\title{Bow varieties: Stable envelopes and their 3d mirror symmetry} 
\author{T. M. Botta$^{\diamond \bullet}$ and R. Rim\'anyi$^\star$}
\address{$^\diamond$ Department of Mathematics, ETH Z\"urich, Switzerland \\ 
$^\bullet$ Department of Mathematics, Columbia University, USA \\
$^\star$ Department of Mathematics, University of North Carolina at Chapel Hill, USA}
\email{tommaso.botta@columbia.edu}
\email{rimanyi@email.unc.edu}
\subjclass[2020]{55N34, 81T30, 14J33, 20G42, 14C17}
\begin{document} 
%\onehalfspacing

\maketitle

\begin{abstract}
    We study the elliptic characteristic classes known as ``stable envelopes'', which were introduced by M.~Aganagic and A.~Okounkov. We prove that for a rich class of holomorphic symplectic varieties---called Cherkis bow varieties---their elliptic stable envelopes exhibit a duality property inspired by mirror symmetry in $d=3$, $\mathcal N=4$ quantum field theories. A crucial step of our proof involves the process of ``resolving'' large charge branes into multiple smaller charge branes. For D5 branes this phenomenon turns out to be the geometric counterpart of the algebraic fusion procedure, and for NS5 branes this is a new mirror construction.
    %Along the way we discover various new features in the geometry of bow varieties.
\end{abstract}

\tableofcontents

\section{Introduction}

Three dimensional quantum field theories predict unexpected relations between seemingly unrelated fields. One of these relations that can be phrased in concrete mathematical terms ($\vartheta$-function identities) concerns a class of equivariant characteristic classes, namely the {\em elliptic stable envelopes} introduced by  Aganagic and Okounkov \cite{aganagic2016elliptic}. In this paper we prove the 3d mirror symmetry of elliptic stable envelopes on type A bow~varieties. These varieties include cotangent bundles of partial flag varieties, type A Nakajima quiver varieties, and much more. Hardly any non-Abelian 3d mirror symmetry statements were know before---our result is new even for 2-step flag varieties. 

Another achievement---discussed below---is a geometric construction underpinning the so-called algebraic fusion procedure of R-matrices. In our language the geometry arises from the ``resolution of D5 branes''. Moreover, we discover a 3d mirror dual geometric fusion, the ``resolution of NS5 branes''. The two kinds of 5-brane resolutions are promising new geometric techniques in a variety of areas including curve counting, vertex functions, and quantum bispectral Schur-Weyl duality. %, and are not restricted to type~A. 

\subsection{Stable envelopes}
A key concept of this paper, the “stable envelope”, is due to A.~Okounkov and his co-authors  \cite{maulik2012quantum, Okounkov_lectures, aganagic2016elliptic}. It is that rare notion in mathematics that not only organizes and generalizes several earlier mathematical objects (in this case, from enumerative geometry and representation theory), but also opens up a plethora of new connections, found in places like geometry, quantum integrable systems, KZ-type PDE’s, difference equations, quantum groups, and quantum cohomology. 

Stable envelopes are equivariant characteristic classes assigned to the fixed point components of certain smooth holomorphic symplectic varieties with a torus action. They come in three flavors: cohomological, K theoretic, and elliptic. They depend on some choices: a ``chamber'' and an ``attracting bundle'' (in the elliptic case) or ``alcove'' (in K theory). 

%They depend on some choices (an ``attracting bundle'' and a ``chamber''), and they come in three flavors: cohomological, K theoretic, and elliptic stable envelopes.

Instead of a precise definition of stable envelopes, let us get familiar with them through an example: The natural $\At=(\Cs)^2$ action on $\C^2$ induces $\At$ actions on $\mathbb P^1$ and on $X=T^*\mathbb P^1$. An additional $\Cs_{\h}$ acts on $X$ scaling the fibers.  The set of $\Tt=\At\times \Cs_{\h}$ fixed points on $X$ is two points, $f_1=(1:0)$ and $f_2=(0:1)$, and there are two possible choices of chambers $\mC_1$  and $\mC_2$. For an appropriate choice of attracting bundle, the {\em cohomological} stable envelopes and their fixed point restrictions are

\smallskip

%\begin{tabular}{lllll}
%$\Stab{\mC_1}{f_1}=t-a_2+\h$, & & & $(\ )|_{f_1}=a_1-a_2$ & $(\ )|_{f_2}=0$, \\
%$\Stab{\mC_1}{f_2}=a_1-t$, & & & $(\ )|_{f_1}=\h$ & $(\ )|_{f_2}=a_1-a_2+\h$, \\
%$\Stab{\mC_2}{f_1}=a_2-t$, & & & $(\ )|_{f_1}=a_2-a_1+\h$ & $(\ )|_{f_2}=\h$, \\
%$\Stab{\mC_2}{f_2}=t-a_1+\h$, & & & $(\ )|_{f_1}=0$ & $(\ )|_{f_2}=a_2-a_1$,
%\end{tabular}

\begin{tabular}{lllll}
$\Stab{\mC_1}{f_1}=t-a_2$, & & & $(\ )|_{f_1}=a_1-a_2$ & $(\ )|_{f_2}=0$, \\
$\Stab{\mC_1}{f_2}=a_1-t+\h$, & & & $(\ )|_{f_1}=\h$ & $(\ )|_{f_2}=a_1-a_2+\h$, \\
$\Stab{\mC_2}{f_1}=a_2-t+\h$, & & & $(\ )|_{f_1}=a_2-a_1+\h$ & $(\ )|_{f_2}=\h$, \\
$\Stab{\mC_2}{f_2}=t-a_1$, & & & $(\ )|_{f_1}=0$ & $(\ )|_{f_2}=a_2-a_1$,
\end{tabular}

\smallskip

\noindent while the {\em elliptic} stable envelopes and their fixed point restrictions are
\begin{eqnarray*}
 \Stab{\mC_1}{f_1}=\frac{
 \te(t/a_1\cdot \h^{-1}\cdot z_1/z_2)\te(t/a_2)}
 {\te(z_1/z_2\cdot h^{-1})},
 & &
 \begin{array}{l}
 (\ )|_{f_1}=\te(a_1/a_2) \\
 (\ )|_{f_2}=0,
 \end{array}
\\
 \Stab{\mC_1}{f_2}=\frac{
 \te(t/a_2\cdot z_1/z_2)\te(a_1/t\cdot \h)}
 {\te(z_1/z_2)},
 & &
\begin{array}{l}
 (\ )|_{f_1}=\te(a_1/a_2\cdot z_1/z_2)\te(\h)/\te(z_1/z_2) \\
 (\ )|_{f_2}=\te(a_1/a_2\cdot \h),
 \end{array}
 \\
\Stab{\mC_2}{f_1}=\frac{
 \te(t/a_1\cdot z_1/z_2)\te(a_2/t \cdot \h)}
 {\te(z_1/z_2)},
 & &
 \begin{array}{l}
 (\ )|_{f_1}=\te(a_2/a_1\cdot \h) \\
 (\ )|_{f_2}=\te(a_2/a_1\cdot z_1/z_2)\te(\h)/\te(z_1/z_2),
 \end{array}
 \\
 \Stab{\mC_2}{f_2}=\frac{
 \te(t/a_2\cdot \h^{-1} \cdot z_1/z_2)\te(t/a_1)}
 {\te(z_1/z_2\cdot  \h^{-1})},
 & &
\begin{array}{l}
 (\ )|_{f_1}=0 \\
 (\ )|_{f_2}=\te(a_2/a_1).
 \end{array}
\end{eqnarray*}
The variable $t$ is the Chern class of the tautological line bundle over $\mathbb P^1$, and $a_1,a_2, \h$ are the Chern roots of $\At \times \Cs_{\h}$. The variables $z_1,z_2$ (in the elliptic case) are called K\"ahler (or dynamical) variables, $\te$ is the odd Jacobi theta function written multiplicatively, $\te(1)=0$. Restrictions to the fixed points $f_1$ and $f_2$ are achieved by the substitutions $t\mapsto a_1, t\mapsto a_2$. 
% in cohomology, and by $t\mapsto a_1\h^{-1}, t\mapsto a_2\h^{-1}$ in elliptic cohomology.
 
\medskip

There are multiple roles stable envelopes play in geometry and related fields. Now we sketch a few of these, and refer the reader to the introductions of \cite{maulik2012quantum, aganagic2016elliptic} for more. 

%Let us sketch a few---related---roles stable envelopes play in geometry and representation theory (for a more extensive list we refer the reader to the Introductions of \cite{maulik2012quantum, aganagic2016elliptic}). 

First, stable envelopes ``geometrize'' quantum group representations \cite{maulik2012quantum}. Namely, tensor products of fundamental representations of quantum groups are identified with cohomology rings of moduli spaces by matching ``eigenvectors'' with the classes of fixed points. Under such an identification, the coordinate-basis of the quantum group representation is matched with the stable envelopes. We invite the reader to calculate the primary feature of this identification: {\em the matrix 
$(\Stab{\mC_1}{f_j}|_{f_i})^{-1}  (\Stab{\mC_2}{f_j}|_{f_i})$ (in cohomology) is Yang's R-matrix}. 

Second, when $X$ is a cotangent bundle of a homogeneous space $M$, the $\h=1$ substitution in a cohomological stable envelope gives ($\pm1$ times) the so-called equivariant Chern-Schwartz-McPherson (CSM) class of a Schubert cell in $M$ \cite{RRAV, AMSS_csm}. CSM classes are designed to generalize the total Chern class of the tangent bundle for locally closed varieties with singularities, in a ``motivic'' way. The $M=\mathbb P^1$ example above gives $1+t-a_2$ and $t-a_1$ as CSM classes of the two Schubert cells $\C, \{ pt\} \subset \mathbb P^1$, whose sum is $1+2t-a_1-a_2=c(T\mathbb P^1)$. The K theoretic and elliptic versions of CSM classes  match the appropriate versions of stable envelopes, see \cite{Feher_2020, AMSS_mc, RW_elliptic, kumar2020}.
 
Third, to a torus fixed point of an appropriate holomorphic symplectic variety (let us assume the fixed points are isolated) one associates a curve counting ``vertex function''. Vertex functions are power series in K\"ahler variables $z$, with coefficients in $K_{\Tt}(X)$ 
%$=\Z[a^{\pm 1}]$, %equivariant K theory classes 
%(that is, coefficients are functions of the equivariant variables $a$) 
that count ``quasi-maps'' in an appropriate sense. These curve counting functions satisfy difference equations (of the type q-KZ) in the equivariant $a$ variables and so-called dynamical difference equations in the $z$ variables. The matrix $(\Stab{}{f}|_g)_{f,g}$ of fixed point restrictions of elliptic stable envelopes serves as the {\em monodromy} matrix for the difference equations \cite{aganagic2016elliptic, okounkov2020inductiveII, okounkovsmirnov2016Kthy}.

\subsection{3d mirror symmetry} \label{sec:intro_3d}
The phenomenon called ``mirror symmetry'' has been a remarkable motivation for algebraic geometers for decades. Its physical origin is the fact that certain $d=2$, $\mathcal N=(2,2)$ quantum field theories can be ``twisted'' two different ways (topological A and B twists) to obtain topological quantum field theories, whose mathematical incarnations are then expected to be related. Three dimensional theories have rich mathematical counterparts too, see eg. \cite{BKK, PSZ, KPSZ}. It turns out that appropriate $d=3$, $\mathcal N=4$ supersymmetric $\sigma$-models also have two topological twists \cite{IntSei, HW, BHOOY, GaiottoWitten, bullimore}. The occurring duality for mathematical objects \cite{NakajimaTowards,BFN,KZ,kamnitzer} derived from the two TQFTs is called {\em 3d mirror symmetry} or {\em symplectic duality}. For a recent survey on 3d mirror symmetry see \cite{WebsterYoo}.

One of the key predictions of 3d mirror symmetry concerns the curve counting functions mentioned above. Namely, the Higgs branches $X, X^!$ of 3d mirror dual theories (or equivalently, the Higgs and Coulomb branches of the same theory) are expected to have the same curve counting functions with $a$ and $z$ variables interchanged and $\h$ inverted. As a consequence, or essentially equivalently, the monodromies of their governing difference equations are expected to be equal---after transposition, switching $a \leftrightarrow z$ variables, and inverting $\h$. This latter property is the topic of our paper:
\begin{equation}\label{eq:Intro3d}
\frac{\Stab{}{f}|_{g} (a,z,\h)}{\Stab{}{g}|_{g} (a,z,\h)} = \pm \frac{\Stab{}{g^!}|_{f^!} (z,a,\h^{-1})}{\Stab{}{f^!}|_{f^!} (z,a,\h^{-1})}
\end{equation}
for elliptic stable envelopes, and the appropriate choices of chamber and attractive bundle. The essence is the numerators; the ``diagonal restrictions'' in the denominators are explicit formulas with geometric meaning, they could be omitted by a different normalization. Note that, at the very least, 3d mirror symmetry requires a bijection $f\leftrightarrow f^!$ between the torus fixed points of $X$ and $X^!$. 

Equation \eqref{eq:Intro3d} as the 3d mirror symmetry property for stable envelopes appeared in a lecture of A.~Okounkov \cite{Okounkov_video}, and then in the literature \cite{RSVZ_Gr3d}.

For example, the variety $X=T^*\mathbb P^1$ is self dual with $f_1^!=f_2, f_2^!=f_1$. Consider the chamber $\mC_1$ and the elliptic Stab formulas above. Then \eqref{eq:Intro3d} reduces to verifying that $\te(a_1/a_2\cdot z_1/z_2)\te(h) / ( \te(a_1/a_2) \te(z_1/z_2) )$ gets multiplied by $-1$ after substituting $a_i \leftrightarrow z_i$, $\h \leftrightarrow \h^{-1}$.

Other examples are much less obvious: the Nakajima varieties corresponding to the quivers 
\begin{equation}\label{random_quiver_pair}
\begin{tikzpicture}[baseline=1, scale=.4]
\draw[thick] (0,1) -- (7.5,1);
\draw[fill] (0,1) circle (3pt);     \node at (0,1.6) {$\scriptstyle 1$} ;
\draw[fill] (1.5,1) circle (3pt);  \node at (1.5,1.6) {$\scriptstyle 2$} ;
\draw[fill] (3,1) circle (3pt);     \node at (3,1.6) {$\scriptstyle 2$} ;
\draw[fill] (4.5,1) circle (3pt);  \node at (4.5,1.6) {$\scriptstyle 3$} ;
\draw[fill] (6,1) circle (3pt);  \node at (6,1.6) {$\scriptstyle 2$} ;
\draw[fill] (7.5,1) circle (3pt);  \node at (7.5,1.6) {$\scriptstyle 1$} ;
\draw[thick] (1.5,1) -- (1.5,0.1); \draw[thick] (1.4,-0.1) rectangle (1.6,0.1); \node at (1.5,-.6) {$\scriptstyle 1$};
\draw[thick] (4.5,1) -- (4.5,0.1); \draw[thick] (4.4,-0.1) rectangle (4.6,0.1); \node at (4.5,-.6) {$\scriptstyle 2$};
\draw[thick] (6,1) -- (6,0.1); \draw[thick] (5.9,-0.1) rectangle (6.1,0.1); \node at (6,-.6) {$\scriptstyle 1$};
\draw[thick] (7.5,1) -- (7.5,0.1); \draw[thick] (7.4,-0.1) rectangle (7.6,0.1); \node at (7.5,-.6) {$\scriptstyle 1$};
\end{tikzpicture}
\qquad \text{and} \qquad 
\begin{tikzpicture}[baseline=1, scale=.4]
\draw[thick] (0,1) -- (4.5,1);
\draw[fill] (0,1) circle (3pt);     \node at (0,1.6) {$\scriptstyle 2$} ;
\draw[fill] (1.5,1) circle (3pt);  \node at (1.5,1.6) {$\scriptstyle 3$} ;
\draw[fill] (3,1) circle (3pt);     \node at (3,1.6) {$\scriptstyle 2$} ;
\draw[fill] (4.5,1) circle (3pt);  \node at (4.5,1.6) {$\scriptstyle 1$} ;
\draw[thick] (0,1) -- (0,0.1); \draw[thick] (-0.1,-0.1) rectangle (0.1,0.1); \node at (0,-.6) {$\scriptstyle 3$};
\draw[thick] (1.5,1) -- (1.5,0.1); \draw[thick] (1.4,-0.1) rectangle (1.6,0.1); \node at (1.5,-.6) {$\scriptstyle 2$};
\draw[thick] (3,1) -- (3,0.1); \draw[thick] (2.9,-0.1) rectangle (3.1,0.1); \node at (3,-.6) {$\scriptstyle 1$};
\draw[thick] (4.5,1) -- (4.5,0.1); \draw[thick] (4.4,-0.1) rectangle (4.6,0.1); \node at (4.5,-.6) {$\scriptstyle 1$};
\end{tikzpicture}
\end{equation}
are also 3d mirror duals. They have complex dimensions 16 and 22, respectively. They have a 6- and an 8-torus acting on them, respectively. Yet, both have exactly 1055 fixed points and with the right choice of bijection between the fixed points \eqref{eq:Intro3d} holds. (We will revisit the combinatorics of this example in Section~\ref{sec:IntBow}.)
The pair of quivers we just discussed had to be chosen carefully: not all quivers have a 3d mirror quiver---this phenomenon will be illuminated soon. 

There are two infinite families of pairs of quiver varieties for which \eqref{eq:Intro3d} is proved already. The first class is
\[
\begin{tikzpicture}[baseline=1, scale=.4]
\draw[thick] (1.5,1) -- (7.5,1);
%\draw[fill] (0,1) circle (3pt);     \node at (0,1.6) {$\scriptstyle 1$} ;
\draw[fill] (1.5,1) circle (3pt);  \node at (1.5,1.6) {$\scriptstyle 1$} ;
\draw[fill] (3,1) circle (3pt);     \node at (3,1.6) {$\scriptstyle 2$} ;
       \node at (4.2,1.6) {$\cdots$} ;
\draw[fill] (6,1) circle (3pt);  \node at (5.7,1.6) {$\scriptstyle n-2$} ;
\draw[fill] (7.5,1) circle (3pt);  \node at (7.6,1.6) {$\scriptstyle n-1$} ;
\draw[thick] (7.5,1) -- (7.5,0.1); \draw[thick] (7.4,-0.1) rectangle (7.6,0.1); \node at (7.5,-.6) {$\scriptstyle n$};
\end{tikzpicture}
\qquad\text{versus} \qquad 
\begin{tikzpicture}[baseline=1, scale=.4]
\draw[thick] (0,1) -- (6,1);
\draw[fill] (0,1) circle (3pt);     \node at (-.3,1.6) {$\scriptstyle n-1$} ;
\draw[fill] (1.5,1) circle (3pt);  \node at (1.5,1.6) {$\scriptstyle n-2$} ;
%\draw[fill] (3,1) circle (3pt);     \node at (3.3,1.6) {$\scriptstyle n-3$} ;
       \node at (3.5,1.6) {$\cdots$} ;
\draw[fill] (4.5,1) circle (3pt);  \node at (4.5,1.6) {$\scriptstyle 2$} ;
\draw[fill] (6,1) circle (3pt);  \node at (6,1.6) {$\scriptstyle 1$} ;
\draw[thick] (0,1) -- (0,0.1); \draw[thick] (-.1,-0.1) rectangle (0.1,0.1); \node at (0,-.6) {$\scriptstyle n$};
\end{tikzpicture},
\]
both isomorphic to the cotangent bundle of the full flag variety on $\C^n$ \cite{rimanyi20193d, RRAW_Langlands}. The second class is \cite{RSVZ_Gr3d} 
\[
T^*\Gr{k}{n}=
\begin{tikzpicture}[baseline=3, scale=.4]
\draw[fill] (0,1) circle (3pt);     
\node at (0,1.6) {$\scriptstyle k$} ;
\draw[thick] (0,1) -- (0,0.1); 
\draw[thick] (-.1,-0.1) rectangle (0.1,0.1); 
\node at (0,-.6) {$\scriptstyle n$};
\end{tikzpicture}
\qquad\text{versus} \qquad 
\begin{tikzpicture}[baseline=3, scale=.4]
\draw[thick] (0,1) -- (15,1);
\draw[fill] (0,1) circle (3pt);     \node at (0,1.6) {$\scriptstyle 1$} ;
\draw[fill] (1.5,1) circle (3pt);  \node at (1.5,1.6) {$\scriptstyle 2$} ;
 \node at (3,1.6) {$\cdots$} ;
\draw[fill] (4.5,1) circle (3pt);  \node at (4.5,1.6) {$\scriptstyle k$} ;
\draw[fill] (6,1) circle (3pt);  \node at (6,1.6) {$\scriptstyle k$} ;
\node at (7.5,1.6) {$\cdots$} ;
\draw[fill] (9,1) circle (3pt);  \node at (9,1.6) {$\scriptstyle k$} ;
\draw[fill] (10.5,1) circle (3pt);  \node at (10.5,1.6) {$\scriptstyle k$} ;
\draw[fill] (12,1) circle (3pt);  \node at (12,1.6) {$\scriptstyle k-1$} ;
\node at (13.6,1.6) {$\cdots$} ;
\draw[fill] (15,1) circle (3pt);  \node at (15,1.6) {$\scriptstyle 1$} ;
\draw[thick] (4.5,1) -- (4.5,0.1); \draw[thick] (4.4,-0.1) rectangle (4.6,0.1); \node at (4.5,-.6) {$\scriptstyle 1$};
\draw[thick] (10.5,1) -- (10.5,0.1); \draw[thick] (10.4,-0.1) rectangle (10.6,0.1); \node at (10.5,-.6) {$\scriptstyle 1$};
\end{tikzpicture}
\]
for $k\leq n/2$. On the right there are $n-2k+1$ copies of $k$ on the top (if $n=2k$ then the two bottom $1$s collide to be a $2$).  

Why exactly these quivers are mirrors of each other? How to find the mirror of other quivers?
%, and why for some quivers there is no mirror quiver? 
These questions will be answered next.

\subsection{Bow varieties and the Main Theorem} \label{sec:IntBow}

Bow varieties $\Ch(\DD)$ form a remarkable class of smooth, holomorphic complex varieties that generalize type A Nakajima quiver varieties. They were introduced by Cherkis as moduli spaces of unitary instantons on multi-Taub-NUT spaces \cite{cherkis1, cherkis2, cherkis3}; and were given a quiver-like presentation by Nakajima and Takayama \cite{Nakajima_Takayama, takayama_2016}, see also \cite{RimRoz}. We will use \cite{rimanyi2020bow} as our general reference. 

The combinatorial code of a bow variety is a {\em brane diagram} 
%$\DD=\ttt{0\fs 1\fs 2\bs 1\bs 0}$, 
$\DD=\ttt{0\fs 1\fs 3\fs 3\bs 1\bs 0}$,
that is, a finite sequence of NS5 branes ($\ttt{\fs}$) and D5 branes ($\ttt{\bs}$), separated by integers (the dimension vector, starting and ending with 0, often omitted). Extra flexibility is provided by a local surgery, called Hanany-Witten (HW) transition $\ttt{$d_1$\fs $d_2$\bs $d_3$} \stackrel{HW}{\leftrightarrow} \ttt{$d_1$\bs $d_1+d_3-d_2+1$\fs $d_3$}$, under which the associated bow variety is unchanged. A brane diagram is balanced if the integers on the two sides of any D5 brane are equal. Nakajima quiver varieties correspond to brane diagrams HW equivalent to balanced ones, for example the $\DD$ above: $\ttt{\fs 1\fs 3\fs 3\bs 1\bs} \stackrel{HW}{\leftrightarrow}  \ttt{\fs 1\bs 1\fs 2\bs 2\fs}$ 
$=
\begin{tikzpicture}[baseline=1, scale=.4]
\draw[thick] (0,1) -- (1.5,1);
\draw[fill] (0,1) circle (3pt);     \node at (0,1.6) {$\scriptstyle 1$} ;
\draw[fill] (1.5,1) circle (3pt);  \node at (1.5,1.6) {$\scriptstyle 2$} ;
\draw[thick] (0,1) -- (0,0.1); \draw[thick] (-0.1,-0.1) rectangle (0.1,0.1); \node at (0,-.6) {$\scriptstyle 1$};
\draw[thick] (1.5,1) -- (1.5,0.1); \draw[thick] (1.4,-0.1) rectangle (1.6,0.1); \node at (1.5,-.6) {$\scriptstyle 1$};
\end{tikzpicture}
$.

Bow varieties come equipped with all the ``usual suspects'' of structures one wishes for in enumerative geometry and geometric representation theory: tautological bundles, torus action with finitely many fixed points,  combinatorial codes of fixed points (called {\em tie diagrams}), and combinatorial descriptions of fixed point restriction maps.  

Another remarkable feature of brane diagrams is the existence of an involution $\DD \leftrightarrow \DD^!$, a ``combinatorial 3d mirror symmetry'', obtained by simply swapping NS5 and D5 branes. For example $\ttt{\fs 1\fs 2\bs 2\fs 2\fs 3\bs 3\bs 3\fs 2\bs 2\fs 1\bs 1\fs}$ and $\ttt{\bs 1\bs 2\fs 2\bs 2\bs 3\fs 3\fs 3\bs 2\fs 2\bs 1\fs 1\bs}$ are 3d mirror dual brane diagrams, and in fact, they are Hanany-Witten equivalent to the pair of quivers in \eqref{random_quiver_pair}! %of quivers in Section~\ref{sec:intro_3d} above! 

There is a natural bijection between the combinatorial codes of the fixed points for $\DD$ and $\DD^!$ (simply looking at tie diagrams upside down), and in turn, a natural bijection between the torus fixed points of $\Ch(\DD)$ and $\Ch(\DD^!)$. The reader can now---correctly---guess our

\medskip
\noindent {\bf Main Theorem.} (Theorem \ref{theorem mirror symmetry stabs} below.)
{\em The varieties $\Ch(\DD)$ and $\Ch(\DD^!)$ satisfy {3d mirror symmetry for elliptic stable envelopes} in the sense of~\eqref{eq:Intro3d}.  }
\medskip

\noindent 
%The precise formulation is Theorem \ref{} below. 
%This theorem explains the choice of pairs of quivers for which  3d mirror symmetry for stable envelopes were known \cite{rimanyi20193d,RSVZ_Gr3d}, and also explain why we cannot find the 3d mirror dual pair for some quivers. 
The theorem answers the questions in the last paragraph of Section~\ref{sec:intro_3d}. Moreover, it answers why some quivers have no mirror quivers: all quiver varieties are bow varieties (this larger class is closed for 3d mirror symmetry), but the dual of a quiver variety is not necessarily a quiver variety.

\subsection{Our proof}
Ever since quantum field theories and computational evidence predicted 3d mirror symmetry for stable envelopes, experts envisioned two strategies to prove it. 
\begin{itemize}
\item A conceptual proof would be the geometric construction of a {\em duality interface}, a kind of ``two-sided stable envelope'' on $\Ch(\DD)\times \Ch(\DD^!)$---as suggested by A. Okounkov \cite{Okounkov_video}. 
\item A computational proof would be achieved by developing formulas or recursions for the stable envelopes, and proving \eqref{eq:Intro3d} algebraically with the appropriate residue calculus and/or trisecant identities. This %latter 
strategy is used in the known special cases \cite{RSVZ_Gr3d, rimanyi20193d, RRAW_Langlands}. 
\end{itemize}
In a subsequent paper \cite{RRTBshuffle} we will prove a shuffle property, and in turn, an Elliptic Cohomological Hall algebra structure of stable envelopes on bow varieties. As a result, explicit formulas for stable envelopes are obtained. 
Nevertheless, even if we could organize the needed residue calculus to prove \eqref{eq:Intro3d} for the formulas, such a proof would not be illuminating or satisfactory.   

Instead, in this paper we choose a different strategy: we analyze the geometric changes of a bow variety as we replace a 5-brane of charge$=\!\!\w$ with $\w$ copies of 5-branes of charge$=\!\!1$. After such ``resolutions'' of the 5-branes we arrive at a brane diagram with only charge$=\!\!1$ 5-branes, and the corresponding bow variety is the cotangent bundle of the full flag variety. For the latter 3d mirror symmetry has been proven \cite{rimanyi20193d, RRAW_Langlands}. 

Resolutions of 5-branes are remarkable operations, and we believe that their study is worthwhile independently of our proof of 3d mirror symmetry; let us explain why. %For example, the change of curve counting functions under 5-brane resolutions is one of the few promising future projects.

The resolution of a D5 brane leads to a closed embedding between bow varieties. Moreover, this embedding respects a poset structure that is relevant in the definition of stable envelopes. As a consequence we obtain a ``one-term relation'' (Corollary~\ref{corollary nicest one-term D5 resolution}) between the stable envelopes before and after the D5 resolution. This relation is the basis for the known ``algebraic fusion of R-matrices'' (Section~\ref{subsection fusion of R-matrices}), hence we call our D5 brane resolution {\em geometric fusion}. Moreover, the net effect of maximally resolving all D5 branes is a remarkable embedding of the bow variety into a cotangent bundle of a full flag variety, cf. \cite{ji2024bow}.

The resolution of an NS5 brane is a procedure analogous to a forgetful map from a many-step flag variety to a fewer-step flag variety---but in the symplectic settings where the map is replaced with a convolution diagram. Such a map/convolution yields a push-forward formula for characteristic classes, and we obtain ``many-term relations'' (Proposition \ref{proposition separated NS5 fusion ratios stabs} and \ref{proposition co-separated NS5 fusion ratios stabs}) that compare stable envelopes before and after an NS5 resolution. Since an NS5 resolution is the 3d mirror of a D5 resolution, we name the former a {\em mirror geometric fusion}, and we expect that its study will lead to fusion-like statements for the dynamical Weyl group (3d mirror symmetry is expected to swap the R-matrix action and the dynamical Weyl group action in the style of ``quantum bispectral Schur-Weyl duality" \cite{smirnov2021qDiffHilb, kononov2022pursuingI, kononov2020pursuingII, dalipi2022howe}). 

The remaining ingredient of our proof is an R-matrix argument (Section~\ref{sec:Rmatrix}) which proves the consistency of the stable envelope relations coming from geometric vs mirror geometric fusions. 

\smallskip

Let us emphasize that our proof is geometric: we prove the 3d mirror symmetry of elliptic stable envelopes without providing formulas for them. Therefore the formulas obtained by the shuffle structure in the future paper \cite{RRTBshuffle} (cf.~\cite{botta2023framed, botta2021shuffle}) will automatically satisfy \eqref{eq:Intro3d}, even though that equality is not obvious algebraically. As a consequence, our geometric proof yields an abundance of theta-function identities: one for every pair of 0-1-matrices that have the same row and column sums. 

\smallskip

While the main result of the paper is the 3d mirror symmetry of elliptic stable envelopes for bow varieties, we decided not to write a paper that only contains that proof in its most compact form. Instead, we decided to
provide a detailed and rigorous treatment of elliptic stable envelopes on bow varieties, and to explore various aspects of the arising rich geometry. Examples for these explorations include the tensor structure of bow varieties, relations among their tautological bundles, a relevant quadratic form calculus, the study of geometric R-matrices, the $S_n$ action obtained by permuting the same kinds of 5-branes, as well as a discussion on what modification of the powerful theory of equivariant localization holds for bow varieties.

\bigskip

\noindent{\bf Acknowledgments.} The first author was supported as a part of NCCR SwissMAP, a National Centre of Competence in Research, funded by the Swiss National Science Foundation (grant number 205607) and grant 200021\_\-196892 of the Swiss National Science Foundation. The second author was supported by the Simons Foundation grant 5107838 as well as the NSF grants 2152309 and 2200867. We are grateful for useful discussions on the subject with A.~Buch, %H. Dinkins,
G.~Felder, L.~Rozansky, Y.~Shou, A.~Smirnov, A. Varchenko, and T. Wehrhan.

\section{Brane diagrams, bow varieties, fixed points}
\label{Section: Brane diagrams, bow varieties, fixed points}

In this section we recall the content of Sections 2 and 3 of \cite{rimanyi2020bow} in  a nutshell---yet, the reader is advised to consult that paper as well as \cite{Nakajima_Takayama} for more details. 

\subsection{Brane diagrams} 
Combinatorial objects like $\DD=\ttt{\fs 2\bs 2\fs 2\bs 4\fs 3\fs 3\fs 4\bs 3\fs 2\bs 2\bs}$ will be called (type A) brane diagrams. The red forward-leaning lines are called NS5 branes, denoted by $\Zb$. The blue backward-leaning lines are called D5 branes, denoted by $\Ab$. The positions between 5-branes are called D3 branes, denoted by $\Xb$, and the integer sitting there is called its multiplicity or dimension $d_{\Xb}$. 

If all NS5 branes are to the left of all D5 branes, we call the diagram {\em separated}. If all NS5 branes are to the right of all D5 branes, we call the diagram {\em co-separated}.

The {\em charge} of an NS5  brane $\Zb$ or a D5 brane $\Ab$ is defined by
\begin{align*}
\ch(\Zb)= & (d_{\Zb^+}-d_{\Zb^-})+|\{\text{D5 branes left of $\Zb$}\}|,
\\ 
\ch(\Ab)= & (d_{\Ab^-}-d_{\Ab^+})+|\{\text{NS5 branes right of $\Ab$}\}|.
\end{align*}
Here, the superscripts $+, -$ refer to the D3 branes directly to the right and left. 
We define the {\em local charge} (or ``{\em weight}'') of 5-branes by  $\w(\Zb)=|d_{\Zb^+}-d_{\Zb^-}|$, $\w(\Ab)=|d_{\Ab^+}-d_{\Ab^-}|$. For an NS5 brane let $\ell(\Zb)$ denote the number of D5 branes left of $\Zb$. %(the number that comes up in the definition of its charge).

\subsection{The bow variety} \label{sec:def of bow variety}
To a D3 brane we associate a complex vector space $W$ of the given dimension. To a D5 brane $\Ab$ we associate a one-dimensional space $\C_{\Ab}$ with the standard $\GL(\C_{\Ab})$ action and the ``three-way part''
\begin{align*}
\MM_{\Ab}= &\Hom(W_{{\Ab}^+},W_{{\Ab}^-}) \oplus
\h\Hom(W_{{\Ab}^+},\C_{\Ab}) \oplus \Hom(\C_{\Ab},W_{{\Ab}^-}) \\
&  \oplus\h\End(W_{{\Ab}^-}) \oplus \h\End(W_{{\Ab}^+}),
\end{align*} 
with elements denoted $(A_{\Ab}, b_{\Ab}, a_{\Ab}, B_{\Ab}, B'_{\Ab})$, and $\NN_{\Ab}=\h\Hom(W_{{\Ab}^+},W_{{\Ab}^-})$. To an NS5 brane $\Zb$ we associate the ``two-way part''
\[
\MM_{\Zb}= \h\Hom(W_{{\Zb}^+},W_{{\Zb}^-}) \oplus
\Hom(W_{{\Zb}^-},W_{{\Zb}^+}),
\]
whose elements will  be denoted by $(C_{\Zb}, D_{\Zb})$. To a D3 brane $\Xb$ we associate $\NN_{\Xb}=\h\End(W_{\Xb})$. In these formulas, the $\h$ factor means an action of an extra $\Cs$ factor called $\Cs_{\h}$.
Let 
\[
\MM=\bigoplus_{\Ab} \MM_{\Ab} \oplus \bigoplus_{\Zb} \MM_{\Zb}, \qquad \NN=\bigoplus_{\Ab} \NN_{\Ab} \oplus \bigoplus_{\Xb} \NN_{\Xb}.
\]
We define a map $\mu:\MM \to \NN$ componentwise as follows.
\begin{itemize}
\item 
The $\NN_{\Ab}$-component of $\mu$ is 
$B_{\Ab} A_{\Ab} -A_{\Ab} B'_{\Ab}+a_{\Ab} b_{\Ab}$.
\item
The $\NN_{\Xb}$-components of $\mu$ depend on the diagram:
\begin{itemize}
\item[\ttt{\bs -\bs}] If $\Xb$ is in between two D5 branes then it is $B'_{\Xb^-}-B_{\Xb^+}$. 
\item[\ttt{{\fs}-{\fs}}] If $\Xb$ is in between two NS5 branes then it is $C_{\Xb^+}D_{\Xb^+}$ $-D_{\Xb^-}C_{\Xb^-}$.
\item[\ttt{{\fs}-\bs}] If $\Xb^-$ is an NS5 brane and $\Xb^+$ is a D5 brane then it is $-D_{\Xb^-}C_{\Xb^-}$ $-B_{\Xb^+}$.
\item[\ttt{\bs -{\fs}}] If $\Xb^-$ is a  D5 brane and $\Xb^-$ is an NS5 brane then it is $C_{\Xb^+}D_{\Xb^+}+B'_{\Xb^-}$.
\end{itemize}
\end{itemize}
Let $\widetilde{\mathcal M}$ consist of points of $\mu^{-1}(0)\subset \MM$ for which the stability conditions  
\begin{itemize}
\item[(S1)]  if $S\leq W_{{\Ab}^+}$ is a subspace with $B'_{\Ab}(S)\subset S$, $A_{\Ab}(S)=0$, $b_{\Ab}(S)=0$ then $S=0$,
\item[(S2)]  if $T \leq W_{{\Ab}^-}$ is a subspace with $B_{\Ab}(T)\subset T$, $Im(A_{\Ab})+Im(a_{\Ab})\subset T$ then $T=W_{\Ab^-}$
\end{itemize}
hold for all D5 branes $\Ab$. Let $G=\prod_{\Xb} \GL(W_\Xb)$ and consider the character 
\begin{equation}\label{eq:character}
\chi: G\ \to \C^{\times}, 
\qquad\qquad
(g_\Xb)_{\Xb} \mapsto \prod_{\Xb'} \det(g_{\Xb'}),
\end{equation}
where the product runs for D3 branes $\Xb'$ such that $(\Xb')^-$ is an NS5 brane (in picture \ttt{{\fs}}$\!\Xb'$). Let $G$ act on $\widetilde{\mathcal M} \times \C$ by $g.(m,x)=(gm,\chi^{-1}(g)x)$. We say that $m\in \widetilde{\mathcal M}$ is stable (notation $m\in \widetilde{\mathcal M}^s$) if the orbit $G(m,x)$ is closed and the stabilizer of $(m,x)$ is finite for $x\not=0$. Define the bow variety $\Ch(\DD)$ associated with the brane diagram  $\DD$ to be $\widetilde{\mathcal M}^s/G$. By this definition $\Ch(\DD)$ is an orbifold, but in fact the stabilizers of stable points turn out to be trivial.

The obtained variety $\Ch(\DD)$ is smooth, holomorphic symplectic. It comes with the action of the torus $\Tt=\At\times \C_{\h}^\times$, where $\At=\times_{\Ab} \GL(\C_{\Ab})$. The vector spaces $W_{\Xb}$ associated with the D3 branes induce ``tautological'' bundles $\xi$ (of the given rank). 

\subsection{Affinization and handsaw variety} \label{sec:Affinization}

Although the stability conditions (S1) and (S2) are open, the variety $\widetilde{\mathcal M}$ is affine  \cite[Section 2]{takayama_2016}. As a consequence, bow varieties come with a projective morphism
\[
\pi:X(\D)\to X_0(\D):=\Spec(\C[\widetilde{\mathcal M}]^{G})
\]
to an affine variety. 
\begin{remark}
    The chosen stability condition implies that if $\DD$ is separated then the maps $A_{\Ab}$ are injective and the maps $C_{\Zb}$ are surjective for all $\Ab$ and $\Zb$. If instead the brane diagram $\DD$ is co-separated, then $A_{\Ab}$ must be surjective and the maps $D_{\Zb}$ are injective. As a consequence, the bow variety $C(\DD)$ is empty unless $ d_{\Ab^-}-d_{\Ab^+}\geq 0$ and $d_{\Zb^+}-d_{\Zb^-}\geq 0$ in the separated case and $ d_{\Ab^-}-d_{\Ab^+}\leq 0$ and $d_{\Zb^+}-d_{\Zb^-}\leq 0$ in the co-separated case. 
\end{remark}

Assume now that the bow diagram $\D$ is separated or co-separated. By neglecting the two-way parts of the diagram, we get a map $  \mu_{HS}:(\oplus_{\Ab} \MM_{\Ab})\to (\oplus_{\Ab} \NN_{\Ab}) \oplus (\oplus_{\Xb} \NN_{\Xb}) $.
The subscript refers to the shape of the associated quiver, which resembles a handsaw.  Let $\widetilde{ \mathcal{M}}_{HS}$ denote the subvariety of $\mu_{HS}^{-1}(0)$ satisfying the conditions (S1) and (S2). It is also affine and its quotient 
\[
HS(\D):=\Spec(\C[\widetilde{\mathcal{M}}_{HS}]^{\prod_{\Ab}\GL(W_{\Ab})})
\]
is called a handsaw variety. As shown in \cite[Prop. 2.9 and Cor. 2.21]{takayama_2016}, the $G_{\D}$ action on $\widetilde{\mathcal M}$ is free and all its orbits are closed, so $HS(\D)$ is just the orbit space $\widetilde{\mathcal{M}}_{HS}/\prod_{\Ab}\GL(W_{\Ab})$.

The natural projection $\widetilde{\mathcal{M}}\to \widetilde{\mathcal{M}}_{HS}$ descends to an affine morphism $\rho: X_0(\D)\to HS(\D)$. Altogether, if $\D$ is separated or co-separated, we have morphisms 
\begin{equation}
    \label{maps from bow to affine bow and handsaw}
    X(\D)\xrightarrow[]{\pi} X_0(\D) \xrightarrow[]{\rho} HS(\D).
\end{equation}

\subsection{Hanany-Witten transition} \label{sec:HW}
The local surgery 
\[
\ttt{$d_1$\fs $d_2$\bs $d_3$} \leftrightarrow \ttt{$d_1$\bs $d_1+d_3-d_2+1$\fs $d_3$}\] 
on diagrams is called Hanany-Witten transition. It is a fact that under such transition the associated bow varieties are isomorphic, with the change of K-theory classes of bundles, and parametrization
\begin{equation}\label{HWfig}
\begin{tikzpicture}[baseline=(current  bounding  box.center), scale=.5]
\draw[thick] (-1,1)--(6,1);
\draw[thick,red] (0.6,0)--(1.4,2);
\draw[thick,blue] (4.4,0)--(3.6,2);
\node at (0.2,2) {$\xi_1$};
\node at (2.5,2) {$\xi_2$};
\node at (5.2,2) {$\xi_3$};
%\node at (0.4,-.7) {$z$};
\node at (4.6,-.7) {$\C_{\Ab}$};
\draw[ultra thick, <->] (8,1)--(9.5,1) node[above]{HW} -- (11,1);
\draw[thick] (13,1)--(24,1);
\draw[thick,blue] (14.6,2)--(15.4,0);
\draw[thick,red] (22.4,2)--(21.6,0);
\node at (13.8,2) {$\xi_1$};
\node at (18.5,2) {$\xi_1+\xi_3-\xi_2+\C_{\Ab}$}; %\node at (18,1.6) {$+a$}; 
%\node at (18,2) {$\bar{\beta}$};
\node at (23.2,2) {$\xi_3$};
\node at (15.3,-.7) {$\C_{\Ab}\h^{-1}$};
%\node at (19.6,-.7) {$z$};
\end{tikzpicture}.
\end{equation}
The charges of 5-branes are invariant under HW transition, and in fact, the vectors of NS5 and D5 charges $r$ and $c$ are a complete invariant of a HW equivalence class. Any brane diagram can be made separated of co-separated by a sequence of HW moves. 

\subsection{Cotangent bundles of partial flag varieties and Nakajima quiver varieties are bow varieties \cite[\S 6.2-6.3]{rimanyi2020bow}} \label{sec:quivers}
The bow varieties for which the charge vector $c$ is non-increasing are naturally isomorphic to Nakajima quiver varieties. Among these, the bow variety with $c=(1,1,\ldots,1)$ is naturally isomorphic to the total space of the cotangent bundle of a partial flag variety, namely one with {\em dimension steps} $r_i$. In particular, the bow variety with $r=c=(1,\ldots,1)\in \N^n$ is the cotangent bundle of the full flag variety on $\C^n$.

\subsection{Torus fixed points} \label{sec:fixedpoints}
The $\Tt$ (or $\At$) action on $\Ch(\DD)$ has finitely many fixed points. The fixed points can be encoded combinatorially by {\em tie diagrams}, like this one
\begin{equation}
\begin{tikzpicture}[baseline=0,scale=.4]
\draw [thick,red] (0.5,0) --(1.5,2); 
\draw[thick] (1,1)--(2.5,1) node [above] {$2$} -- (31,1);
\draw [thick,blue](4.5,0) --(3.5,2);  
\draw [thick](4.5,1)--(5.5,1) node [above] {$2$} -- (6.5,1);
\draw [thick,red](6.5,0) -- (7.5,2);  %%
\draw [thick](7.5,1) --(8.5,1) node [above] {$2$} -- (9.5,1); 
\draw[thick,blue] (10.5,0) -- (9.5,2);  
\draw[thick] (10.5,1) --(11.5,1) node [above] {$4$} -- (12.5,1); 
\draw [thick,red](12.5,0) -- (13.5,2);   %%
\draw [thick](13.5,1) --(14.5,1) node [above] {$3$} -- (15.5,1);
\draw[thick,red] (15.5,0) -- (16.5,2);  %%
\draw [thick](16.5,1) --(17.5,1) node [above] {$3$} -- (18.5,1);  
\draw [thick,red](18.5,0) -- (19.5,2);  %%
\draw [thick](19.5,1) --(20.5,1) node [above] {$4$} -- (21.5,1);
\draw [thick,blue](22.5,0) -- (21.5,2);
\draw [thick](22.5,1) --(23.5,1) node [above] {$3$} -- (24.5,1);  
\draw[thick,red] (24.5,0) -- (25.5,2); 
\draw[thick] (25.5,1) --(26.5,1) node [above] {$2$} -- (27.5,1);
\draw [thick,blue](28.5,0) -- (27.5,2);  %%
\draw [thick](28.5,1) --(29.5,1) node [above] {$2$} -- (30.5,1);   
\draw [thick,blue](31.5,0) -- (30.5,2);   %%

\draw [dashed, black](4.5,-.25) to [out=-45,in=225] (12.5,-.25);
\draw [dashed, black](10.5,-.25) to [out=-45,in=225] (12.5,-.25);
\draw [dashed, black](10.5,-.25) to [out=-45,in=225] (15.5,-.25);
\draw [dashed, black](10.5,-.25) to [out=-45,in=225] (24.5,-.25);
\draw [dashed, black](22.5,-.25) to [out=-45,in=225] (24.5,-.25);

\draw [dashed, black](1.5,2.25) to [out=45,in=-225] (3.5,2.25);
\draw [dashed, black](1.5,2.25) to [out=45,in=-225] (9.5,2.25);
\draw [dashed, black](13.5,2.25) to [out=45,in=-225] (21.5,2.25);
\draw [dashed, black](16.5,2.25) to [out=45,in=-225] (21.5,2.25);
\draw [dashed, black](19.5,2.25) to [out=45,in=-225] (30.5,2.25);
\draw [dashed, black](25.5,2.25) to [out=45,in=-225] (30.5,2.25);
\node at (.7,-1) {\small $\Zb_1$};
\node at (6.8,-1) {\small $\Zb_2$};
\node at (4,-1) {\small $\Ab_1$};
\node at (31.7,-1) {\small $\Ab_5$};
\node at (28.5,-1) {\small $\Ab_4$};
\node at (25,-1) {\small $\Zb_6$};

\end{tikzpicture},
\end{equation}
see details in \cite{rimanyi2020bow}. Equivalently, fixed points can be encoded by {\em binary contingency tables} (BCTs), which are 01 matrices with row and column sums the charge vectors. The BCT corresponding to the tie diagram above, as well as the charge vectors $r=(2,1,1,2,3,2)$, $c=(5,2,2,0,2)$, are 
\[
\begin{tikzpicture}[baseline=1.4 cm, scale=.5]
\draw[ultra thin] (0,0) -- (5,0);
\draw[ultra thin]  (0,1) -- (5,1);
\draw[ultra thin]  (0,2) -- (5,2);
\draw[ultra thin]  (0,3) -- (5,3);
\draw[ultra thin]  (0,4) -- (5,4);
\draw[ultra thin]  (0,5) -- (5,5);
\draw[ultra thin]  (0,6) -- (5,6);
\draw[ultra thin]  (0,0) -- (0,6);
\draw[ultra thin]  (1,0) -- (1,6);
\draw[ultra thin]  (2,0) -- (2,6);
\draw[ultra thin]  (3,0) -- (3,6);
\draw[ultra thin]  (4,0) -- (4,6);
\draw[ultra thin]  (5,0) -- (5,6);
%\draw[ultra thick] (0,6) -- (0,5) -- (1,5) -- (1,4) -- (2,4) -- (2,1) -- (3,1) -- (3,0) -- (5,0);
\node at (.5,6.4) {\small $5$}; \node at (1.5,6.4) {\small $2$}; \node at (2.5,6.4) {\small $2$}; \node at (3.5,6.4) {\small $0$}; \node at (4.5,6.4) {\small $2$};
\node at (.5,7.4) {\small $\Ab_1$}; \node at (1.5,7.4) {\small $\Ab_2$}; \node at (2.5,7.4) {\small $\Ab_3$}; \node at (3.5,7.4) {\small $\Ab_4$}; \node at (4.5,7.4) {\small $\Ab_5$};
\node at (-.5,0.5) {\small $2$}; \node at (-.5,1.5) {\small $3$}; \node at (-.5,2.5) {\small $2$}; \node at (-.5,3.5) {\small $1$}; \node at (-.5,4.5) {\small $1$}; \node at (-.5,5.5) {\small $2$};  
\node at (-1.8,0.5) {\small $\Zb_6$}; \node at (-1.8,1.5) {\small $\Zb_5$}; \node at (-1.8,2.5) {\small $\Zb_4$}; \node at (-1.8,3.5) {\small $\Zb_3$}; \node at (-1.8,4.5) {\small $\Zb_2$}; \node at (-1.8,5.5) {\small $\Zb_1$};  
\node[violet] at (0.5,0.5) {\small $1$};\node[violet] at (1.5,0.5) {\small $0$};\node[violet] at (2.5,0.5) {\small $0$};\node[violet] at (3.5,0.5) {\small $0$};\node[violet] at (4.5,0.5) {\small $1$};
\node[violet] at (0.5,1.5) {\small $1$};\node[violet] at (1.5,1.5) {\small $1$};\node[violet] at (2.5,1.5) {\small $0$};\node[violet] at (3.5,1.5) {\small $0$};\node[violet] at (4.5,1.5) {\small $1$};
\node[violet] at (0.5,2.5) {\small $1$};\node[violet] at (1.5,2.5) {\small $0$};\node[violet] at (2.5,2.5) {\small $1$};\node[violet] at (3.5,2.5) {\small $0$};\node[violet] at (4.5,2.5) {\small $0$};
\node[violet] at (0.5,3.5) {\small $0$};\node[violet] at (1.5,3.5) {\small $0$};\node[violet] at (2.5,3.5) {\small $1$};\node[violet] at (3.5,3.5) {\small $0$};\node[violet] at (4.5,3.5) {\small $0$};
\node[violet] at (0.5,4.5) {\small $1$};\node[violet] at (1.5,4.5) {\small $0$};\node[violet] at (2.5,4.5) {\small $0$};\node[violet] at (3.5,4.5) {\small $0$};\node[violet] at (4.5,4.5) {\small $0$};
\node[violet] at (0.5,5.5) {\small $1$};\node[violet] at (1.5,5.5) {\small $1$};\node[violet] at (2.5,5.5) {\small $0$};\node[violet] at (3.5,5.5) {\small $0$};\node[violet] at (4.5,5.5) {\small $0$};
\end{tikzpicture}.
\]
In the whole paper, we assume that $\DD$ is such that $\Ch(\DD)$ has at least one fixed point. The Gale-Ryser theorem \cite[\S 6.2.4]{BrualdiRyser} is a combinatorial characterization of the charge vectors for which this condition holds.  

\subsection{Fixed point restrictions}
Throughout the paper we will be concerned with formulas in four sets of variables:
\begin{itemize}
\item{} a variable $a_i$ for each D5 brane (``equivariant variables'');
\item{} a variable $z_i$ for each NS5 brane (``K\"ahler variables'');
\item{} variables $t^\xi_{i}$ for $i=1,\ldots,\rk(\xi)$ (``topological variables'') for the bundles $\xi$;
\item{} an extra variable $\h$.
\end{itemize}
The geometric meaning of these variables depends on the context, namely whether we are in cohomology, K theory, elliptic cohomology, or if we are recording ``factors of automorphy'' of elliptic functions. Yet, in all these settings $a_i$ are the relevant Chern roots of the factors of $\At$. The $t^{\xi}_{j}$ are the Chern roots of the bundle $\xi$.  The variable $\h$ is the Chern root of the $\C_{\h}^{\times}$. In elliptic cohomology these are coordinates on factors of the elliptic cohomology scheme---cf. Section \ref{section: Elliptic cohomology}. The $z_i$ variables are coordinates on other factors of the same scheme.

At torus fixed points, the $t^\xi_{j}$ variables specialize to monomials in the $a_i$ and $\h$ variables. The combinatorics of this specialization is involved, see the ``butterfly diagrams'' of \cite[Sec.4.3]{rimanyi2020bow}. For the fixed point in the figure above the specialization (written multiplicatively) is
\begin{multline*}
t_{1j} \mapsto a_1, a_2\h^{-1}; \qquad  
t_{2j}\mapsto a_1, a_2\h^{-1}; \qquad
t_{3j} \mapsto a_1, a_2; \qquad
t_{4j} \mapsto a_1, a_2, a_2\h, a_2\h^2; 
\\
t_{5j} \mapsto a_2\h, a_2\h^2, a_3\h^{-2}; \quad
t_{6j} \mapsto a_2\h^2, a_3\h^{-2}, a_3\h^{-1}; \quad
t_{7j} \mapsto a_2\h^2, a_3\h^{-1}, a_3, a_5\h^{-1};
\\
t_{8j} \mapsto a_2\h^2, a_3\h^{-1}, a_5\h^{-1}; \qquad
t_{9j} \mapsto a_5, a_5\h^{-1}; \qquad
t_{10j} \mapsto a_5, a_5\h^{-1}.
\end{multline*}
Here we wrote $t_{ij}$ for $t^\xi_j$ for the $i$'th $\xi$ bundle from the left. In notation we will write $(\ )|_f$ for the restriction map to the fixed point $f\in \Ch(\D)^{\Tt}$. When working in cohomology instead of K theory or elliptic cohomology (or when we work with quadratic forms governing the quasi-periods of elliptic functions) we need to read these specializations additively, e.g. $a_2-\h$ instead of  $a_2\h^{-1}$. 

\subsection{Zero charge 5-branes are unnecessary} \label{Sec:0charge}

Some of the charges of the NS5 or D5 branes in a brane diagram $\mathcal D$ can be zero. However, in such a case the associated bow variety $X(\mathcal D)$ can also be presented with another brane diagram that has no 0-charge 5-branes, cf. \cite[Lemma 2.12]{ji2024bow}. The figure 
\begin{equation*}
\begin{tikzpicture}[baseline=5,scale=.3]
\draw [thick] (1,1) -- (11,1);
\draw [thick,red] (0.5,0) --(1.5,2); \node at ( .7,-1) {\small $\Zb_1$};
   \node at (2.5,1.8) {$\cdots$}; \node at (1.7,.2) {\small $\cdots$};
\draw [thick,red] (2.5,0) --(3.5,2); \node at (2.7,-1) {\small $\Zb_m$};
\draw [thick, blue] (6.5,2) --(7.5,0); \node at (7.2,-1) {\small $\Ab_1$};
   \node at (7.2,-2.2) {$w_1$};
\draw [thick, blue] (8.5,2) --(9.5,0); \node at (9.2,-1) {\small $\Ab_2$};
   \node at (9.2,-2.2) {$w_2$};
\draw [thick, blue] (10.5,2) --(11.5,0); \node at (11.2,-1) {\small $\Ab_3$};
   \node at (11.2,-2.2) {$w_3$};
 \draw [black](6,2) to [out=70,in=-90] (8.5,2.7) to [out=-90,in=100] (11,2);
\node at (8.5,3.2) {$n$};
\draw [black](1,2) to [out=70,in=-90] (2.5,2.7) to [out=-90,in=100] (4,2);
\node at (2.5,3.2) {$m$};
\end{tikzpicture}
\longrightarrow
\begin{tikzpicture}[baseline=5,scale=.3]
\draw [thick] (1,1) -- (14,1);
\draw [thick,red] (0.5,0) --(1.5,2); \node at ( .7,-1) {\small $\Zb_1$};
   \node at (2.5,1.8) {\small $\cdots$}; \node at (1.7,.2) {\small $\cdots$};
\draw [thick,red] (2.5,0) --(3.5,2); \node at (2.7,-1) {\small $\Zb_m$};
\draw [thick, blue] (6.5,2) --(7.5,0); \node at (7.2,-1) {\small $\Ab_1$};
   \node at (7.2,-2.2) {$w_1$};    \node at (7.1,-3.2) {\small $+1$};
\draw [thick, blue] (8.5,2) --(9.5,0); \node at (9.2,-1) {\small $\Ab_2$};
   \node at (9.2,-2.2) {$w_2$};  \node at (9.1,-3.2) {\small $+1$};
\draw [thick, blue] (10.5,2) --(11.5,0); \node at (11.2,-1) {\small $\Ab_3$};
   \node at (11.2,-2.2) { $w_3$};  \node at (11.1,-3.2) {\small $+1$};
\draw [thick,red] (13.5,0) --(14.5,2); \node at ( 13.7,-1) {\small $\Zb'$};
   \node at (13.5,-2.6) { $n$};
\node at (12.5,1.7) {$0$};
\draw [black](6,2) to [out=70,in=-90] (8.5,2.7) to [out=-90,in=100] (11,2);
\node at (8.5,3.2) {$n$};
\draw [black](12,2.2) to [out=70,in=-90] (13.5,2.7) to [out=-90,in=100] (15,2.2);
\node at (13.5,3.3) {ADD};
\end{tikzpicture}
=
\begin{tikzpicture}[baseline=5,scale=.3]
\draw [thick] (1,1) -- (11,1);
\draw [thick,red] (0.5,0) --(1.5,2); \node at ( .7,-1) {\small $\Zb_1$};
   \node at (2.5,1.8) {$\cdots$}; \node at (1.7,.2) {\small $\cdots$};
\draw [thick,red] (2.5,0) --(3.5,2); \node at (2.7,-1) {\small $\Zb_m$};
\draw [thick, blue] (6.5,2) --(7.5,0); \node at (7.2,-1) {\small $\Ab_1$};
   \node at (7.2,-2.2) {$w_1$}; \node at (7.1,-3.2) {\small $+1$};
\draw [thick, blue] (8.5,2) --(9.5,0); \node at (9.2,-1) {\small $\Ab_2$};
   \node at (9.2,-2.2) {$w_2$}; \node at (9.1,-3.2) {\small $+1$};
\draw [thick, blue] (10.5,2) --(11.5,0); \node at (11.2,-1) {\small $\Ab_3$};
   \node at (11.2,-2.2) {$w_3$}; \node at (11.1,-3.2) {\small $+1$};
 \draw [black](6,2) to [out=70,in=-90] (8.5,2.7) to [out=-90,in=100] (11,2);
\node at (8.5,3.2) {$n$};
\draw [thick,red] (4,0) --(5,2); \node at ( 4.4,-.85) {\small $\Zb'$};
   \node at (4.2,-2.6) { $n$};
\draw [black](1,2) to [out=70,in=-90] (3.25,2.7) to [out=-90,in=100] (5.5,2);
\node at (3.25,3.2) {$m+1$};
\end{tikzpicture}
\end{equation*}
illustrates how to get rid of possible 0-charge D5 branes. Namely, first we bring $\mathcal D$ to separated form (for convenience), see the first diagram. Then we add an NS5 brane at the right end of the diagram connected by a D3 brane of multiplicity 0. Its charge is then $n$, the number of D5 branes. This addition does not change the bow variety (by definition), but it adds 1 to all D5 brane charges, indicated by $w_i$ and $w_i+1$ in the figure. The separated HW-representative of this new diagram is on the right. %---here the local charges of the D5 branes also increased by 1 compared to the leftmost diagram.

The procedure of getting rid of possible 0-charge NS branes, by adding an extra D5 brane, is analogous. 

\section{More on bow varieties}

\noindent In this section we present three new results on bow varieties.

\subsection{Geometric tensor structure}
\label{section Actions of subtori}

Let us now consider more general torus actions. Namely, let $X(\D,r,c)$ be a bow variety with charge vectors $c=(\ch(\Ab))_{\Ab\in \D}$ and $r=( \ch(\Zb))_{\Zb\in \D}$ and consider a rank two torus $A=\Cs_{a'}\times \Cs_{a''}\subset \A$ acting with $a'$ on a subset of the D5 branes in $\D$, and with $a''$ on the remaining D5 branes.

Let $\D'$ (resp. $\D''$) be the brane diagram obtained from $\D$ by erasing those D5 branes that are not acted on by $a'$ (resp. $a''$). Accordingly, the charge vector $c$ induces a charge vector $c'$ (resp. $c''$ ) obtained from $c$ by erasing those entries corresponding to the D5 branes in $\D''$ (resp. $\D'$). By construction, the vector $c$ is recovered from $c'$ and $c''$ by merging the two vectors together, hence we write $c=c'\sqcup c''$. 
%The following statement can be seen as the bow variety analog of the tensor structure of the fixed locus description for quiver varieties.

\begin{theorem}
\label{proposition tensor product decomposition fixed locus}
    We have 
    \[
    X(\D,r,c)^A=\bigsqcup_{r=r'+r''}X(\D',r',c')\times X(\D'',r'',c'').
    \]
%Here, the notation $c=c'\sqcup c''$ means that $c_i=c'_i$ if $\Zb_i\in \D'$ and $c_i=c''_i$ if $\Zb_i\in \D''$.  
\end{theorem}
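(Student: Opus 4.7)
The plan is to construct mutually inverse morphisms between the two sides, with the nontrivial direction relying on a purity statement for $A$-weights at fixed points.

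\emph{Forward direction.} Given $(x',x'') \in X(\D',r',c') \times X(\D'',r'',c'')$, set $W_\Xb = W'_{\Xb'} \oplus W''_{\Xb''}$, where $\Xb'$ and $\Xb''$ denote the D3 branes of $\D'$ and $\D''$ containing $\Xb$ after erasing the opposite class of D5 branes (so that for $\Ab\in\D'$ one has $W''_{\Ab^-}=W''_{\Ab^+}$). Assemble the bow data blockwise: at $\Ab\in\D'$, use the $x'$-data on $W'$ and $(\mathrm{id},0,0,\beta,\beta)$ on $W''$, where $\beta$ is the $B$-field on the corresponding unsplit D3 brane of $\D''$ determined by $x''$; symmetrically for $\Ab\in\D''$, while $C_\Zb, D_\Zb$ are block sums. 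The moment map and stability conditions (S1), (S2) split blockwise, giving a morphism
\[
\Phi\colon \bigsqcup_{r=r'+r''} X(\D',r',c')\times X(\D'',r'',c'') \longrightarrow X(\D,r,c)^A.
\]

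\emph{Inverse direction.} For $x\in X(\D,r,c)^A$, lift to $\tilde x\in\widetilde{\mathcal M}^s$. The free $G$-action produces a unique $\phi\colon A\to G$ with $a\cdot\tilde x=\phi(a)\cdot\tilde x$, endowing each $W_\Xb$ with an $A$-weight decomposition $W_\Xb=\bigoplus_\eta W_\Xb(\eta)$. Equivariance of $a_\Ab, b_\Ab$ forces $\mathrm{Im}(a_\Ab)\subseteq W_{\Ab^-}(a')$ and $b_\Ab|_{W(\eta)}=0$ for $\eta\neq a'$ when $\Ab\in\D'$, and symmetrically for $\D''$. The pair $(x',x'')$ will be obtained by taking $W'_\Xb=W_\Xb(a')$, $W''_\Xb=W_\Xb(a'')$ and restricting the bow data, provided only these two weights appear.

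\emph{Purity (main step).} Fix $\eta\notin\{a',a''\}$. On $W(\eta)$, $a_\Ab=b_\Ab=0$ and the moment map reduces to $B_\Ab A_\Ab=A_\Ab B'_\Ab$. Applying (S1) to $S=\ker(A_\Ab)\cap W_{\Ab^+}(\eta)$ (which is $B'_\Ab$-invariant) forces injectivity of $A_\Ab|_{W(\eta)}$; applying (S2) to $T=W_{\Ab^-}(a')\oplus\bigoplus_{\eta'\neq a'}\mathrm{Im}(A_\Ab|_{W(\eta')})$ for $\Ab\in\D'$, and the symmetric choice for $\D''$, forces surjectivity. Hence $A_\Ab|_{W(\eta)}$ is an isomorphism at every D5 brane, and $\dim W_\Xb(\eta)$ is constant along each chamber delimited by consecutive NS5 branes. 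Now consider the 1-parameter subgroup $\lambda\colon\C^\times\to G$ acting by $t\cdot\mathrm{id}$ on every $W_\Xb(\eta)$ and trivially elsewhere: weight-compatibility of all bow data yields $\lambda(t)\in\mathrm{Stab}_G(\tilde x)$, and the GIT character of \eqref{eq:character} evaluates to $\chi(\lambda(t))=t^N$ with $N=\sum_{\Zb}\dim W_{\Zb^+}(\eta)$. Since the leftmost chamber has weight-$\eta$ dimension $0$ by the left boundary condition, $N$ equals the total weight-$\eta$ dimension across all chambers; if any $W_\Xb(\eta)\neq 0$ then $N>0$ and $\lim_{t\to\infty}\lambda(t)\cdot(\tilde x,1)=(\tilde x,0)$ lies outside $G\cdot(\tilde x,1)$, contradicting the closedness of the GIT orbit required by stability. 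Hence $W_\Xb(\eta)=0$.

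Restricting the bow data of $\tilde x$ to the $a'$- and $a''$-weight summands then yields the pair $(x',x'')$ with NS5 charges $r_\Zb=r'_\Zb+r''_\Zb$; this defines the inverse $\Psi$, and algebraicity is clear because the weight eigenspaces of $\phi(a)$ depend holomorphically on $\tilde x$. The main obstacle is the purity step: it combines the D5-stability conditions (S1), (S2)---which promote the weight restrictions of $A_\Ab$ into isomorphisms---with the GIT destabilization supplied by \eqref{eq:character} to eliminate unwanted weight components, and neither ingredient suffices alone.
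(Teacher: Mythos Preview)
Your argument is correct and follows the same overall route as the paper's proof, which is itself an adaptation of \cite[Prop.~2.3.1]{maulik2012quantum}: lift a fixed point to a representative, obtain a homomorphism $\phi\colon A\to G$ from freeness of the $G$-action, take the induced weight decomposition of the $W_\Xb$, and split the bow data accordingly.

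The difference is one of emphasis. The paper only sketches the argument and asserts without further comment that the weight decomposition has exactly the two pieces $W'_\Xb\oplus W''_\Xb$ with characters $a'$ and $a''$. You supply this \emph{purity} step explicitly: the combination of (S1) and (S2) promotes each $A_\Ab|_{W(\eta)}$ to an isomorphism for $\eta\notin\{a',a''\}$, so $\dim W_\Xb(\eta)$ is constant across D5 branes; then the one-parameter subgroup $\lambda$ acting by $t$ on $W(\eta)$ stabilizes $\tilde x$ while pairing nontrivially with the GIT character~$\chi$, forcing a non-closed orbit and hence $W(\eta)=0$. This is precisely the bow-variety analogue of the standard ``framing generates everything'' argument for Nakajima varieties, and it is good that you spell it out.

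Conversely, the paper is more explicit where you are terse: it shows that once $p=p'\oplus p''$, the three-way parts of $p''$ at D5 branes $\Ab\in\D'$ (where $\C_\Ab$ is missing on the $a''$-side) collapse via (S1), (S2), and $\mu=0$ to factors of the form $T^*\mathrm{GL}$, which disappear under symplectic reduction---this is what identifies the factor with $X(\D'',r'',c'')$ rather than a more complicated quotient. You handle this implicitly in your forward construction by filling in $(\mathrm{id},0,0,\beta,\beta)$ on the $W''$-block, but the paper's viewpoint makes the inverse identification cleaner.
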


\begin{proof}[Proof of Theorem \ref{proposition tensor product decomposition fixed locus}]
    The proof is an adaptation of the analogous statement for quiver varieties given in \cite[Prop. 2.3.1]{maulik2012quantum}, so we only sketch it. Recall that $X(\D,r,c)$ can be seen as the moduli space of representations $p\in \Rep_Q(\w)$ of a certain quiver $Q$ with dimension $\w$ satisfying $\mu(p)=0$, the open conditions $(S1), (S2)$, and GIT stability. 
    
    Fix some $x\in X(\D,r,c)^A$ and let $p=(a,b,A,B,C,D)\in \mu^{-1}(0)^{s}$ be a representation whose class is $x$. Since the action of $G=\prod_{\Xb} {\GL(W_{\Xb})}$ on $ \mu^{-1}(0)^{s}$ is free, we get a map
    \[
    \phi: A\to A\times G
    \]
    which is the identity on the first component and such that $p$ is fixed by $\phi(A)$. The composition $A\to A\times G\to G$ induces decompositions of the vector spaces $W_{\Xb}=W_{\Xb}'\oplus W_{\Xb}''$ in weight subspaces acted on by $a'$ and $a''$, respectively. The decomposition of the remaining vertices, i.e. the lower vertices in the three-way parts, is always trivial because by assumption they are either acted on by $a'$ or $a''$.
    
    Since $p$ is $\phi(A)$-fixed, it decomposes as a direct sum of sub-representations $p=p'\oplus p''$ respecting the weight decompositions. Since the lower vertices in the three-way parts are either acted on by $a'$ or $a''$, the point $p'$ (resp. $p''$) is effectively a representation of the quiver $Q'$ (resp. $Q''$) that can be obtained from $Q$ by erasing all the lower vertices with an action of $a''$(resp. $a'$) in the three-way parts together with all the arrows connected to them. The following is the graphical illustration, with the assumption that $\C_{\Ab}$ carries an action of $a'$: 
    \[
    \begin{tikzcd}[column sep=small, row sep=normal]
    W_{\Ab_-}\arrow[loop ,out=120,in=60,distance=2em, "B_{\Ab_-}"] & &  W_{\Ab_+}\arrow[ll, "A_{\Ab}"]\arrow[dl, "b_{\Ab}"] \arrow[loop ,out=120,in=60,distance=2em, "B_{\Ab_+}"]\\
    & \C_{\Ab}\arrow[ul, "a_{\Ab}"] &
    \end{tikzcd}
    =
    \begin{tikzcd}[column sep=small, row sep=normal]
    W'_{\Ab_-}\arrow[loop ,out=120,in=60,distance=2em, "B'_{\Ab_-}"] & &  W'_{\Ab_+}\arrow[ll, "A'_{\Ab}"]\arrow[dl, "b'_{\Ab}"] \arrow[loop ,out=120,in=60,distance=2em, "B'_{X_+}"]\\
    & \C_{\Ab}\arrow[ul, "a'"] &
    \end{tikzcd}\oplus 
    \begin{tikzcd}[column sep=small, row sep=normal]
    W''_{\Ab_-}\arrow[loop ,out=120,in=60,distance=2em, "B''_{\Ab_-}"] & &  W''_{\Ab_+}\arrow[ll, "A''_{\Ab}"]\arrow[dl, "0"] \arrow[loop ,out=120,in=60,distance=2em, "B''_{\Ab_+}"]\\
    & 0\arrow[ul, "0"] &
    \end{tikzcd}
    \]
    It is easy to check that $p=p'\oplus p''$ satisfies (S1),(S2) and the GIT stability iff $p'$ and $p''$ do. Similarly, $p$ satisfies $\mu(p)=0$ iff $\mu(p')=\mu(p'')=0$. Set $G'=\prod_{\Xb} {\GL(W'_{\Xb})}$ and $G''=\prod_{\Xb} {\GL(W''_{\Xb})}$. Then we have 
    \[
    X(\D,r,c)^A=\bigsqcup_{\w'+\w''=\w} (\Rep_{Q'}(\w')\cap\mu^{-1}(0)^s)/G'\times (\Rep_{Q''}(\w'')\cap \mu^{-1}(0)^s)/G''.
    \]
    It remains to identify the two factors of the tensor product with the bow varieties in the statement. 
    Since any $p''\in \Rep_{Q'}(\w'')\cap\mu^{-1}(0)^s$ satisfies (S1), (S2), and $\mu(p'')=0$, the maps of the form $A''_{\Ab}$ as in the figure above are isomorphisms and $B''_{\Ab_-}=A''_{\Ab}B''_{\Ab_+}(A''_{\Ab})^{-1}$. 
    
    Overall, this means that the quiver $Q''$ is obtained from $Q$ by replacing all the three-way parts with an $\Cs_{a'}$-action with a single edge and a loop, and the representations in $\Rep_{Q''}(\w'')\cap\mu^{-1}(0)^s$ assign isomorphisms to these edges. In other words, all these three-way parts can be replaced with a factor of the form $T^*\GL(W''_{\Ab_-})=T^*\GL(W''_{\Ab_+})$. Taking the symplectic reduction by $\GL(W''_{\Ab_-})=\GL(W''_{\Ab_+})$ is the same as neglecting such factors. This observation provides the identification $\Rep_{Q''}(\w'')\cap\mu^{-1}(0)^s=X(\D,r'',c'')$ for some $(r'',c'')$ determined by $\w$. A similar argument shows that $\Rep_{Q'}(\w')\cap\mu^{-1}(0)^s=X(\D,r',c')$  for some $(r',c')$. The check that $\w'+\w''=\w$ is equivalent to $r'+r''=r$ and $c'\sqcup c''=c$ is left as an exercise.
\end{proof}

By iteration of the previous proposition, one gets an explicit description of the fixed locus for larger tori acting on the D5 branes, and eventually recovers the description of the $\At$-fixed locus discussed in Section~\ref{sec:fixedpoints}.
\begin{example} \rm
\label{Example fixed point-fixed component interaction}
Consider the bow variety $X(\D,r,c)$ and the fixed point $f\in X(\D)^{\At}$ given by the following tie diagram: 
\[
\begin{tikzpicture}[baseline=0,scale=.4]
\begin{scope}[yshift=0cm]
\draw [thick,red] (0.5,0) --(1.5,2); 
\draw[thick] (1,1) node [left] {$f=$}--(2.5,1) node [above] {$2$} -- (31,1);
\draw [thick,blue](4.5,0) --(3.5,2);  
\draw [thick](4.5,1)--(5.5,1) node [above] {$2$} -- (6.5,1);
\draw [thick,red](6.5,0) -- (7.5,2);  %%
\draw [thick](7.5,1) --(8.5,1) node [above] {$2$} -- (9.5,1); 
\draw[thick,blue] (10.5,0) -- (9.5,2);  
\draw[thick] (10.5,1) --(11.5,1) node [above] {$4$} -- (12.5,1); 
\draw [thick,red](12.5,0) -- (13.5,2);   %%
\draw [thick](13.5,1) --(14.5,1) node [above] {$3$} -- (15.5,1);
\draw[thick,red] (15.5,0) -- (16.5,2);  %%
\draw [thick](16.5,1) --(17.5,1) node [above] {$3$} -- (18.5,1);  
\draw [thick,red](18.5,0) -- (19.5,2);  %%
\draw [thick](19.5,1) --(20.5,1) node [above] {$4$} -- (21.5,1);
\draw [thick,blue](22.5,0) -- (21.5,2);
\draw [thick](22.5,1) --(23.5,1) node [above] {$3$} -- (24.5,1);  
\draw[thick,red] (24.5,0) -- (25.5,2); 
\draw[thick] (25.5,1) --(26.5,1) node [above] {$2$} -- (27.5,1);
\draw [thick,blue](28.5,0) -- (27.5,2);  %%
\draw [thick](28.5,1) --(29.5,1) node [above] {$2$} -- (30.5,1);   
\draw [thick,blue](31.5,0) -- (30.5,2);   %%

\draw [dashed, black](4.5,-.25) to [out=-45,in=225] (12.5,-.25);
\draw [dashed, black](10.5,-.25) to [out=-45,in=225] (12.5,-.25);
\draw [dashed, black](10.5,-.25) to [out=-45,in=225] (15.5,-.25);
\draw [dashed, black](10.5,-.25) to [out=-45,in=225] (24.5,-.25);
\draw [dashed, black](22.5,-.25) to [out=-45,in=225] (24.5,-.25);

\draw [dashed, black](1.5,2.25) to [out=45,in=-225] (3.5,2.25);
\draw [dashed, black](1.5,2.25) to [out=45,in=-225] (9.5,2.25);
\draw [dashed, black](13.5,2.25) to [out=45,in=-225] (21.5,2.25);
\draw [dashed, black](16.5,2.25) to [out=45,in=-225] (21.5,2.25);
\draw [dashed, black](19.5,2.25) to [out=45,in=-225] (30.5,2.25);
\draw [dashed, black](25.5,2.25) to [out=45,in=-225] (30.5,2.25);

\end{scope}
\end{tikzpicture}
\]
Let $A:=\Cs_{a'}\times \Cs_{a''} \subset \At$ be the torus acting with $a'$ on the first, third and fourth D5 branes in $\D$ and with $a''$ with the remaining two D5 branes. By Theorem \ref{proposition tensor product decomposition fixed locus}, $f$ gets identified with a pair of fixed points $f'\times f'' \in X(\D',r',c')\times X(\D'',r'',c'')$ for the appropriate charges $r',r'',c'$ and $c''$. 

Following the argument in the proof above, one can check that the tie diagram of $f'$ (resp. $f''$) is obtained from the one of $f$ by removing the second and fifth  (resp. first, third, and fourth) branes together with all the ties attached to them:
\[
\begin{tikzpicture}[baseline=0,scale=.4]
\begin{scope}[yshift=0cm]
\draw [thick,red] (0.5,0) --(1.5,2); 
\draw[thick] (1,1) node [left] {$f'=$} --(2.5,1) node [above] {$1$} -- (31,1);
\draw [thick,blue](4.5,0) --(3.5,2);  
\draw [thick](4.5,1)--(5.5,1) node [above] {$1$} -- (6.5,1);
\draw [thick,red](6.5,0) -- (7.5,2);  %%
\draw [thick](7.5,1) --(10,1) node [above] {$1$} -- (12.5,1); 
%\draw[thick,blue] (10.5,0) -- (9.5,2);  
%\draw[thick] (10.5,1) --(11.5,1) node [above] {$4$} -- (12.5,1); 
\draw [thick,red](12.5,0) -- (13.5,2);   %%
\draw [thick](13.5,1) --(14.5,1) node [above] {$1$} -- (15.5,1);
\draw[thick,red] (15.5,0) -- (16.5,2);  %%
\draw [thick](16.5,1) --(17.5,1) node [above] {$2$} -- (18.5,1);  
\draw [thick,red](18.5,0) -- (19.5,2);  %%
\draw [thick](19.5,1) --(20.5,1) node [above] {$2$} -- (21.5,1);
\draw [thick,blue](22.5,0) -- (21.5,2);
\draw [thick](22.5,1) --(23.5,1) node [above] {$1$} -- (24.5,1);  
\draw[thick,red] (24.5,0) -- (25.5,2); 
\draw[thick] (25.5,1) --(26.5,1) node [above] {$0$} -- (27.5,1);
\draw [thick,blue](28.5,0) -- (27.5,2);  %%
\draw [thick](28.5,1) --(29.5,1) node [above] {$0$} -- (30.5,1);   
%\draw [thick,blue](31.5,0) -- (30.5,2);   %%

\draw [dashed, black](4.5,-.25) to [out=-45,in=225] (12.5,-.25);
%\draw [dashed, black](10.5,-.25) to [out=-45,in=225] (12.5,-.25);
%\draw [dashed, black](10.5,-.25) to [out=-45,in=225] (15.5,-.25);
%\draw [dashed, black](10.5,-.25) to [out=-45,in=225] (24.5,-.25);
\draw [dashed, black](22.5,-.25) to [out=-45,in=225] (24.5,-.25);

\draw [dashed, black](1.5,2.25) to [out=45,in=-225] (3.5,2.25);
%\draw [dashed, black](1.5,2.25) to [out=45,in=-225] (9.5,2.25);
\draw [dashed, black](13.5,2.25) to [out=45,in=-225] (21.5,2.25);
\draw [dashed, black](16.5,2.25) to [out=45,in=-225] (21.5,2.25);
%\draw [dashed, black](19.5,2.25) to [out=45,in=-225] (30.5,2.25);
%\draw [dashed, black](25.5,2.25) to [out=45,in=-225] (30.5,2.25);
\end{scope}
\end{tikzpicture}
\]
\[
\begin{tikzpicture}[baseline=0,scale=.4]
\begin{scope}[yshift=0cm]
\draw [thick,red] (0.5,0) --(1.5,2); 
\draw[thick] (1,1) node [left] {$f''=$} --(4.5,1) node [above] {$1$} -- (31,1);
%\draw [thick,blue](4.5,0) --(3.5,2);  
%\draw [thick](4.5,1)--(5.5,1) node [above] {$1$} -- (6.5,1);
\draw [thick,red](6.5,0) -- (7.5,2);  %%
\draw [thick](7.5,1) --(8.5,1) node [above] {$1$} -- (9.5,1); 
\draw[thick,blue] (10.5,0) -- (9.5,2);  
\draw[thick] (10.5,1) --(11.5,1) node [above] {$3$} -- (12.5,1); 
\draw [thick,red](12.5,0) -- (13.5,2);   %%
\draw [thick](13.5,1) --(14.5,1) node [above] {$2$} -- (15.5,1);
\draw[thick,red] (15.5,0) -- (16.5,2);  %%
\draw [thick](16.5,1) --(17.5,1) node [above] {$1$} -- (18.5,1);  
\draw [thick,red](18.5,0) -- (19.5,2);  %%
\draw [thick](19.5,1) --(22.5,1) node [above] {$2$} -- (21.5,1);
%\draw [thick,blue](22.5,0) -- (21.5,2);
%\draw [thick](22.5,1) --(23.5,1) node [above] {$3$} -- (24.5,1);  
\draw[thick,red] (24.5,0) -- (25.5,2); 
\draw[thick] (25.5,1) --(28,1) node [above] {$2$} -- (30.5,1);
%\draw [thick,blue](28.5,0) -- (27.5,2);  %%
%\draw [thick](28.5,1) --(29.5,1) node [above] {$2$} -- (30.5,1);   
\draw [thick,blue](31.5,0) -- (30.5,2);   %%

%\draw [dashed, black](4.5,-.25) to [out=-45,in=225] (12.5,-.25);
\draw [dashed, black](10.5,-.25) to [out=-45,in=225] (12.5,-.25);
\draw [dashed, black](10.5,-.25) to [out=-45,in=225] (15.5,-.25);
\draw [dashed, black](10.5,-.25) to [out=-45,in=225] (24.5,-.25);
%\draw [dashed, black](22.5,-.25) to [out=-45,in=225] (24.5,-.25);

%\draw [dashed, black](1.5,2.25) to [out=45,in=-225] (3.5,2.25);
\draw [dashed, black](1.5,2.25) to [out=45,in=-225] (9.5,2.25);
%\draw [dashed, black](13.5,2.25) to [out=45,in=-225] (21.5,2.25);
%\draw [dashed, black](16.5,2.25) to [out=45,in=-225] (21.5,2.25);
\draw [dashed, black](19.5,2.25) to [out=45,in=-225] (30.5,2.25);
\draw [dashed, black](25.5,2.25) to [out=45,in=-225] (30.5,2.25);
\end{scope}

\end{tikzpicture}
\]
%Some branes are longer than others simply to keep the alignment between the NS5 branes of the three diagrams above.
It is also instructive to verify the consistency of the charges, as well as the identities $r=r'+r''$ and $c=c'\sqcup c''$.
%The reader is invited to compute all the charges $r,r',r'',c,c'$ and $c''$ from the diagrams above and check that $r=r'+r''$ and $c=c'\sqcup c''$.
\end{example}

\begin{remark}
\label{remark rep. thy. interpretation cohomology bow varieties}
This theorem can be seen as the bow variety version of the so-called tensor product structure of the fixed locus for quiver varieties \cite[Section 2.3]{maulik2012quantum}. It plays a key role in the interpretation of the cohomology of bow varieties as a module over the appropriate quantum group. Indeed, let us define 
\[
X(\D, c):=\bigsqcup_{r} X(\D, r, c).
\]
Then we have $X(\D, c')^A= X(\D, c')\times X(\D, c'')$ and hence, passing to cohomology with coefficients in a field, we get
\[
H^*_{\Tt}(X(\D,c)^A)=H_{\Tt'}(X(\D,c'))\otimes_{\Bbbk} H_{\Tt''}(X(\D,c'')),
\]
where $\Bbbk=H_{\Cs_{\h}}(\pt)$.
This description highlights the representation theoretic interpretation of the charge vectors $c$ and $r$. The vector $c$ defines a quantum group representation, namely $H^*_{\Tt}(X(\D,c))$, and $r$ prescribes a specific weight space $H^*_{\Tt}(X(\D,r,c))\subseteq H^*_{\Tt}(X(\D,c))$. The parallel with Nakajima varieties $\mathcal{M}_Q(\vi,\w)$ is thus the following: the D5 charge vector $c$ play similar roles as the framing vector $\w$, and the NS5 charge vector $r$ plays similar role to the dimension vector $\vi$.
\end{remark}

\subsection{Relations among tautological bundles} \label{sec:coincidence}
%By HW transitions we can move all NS5 branes to the left, and obtain a {\em separated} brane diagram
Consider the separated brane diagram
\begin{equation}\label{sepfig1}
\begin{tikzpicture}[baseline=10,scale=.44]
\draw [thick,red] (0.6,0) --(1.4,2); 
\draw [thick,red](3.6,0) --(4.4,2);  
\draw [thick,red](6.6,0) -- (7.4,2);  %%
\draw [thick,red](11.6,0) -- (12.4,2);   %%
\draw[thick,red] (14.6,0) -- (15.4,2);  %%
\draw [thick,blue](18.4,0) -- (17.6,2);  %%
\draw [thick,blue](21.4,0) -- (20.6,2);
\draw[thick,blue] (24.4,0) -- (23.5,2); 
\draw [thick,blue](28.5,0) -- (27.5,2);  %%
\draw [thick,blue](31.5,0) -- (30.5,2);   %%

\draw [thick] (1,1)--(31,1);

\node at (2.8,2) {$\xi_{1}$};
\node at (5.9,2) {$\xi_{2}$};
\node at (10,2) {$\cdots$};
\node at (14,2) {$\xi_{m-1}$};
\node at (16.5,2) {$\zeta_0$};
\node at (19.4,2) {$\zeta_{1}$};
\node at (22.4,2) {$\zeta_{2}$};
\node at (25.5,2) {$\cdots$};
\node at (29.4,2) {$\zeta_{n-1}$};

\node at (0.8,-.8) {$\Zb_{1}$};
\node at (3.8,-.8) {$\Zb_{2}$};
\node at (6.8,-.8) {$\Zb_{3}$};
\node at (11.8,-.8) {$\Zb_{m-1}$};
\node at (14.8,-.8) {$\Zb_{m}$};

\node at (18.7,-.8) {$\Ab_{1}$};
\node at (21.7,-.8) {$\Ab_{2}$};
\node at (24.7,-.8) {$\Ab_{3}$};
\node at (28.5,-.8) {$\Ab_{n-1}$};
\node at (31.6,-.8) {$\Ab_{n}$};
\end{tikzpicture}.
\end{equation}
\begin{proposition} \label{prop:rightbundles}
The $\zeta_k$ bundles %with $k\geq 0$ 
are topologically trivial (with an action of $\Tt$). 
\end{proposition}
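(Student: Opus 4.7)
The plan is to construct an explicit $\Tt$-equivariant isomorphism of each $\zeta_k$ with a direct sum of trivial $\Tt$-equivariant line bundles, by globally splitting the filtration of $\zeta_0$ induced by the D5-brane injections. Since $\D$ is separated, every $A_{\Ab_i}\colon W_{\Ab_i^+}\to W_{\Ab_i^-}$ is injective by the remark in Section~\ref{sec:Affinization}, giving an ascending chain
\[
0 = \zeta_n \;\stackrel{A_{\Ab_n}}{\hookrightarrow}\; \zeta_{n-1} \;\stackrel{A_{\Ab_{n-1}}}{\hookrightarrow}\; \cdots \;\stackrel{A_{\Ab_1}}{\hookrightarrow}\; \zeta_0
\]
of $\Tt$-equivariant subbundles of $\zeta_0$, where $\zeta_n := 0$ because the diagram ends with a D3 brane of dimension zero.

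I would then combine the stability condition (S2) with the $\NN_{\Ab_k}$-component of the moment map $B_{\Ab_k}A_{\Ab_k}-A_{\Ab_k}B'_{\Ab_k}+a_{\Ab_k}b_{\Ab_k}=0$, which shows that $B_{\Ab_k}$ sends $\operatorname{Im}(A_{\Ab_k})$ into $\operatorname{Im}(A_{\Ab_k})+\operatorname{Im}(a_{\Ab_k})$. A short dimension count---noting that an earlier collapse of the $B_{\Ab_k}$-iterates of $a_{\Ab_k}$ would produce a $B_{\Ab_k}$-invariant proper subspace of $W_{\Ab_k^-}$ containing both $\operatorname{Im}(A_{\Ab_k})$ and $\operatorname{Im}(a_{\Ab_k})$, contradicting (S2)---yields the fiberwise direct-sum decomposition
\[
W_{\Ab_k^-} \;=\; \operatorname{Im}(A_{\Ab_k}) \,\oplus\, \operatorname{Span}\bigl(a_{\Ab_k},\,B_{\Ab_k}a_{\Ab_k},\,\ldots,\,B_{\Ab_k}^{w(\Ab_k)-1}a_{\Ab_k}\bigr),
\]
where $w(\Ab_k)=\dim W_{\Ab_k^-}-\dim W_{\Ab_k^+}$. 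Since $B_{\Ab_k}\in\h\operatorname{End}(W_{\Ab_k^-})$ has $\Tt$-weight $\h$ while $a_{\Ab_k}$ has weight $0$, the $\Tt$-equivariant morphism
\[
\phi_k\colon \zeta_k \,\oplus\, \bigoplus_{j=0}^{w(\Ab_k)-1}\C_{\Ab_k}\,\h^{j} \;\longrightarrow\; \zeta_{k-1},\qquad \bigl(s,(c_j)_j\bigr)\mapsto A_{\Ab_k}(s)+\sum_{j}B_{\Ab_k}^{j}a_{\Ab_k}(c_j),
\]
is a well-defined map of vector bundles of equal rank that is fiberwise an isomorphism by the previous display, hence a $\Tt$-equivariant bundle isomorphism.

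Iterating $\phi_n,\phi_{n-1},\ldots,\phi_{k+1}$ starting from $\zeta_n=0$ then gives
\[
\zeta_k \;\cong\; \bigoplus_{i=k+1}^{n}\;\bigoplus_{j=0}^{w(\Ab_i)-1}\C_{\Ab_i}\,\h^{j},
\]
a direct sum of one-dimensional $\Tt$-representations pulled back from a point, so $\zeta_k$ is topologically trivial with the prescribed nontrivial $\Tt$-action. The main obstacle I anticipate is verifying the linear-independence/spanning claim in the second step: $B_{\Ab_k}$ does not descend to $W_{\Ab_k^-}/\operatorname{Im}(A_{\Ab_k})$ (only modulo $\operatorname{Im}(a_{\Ab_k})$), so one must argue carefully---using the distinct $\Tt$-weights $a_{\Ab_k}\h^{j}$ of the iterates $B_{\Ab_k}^{j}a_{\Ab_k}$ together with a minimal-polynomial argument driven by (S2)---that precisely the first $w(\Ab_k)$ iterates provide a complement of $\operatorname{Im}(A_{\Ab_k})$.
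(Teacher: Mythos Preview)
Your approach is essentially identical to the paper's: both iteratively trivialize $\zeta_{k-1}$ by combining the $A_{\Ab_k}$-image of a trivialization of $\zeta_k$ with the sections $B_{\Ab_k}^{j}a_{\Ab_k}(1)$ for $j=0,\ldots,w(\Ab_k)-1$, invoking (S1), (S2) for the spanning/independence. One caution on your final paragraph: the ``distinct $\Tt$-weights'' heuristic only applies at torus fixed points, not at a general point of $\Ch(\DD)$, so the linear-independence step must rest entirely on the (S2)-based argument you also outline (which is indeed what the paper implicitly uses).
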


\begin{proof} For a point in $\Ch(\DD)$ let $v_k=a_{\Ab_{k}}(1)$ be a vector in its fiber of $\zeta_{k-1}$. The stability conditions (S1), (S2) imply that the sections $\left(B_{\Ab_{n}}\right)^i(v_n)$ for $i=0,\ldots,\ch(\Ab_{n})-1$ trivialize $\zeta_{n-1}$. The $A_{\Ab_{n-1}}$-images of these sections, together with the sections $\left(B_{\Ab_{n-1}}\right)^i(v_{n-1})$ for $i=0,\ldots,\ch(\Ab_{n})-1$ trivialize $\zeta_{n-2}$, again because of (S1), (S2), etc. 
\end{proof}

Let us consider a diagram HW-equivalent with the separated one above, and let the portion between $\Zb_{k}$ and $\Zb_{k+1}$ be (that is, $r=\ell(\Zb_{k+1})-\ell(\Zb_k)$)

\begin{equation}\label{etafig}
\begin{tikzpicture}[baseline=10,scale=.44]
\draw [thick,red] (3.6,0) --(4.4,2); 
\draw [thick,red](20.6,0) --(21.4,2);  
\draw [thick,blue](7.4,0) -- (6.6,2); 
\draw [thick,blue](10.4,0) -- (9.6,2);
\draw[thick,blue] (15.4,0) -- (14.5,2); 
\draw [thick,blue](18.5,0) -- (17.5,2); 
\draw [thick] (1,1)--(24,1);
\node at (3.8,-.8) {$\Zb_{k}$};
\node at (20.8,-.8) {$\Zb_{k+1}$};
\node at (12.5,1.5) {$\ldots$};
\node at (-0.5,1) {$\ldots$};
\node at (25.5,1) {$\ldots$};
\node at (5.5,2) {$\eta_{k0}$};
\node at (8.5,2) {$\eta_{k1}$};
\node at (16.2,2) {$\eta_{k,r-1}$};
\node at (19.5,2) {$\eta_{k,r}$};
\end{tikzpicture}.
\end{equation}

\begin{proposition}\label{prop:negligible}
Let us identify the varieties corresponding to diagrams \eqref{sepfig1} and \eqref{etafig} via HW isomorphism. Then the $\eta_{ki}$ bundles of Figure \eqref{etafig} can be written as $\xi_{k}+N_{ki}(a,\h) \in K_{\Tt}(\Ch(\DD))$, where $N_{ki}(a,\h)$ stands for a virtual bundle that is topologically trivial (with an action of $\Tt$). The bundles left of $\Zb_1$ and those right of $\Zb_m$ in Figure \eqref{etafig} are topologically trivial.
\end{proposition}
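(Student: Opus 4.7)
The plan is to induct on the number of Hanany--Witten (HW) moves that transform the separated form \eqref{sepfig1} into the given HW-equivalent diagram; such a sequence exists and is finite by Section~\ref{sec:HW}. The base case is the separated diagram itself: the unique bundle in the interval $[\Zb_k, \Zb_{k+1}]$ is $\xi_k$, so we may take $N_{k,0}=0$; the bundles in the region to the right of $\Zb_m$ are exactly the $\zeta_j$'s and are topologically trivial by Proposition~\ref{prop:rightbundles}; and the bundles to the left of $\Zb_1$ have rank zero since brane diagrams begin with dimension zero.

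For the inductive step, I apply a single HW transition and invoke formula~\eqref{HWfig}, which modifies only one tautological bundle: the ``middle'' bundle $\xi_M$ is replaced by $\xi_L + \xi_R - \xi_M + \C_{\Ab}^{\pm}$, where $\C_{\Ab}^{\pm} \in \{\C_{\Ab},\, \C_{\Ab}\h^{-1}\}$ is a character of $\Tt$ and hence topologically trivial. Suppose the swap involves the NS5 brane $\Zb_j$ with an adjacent D5 brane $\Ab$, and assume first that $1 < j < m$. In the swap $\fs_j \bs \to \bs \fs_j$, the inductive hypothesis gives $\xi_L = \xi_{j-1} + N_L$ (since $\xi_L$ lives in $[\Zb_{j-1}, \Zb_j]$) and $\xi_M = \xi_j + N_M$, $\xi_R = \xi_j + N_R$ (since both live in $[\Zb_j, \Zb_{j+1}]$), with $N_L, N_M, N_R$ topologically trivial. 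The altered bundle now sits in $[\Zb_{j-1}, \Zb_j]$, and substituting into the HW formula yields
\[
\xi_L + \xi_R - \xi_M + \C_{\Ab}^{\pm} \;=\; \xi_{j-1} \,+\, \bigl(N_L + N_R - N_M + \C_{\Ab}^{\pm}\bigr),
\]
which is exactly of the required form. The reverse swap $\bs \fs_j \to \fs_j \bs$ is symmetric, landing the altered bundle in $[\Zb_j, \Zb_{j+1}]$ with the $\xi_{j-1}$-contributions canceling to leave $\xi_j$ plus trivial terms.

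The edge cases $j=1$ and $j=m$ need a separate (but parallel) check: then one of $\xi_L, \xi_R$ is trivial by the inductive hypothesis, the altered bundle migrates into the left-of-$\Zb_1$ or right-of-$\Zb_m$ region, and the surviving $\xi_j$ terms in the HW formula cancel against each other, leaving only a trivial virtual bundle. The main obstacle, such as it is, is purely book-keeping: after each swap one must correctly identify which interval the altered bundle belongs to, and hence which $\xi_{j'}$ is expected to appear in the decomposition. Once this is laid out, the inductive step reduces to substituting the HW formula and matching $\xi$-terms on both sides, with the trivial pieces freely absorbed into the new $N'(a,\h)$.
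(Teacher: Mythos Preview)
Your proposal is correct and follows essentially the same approach as the paper: induct on HW moves starting from the separated diagram (using Proposition~\ref{prop:rightbundles} for the base case) and at each step apply formula~\eqref{HWfig}, observing that two of the three $\xi$-contributions cancel so that the altered bundle lands in the correct form $\xi_{j'}+N'$. The paper's proof is just a terser version of this, displaying only one representative HW step and leaving the edge cases implicit.
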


We used the letter $N$ to indicate that these contributions will be {\em negligible} for us.

\begin{proof}
Assuming the statement for the diagram on the left and applying the HW transition  
\begin{equation}\label{HWinprooffig}
\begin{tikzpicture}[baseline=(current  bounding  box.center), scale=.5]
\draw[thick] (-1,1)--(9.5,1);
\draw[thick,red] (0.6,0)--(1.4,2);
\draw[thick,blue] (6.9,0)--(6.1,2);
\node at (-0.2,2) {$\xi_{k}+N$};
\node at (4,2) {$\xi_{k+1}+N$};
\node at (8.5,2) {$\xi_{k+1}+N$};
\node at (0.6,-.7) {$\Zb_k$};
\node at (7.1,-.7) {$a_i\h^j$};
\draw[ultra thick, ->] (11,1)--(12.5,1);
\draw[thick] (16,1)--(24,1);
\draw[thick,blue] (17.6,2)--(18.4,0);
\draw[thick,red] (21.4,2)--(20.6,0);
\node at (15.8,2) {$\xi_{k}+N$};
\node at (19.5,2) {$\eta$}; 
%\node at (18,2) {$\bar{\beta}$};
\node at (23.7,2) {$\xi_{k+1}+N$};
\node at (18.3,-.7) {$a_i\h^{j-1}$};
\node at (21,-.7) {$\Zb_k$};
\end{tikzpicture},
\end{equation}
for $\eta$ we obtain 
$\eta=(\xi_{k}+N)+(\xi_{k+1}+N)-(\xi_{k+1}+N)+a_i\h^j
=
\xi_{k}+N \in K_{\Tt}(\Ch(\DD))$
what we wanted to prove (here we used the same letter $N$ for possibly different virtual bundles of type $N(a,h)$).
\end{proof}

\subsection{Quadratic forms} 
\label{subsection Quadratic forms}

When dealing with equivariant elliptic cohomology we will consider a certain quadratic form (up to an equivalence) associated with the space $\Ch(\DD)$. Part of this quadratic form (``the $\alpha$ part'') comes from a virtual bundle over the space, and another (``dynamical'') part is an explicit formula. In this section, we present these two parts and prove that their sum is HW invariant.
%the topology/combinatorics necessary to associate a quadratic form to a brane diagram in such a way that is invariant under HW transition. 

\subsubsection{The $alpha$ part}\label{The alpha part}
Let $\DD$ be a brane diagram. Consider the class $\alpha \in K_{\Tt}(\Ch(\DD))$ of the virtual bundle 
\begin{equation}
\label{class alpha for general bow variety}
\h \left( 
\bigoplus_{\Ab} \Hom( \xi^{\Ab^+}, \xi^{\Ab^-} ) \oplus \Hom( \C_{\Ab}, \xi^{\Ab^-} ) 
\oplus
\bigoplus_{\Zb} \Hom( \xi^{\Zb^-}, \xi^{\Zb^+} )
\ominus 
%\right. \\ \left.
\bigoplus_{\xi} \Hom( \xi, \xi)
\right)^\vee
\end{equation}
where $\Ab$, $\Zb$, $\xi$ run over the D5, NS5 and D3 branes. Observe that the definition of $\alpha$ changes under HW transition, because the $\xi$ bundles change. Yet, the change is controlled. 

\begin{proposition} \label{prop:alphach} 
Consider the HW transition of \eqref{HWfig} {from left to right}, and denote $\xi'_2=\xi_1+\xi_3+\C_{\Ab}-\xi_2$.  In $K_{\Tt}(\Ch(\DD))$ we have
\[
\alpha^{\after}-\alpha^{\before}=\Hom(\xi_1,\C_{\Ab}) - \h\Hom(\C_{\Ab},\xi'_2).
\]
\end{proposition}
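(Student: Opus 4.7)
The plan is to cancel out everything in \eqref{class alpha for general bow variety} that is unaffected by the local HW surgery, and then perform a routine K-theoretic expansion on what remains. Only the two 5-branes being swapped and the single D3 bundle sitting between them contribute to $\alpha^{\after}-\alpha^{\before}$; all other summands are identical on both sides and cancel. On the ``before'' side, this local contribution (\emph{before} the outer dualization and $\h$-twist in \eqref{class alpha for general bow variety}) is
\[
B \;=\; \Hom(\xi_1,\xi_2) + \Hom(\xi_3,\xi_2) + \Hom(\C_{\Ab},\xi_2) - \Hom(\xi_2,\xi_2),
\]
coming from the NS5 with $\Zb^-=\xi_1,\Zb^+=\xi_2$, the D5 with $\Ab^-=\xi_2,\Ab^+=\xi_3$, and the D3 bundle $\xi_2$. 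On the ``after'' side, the middle D3 bundle is $\xi'_2$, the D5 now has $\Ab^-=\xi_1,\Ab^+=\xi'_2$, and its equivariant character is shifted to $\C_{\Ab}\h^{-1}$, as read off the label in \eqref{HWfig}. Thus the analogous piece is
\[
A \;=\; \Hom(\xi'_2,\xi_1) + \Hom(\xi'_2,\xi_3) + \Hom(\C_{\Ab}\h^{-1},\xi_1) - \Hom(\xi'_2,\xi'_2).
\]

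The key computation is to substitute $\xi'_2 = \xi_1+\xi_3+\C_{\Ab}-\xi_2$ into $A$ and expand $\Hom$ bi-additively in $K_\Tt$. Expanding $\Hom(\xi'_2,\xi'_2)$ alone produces sixteen summands; paired with the expansions of $\Hom(\xi'_2,\xi_1)$ and $\Hom(\xi'_2,\xi_3)$ and compared against $B$, every term involving only members of $\{\xi_1,\xi_2,\xi_3\}$ telescopes. What survives are the terms with at least one $\C_{\Ab}$ factor; collecting them and re-using the identity $\xi'_2 = \xi_1+\xi_3+\C_{\Ab}-\xi_2$ one more time, we arrive at the compact form
\[
A - B \;=\; \h\,\Hom(\C_{\Ab},\xi_1) \;-\; \Hom(\xi'_2,\C_{\Ab}).
\]

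It remains to apply the outer dual and the $\h$-twist. Using $\Hom(V,W)^\vee = \Hom(W,V)$ and the fact that dualization inverts equivariant weights (in particular $\h^\vee = \h^{-1}$), we obtain
\[
\alpha^{\after}-\alpha^{\before} \;=\; \h\,(A-B)^\vee \;=\; \h\bigl[\h^{-1}\Hom(\xi_1,\C_{\Ab}) - \Hom(\C_{\Ab},\xi'_2)\bigr] \;=\; \Hom(\xi_1,\C_{\Ab}) - \h\,\Hom(\C_{\Ab},\xi'_2),
\]
which is the claim. The main obstacle is purely bookkeeping: one must track that the $\C_{\Ab}$-line character shifts by $\h^{-1}$ across the HW move and that the outer dualization inverts $\h$-weights once more. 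Getting either of these conversions wrong changes the answer by an overall factor of $\h$; the combinatorial telescoping itself is mechanical once $\xi'_2$ is expanded.
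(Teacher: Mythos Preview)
Your proof is correct and is precisely the ``straightforward (long) calculation using \eqref{HWfig}'' that the paper invokes but does not write out; you have supplied the details the paper omits. One small organizational improvement: the telescoping becomes almost immediate if you first observe that $\Hom(\xi'_2,\xi_1)+\Hom(\xi'_2,\xi_3)-\Hom(\xi'_2,\xi'_2)=\Hom(\xi'_2,\xi_2)-\Hom(\xi'_2,\C_{\Ab})$ and that $\Hom(\xi'_2,\xi_2)$ is exactly $B$, so $A-B=\h\,\Hom(\C_{\Ab},\xi_1)-\Hom(\xi'_2,\C_{\Ab})$ drops out in two lines rather than via a full sixteen-term expansion.
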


\begin{proof} Straightforward (long) calculation using \eqref{HWfig}. \end{proof}

Let us define an additive map $Q:K_{\Tt}(\Ch(\DD)) \to Q(H^2_{\Tt}(\Ch(\DD))$, where the codomain is the additive group of quadratic forms on $H^2_{\Tt}(\Ch(\DD))$, by sending a line bundle $L$ to $c_1(L)^2$. We say that two such quadratic forms are equivalent (in notation $\equiv$) if they differ by a quadratic form in $Q(H^2_{\Tt}(\pt))$ (or in the $z_i$ variables to be considered later). That is, quadratic forms in $a_i$ and $\h$ variables are equivalent to 0, and in turn, the quadratic forms of the ``negligible'' terms of Section \ref{sec:coincidence} are equivalent to 0. 

\begin{corollary} \label{cor:alphach}
Consider the situation of Proposition \ref{prop:alphach}, and let $t_{1i}$ be the Chern roots of the bundle $\xi_1$. Then 
\[
Q(\alpha^{\after})-Q(\alpha^{\before}) \equiv -2\h \sum_i t_{1i}.
\]
\end{corollary}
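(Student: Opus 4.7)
The plan is to apply $Q$ directly to the K-theoretic identity of Proposition~\ref{prop:alphach}, then use Proposition~\ref{prop:negligible} to discard the contributions of $\xi_2$ and $\xi_3$ modulo equivalence, reducing the task to a short Chern-root identity involving only the $t_{1i}$.

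By additivity of $Q$, we have $Q(\alpha^{\after}) - Q(\alpha^{\before}) = Q(\Hom(\xi_1,\C_{\Ab})) - Q(\h\Hom(\C_{\Ab},\xi'_2))$. The first summand has Chern roots $a_{\Ab}-t_{1i}$, so it equals $\sum_i (a_{\Ab} - t_{1i})^2$. For the second, the key observation is that the bundles $\xi_2$ and $\xi_3$ both lie in the same NS5-range of the before-HW diagram: they are the D3 bundles immediately to the left and to the right of the single D5 brane $\Ab$, with no intervening NS5 brane. Proposition~\ref{prop:negligible} then gives $\xi_3 - \xi_2 = N$ for a virtual bundle $N$ whose Chern roots lie in the subring generated by the $a_i$ and $\h$ (the statement covers both the case when $\Zb$ is an ``inner'' NS5 brane and the case when it is the rightmost one, where both bundles are simply topologically trivial). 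Writing $\xi'_2 = \xi_1 + \C_{\Ab} + N$ and expanding additively,
\[
\h\Hom(\C_{\Ab},\xi'_2) = \h\Hom(\C_{\Ab},\xi_1) + \h\End(\C_{\Ab}) + \h\Hom(\C_{\Ab},N),
\]
and the last two summands are virtual bundles whose Chern roots lie in $a_i,\h$. Their $Q$-values therefore sit in $Q(H^2_{\Tt}(\pt))$ and vanish under $\equiv$, leaving $Q(\h\Hom(\C_{\Ab},\xi'_2)) \equiv \sum_i (\h - a_{\Ab} + t_{1i})^2$.

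What remains is the elementary identity $(a_{\Ab}-t)^2 - (\h - a_{\Ab} + t)^2 = 2\h(a_{\Ab} - t) - \h^2$, which summed over $t = t_{1i}$ yields $2\h a_{\Ab}\rk(\xi_1) - 2\h\sum_i t_{1i} - \rk(\xi_1)\h^2$. The first and third summands live in $a_{\Ab},\h$ and are therefore $\equiv 0$, leaving precisely $-2\h\sum_i t_{1i}$, as required. The principal conceptual step is the reduction $\xi_3 - \xi_2 \equiv 0$ via Proposition~\ref{prop:negligible}; once that is in place, everything else is a direct Chern-root calculation.
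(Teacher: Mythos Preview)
Your proof is correct and follows essentially the same route as the paper: apply $Q$ to the identity of Proposition~\ref{prop:alphach}, invoke Proposition~\ref{prop:negligible} to identify the Chern roots of $\xi'_2$ with those of $\xi_1$ modulo negligible terms, and finish with the difference-of-squares computation. Your use of $\xi_3-\xi_2\equiv 0$ in the before-HW diagram is equivalent to the paper's direct replacement of the $t'_{2j}$ by $t_{1i}$, since $\xi'_2=\xi_1+(\xi_3-\xi_2)+\C_{\Ab}$.
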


\begin{proof}
According to Proposition \ref{prop:alphach}, we have $Q(\alpha^{\after})-Q(\alpha^{\before})=
\sum_i (a-t_{1i})^2 - \sum_j (t'_{2j}-a+\h)^2$. Using Proposition \ref{prop:negligible}, this is equivalent to 
$\sum_i (a-t_{1i})^2 - \sum_i (t_{1i}-a+\h)^2 = -2\h\sum_i(t_{1i}-a) - \sum_i \h^2$, which is, by algebra, equivalent to the stated formula.
\end{proof}

\subsubsection{The dynamical part}\label{The dynamical part}

Let $\DD$ be a brane diagram. Let $\eta_k$ be {\em one} of the bundles in between the NS5 branes $\Zb_{k}$ and $\Zb_{k+1}$. 
%and let $r_k$ be the number of D5 branes in between $\Zb_{k}$ and $\Zb_{k+1}$ (see Figure \ref{etafig}). 
Let us define the quadratic form
\begin{equation}
    \label{equadratic form U part}
    \QU=
    2 \sum_{k=1}^{m-1}  \left(\sum_i t^{\eta_k}_i\right) \left(z_{k}-z_{k+1}+(\ell(\Zb_{k+1})-\ch(\Zb_k))\h\right).
\end{equation}
We will only consider $\QU$ up to the equivalence $\equiv$, hence the choice of $\eta_k$ does not matter (cf. Proposition~\ref{prop:negligible}).

\begin{proposition}\label{prop:Qch}
Consider the HW transition of \eqref{HWfig} {\em from left to right}. We have 
\[
\QU^{\after}-\QU^{\before}\equiv 2 \h \sum_i t_{1i}.
\]
\end{proposition}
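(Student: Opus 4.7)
The plan is to isolate which summands in the definition \eqref{equadratic form U part} of $\QU$ actually change under the HW transition, exploiting the freedom to pick a representative bundle $\eta_k$ between $\Zb_k$ and $\Zb_{k+1}$ so that its $K$-theory class is preserved. Since $\QU$ is only considered up to $\equiv$, any choice of $\eta_k$ is admissible.

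Let $\Zb = \Zb_k$ denote the NS5 brane being moved in the local picture \eqref{HWfig}, and keep the labels $\xi_1,\xi_2,\xi_3$ from that figure. I would first verify two combinatorial claims: (i) $\ch(\Zb_j)$ is unchanged for every $j$; only $\Zb_k$ could be affected, and there $d_{\Zb_k^+}-d_{\Zb_k^-}$ drops by $1$ while the count $\ell(\Zb_k)$ gains $1$, so the total is preserved (this is consistent with the general HW-invariance of charges stated after \eqref{HWfig}); (ii) $\ell(\Zb_j)$ changes only for $j=k$, where it increases by exactly~$1$, because the D5 brane $\Ab$ crosses from the right of $\Zb_k$ to its left, while no other NS5 brane is relocated.

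Next I would walk through each summand of $\QU=2\sum_{k'=1}^{m-1}(\sum_i t^{\eta_{k'}}_i)(z_{k'}-z_{k'+1}+(\ell(\Zb_{k'+1})-\ch(\Zb_{k'}))\h)$. The summand $k'=k$ involves $\ell(\Zb_{k+1})$ and $\ch(\Zb_k)$, both unchanged, so it does not move. Summands with $k'\notin\{k-1,k\}$ are untouched. Only the summand $k'=k-1$ (present when $k\geq 2$) has its $\h$-coefficient grow by exactly~$1$. For that summand I would pick $\eta_{k-1}=\xi_1$ in both diagrams: this is legitimate because $\xi_1$ sits between $\Zb_{k-1}$ and $\Zb_k$ before and after HW (the NS5 $\Zb_k$ only slides past the D5 brane $\Ab$ to its right), and as a tautological bundle attached to an unaffected position its Chern roots $t_{1i}$ agree verbatim on the two sides of the HW move. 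The change therefore equals $2\h\sum_i t_{1i}$ as a bona fide equality, not only up to $\equiv$.

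Finally I would dispatch the boundary case $k=1$: here the $k'=0$ summand is absent and $\QU^{\after}-\QU^{\before}=0$, while simultaneously $\xi_1$ lies to the left of the leftmost NS5 brane, hence is topologically trivial by Proposition \ref{prop:negligible}, so $2\h\sum_i t_{1i}$ is a quadratic form in the variables $a_i,\h$ only and is $\equiv 0$. The only real obstacle in the whole argument is the careful combinatorial accounting of $\ell$ and $\ch$ under HW in step one; once that is done the rest is pure bookkeeping, and the claim is sharpened to an equality whenever $k\geq 2$.
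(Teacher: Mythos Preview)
Your argument is correct and follows essentially the same approach as the paper: isolate the summands $k'=k-1$ and $k'=k$, observe that only $\ell(\Zb_k)$ changes (by $+1$) while all charges and $\ell(\Zb_{k+1})$ are unchanged, and handle the boundary case via Proposition~\ref{prop:negligible}. The only minor difference is presentational: the paper lists both potentially affected terms (implicitly choosing $\eta_{k-1}=\xi_1$ and $\eta_k=\xi_3$) and then checks which moves, whereas you dispatch the $k'=k$ term directly; you also spell out the $k=1$ boundary in more detail, while the paper simply notes that when $k=1$ or $k=m$ one of the two listed terms is absent.
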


\begin{proof}
If the NS5 brane involved is $\Zb_k$ then the affected terms of $\QU$ are 
\begin{multline*}
2\sum_i t_{1i} (z_{k-1}-z_k+(\ell(\Zb_k)-\ch(\Zb_{k-1})) \h)+
\\
2\sum_j t_{3j} (z_k-z_{k+1}+(\ell(\Zb_{k+1})-\ch(\Zb_k)) \h).
\end{multline*}
By inspection, we see that 
$\ell(\Zb^{\after}_{k})=\ell(\Zb^{\before}_{k})+1$ and $\ell(\Zb^{\after}_{k+1})=\ell(\Zb^{\before}_{k+1})$, while the charges do not change under HW transition.
Simple algebra finishes the proof. (When $k=1$ or $m$ then one of the two terms above is missing, but the proof easily extends.)
\end{proof}

\noindent The net effect of Corollary \ref{cor:alphach} and Proposition \ref{prop:Qch} is 

\begin{theorem} 
\label{Theorem: quadratic form is invariant under HW}
%For the HW transition in Figure \ref{HWfig} we have
%\[
%(Q(\alpha)+\QU)^{\after} \equiv (Q(\alpha)+\QU)^{\before},
%\]
%that is, 
The equivalence class of the quadratic form $Q(\alpha)+\QU$ associated with a brane diagram is invariant under Hanany-Witten transition. 
\end{theorem}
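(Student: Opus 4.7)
The plan is extremely short: the two ingredients stated immediately before the theorem already compute the HW-variations of the two summands, and they are visibly opposite. So the strategy is to simply add them, noting that Hanany-Witten transition is a local move and that it suffices to check invariance for a single application of the move of diagram~\eqref{HWfig} at an arbitrary location in $\DD$.

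First I would recall that by Corollary~\ref{cor:alphach}, applied to the transition of~\eqref{HWfig} from left to right,
\[
Q(\alpha^{\after})-Q(\alpha^{\before}) \equiv -2\h \sum_i t_{1i},
\]
where $t_{1i}$ are the Chern roots of the bundle $\xi_1$ sitting to the left of the moved NS5 brane. Then by Proposition~\ref{prop:Qch} applied to the same transition,
\[
\QU^{\after}-\QU^{\before}\equiv 2\h \sum_i t_{1i}.
\]
Adding these two identities gives
\[
\bigl(Q(\alpha^{\after})+\QU^{\after}\bigr)-\bigl(Q(\alpha^{\before})+\QU^{\before}\bigr)\equiv 0,
\]
which is precisely the desired invariance of the equivalence class under a single HW move.

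Two small points to address for completeness. First, HW invariance of the equivalence class under arbitrary HW moves follows by iterating the single-move statement, since by Section~\ref{sec:HW} any two HW-equivalent diagrams are connected by a finite sequence of such local transitions, and since both Corollary~\ref{cor:alphach} and Proposition~\ref{prop:Qch} apply to each transition independently. Second, one should check that the HW transition from right to left produces the negative of the above variations, so that the same cancellation works; this is automatic since both formulas were derived as differences \emph{after minus before} and the roles are simply interchanged.

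There is no real obstacle here: all the work has already been done in Proposition~\ref{prop:alphach}, Corollary~\ref{cor:alphach} and Proposition~\ref{prop:Qch}. The only subtlety that could have caused trouble---namely that the right-hand side of Corollary~\ref{cor:alphach} involves only the Chern roots of the specific bundle $\xi_1$ on the left of the moved NS5 brane rather than of $\xi_2$ or $\xi_3$---is precisely what makes the cancellation with $\QU$ work, since the definition of $\QU$ in~\eqref{equadratic form U part} is indexed by the NS5 branes and Proposition~\ref{prop:negligible} guarantees that the choice of representative bundle $\eta_k$ between consecutive NS5 branes is irrelevant modulo $\equiv$.
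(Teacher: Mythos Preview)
Your proof is correct and follows exactly the same approach as the paper: the theorem is an immediate consequence of adding the formulas from Corollary~\ref{cor:alphach} and Proposition~\ref{prop:Qch}, whose right-hand sides cancel. The extra remarks you add about iterating single moves and about the reverse direction are fine but unnecessary, since the paper treats invariance under a single HW move as equivalent to the statement.
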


%%%

\section{Elliptic cohomology}
\label{section: Elliptic cohomology}

\subsection{Definition}
In this paper, we consider the $G$-equivariant elliptic cohomology associated with an elliptic curve $E=\Cs/q^{\Z}$, cf. \cite{Ganter_2014, ginzburg1995elliptic} for a comprehensive discussion, and \cite[App. A]{okounkov2020inductiveI} for a review in the context of stable envelopes.
We will be only interested in its $0$-th (covariant) functor
\[
E_G(-):G\text{-spaces}\to \text{Schemes}_{E_G},
\]
where
$E_G:=E_G(\pt)$ is the elliptic cohomology of the one-point space.

We only need the case when $G$ is either a rank $n$ torus $T$ or $\GL(n)$. 
Then $E_T=\text{Cochar}(T)\otimes_\Z E \cong  E^{n}$ 
and $E_{GL(n)}= E^{(n)}$, the symmetrized $r^{th}$ Cartesian power of $E$.

\subsection{Pullback}

For convenience, we will denote the functorial morphism $E_G(f):E_G(X)\to E_G(Y)$ associated with a map $f:X\to Y$ again by $f$. In particular, we have an adjoint pair
\[
\begin{tikzcd}
\text{Qcoh}(E_G(Y))\ar[r,bend left,"f^*",""{name=A, below}] & \text{Qcoh}(E_G(X))\ar[l,bend left,"f_*",""{name=B,above}] \ar[from=A, to=B, symbol=\dashv]
\end{tikzcd}
\]
which should not be confused with pushforward and pullback in elliptic cohomology.
Indeed, for any given $\Sh{F}\in \text{Qcoh}(E_G(Y))$ the pullback in elliptic cohomology is the canonical morphism 
\[
f^{\oast}:\Sh{F}\to f_*f^*\Sh{F} 
\]
coming from adjunction. Notice that it is not a functor but a morphism in the target category of the functor $f_{*}$. Pushing forward along the canonical maps $p_X: X\to \pt\leftarrow Y: p_Y$, we get a map of $\Sh{O}_{E_G(\pt)}$-modules $(p_Y)_*\Sh{F}\to (p_X)_*f^*\Sh{F}$,
which, with slight abuse of notation, we also denote by $f^{\oast}: \Sh{F}\to f^*\Sh{F}$.
%This is the pullback in elliptic cohomology. %It is customary to denote it too by $f^*$. 
%Notice that it is not a functor but a morphism in the category $\text{Qcoh}(E_G(\pt))$. %Clearly, the pullback is functorial: $(g\circ f)$=$f^*\circ g^*$.

\subsection{Change of group}
Elliptic cohomology is also functorial with respect to group homomorphisms. Namely, given a a $G$-space $X$ and a group homomorphism  $\varphi: H\to G$, we get a map $\varphi: E_H(X)\to E_G(X)$ and an adjoint pair 
\[
\begin{tikzcd}
\text{Qcoh}(E_G(X))\ar[r,bend left,"\varphi^*",""{name=A, below}] & \text{Qcoh}(E_H(X))\ar[l,bend left,"\varphi_*",""{name=B,above}] \ar[from=A, to=B, symbol=\dashv]
\end{tikzcd}
\]
As in the previous section, we denote use the symbol $\varphi^{\oast}$ to denote both the canonical morphism $\Sh{F}\to \varphi_*\varphi^*\Sh{F}$ and its pushforward to $E_G(\pt)$.

\subsection{Chern roots and Thom sheaves}

\label{Section: Chern roots and thom sheaves}
In order to define pushforwards, we first need to introduce Chern classes and Thom sheaves.
A $G$-equivariant complex vector bundle $V\to X$ of rank $r$ defines a map 
\[
c(V):E_G(X)\to E_{\GL(r)}(\pt)=E^{(r)},
\]
called elliptic characteristic class of $V$, see \cite[Section 1.8]{ginzburg1995elliptic} and \cite[Section 5]{Ganter_2014}. Its coordinates in the target are called elliptic Chern roots. Associated with $V$ is an invertible sheaf, the Thom sheaf of $V$
\[
\Theta(V):=c(V)^*(D).
\]
Here, $D$ is the divisor $D=\lbrace0\rbrace+E^{(r-1)}\subset E^{(r)}$ formed by those tuples $(t_1,\dots, t_r)\in E^{(r)}$ that contain $0$. Since the divisor $D$ is effective, the line bundle $\Theta(V)$ admits a canonical global section, which we denote by $\vartheta(V)$. This notation reflects the fact that $\vartheta(V)$ is the pullback by $c(V)$ of the canonical section
\[
\prod_{i=1}^r\vartheta(t_i)
\]
of the divisor $D$ on $E^{(r)}$. It is easy to check that the assignment $V\mapsto \Theta(V)$ extends to a functorial group homomorphism
\[
\Theta: K_G(X)\mapsto \Pic{}{E_G(X)}\qquad  [V]\mapsto \Theta(V).
\]

The Thom sheaf allows defining another important class of line bundles, which depends on an extra parameter $z$ living in a one-dimensional torus $\Cs_z$ acting trivially on $X$. Namely, for every $V$ as above,  we define 
\begin{equation*}
    \Sh{U}(V,z):=\Theta\left((V-1^{\oplus \rk(V)})(z-1)\right)\in \Pic{}{E_{G\times \Cs_z}(X)}.
\end{equation*}
It admits a canonical meromorphic section, which is the pullback by $c(V)$ of the canonical section
\[
\prod_{i=1}^{r}\delta(t_i,z)=\prod_{i=1}^r \frac{\vartheta(t_iz)}{\vartheta(t_i)\vartheta(z)}
\]
of the line bundle $\bigotimes_{i=1}^r\Theta((t_i-1)(z-1))$ on $E_{\GL(r)\times \Cs_{\hbar}}(\pt)$.
Note that the latter plays a central role in $\h$-deformed characteristic classes in elliptic cohomology \cite{rimanyi2019hbardeformed}. 
Next, we list some properties of these bundles.
\begin{lemma}
\label{Lemma properties U bundle}
    We have 
    \begin{align*}
        &\Sh{U}(V,1)\cong\Sh{O},
        %\label{property 0 U bundle}
        \\
        &\Sh{U}(V,z_1z_2)\cong\Sh{U}(V,z_1)\otimes \Sh{U}(V,z_2),
        %\label{property 1 U bundle}
        \\
        &\Sh{U}(V_1\oplus V_2,z)\cong \Sh{U}(V_1,z)\otimes \Sh{U}(V_2,z),
        %\label{property 2 U bundle}
        \\
        &\Sh{U}(V_1\otimes V_2,z)\cong \Sh{U}(V_1,z)^{\otimes\rk(V_2)}\otimes \Sh{U}(V_2,z) ^{\otimes\rk(V_1)},
        %\label{property 2 U bundle}
        \\
        &\Theta(\Hom(V_1,V_2)-\h\Hom(V_2,V_1))\cong \Sh{U} \left(V_1,\h^{-\rk(V_2)} \right) \otimes \Sh{U} \left( V_2,\h^{\rk(V_1)}\right)\otimes\Theta(\h)^{-\rk(V_1)\rk(V_2)}.
        %\label{property 4 U bundle}
    \end{align*}
\end{lemma}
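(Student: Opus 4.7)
The plan is to verify the five isomorphisms in turn, using throughout that $\Theta: K_G(X) \to \Pic(E_G(X))$ is a group homomorphism (so $\Theta(W_1 + W_2) \cong \Theta(W_1) \otimes \Theta(W_2)$ and $\Theta(0) \cong \Sh{O}$). The strategy is to reduce each claim either to a K-theoretic identity that is manifestly true, or to the assertion that $\Theta(W) \cong \Sh{O}$ for a ``double-traceless'' virtual class $W$, i.e.\ one that factors as a product of two or more rank-zero virtual bundles.

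First I will dispatch the easy items. Property (1) is immediate because $(V - 1^{\oplus r})(z-1)$ specializes to the zero virtual class at $z = 1$, and $\Theta(0)\cong\Sh{O}$. Property (3) follows from the K-theoretic identity
\begin{equation*}
(V_1 \oplus V_2 - 1^{\oplus(r_1 + r_2)})(z - 1) = (V_1 - 1^{\oplus r_1})(z - 1) + (V_2 - 1^{\oplus r_2})(z - 1)
\end{equation*}
upon applying $\Theta$.

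Next I will treat (2) and (4). In each case the arguments of $\Theta$ on the two sides differ by a class that factors as a product containing two or more traceless factors: for (2) the difference is $(V-r)(z_1-1)(z_2-1)$, and for (4) it is $(V_1 - r_1)(V_2 - r_2)(z-1)$. I plan to prove that $\Theta$ sends any such class to $\Sh{O}$. The cleanest way is to compute the factors of automorphy of the canonical meromorphic section under independent period shifts in each variable ($t_i$, $z$, $\h$): since the class expands into terms whose Chern roots group into balanced pairs, the shift monomials cancel. This identifies the line bundles up to a class in $\Pic^0$ that is itself forced to be trivial by an evaluation at a distinguished point (e.g.\ $z_i = 1$, which reduces to property (1) already established).

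Finally, property (5) is the delicate one. I will expand both sides fully as K-theory classes: the left-hand class is $V_1^\vee V_2 - \h\, V_2^\vee V_1$, while the right-hand class unfolds to $(V_1 - r_1)(\h^{-r_2} - 1) + (V_2 - r_2)(\h^{r_1} - 1) - r_1 r_2 \cdot \h$. The main obstacle will be the bookkeeping of the $\h$-dependence: tracking how the dualization in $\Hom(V_i, V_j)$ interacts with the $\h^{\pm r_i}$ twists and with the correction $\Theta(\h)^{-r_1 r_2}$. The key observation I will exploit is that $V_1^\vee V_2 - \h\, V_2^\vee V_1$ can be rewritten, modulo classes of the ``product-of-traceless'' type already shown to lie in $\ker\Theta$, as a combination of the $(V_i - r_i)(\h^{\pm r_j} - 1)$ pieces plus exactly the $-r_1 r_2 \cdot \h$ diagonal contribution. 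Once this rewriting is carried out, property (5) follows from another application of the triviality principle of the previous paragraph, together with properties (2) and (3).
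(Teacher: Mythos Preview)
The paper does not prove this lemma: immediately after the statement it writes ``For the proof and more details about $\Sh{U}(V,z)$, see [App.\ A]\{okounkov2020inductiveI\}.'' So there is no in-paper argument to compare against. Your direct approach---using that $\Theta$ is a group homomorphism together with a vanishing principle for certain ``multi-traceless'' classes, verified through factors of automorphy and a specialization to pin down the $\Pic^0$ part---is the standard route and is essentially what one finds in the cited reference.

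There is, however, a genuine misstatement in your triviality principle. You assert $\Theta(W)\cong\Sh{O}$ whenever $W$ is a product of \emph{two or more} rank-zero virtual classes. Taken literally this is false: $\Sh{U}(V,z)=\Theta\bigl((V-r)(z-1)\bigr)$ is itself such a two-fold product, yet is nontrivial in general. The correct principle is the \emph{cubic} relation: $\Theta$ annihilates classes of the form $(L_1-1)(L_2-1)(L_3-1)$ for line bundles $L_i$, and hence $\Z$-linear combinations thereof. Your actual differences for (2) and (4) are triple products, so those arguments survive once the principle is stated correctly; but as written the principle would trivialize every $\Sh{U}$.

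For (5), the promised rewriting of $V_1^\vee V_2 - \h\,V_2^\vee V_1$ modulo cubic relations into the right-hand K-class is asserted but not carried out, and it is not transparent: the duals $V_i^\vee$ do not decompose cleanly in terms of $(V_i-r_i)(\h^{\pm r_j}-1)$, and one also needs the parity relation $\Theta(L)\cong\Theta(L^{-1})$, which you do not mention. The clean route is the one you already use for (2) and (4): compare quadratic forms directly. With Chern roots $s_i$ of $V_1$ and $t_j$ of $V_2$, both sides have quadratic form $2\h\bigl(r_1\sum_j t_j - r_2\sum_i s_i\bigr) - r_1 r_2 \h^2$; since every bundle in sight is built from $\Theta$ (hence has trivial translate part in the classification $\Sh{L}(Q,v)$ of line bundles on abelian varieties used in the paper), equality of quadratic forms already gives the isomorphism.
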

For the proof and more details about $\Sh{U}(V,z)$, see \cite[App. A]{okounkov2020inductiveI}. The choice of replacing $z$ with $\h$ in the last equation is in view of the applications.
\begin{comment}
    \begin{align}
    \Sh{U}(V,z)&\cong \Sh{U}(\det(V),z) \label{property 1 U bundle}
    \\
    \Sh{U}(V_1\oplus V_2,z)&\cong \Sh{U}(V_1,z)\otimes \Sh{U}(V_2,z)\label{property 2 U bundle}
    \\
    \Sh{U}(V,z_1z_2)&=\Sh{U}(V,z_1)\otimes \Sh{U}(V,z_2).
    \label{property 3 U bundle}
\end{align}
\end{comment}

\subsection{Pushforwards}
Let $f:X\to Y$ be a proper complex oriented morphism of smooth varieties and $\Sh{E}$ be any vector bundle on $E_G(Y)$. The pushforward in elliptic cohomology is a morphism
\begin{equation*}
    \label{def: pullback elliptic cohoomlogy}
    f_{\oast} :f_*(\Theta(-N_f)\otimes f^*\Sh{E})\to \Sh{E},
\end{equation*}
where $N_f=f^*TY-TX\in K_G(X)$ is the virtual normal bundle. Pushing forward to $E_G(\pt)$, we get a morphism of $\Sh{O}_{E_G(\pt)}$-modules $(\Theta(-N_f)\otimes f^*\Sh{E})\to \Sh{E}$, which, in analogy with the pullback discussed in the previous section, we also denote by $f_{\oast}$.
More generally, if $f$ is not proper, then pushforward may only be defined on some subsheaf of the domain. By definition, a section $s$ of some sheaf $\Sh{G}$ on $E_G(X)$ is supported on some closed $G$-invariant subset $W$ if its restriction to $E_G(X\setminus W)$ is zero. Its support $\text{supp}(s)$ is the intersection of all such $W$. Then, the pushforward of any complex oriented map $f$ can be defined as
\[
f_{\oast} :f_*(\Theta(-N_f)_c\otimes f^*\Sh{E})\to \Sh{E}
\]
where $\Theta(-N_f)_c$ is the subsheaf of $\Theta(-N_f)$ of sections $s$ such that $f|_{\text{supp}(s)}$ is proper \cite{okounkov2020inductiveI}.

For a closed embedding $i:X\hookrightarrow Y$, 
consider the composition
\[
    i_*\left(\Theta(-N_i)\otimes i^*\Sh{E}\right)\xrightarrow[]{ i_{\oast}} \Sh{E}\xrightarrow[]{i^{\oast}} i_*i^*\Sh{E}
\]
Given a global section section $\gamma\in \Gamma\left(\Theta(-N_i)\otimes i^*\Sh{E}\right)$, we have 
\begin{equation}
    \label{pushforward formula loalization}
i^{\oast}i_{\oast}(\gamma)= \vartheta(N_i)\cdot \gamma,
\end{equation}
cf. \cite[Proposition 2.9.1]{ginzburg1995elliptic}. In other words, the composition $i^{\oast}\circ i_{\oast}$ is equal to multiplication by the elliptic Euler class of the normal bundle $N_i$.

The pushforward in elliptic cohomology is functorial, in the sense that given two proper complex oriented morphisms of smooth varieties $f:X\to Y$ and $g:Y\to Z$ and a line bundle $\Sh{E}$ on $E_G(Z)$, the following diagram commutes
\begin{equation}
    \label{functoriality pushforward elliptic cohomology}
    \begin{tikzcd}
   (g\circ f)_*(\Theta(-N_{g\circ f})\otimes (g\circ f)^*\Sh{E})\arrow[d, equal]\arrow[r, "{(g\circ f)_{\oast}}"] & \Sh{E} \\
   g_*f_*(\Theta(-N_f)\otimes f^*(\Theta(-N_g)\otimes g^*\Sh{E}))\arrow[r, "g_*(f_{\oast})"] & g_*(\Theta(-N_g)\otimes g^*\Sh{E}).\arrow[u, "g_{\oast}"]
\end{tikzcd}
\end{equation}
The identity arrow in the diagram is induced by the isomorphism
\[
\Theta(-N_{g\circ f})=\Theta(-N_f-f^*N_{g})\cong \Theta(-N_f)\otimes f^*\Theta(-N_{g}).
\]
Throughout the paper, we will commonly pushforward this diagram to $E_G(\pt)$. Accordingly, with a slight abuse of notation, we will write $(g\circ f)_{\oast}=g_{\oast}f_{\oast}$.

The pushforward is defined like in ordinary equivariant cohomology, namely as the composition of the Thom isomorphism with the Pontryagin-Thom collapse map. From the naturality of the Thom isomorphism, it follows that if two proper complex-oriented morphisms $f$ and $F$ fit in a Cartesian diagram
\[
\begin{tikzcd}
    X_1\arrow[d, "F"]\arrow[r, "g"] & X_2\arrow[d, "f"]
    \\
    Y_1 \arrow[r, "G"] & Y_2 
\end{tikzcd}
\]
such that $\dim( X_1)-\dim(Y_1)=\dim( X_2)-\dim(Y_2)$, then $G^{\oast}f_{\oast}=G_*(F_{\oast})f_*(g^{\oast})$ as morphisms of $\Sh{O}_{E_G(Y_2)}$-modules. The reader is invited to compare the line bundles entering these maps' definitions. To ease the notation, we will simply write $G^{\oast}f_{\oast}=F_{\oast}g^{\oast}$. 

We conclude the section by stating the powerful %following version of 
localization theorem. %, which is a valuable tool for computations.  
To ease its formulation, we omit the sheaves entering in the definition of the pushforwards. In addition, for any given elliptic cohomology class $\gamma$ we denote by $\gamma|_F$ the restriction $(i_F)^{\oast}\gamma$, where $F\subset X^G$ and $i_F: F\hookrightarrow X$ is the inclusion. %The proposition will then hold whenever all the maps are defined.
\begin{proposition}
\label{localization formula pushforward}
Let $p:X\to Y$ be a proper complex-oriented morphism of smooth $T$-varieties with finite fixed locus. If the class $\alpha$ is in the image of the pushforward $i_{\oast}$ associated with $i: X^T\hookrightarrow X$, then for every $g\in Y^T$ 
\[
(p_{\oast}(\alpha))|_{g}=\vartheta(T_gY)\sum_{f\in p^{-1}(g)^{T}}\frac{\alpha|_{f}}{\vartheta(T_fX)}
\]
holds in the localized theory (the ``localized theory'' is obtained by inverting the equivariant variables of $T$, i.e. by tensoring with the sheaf of meromorphic functions over $E_T(\pt)$).
\end{proposition}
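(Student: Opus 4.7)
The plan is to reduce the statement to the self-intersection formula $i^{\oast}i_{\oast}(\gamma) = \vartheta(N_i)\cdot \gamma$ already recorded in the excerpt, using functoriality of pushforward and base change along Cartesian squares. The strategy proceeds in three stages: decompose $\alpha$ as a sum of pushforwards from individual fixed points of $X$; apply $p_{\oast}$ and observe that the images of these inclusions land at torus-fixed points of $Y$; then restrict to $g$ and identify which contributions survive.

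For the first stage, by hypothesis write $\alpha = i_{\oast}(\beta)$ for some $\beta$ on $E_T(X^T)$. Since $X^T$ is finite, $i$ splits as the disjoint union of the inclusions $i_f : \{f\} \hookrightarrow X$; the section $\beta$ is the tuple $(\beta_f)_{f\in X^T}$ of its components, and
\[
\alpha \;=\; \sum_{f\in X^T} (i_f)_{\oast}(\beta_f).
\]
Restricting to $f$ and applying the self-intersection identity, with $N_{i_f} = T_f X$, gives $\alpha|_f = \vartheta(T_f X)\cdot \beta_f$, so in the localized theory
\[
\beta_f \;=\; \frac{\alpha|_f}{\vartheta(T_f X)}.
\]

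For the second stage, functoriality of pushforward yields
\[
p_{\oast}\alpha \;=\; \sum_{f\in X^T} (p\circ i_f)_{\oast}(\beta_f),
\]
and $T$-equivariance of $p$ forces each composition $p\circ i_f$ to equal the inclusion $j_{p(f)} : \{p(f)\}\hookrightarrow Y$ of a torus-fixed point. Applying $j_g^{\oast}$, where $j_g : \{g\}\hookrightarrow Y$, and using base change along the Cartesian square formed by the two point inclusions $j_g$ and $j_{p(f)}$, the contribution of $f$ vanishes whenever $p(f)\neq g$, because the fiber product $\{g\}\times_Y\{p(f)\}$ is empty. For the remaining terms, when $p(f)=g$ the self-intersection identity applied to $j_g$ gives $j_g^{\oast}(j_g)_{\oast}(\beta_f) = \vartheta(T_g Y)\cdot \beta_f$. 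Summing over $f\in p^{-1}(g)^T$ and substituting the formula for $\beta_f$ produces the claimed identity.

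The main obstacle is the very first step: extracting $\beta_f$ from $\alpha|_f$ requires inversion of $\vartheta(T_f X)$, which is exactly why the statement is phrased in the localized theory. Moreover, the hypothesis that $\alpha$ lie in the image of $i_{\oast}$ is essential, since without it there is no $\beta$ whose restrictions are controlled by the data $\alpha|_f$ alone; this is the standard substitute for an honest localization isomorphism in equivariant elliptic cohomology.
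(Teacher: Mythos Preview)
Your proof is correct and follows essentially the same approach as the paper's: reduce to $\beta$ supported on a single fixed point, then invoke functoriality of pushforward together with the self-intersection formula \eqref{pushforward formula loalization}. The paper's proof is a two-line sketch of exactly this argument, so your version simply fills in the details it leaves implicit.
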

%Here, the ``localized'' theory (and pushforward) is obtained by 
%pushforward and localized theory we mean the ones obtained by 
%inverting the equivariant variables of $T$, i.e. after tensoring with the sheaf of meromorphic functions over $E_T(\pt)$. 

\begin{proof}
    It suffices to assume that $\alpha=i_{\oast}(\beta)$ with $\beta$ supported on some $f\in X^T$. Then the proposition readily follows from the functoriality of the pushforward and \eqref{pushforward formula loalization}.
\end{proof}

\subsection{Action by correspondences}

\label{subsection action by correspondences}
%Let $X$ be a topological space. Topological classes in elliptic cohomology are sections of line bundles on the cohomology scheme $E_T(X)$. Sometimes, it is useful to take the dual of this line bundles, in the appropriate sense.

For any line bundle $\Sh{L}$ on $E_G(X)$, its dual is defined as $\Sh{L}^\triangledown:= \Sh{L}^{-1}\otimes \Theta(TX)$ \cite{okounkov2020inductiveI}. Notice that if $p:X\to \pt $ is proper, then there is a well-defined pairing
\[
\Sh{L}^\triangledown\otimes \Sh{L}=\Theta(TX)\xrightarrow[]{p_{\oast}} \Sh{O}_{E_G(\pt)}.
\]
Let $\Sh{L}_i$ be a line bundle on $E_G(X_i)$ for $i=1,2$ and consider the maps 
\[
\begin{tikzcd}
    X_1 & X_1\times X_2 \arrow[l, swap, "p_1"]\arrow[r, "p_2"] & X_2.
\end{tikzcd}
\]
For us, a correspondence $\alpha$ in elliptic cohomology is a global section of 
\[
\Sh{L}_1^\triangledown\boxtimes \Sh{L}_2\in \Pic{}{E_G(X_1\times X_2)}
\]
that is proper over $X_2$, i.e. such that $p_2|_{\text{supp}(\alpha)}$ is proper. The upshot of the properness assumption is that we can push forward along $p_2$ to get an operator
\[
\alpha: \Sh{L}_1\to \Sh{L}_2 \qquad s\mapsto (p_2)_{\oast} (\alpha \cdot p_1^{\oast}(s)).
\]

\subsection{Convolution of correspondences}

Let $X_i$ for $i=1,2,3$ be three $G$-spaces and $\Sh{L}_i$ be line bundles on $E_G(X_i)$. Consider the diagram 
\[
\begin{tikzcd}
    X_1\times X_2\times X_2\times X_3 &  X_1\times X_2\times X_3\arrow[l, swap, "\Delta"] \arrow[r, "p_{13}"] & X_1\times X_3. 
\end{tikzcd}
\]
Let $\alpha_{12}\in H^0(\Sh{L}_1^\triangledown\boxtimes \Sh{L}_2)$ and $\alpha_{23}\in H^0(\Sh{L}_2^\triangledown\boxtimes \Sh{L}_3)$. 
\begin{lemma}
\label{lemma support convolution}
Assume that $\text{supp}(\alpha_{23})$ is proper over $X_3$. Then the convolution
\begin{equation}
    \label{equation definition convolution}
    \alpha_{23}\circ \alpha_{12} :=(p_{13})_{\oast}\left( \Delta^{\oast}(\alpha_{12}\boxtimes \alpha_{23})\right)
\end{equation}
is well-defined and is supported over
\begin{equation}
\label{support convolution}
    \Set{(x_1,x_3)\in X_1\times X_3}{\exists \,  x_2\in X_2 \text{ s.t. } (x_1,x_2)\in \text{supp}(\alpha_{12}) , \,   (x_2,x_3)\in\text{supp}(\alpha_{23}) }.
\end{equation}
\end{lemma}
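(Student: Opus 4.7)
The plan is to reduce the lemma to two elementary observations: a support-tracking argument for external product and diagonal pullback, and a properness check for $p_{13}$ restricted to the relevant closed subset.

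First I would identify where the section $\Delta^{\oast}(\alpha_{12}\boxtimes\alpha_{23})$ is supported. Since the external product on $X_1\times X_2\times X_2\times X_3$ has support $\text{supp}(\alpha_{12})\times \text{supp}(\alpha_{23})$, and $\Delta(x_1,x_2,x_3)=(x_1,x_2,x_2,x_3)$, the pulled-back section is supported in
\[
S:=\Delta^{-1}\bigl(\text{supp}(\alpha_{12})\times\text{supp}(\alpha_{23})\bigr)=\Set{(x_1,x_2,x_3)\in X_1\times X_2\times X_3}{(x_1,x_2)\in\text{supp}(\alpha_{12}),\ (x_2,x_3)\in\text{supp}(\alpha_{23})},
\]
i.e.\ the fiber product $\text{supp}(\alpha_{12})\times_{X_2}\text{supp}(\alpha_{23})$. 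Note that, thanks to the matching of virtual normal and tangent bundles ($\mathcal{L}_2^\triangledown\otimes\mathcal{L}_2=\Theta(TX_2)$ on the diagonal and $\Theta(-N_{p_{13}})=p_{X_2}^*\Theta(TX_2)$), the pullback $\Delta^{\oast}(\alpha_{12}\boxtimes\alpha_{23})$ is naturally a section of $p_{13}^*(\mathcal{L}_1^\triangledown\boxtimes\mathcal{L}_3)\otimes \Theta(-N_{p_{13}})$, so that the pushforward is formally applicable.

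Second, I would verify that $p_{13}|_S$ is proper, which is exactly what is needed to define $(p_{13})_{\oast}$ on this section. The key is the factorization
\[
S\;\hookrightarrow\; X_1\times \text{supp}(\alpha_{23})\;\xrightarrow{\ \operatorname{id}_{X_1}\times \pi_{X_3}\ }\; X_1\times X_3,
\]
where $X_1\times\text{supp}(\alpha_{23})$ is viewed as a closed subset of $X_1\times X_2\times X_3$ via the condition $(x_2,x_3)\in\text{supp}(\alpha_{23})$. The first arrow is a closed embedding because $S$ is cut out inside $X_1\times\text{supp}(\alpha_{23})$ by the closed condition $(x_1,x_2)\in\text{supp}(\alpha_{12})$; the second arrow is proper, being the base change along $X_1\times X_3\to X_3$ of the proper map $\text{supp}(\alpha_{23})\to X_3$ (proper by hypothesis on $\alpha_{23}$). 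Composing gives a proper map $S\to X_1\times X_3$, so the pushforward is well defined.

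Finally, the support description in \eqref{support convolution} follows at once: pushforward along $p_{13}$ can only enlarge supports by taking their image, so $\text{supp}(\alpha_{23}\circ\alpha_{12})\subseteq p_{13}(S)$, which is exactly the set displayed in the statement. I do not anticipate any serious obstacle here; the only mild subtlety is the factorization of $p_{13}|_S$ through $X_1\times\text{supp}(\alpha_{23})$, which cleanly isolates the base-change step. It is worth remarking that only the $X_3$-properness of $\text{supp}(\alpha_{23})$ is used; the analogous properness of $\text{supp}(\alpha_{12})$ over $X_2$ is part of the data of a correspondence but is not invoked in this particular lemma.
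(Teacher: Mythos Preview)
Your proposal is correct and follows essentially the same route as the paper: identify the support $S=\Delta^{-1}(\text{supp}(\alpha_{12})\times\text{supp}(\alpha_{23}))$, factor through $X_1\times\text{supp}(\alpha_{23})$ to obtain properness of $p_{13}|_S$ via base change, and conclude that the support of the pushforward lies in $p_{13}(S)$. The only cosmetic difference is that the paper makes the last step explicit via a Cartesian diagram and base-change compatibility of pushforward and pullback, whereas you invoke the general fact that pushforward sends supports into images; both are equivalent here since $p_{13}(S)$ is closed.
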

\begin{proof}
The class $\Delta^{\oast}(\alpha_{12}\otimes \alpha_{23})$ is supported on
\[
\Delta^{-1}(\text{supp}(\alpha_{12})\times \text{supp}(\alpha_{23}))\subseteq X_1\times \text{supp}(\alpha_{23}).
\]
Since the map $\text{supp}(\alpha_{23})\to X_3$ is proper by assumption, the composition
\[
\Delta^{-1}(\text{supp}(\alpha_{12})\times \text{supp}(\alpha_{23}))\hookrightarrow X_1\times X_2\times X_3\to X_1\times X_3
\]
is also proper. Thus, the pushforward in the definition of $\alpha_{23}\circ \alpha_{12}$ is well defined. Let now $V_{13}$ be the set \eqref{support convolution} and notice that 
\[
V_{13}=p_{13} \left(\Delta^{-1}(\text{supp}(\alpha_{12})\times \text{supp}(\alpha_{23}))\right).
\]
We need to show that $\alpha_{12}\circ \alpha_{23}|_{X_1\times X_3\setminus V_{13}}=0$.
Consider the Cartesian diagram  
\[
\begin{tikzcd}
   p_{13}^{-1}(X_1\times X_3\setminus V_{13})\arrow[d, "p_{13}"]\arrow[r, hookrightarrow] & X_1\times X_2 \times X_3\arrow[d, "p_{13}"]
   \\
    X_1\times X_3\setminus V_{13}\arrow[r, hookrightarrow] & X_1\times X_3.
\end{tikzcd}
\]
By compatibility of pushforward and pullback in Cartesian squares, we have
\[
\alpha_{23}\circ \alpha_{12}\Big|_{X_1\times X_3\setminus V_{13}}=(p_{13})_{\oast} \left(\Delta^{\oast}(\alpha_{12}\boxtimes \alpha_{23})\Big|_{  p_{13}^{-1}(X_1\times X_3\setminus V_{13})}\right)=0.
\]
Here, the last step follows from the observation that
\begin{multline*}
p_{13}^{-1}(X_1\times X_3\setminus V_{13})=X_1\times X_2\times X_3\setminus p_{13}^{-1}(V_{13})
\\
\subseteq X_1\times X_2\times X_3\setminus \Delta^{-1}(\text{supp}(\alpha_{12})\times \text{supp}(\alpha_{23}))),
\end{multline*}
hence $\Delta^{\oast}(\alpha_{12}\boxtimes \alpha_{23})|_{  p_{13}^{-1}(X_1\times X_3\setminus V_{13})}=0$.
\end{proof}

\begin{remark}
A standard argument shows that if $\alpha_{12}\circ \alpha_{23}$ is proper over $X_1$, and hence its associated operator is well defined, then the diagram
%\[
\begin{tikzcd}
    \Sh{L}_1\arrow[r, "\alpha_{12}"]\arrow[rr, bend right, swap, "\alpha_{23}\circ \alpha_{12}"] & \Sh{L}_{2}\arrow[r, "\alpha_{23}"] & \Sh{L}_3
\end{tikzcd}
%\]
commutes. %, as suggested from the notation.
\end{remark}

\section{Elliptic stable envelopes}

In this section, we recall the general definition of elliptic stable envelopes. For more details, see \cite{aganagic2016elliptic, okounkov2020inductiveI}. 

\subsection{Attracting loci}
\label{subsection attracting loci}
 Let $X$ be a smooth quasi-projective variety equipped with the action of a torus $T$. Let $A\subset T$ be a subtorus. The $A$-weights in the normal bundle of $X^A$ in $X$ are finite and produce a wall arrangement $W$  in
$ 
\text{Lie}_\R(A)=\text{Cochar}(A)\otimes_\Z \R
$,
the real Lie algebra of $A$.
A chamber $\mathfrak{C}$ is a connected component in $\text{Lie}_\R(A)\setminus W$. A choice of chamber determines a collection of affine bundles
\begin{equation}
    \label{partial order definition}
    \Att{C}(F)=\Set{x\in X}{\lim_{z\to 0} \sigma (z)\cdot x\in F}\qquad \text{for any }\sigma \in \mathfrak{C}
\end{equation}
attached to fixed components in $X^A$. These are called attracting sets. Associated with them is a partial ordering (``generalized Bruhat order'') on the set of connected components, defined as the transitive closure of the relation
\[
\overline{\Att{C}(F)}\cap F'\neq 0 \implies F\geq F'.
\]

Consider the triple of $T$-invariant subspaces
\begin{multline}
    \label{equation spaces in the definition of stable envelopes}
    \Att{C}^{\leq}(F):= \bigcup_{F'\leq F} F\times \Att{C}(F'),
    \qquad X^{\geq F}=X\setminus \bigcup_{F'< F} \Att{C}(F'), 
    \\
    X^{> F} = X\setminus \bigcup_{F'\leq F} \Att{C}(F').
\end{multline}
%The notation reflects the fact that $X^{\geq F}$ (resp. $X^{>F}$) only contains fixed components $F'$ such that $F'\not\leq F$ (resp. $F'\not< F$).
Notice that $X^{> F}\subseteq X^{\geq F}$ and the inclusion $j: F\times \Att{C}(F)\hookrightarrow F\times X^{\geq F}$ is well defined and closed. 

The space $\Att{C}^{\leq}(F)$ is a singular closed subspace of $F\times X$. It contains the so-called full attracting set $\Attfull{C}(F)$, which is also $T$-invariant and is defined as the set of pairs $(f,x)\in F\times X$ connected via a chain of closures of attracting orbits. If all the fixed components are singletons, then $\Attfull{C}(F)= \Att{C}^{\leq}(F)$.

\subsection{Stable envelopes: basics} \label{section Stable envelopes: basics}

%\subsubsection{}

The $A$-equivariant elliptic stable envelopes \cite{aganagic2016elliptic, okounkov2020inductiveI} of $X$ are sections of certain line bundles depending on the choice of a chamber $\mathfrak{C}$ and of an attractive line bundle $\Sh{L}_X$. The latter is a line bundle on the scheme $E_T(X)$ satisfying 
\begin{equation}
    \label{defining equation attractive line bundle}
    \deg_A(i_F^*\Sh{L}_X)=\deg_A(\Theta(N^-_{F/X})),
\end{equation}
for all connected components $F\subseteq X^A$. Here $i_F: F\hookrightarrow X$ is the inclusion. For a general definition of degree in this context, we refer to \cite[App. B.3]{okounkov2020inductiveI}. Later, we will give a characterization for bow varieties.

A constructive way to produce an attractive line bundle is to first find a polarization, i.e. a class $T^{1/2}X\in K_T(X)$ that satisfies
\[
TX=T^{1/2}X+ (T^{1/2}X)^\vee.
\]
in $K_A(X)$.
%when restricted to $K_A(X)$.
\begin{proposition}[{\cite[Prop. 2.4, 2.6]{okounkov2020inductiveI}}]
\label{proposition polarization implies existence attractive line bunlde}
Let $T^{1/2}X$ be a polarization and let $\Sh{U}\in \Pic{}{E_T(X)}$ be any line bundle such that $\deg_A(\Sh{U})=0$. Then the line bundle 
$
\Sh{L}_X:=\Theta(T^{1/2}X)\otimes \Sh{U}
$
is attractive.
\end{proposition}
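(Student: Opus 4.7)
The plan is to reduce the attractiveness condition to a purely weight-theoretic computation and then exploit two facts: that $\deg_A$ kills the $A$-invariant part of any Thom sheaf, and that it is invariant under duality. Throughout, write $i_F \colon F \hookrightarrow X$ for the inclusion of a component of $X^A$, and decompose any $V \in K_T(F)$ according to $A$-weights as $V = V^0 + V^+ + V^-$, where $V^0$ is the $A$-fixed part and $V^\pm$ collect weights that are strictly positive (resp.\ negative) on $\sigma \in \mathfrak{C}$.

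First I would observe that since $\deg_A$ is additive, pulling $\mathcal{L}_X = \Theta(T^{1/2}X) \otimes \mathcal{U}$ back to $F$ gives
\[
\deg_A(i_F^*\mathcal{L}_X) \;=\; \deg_A\bigl(\Theta(T^{1/2}X|_F)\bigr) + \deg_A(i_F^*\mathcal{U}).
\]
The hypothesis $\deg_A(\mathcal{U}) = 0$ passes to the restriction $i_F^*\mathcal{U}$, so the second term vanishes and it remains to prove the identity
\[
\deg_A\bigl(\Theta(T^{1/2}X|_F)\bigr) \;=\; \deg_A\bigl(\Theta(N^-_{F/X})\bigr).
\]

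Next I would use two general properties of $\deg_A$ on Thom sheaves, both recorded in \cite[App.~B.3]{okounkov2020inductiveI}. (i) Since $\deg_A$ is a quadratic form in $A$-characters, any bundle with trivial $A$-action contributes $0$; in particular $\deg_A(\Theta(V^0)) = 0$. (ii) Replacing a class by its dual negates weights, so $\deg_A(\Theta(V^\vee)) = \deg_A(\Theta(V))$. Applying (i) to the polarization gives
\[
\deg_A\bigl(\Theta(T^{1/2}X|_F)\bigr) \;=\; \deg_A\bigl(\Theta((T^{1/2}X|_F)^+)\bigr) + \deg_A\bigl(\Theta((T^{1/2}X|_F)^-)\bigr).
\]
On the other hand, restricting the polarization equation $TX = T^{1/2}X + (T^{1/2}X)^\vee$ to $F$ and taking the attracting parts yields
\[
N^-_{F/X} \;=\; (T^{1/2}X|_F)^{+} \;+\; \bigl((T^{1/2}X|_F)^{-}\bigr)^{\vee},
\]
because dualization interchanges positive and negative $A$-weights. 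Applying $\deg_A \circ \Theta$ to both sides, and using (ii) to replace the dual in the second summand, gives
\[
\deg_A\bigl(\Theta(N^-_{F/X})\bigr) \;=\; \deg_A\bigl(\Theta((T^{1/2}X|_F)^{+})\bigr) + \deg_A\bigl(\Theta((T^{1/2}X|_F)^{-})\bigr),
\]
which matches the previous display. This verifies \eqref{defining equation attractive line bundle} and concludes the proof.

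The argument is essentially bookkeeping with weights, so there is no genuine obstacle; the only point that requires care is verifying the two abstract properties of $\deg_A$ (vanishing on $A$-trivial bundles and invariance under duality) in the elliptic setting, for which I would simply cite the discussion of degrees of theta line bundles in \cite[App.~B.3]{okounkov2020inductiveI}. Everything else follows from the polarization identity and the fact that complex conjugation of characters swaps the attracting and repelling half-spaces of $\mathfrak{C}$.
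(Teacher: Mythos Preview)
The paper does not actually prove this proposition; it simply cites \cite[Prop.~2.4, 2.6]{okounkov2020inductiveI} and moves on. Your argument is the standard weight-bookkeeping that underlies those cited results, and the overall strategy is correct.

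One small slip: in the displayed formula for $N^-_{F/X}$ you have the signs reversed. As written, $(T^{1/2}X|_F)^{+} + \bigl((T^{1/2}X|_F)^{-}\bigr)^{\vee}$ consists entirely of \emph{positive} $A$-weights, so it equals $N^+_{F/X}$, not $N^-_{F/X}$. The correct identity is
\[
N^-_{F/X} \;=\; (T^{1/2}X|_F)^{-} + \bigl((T^{1/2}X|_F)^{+}\bigr)^{\vee}.
\]
Fortunately this does not affect your conclusion: immediately afterward you apply the duality invariance $\deg_A(\Theta(V^\vee)) = \deg_A(\Theta(V))$, which makes the two expressions indistinguishable at the level of $\deg_A$. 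So the proof stands once that line is corrected.
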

Let 
$ 
\Sh{L}_{A,F}=i_F^*\Sh{L}_X\otimes \Theta(-N_{F/X}^-).
$
The elliptic stable envelope of $F$ will be a section of the sheaf
\begin{equation}
\label{sheaf definition of stable envelopes}
    (\Sh{L}_{A,F})^{\triangledown}\boxtimes \Sh{L}_X(\infty\Delta)
\end{equation}
on $E_T(F\times X)$. The notation $\infty\Delta$ stands for a singular behavior of the stable envelope on the resonant locus $\Delta\subset E_T(\pt)$, see \cite[
Section 2.3]{okounkov2020inductiveI}. 
Explicitly, given a line bundle $\Sh{G}$ on $E_T(F\times X)$, the sheaf $\Sh{G}$ is defined as follows. Given a pair of connected components $F, F'\subseteq X^A$, consider the morphisms
\[
E_T(F\times F')\xrightarrow{\phi} E_{T/A}(F\times F')\xrightarrow{p} E_{T/A}.
\]
Given an attractive line bundle $\Sh{L}_X$, we define its resonant locus $\Delta\subset E_{T/A}$ as the union of 
\[
p\left(\text{supp}\left(R^\bullet \phi_*\left(\Sh{L}_{A,F'}\boxtimes (\Sh{L}_{A,F})^{-1}\right)\right)\right)
\]
for all fixed components $F'$ such that $F'< F$. Then, for any sheaf $\Sh{G}$ on $E_T(F\times X)$, we set
\[
\Sh{G}(\infty\Delta):=(i_{\Delta})_*(i_{\Delta})^*\Sh{G},
\]
where $i_{\Delta}$ fits in the Cartesian diagram 
\[
\begin{tikzcd}
    q^{-1}(E_{T/A}\setminus \Delta)\arrow[d]\arrow[r, hookrightarrow, "i_{\Delta}"]&  E_T(F\times X)\arrow[d, "q"]\\
    E_{T/A}\setminus \Delta \arrow[r, hookrightarrow]& E_{T/A}
\end{tikzcd}
\]
In conclusion, one can informally say that sections of $\Sh{G}(\infty\Delta)$ are sections of $\Sh{G}$ wish singularities on $\Delta$. The resonant locus is said to be  non-degenerate if its complement in $E_{T/A}$ is open and dense. In the case of bow varieties, the resonant locus will indeed be non-degenerate, and we will see how to bound it.

\begin{remark} 
\label{remark about twists of the attractive line bundle}
    We say that two line bundles in $\Pic{}{E_T(X)}$ are equivalent (in notation~$\equiv$) if they are isomorphic up to a twist by a line bundle $\Sh{G}$ pulled back from $E_T(\pt)$. This is the geometric counterpart of the equivalence relation of Section~\ref{The alpha part}.
    Because of the definition of $(\Sh{L}_{A,F})^{\triangledown}$, the isomorphism class of $\Sh{L}^{\triangledown}_{A,F}\boxtimes \Sh{L}_X$ only depends on the equivalence class of $\Sh{L}_X$.
    
    From now on, we say that $\Sh{L}_X$ is \emph{good} if it differs from an attractive line bundle by a twist of some $\Sh{G}$ as above. A natural situation where good line bundles show up is when $T^{1/2}X=\alpha+\beta$ for some $\beta\in K_T(\pt)$. Then Proposition \ref{proposition polarization implies existence attractive line bunlde} implies that
    \[
    \Sh{L}_X=\Theta(\alpha+\beta)\otimes \Sh{U}=\Theta(\alpha)\otimes \Theta(\beta)\otimes \Sh{U}
    \]
    is attractive, but it might be more convenient to use the good line bundle
    %as prescribed by Proposition \ref{proposition polarization implies existence attractive line bunlde}.Then, it might be more convenient to us Then \eqref{sheaf definition of stable envelopes} is independent of the choice of $\beta$, so we can simply redefine 
    $\Sh{L}_X=\Theta(\alpha)\otimes \Sh{U}$. 
    
    %By a slight abuse of terminology, due to the fact that in general the new $\Sh{L}_X$ will not satisfy \eqref{defining equation attractive line bundle} anymore, we still call $\Sh{L}_X$ an attractive line bundle.
\end{remark}

%\subsubsection{}
\bigskip

For any good line bundle $\Sh{L}_X$, the restriction $\left((\Sh{L}_{A,F})^{\triangledown}\boxtimes \Sh{L}_X\right)|_{F\times \Att{C}(F)}$ is isomorphic to $\Theta(N_j)$, where $N_j$ is the normal bundle to the closed immersion $j: F\times \Att{C}(F)\hookrightarrow F\times X^{\geq F}$. Consequently, there is a well defined pushforward map 
\[
j_{\oast}: \Sh{O}_{E_T(F\times \Att{C}(F))} \to \left((\Sh{L}_{A,F})^{\triangledown}\boxtimes \Sh{L}_X\right)|_{F\times X^{\geq F}}.
\]
The image of the constant $1$ section is, by definition, the class $\left[\Att{C}(F) \right]$.

%Notice that if $F=\lbrace f\rbrace$ is a singleton, then
%\[
%\left[\Att{C}(f) \right]\Big|_{f\times f}=\vartheta(N^-_f).
%\]
\begin{definition}
\label{Definition stable envelopes}
Let $\Sh{L}_{X}$ be a good line bundle for a given choice of chamber $\mathfrak{C}$.
The elliptic stable envelope $\Stab{C}{F}$ for $\Sh{L}_X$ at the fixed component $F\subset X^A$ is a section of $(\Sh{L}_{A,F})^{\triangledown}\boxtimes \Sh{L}_X(\infty \Delta)$ which is supported on $\Attfull{C}(F)$ and restricts to $\left[ \Att{C}(F)\right]$ on $F\times X^{\geq F}$.
\end{definition}
% The following is the main result of \cite{okounkov2020inductiveI}.
\begin{theorem}[Main result of \cite{okounkov2020inductiveI}]
\label{Theorem existence+uniqueness stable envelopes}
    For a fixed $\Sh{L}_{X}$, the stable envelopes exist and are unique. Moreover, the support condition on $\Attfull{C}(F)$ is uniquely determined by a weaker condition, namely support on $\Att{C}^\leq(F)$. 
\end{theorem}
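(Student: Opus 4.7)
The strategy is the inductive argument of \cite{okounkov2020inductiveI}, organized around the partial order on the set of $A$-fixed components. The technical engine is a vanishing lemma: on $E_T(F\times F')$, the line bundle $(\Sh{L}_{A,F})^\triangledown\boxtimes\Sh{L}_X|_{F'}$ has $A$-degrees prescribed by \eqref{defining equation attractive line bundle}, and away from the resonant locus $\Delta$ these degrees preclude non-zero holomorphic sections whose restriction to $F\times(F'\cap X^{\geq F})$ vanishes. The uniqueness, existence, and support-reduction statements all follow by applying this lemma inductively up the poset.

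For \emph{uniqueness}, the difference $\tau=\tau_1-\tau_2$ of two candidates vanishes on $F\times X^{\geq F}$ by the restriction axiom and, by the support axiom, lives on $F\times\bigcup_{F'<F}\overline{\Att{C}(F')}$. Processing the components $F'<F$ from minimal to maximal in the poset and applying the vanishing lemma to $\tau|_{F\times F'}$ forces $\tau=0$. For \emph{existence}, one builds $\Stab{C}{F}$ recursively on the same poset: at minimal $F$ the attracting set $\Att{C}(F)$ is already closed, so $j_{\oast}(1)$ is the stable envelope; at larger $F$ one takes $[\Att{C}(F)]$ as the candidate section on the open locus $F\times X^{\geq F}$ and extends it across the deeper strata by prescribing its behaviour on $F\times\overline{\Att{C}(F')}$ in terms of the stable envelopes already built at smaller $F'$, with the unavoidable poles confined to $\Delta$. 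Solvability of this interpolation problem is exactly the statement that the potential obstructions lie in cohomology groups killed by the degree count furnished by the attractive condition.

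The \emph{support reduction} (replacing $\Attfull{C}(F)$ by $\Att{C}^{\leq}(F)$) is then a byproduct of the same vanishing lemma applied in the complementary direction: any section supported on $\Att{C}^{\leq}(F)$ with the correct restriction on $F\times X^{\geq F}$ must also vanish on every $F\times F'$ with $F'\not\leq F$, so its support automatically refines to $\Attfull{C}(F)$. The main obstacle throughout is the vanishing lemma itself, since one has to certify that the $A$-degrees of $(\Sh{L}_{A,F})^\triangledown\boxtimes \Sh{L}_X|_{F'}$ genuinely forbid stray holomorphic sections; this hinges on $\Sh{L}_X$ being good in the sense of Remark~\ref{remark about twists of the attractive line bundle} together with the non-degeneracy of $\Delta$—a point that for bow varieties will have to be confirmed separately, using the polarization coming from the tensor structure of Section~\ref{section Actions of subtori} and the HW-invariant quadratic form of Theorem~\ref{Theorem: quadratic form is invariant under HW}.
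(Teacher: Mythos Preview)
The paper does not prove this theorem: it is stated as the ``Main result of \cite{okounkov2020inductiveI}'' and cited without proof. There is therefore no proof in the paper to compare your attempt against.

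That said, your sketch is a fair high-level summary of the argument in the cited reference (and its predecessor \cite{aganagic2016elliptic}): induction along the Bruhat order on fixed components, with the vanishing of sections of the off-diagonal line bundles $\Sh{L}_{A,F'}\otimes\Sh{L}_{A,F}^{-1}$ away from $\Delta$ driving both uniqueness and the inductive extension step. One point to separate out: the theorem as stated is a general result requiring only a good $\Sh{L}_X$ with non-degenerate resonant locus, and the proof in \cite{okounkov2020inductiveI} operates at that level of generality. The bow-variety-specific verifications you allude to at the end---that $\Sh{L}_X$ is good and that $\Delta$ is non-degenerate---are not part of the proof of this theorem; in the present paper they are handled afterwards, in Proposition~\ref{Proposition goodness chosen line bundle for bow variety} and Lemma~\ref{lemma: poles stab}, as inputs needed to \emph{apply} the theorem rather than to establish it.
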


%\subsubsection{}

\noindent We conclude the section with two important remarks.
\begin{remark}
    \label{Remark axiomatic stab}
    From Definition \ref{Definition stable envelopes} and Theorem \ref{Theorem existence+uniqueness stable envelopes}, we see that the stable envelopes are uniquely determined by the following two axioms:
\begin{enumerate}
    \item The support axiom: the restriction of $\Stab{C}{F}$ to $F\times X^{>F}=F\times X\setminus \Att{C}^\leq(F)$ is zero.
    \item The diagonal axiom: the restriction of $\Stab{C}{F}$ on $F\times X^{\geq F}$ is equal to $\left[ \Att{C}(F)\right]$.
\end{enumerate}
These axioms are often exploited to prove statements about the stable envelopes. 

Also notice that if we further restrict $\Stab{C}{F}$ to $F\times F\subset F\times X^{\geq F}$, we get $[\Att{C}(F)]|_{F\times F}=\vartheta(N_{F/X}^-)[\Delta]$, where $[\Delta]$ is the fundamental class of the diagonal.
\end{remark}

\begin{remark}
    Assuming that the composition $\Attfull{C}(F)\hookrightarrow F\times X \to X$ is proper, the stable envelope can be thought of as a map rather than a section. Indeed, the properness assumption implies that $\Stab{C}{F}$ defines a map\footnote{The invariance of the section $\Stab{C}{F}$ under twists of the attractive line bundle $\Sh{L}_X$ by some line bundle $\Sh{G}$ as in Remark~\ref{remark about twists of the attractive line bundle} translates in the freedom of twisting the associated map by the same $\Sh{G}$.} 
\[
\Stab{C}{F}:\Sh{L}_{A,F}\to \Sh{L}_X.
\]
via convolution, cf. Section~\ref{subsection action by correspondences}.
\end{remark}

%Existence of stable envelopes is constrained by the existence of an attractive line bundle $\Sh{L}_X$. Soon, we will prove that such a line bundle exists for any bow variety $X$. 

\subsection{Extended elliptic cohomology of bow varieties}

Let $X$ be a bow variety and consider the action of a torus $T=A\times \Cs_{\h}\subseteq \Tt$ for some $A\subseteq \At$.

As discussed in the previous section, the existence of stable envelopes is implied
%constrained 
by the existence of a good line bundle $\Sh{L}_X$. Even if such a line bundle exists,
the stable envelopes are in general sections of $\Sh{L}^{\triangledown}_{A,F}\boxtimes \Sh{L}_X$ with an arbitrary singular behavior on the resonant locus $\infty\Delta\subset E_{\Tt}(X)$, which might be exceedingly big. Consequently, it is important to bound $\infty\Delta$ to the complement of a dense open set. 
Just like for Nakajima varieties, we will achieve this for bow varieties at the cost of enlarging the coefficient space $E_{\Tt}(\pt)$ of the cohomology theory by pulling back all bundles and sections to 
\begin{equation}
    \label{extended base}
    \Base{\Tt}{\Tt^!}:= E_{\At\times \Cs_{\h}\times \At^!}(\pt)=E_{\Tt}(\pt)\times E_{\At^!}(\pt).
\end{equation}

Here, $\At^!=\C^{m}$ is the torus of K\"ahler (or dynamical) variables $(z_1,\dots z_m)$ attached to the $m$ NS5 branes of the bow variety $X$. Thus, for a bow variety with $n$ D5 branes and $m$ NS5 branes, we have $\Base{\Tt}{\Tt^!}\cong E_\hbar\times E^n\times E^m$, and the equivariant and K\"ahler parameters are naturally attached to the appropriate 5-branes (we chose their numbering from left to right):
\begin{equation*}
\begin{tikzpicture}[baseline=0,scale=.4]
\draw [thick,red] (0.5,0) --(1.5,2); 
\draw[thick] (1,1)--(2.5,1) -- (31,1);
\draw [thick,blue](4.5,0) --(3.5,2);  
\draw [thick](4.5,1)--(5.5,1) -- (6.5,1);
\draw [thick,red](6.5,0) -- (7.5,2);  %%
\draw [thick](7.5,1) --(8.5,1) -- (9.5,1); 
\draw[thick,blue] (10.5,0) -- (9.5,2);  
\draw[thick] (10.5,1) --(11.5,1) -- (12.5,1); 
\draw [thick,red](12.5,0) -- (13.5,2);   %%
\draw [thick](13.5,1) --(14.5,1) -- (15.5,1);
\draw[thick,red] (15.5,0) -- (16.5,2);  %%
\draw [thick](16.5,1) --(17.5,1) -- (18.5,1);  
\draw [thick,red](18.5,0) -- (19.5,2);  %%
\draw [thick](19.5,1) --(20.5,1) -- (21.5,1);
\draw [thick,blue](22.5,0) -- (21.5,2);
\draw [thick](22.5,1) --(23.5,1) -- (24.5,1);  
\draw[thick,red] (24.5,0) -- (25.5,2); 
\draw[thick] (25.5,1) --(26.5,1) -- (27.5,1);
\draw [thick,blue](28.5,0) -- (27.5,2);  %%
\draw [thick](28.5,1) --(29.5,1) -- (30.5,1);   
\draw [thick,blue](31.5,0) -- (30.5,2);   %%

%%%%%%%%%%%%%%%%%%%%%

\node [blue] at (4.2,-1) {\small $a_1$};
\node [blue] at (10.2,-1) {\small $a_2$};
\node [blue] at (22.2,-1) {\small $a_3$};
\node [blue] at (28.2,-1) {\small $a_4$};
\node [blue] at (31.2,-1) {\small $a_5$};

\node [red] at (.6,-1) {\small $z_1$};
\node [red] at (6.5,-1) {\small $z_2$};
\node [red] at (12.6,-1) {\small $z_3$};
\node [red] at (15.6,-1) {\small $z_4$};
\node [red] at (18.6,-1) {\small $z_5$};
\node [red] at (24.6,-1) {\small $z_6$};

\end{tikzpicture}.
\end{equation*}
%Notice the ordering from left to right.
Like for Nakajima varieties \cite{aganagic2016elliptic, okounkov2020inductiveI}, the elliptic stable envelopes of bow varieties defined on the extended space \eqref{extended base} will only have poles jointly in the K\"ahler variables $z$ and $\hbar$.

\subsection{Chamber structure for a bow variety}
\label{subsection: hamber structure}
Let $X$ be a bow variety with $n\geq 2$ D5 branes and consider the action of the torus $A=\Cs_{a'}\times \Cs_{a''}\subset \At$ introduced in Section~\ref{section Actions of subtori}. Inspecting the proof of Proposition \ref{proposition tensor product decomposition fixed locus}, it is easy to check that the normal weights $\chi\in \Char(A)$ are of the form $\chi=(a'/a'')^{\pm}$. Consequently, the action of the torus $A$ admits two chambers:
\[
\mathfrak{C}_+=\lbrace a' <a''\rbrace \qquad \mathfrak{C}_-=\lbrace a'' <a'\rbrace.
\]
Similarly, the tangent weights for the $\At$ action on $X$ are of the form $a_i/a_j$, with $0 \leq i,j \leq n$ \cite[Section 4]{rimanyi2020bow}. 
Hence, a permutation of the $a_i$ variables determines a chamber. By slight abuse of notation we will write 
$\lbrace a_{\sigma(1)}< \dots <a_{\sigma(n)}\rbrace$ for the chamber it determines. The chamber determined by the choice
$\lbrace a_{1}< \dots <a_{n}\rbrace$ will be called the {\em standard chamber} for the action of $\At$. 
%Hence, a general chamber for the $\At$-action is the convex alcove
%\[
%\mathfrak{C}=\lbrace a_{\sigma(1)}< \dots <a_{\sigma(n)}\rbrace.
%\] In particular, chambers are in bijection with permutations $\sigma\in S_n$. We call the chamber $\mathfrak{C}=\lbrace a_{1}< %\dots <a_{n}\rbrace$ corresponding to the identity permutation the standard chamber for the action of $\At$.

\subsection{Stable envelopes of bow varieties: definition}
\label{section: Stable envelopes of bow varieties: definition}

Let $X$ be an arbitrary bow variety with $m$ NS5 branes, ordered from left to right, and $\Tt=\At\times \Cs_{\h}$ be the usual torus acting on it. Let also $\eta_k$ be one (any) of the bundles in between the NS5 branes $\Zb_k$ and $\Zb_{k+1}$, cf. \eqref{etafig}.

Recall the line bundles introduced in Section~\ref{Section: Chern roots and thom sheaves}. We define
\begin{equation}
    \label{good line bundle bow variety}
    \Sh{L}_X:=\Theta(\alpha)\otimes \Sh{U} \in \Pic{}{E_{\Tt\times \At^!}(X)},
\end{equation}
where $\alpha\in K_{\Tt}(X)$ is the class \eqref{class alpha for general bow variety} and 
\[
\Sh{U}:=\bigotimes_{k=1}^{m-1} \Sh{U}\left(\eta_k,\frac{z_k}{z_{k+1}}\h^{\ell(\Zb_{k+1})-\ch(\Zb_k)}\right),
\]
\begin{definition}
Let $A\subseteq \At$ and $F$ a connected component of $X^A$. The stable envelope $\Stab{C}{F}$ is the unique section of $\Sh{L}_{A,F}^{\triangledown}\boxtimes \Sh{L}_X{(\infty\Delta)}$ %with possible poles in $z$ and $\h$ variables, 
that satisfies the axioms of Remark \ref{Remark axiomatic stab}.
\end{definition}

In the next proposition we show that this definition is independent of the choice of the $\eta_k$ bundle in the definition of $\Sh{U}$, and the following theorem claims the existence, uniqueness, and holomorphicity property of $\Stab{C}{F}$. 

Notice that $\Sh{L}_X$ is tautological, in the sense that it is pulled back by the equivariant Chern class morphism 
\[
c: E_{\Tt\times \At^!}(X)\to \Base{\Tt}{\Tt^!} \times \prod_{k=1}^m E^{(\rk(\eta_k))},
\]
cf. Section~\ref{Section: Chern roots and thom sheaves}. Recall that complex line bundles $\Sh{L}(Q,v)$ on an abelian variety of the form $\Sh{E}=\C^n/\Gamma$ are classified in terms of a quadratic form $Q\in \text{Mat}_{n\times n}(\Z)$ and a point $v$ in the dual variety $\Sh{E}^\vee=\Pic{0}{\Sh{E}}$, see \cite[Section 5.1]{Felder2018}. In this language, we have
\begin{equation}
    \label{good line bundle in terms of quadratic forms}
    \Sh{L}_X\cong c^* \Sh{L}(Q(\alpha)+QU,0),
\end{equation}
where $Q(\alpha)+QU$ is the quadratic form introduced in Section~\ref{subsection Quadratic forms}. Hence, checking whether a particular tautological formula is the section of the required bundle $\Sh{L}_{A,F}^{\triangledown}\boxtimes \Sh{L}_X$ reduces to a quadratic form calculus, see examples in Section \ref{sec:StabForPn}, cf. \cite{RRTBshuffle}.

Recall the equivalence relation $\equiv$ and the notion of good line bundle introduced in Remark~\ref{remark about twists of the attractive line bundle}.

\begin{proposition}\ 
\label{Proposition goodness chosen line bundle for bow variety}
\begin{enumerate}
    \item The equivalence class of $\Sh{L}_X$ is independent of the choices of $\eta_k$.
    \item The Hanany-Witten isomorphism $X_1\cong X_2$ induces an equivalence $\Sh{L}_{X_1}\equiv \Sh{L}_{X_2}$.
    \item The line bundle \eqref{good line bundle bow variety} is good for the action of any torus $A\subset \At$.
\end{enumerate}
\end{proposition}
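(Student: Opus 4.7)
The plan is to reduce all three claims to the quadratic form machinery of Section \ref{subsection Quadratic forms} combined with the polarization criterion of Proposition \ref{proposition polarization implies existence attractive line bunlde}.

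For parts (1) and (2), I would exploit the tautological description \eqref{good line bundle in terms of quadratic forms}: the equivalence class of $\Sh{L}_X$ is determined by the equivalence class of the quadratic form $Q(\alpha)+\QU$ on the extended base. For (1), replacing one choice of $\eta_k$ by another bundle $\eta_k'$ lying between $\Zb_k$ and $\Zb_{k+1}$ alters $\eta_k$ by a topologically $\Tt$-trivial class $N_{ki}(a,\h)$ (Proposition \ref{prop:negligible}). Substituting this change into the defining expression \eqref{equadratic form U part} of $\QU$ gives a correction which is a quadratic form in the $a_i,\h,z_j$ variables only, hence $\equiv 0$. Part (2) is then immediate from Theorem \ref{Theorem: quadratic form is invariant under HW}: the HW isomorphism intertwines the tautological Chern root maps on $X_1$ and $X_2$ up to the shifts recorded in \eqref{HWfig}, and the quadratic form $Q(\alpha)+\QU$ is HW-invariant under $\equiv$.

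For part (3), the strategy is to produce a polarization $T^{1/2}X$ such that $\Theta(T^{1/2}X)$ agrees with $\Sh{L}_X$ up to a twist of zero $A$-degree, and then invoke Proposition \ref{proposition polarization implies existence attractive line bunlde}. The natural candidate is a class of the same shape as $\alpha$, namely
\[
T^{1/2}X = \bigoplus_{\Ab} \bigl(\Hom(\xi^{\Ab^+},\xi^{\Ab^-}) + \Hom(\C_{\Ab},\xi^{\Ab^-})\bigr) + \bigoplus_{\Zb} \Hom(\xi^{\Zb^-},\xi^{\Zb^+}) - \bigoplus_{\xi} \End(\xi).
\]
Expanding the Nakajima--Takayama description $TX = \MM - \NN - 2\bigoplus_{\Xb}\End(W_{\Xb})$ in $K_{\Tt}(\Ch(\DD))$ and exploiting the duality $TX^\vee \cong \h\cdot TX$ induced by the holomorphic symplectic form, one verifies that $TX = T^{1/2}X + (T^{1/2}X)^\vee$ modulo classes pulled back from $K_{\Tt}(\pt)$. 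So $\Theta(\alpha) \equiv \Theta(T^{1/2}X)$ is attractive up to equivalence. The remaining factor $\Sh{U}$ in $\Sh{L}_X = \Theta(\alpha) \otimes \Sh{U}$ must contribute zero $A$-degree: each $\Sh{U}\bigl(\eta_k, z_k/z_{k+1}\h^{\ell(\Zb_{k+1})-\ch(\Zb_k)}\bigr)$ depends on $A$ only through the Chern roots of $\eta_k$, while its K\"ahler/$\h$ argument carries no $A$-weight, and the last identity of Lemma \ref{Lemma properties U bundle} presents $\Sh{U}$ as $\Theta$ of a class whose $A$-degree manifestly vanishes. Hence $\Sh{L}_X$ is good in the sense of Remark \ref{remark about twists of the attractive line bundle}.

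The main obstacle I anticipate is the polarization identity for (3): the tangent bundle of a bow variety is less symmetric than that of a Nakajima quiver variety, due to the three-way parts involving the one-dimensional factors $\C_{\Ab}$ and the $\h\End(W_{\Ab^\pm})$ contributions in $\MM_{\Ab}$. Verifying that $T^{1/2}X + (T^{1/2}X)^\vee = TX$ modulo $K_{\Tt}(\pt)$ requires careful bookkeeping of these asymmetric terms together with the $\End(W_{\Xb})$ pieces coming from the gauge reduction. Once that single finite computation is in place, the remainder of the argument is essentially formal.
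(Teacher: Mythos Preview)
Your approach to (1) and (2) is exactly the paper's: both follow from the quadratic form description \eqref{good line bundle in terms of quadratic forms} together with Proposition~\ref{prop:negligible} and Theorem~\ref{Theorem: quadratic form is invariant under HW}.

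For (3) your overall strategy (polarization plus $\deg_A\Sh{U}=0$) is also the paper's, but there are two points worth flagging. First, the paper does not attempt the polarization identity for a general brane diagram: it uses part (2) to reduce to the \emph{separated} case, where the identity
\[
TX=\alpha+\beta+\h(\alpha+\beta)^\vee,\qquad \beta\in K_{\Cs_\h}(\pt),
\]
is known (cited from \cite[Section 4.4.2]{Shou}). This reduction is the main simplification you are missing; the direct verification you outline for an arbitrary $\DD$ is considerably messier. Second, there is a slip in your chain ``$\Theta(\alpha)\equiv\Theta(T^{1/2}X)$'': your candidate $T^{1/2}X$ equals $\h\,\alpha^\vee$, and $\Theta(\h\,\alpha^\vee)$ and $\Theta(\alpha)$ differ by a line bundle whose quadratic form is $-2\h\sum_i t_i+\text{const}$, which is \emph{not} $\equiv 0$. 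The paper's polarization is $\alpha+\beta$ itself (not $\h\alpha^\vee$), so that $\Theta(\alpha)$ is good directly via Remark~\ref{remark about twists of the attractive line bundle}. Finally, for $\deg_A\Sh{U}=0$ the paper's argument is simpler than invoking the last identity of Lemma~\ref{Lemma properties U bundle}: one just observes that $\QU$ in \eqref{equadratic form U part} is \emph{linear} in the Chern roots $t^{\eta_k}_i$, so after fixed-point restriction the degree-two-in-$A$ part vanishes.
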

\begin{proof}

The first two claims follow from \eqref{good line bundle in terms of quadratic forms}, Proposition \ref{prop:negligible}, and Theorem \ref{Theorem: quadratic form is invariant under HW}. We now prove the third one. Since any bow variety is isomorphic to a separated one via the Hanany-Witten transition, point (2) implies that we can assume that $X$ is separated.

As shown in \cite[Section 4.4.2]{Shou}, for $X$ separated there exists a class $\beta\in K_{\Cs_{\h}}(\pt)$ such that $\alpha+\beta$ satisfies %the equation
\begin{equation}
   \label{equation separated alpha is a polarization}
    TX=\alpha+\beta +\h (\alpha+\beta)^\vee
\end{equation}
in $K_{\Tt}(X)$, and hence is a polarization for any $A\subset \At$. Hence, as observed in Remark \ref{remark about twists of the attractive line bundle}, the line bundle $\Theta(\alpha)$ is good. Moreover, it is easy to see that $\deg_A(i^*_F\Sh{U})=0$ for any fixed component $F\in X^A$. Indeed, the $A$-degree is equal to the part of the quadratic form associated with $i^*_F\Sh{U}$ with degree two in the equivariant parameters of $A$. Since these parameters can only appear by restricting the Chern roots $t_{i}^{\eta_k}$, which have degree one in \eqref{equadratic form U part}, the degree two part of $i^*_F\Sh{U}$ in the $A$-parameters must be zero. Hence, $\Sh{L}_X=\Theta(\alpha)\otimes \Sh{U} $
is also good. 
\end{proof}

Point (1) of Proposition~\ref{Proposition goodness chosen line bundle for bow variety} implies that the line bundles $\Sh{L}^{\triangledown}_{A,F}\boxtimes \Sh{L}_X$ are independent of the choices of the classes $\eta_k$, up to isomorphism.

\begin{lemma}
\label{lemma: poles stab}
    Fix $A\subset \At$. The line bundle $\Sh{L}_X$ is non-degenerate, i.e. the complement of resonant locus $\Delta$ in 
    \[
    E_{\Tt^!}(\pt).
    \]
    is open and dense. Moreover, $\Delta$ is contained in the complement of the union of codimension-one abelian varieties of the form $\{z_i/z_j\hbar^{\alpha_{ij}}=1\}$ for some $\alpha_{ij}\in \Z$.
\end{lemma}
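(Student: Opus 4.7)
The strategy is to reduce the computation of the resonant locus to the case of pairs of isolated fixed points and then to read off the explicit equations from the factorization $\Sh{L}_X=\Theta(\alpha)\otimes\Sh{U}$ in \eqref{good line bundle bow variety}.

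First, reduce to pairs of fixed points. Since $X^{\At}$ is finite (Section~\ref{sec:fixedpoints}), every connected component $F\subseteq X^A$ is a finite disjoint union of fixed points, and the resonant locus $\Delta\subset E_{T/A}(\pt)$ decomposes as a finite union, indexed by ordered pairs $(f,f')$ with $f'<f$, of the image in $E_{\Tt^!}(\pt)$ of $\operatorname{supp} R^\bullet \phi_*\!\left(\Sh{L}_{A,f'}\boxtimes\Sh{L}_{A,f}^{-1}\right)$. I would then show that each such piece is contained in a finite union of hypersurfaces of the stated form.

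Second, use goodness to control fiberwise degrees. By Proposition~\ref{Proposition goodness chosen line bundle for bow variety}(3) the line bundle $\Sh{L}_X$ is good, so $\Sh{L}_{A,f'}\boxtimes\Sh{L}_{A,f}^{-1}$ has $A$-fiberwise degree zero. The standard theory of line bundles on an abelian variety then gives that $R^\bullet\phi_*$ of a fiberwise degree-zero line bundle is supported exactly on the (codimension $\geq 1$) locus in the base where the restriction to the fiber is the trivial bundle. Hence $\Delta$ is automatically a proper closed subvariety of $E_{T/A}(\pt)$.

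Third, extract the defining equations. Restricting each factor $\Sh{U}(\eta_k,z_k/z_{k+1}\h^{\ell(\Zb_{k+1})-\ch(\Zb_k)})$ to a fixed point $f$ and using Lemma~\ref{Lemma properties U bundle} gives a tensor product of line bundles
\[
\Theta\!\left((t_i^{\eta_k}|_f-1)(z_k/z_{k+1}\h^{e_k}-1)\right),
\]
where by Section~\ref{sec:fixedpoints} each restriction $t_i^{\eta_k}|_f$ is a monomial $a_r\h^s$ with bounded integer exponents. Forming the ratio at $(f',f)$ and specializing the trivialization condition to a generic point of $E_A$ (i.e.\ pushing forward by $\phi$) reduces to finitely many equations of the form $z_k/z_{k+1}\h^\alpha=1$ with $\alpha\in\Z$ controlled by the butterfly diagram data. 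The remaining factor $\Theta(\alpha)|_{f'}\otimes\Theta(\alpha)|_f^{-1}$ is pulled back from $E_{\At\times\Cs_\h}(\pt)$, so it contributes only equations in $a,\h$; after projecting to $E_{\Tt^!}(\pt)=E_{\At^!}\times E_\h$ at generic $a$, these produce no new $z$-dependent hypersurfaces. Finiteness of the set of fixed points, of the indices $k$, and of the Chern roots $t_i^{\eta_k}$ then bounds $\Delta$ by a \emph{finite} union of divisors $\{z_i/z_j\h^{\alpha_{ij}}=1\}$, yielding both the density claim and the explicit containment.

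The main obstacle is the bookkeeping in step three: one must carefully track how the $\Theta(\alpha)$ factor, which mixes $a$ and $\h$, interacts under $\phi_*$ with the $z$-dependent $\Sh{U}$ factor, and then verify that its contribution to $\Delta$, after projection to $E_{\Tt^!}(\pt)$, is absorbed into a generic $a$-specialization and produces no new divisors. This is essentially the quadratic-form calculus of Section~\ref{subsection Quadratic forms} together with \eqref{good line bundle in terms of quadratic forms}; the bounds on $\alpha_{ij}$ ultimately come from the bounded $\h$-exponents appearing in the restrictions $t_i^{\eta_k}|_f$.
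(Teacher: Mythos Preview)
Your proposal has two genuine gaps. First, Step 1 is wrong for a proper subtorus $A\subsetneq\At$: you claim that ``every connected component $F\subseteq X^A$ is a finite disjoint union of fixed points'' because $X^{\At}$ is finite, but by Theorem~\ref{proposition tensor product decomposition fixed locus} the $A$-fixed components are products of bow varieties and generically have positive dimension. The resonant locus is defined via $R^\bullet\phi_*$ of a line bundle on $E_T(F'\times F)$ with $F,F'$ positive-dimensional, so you cannot reduce to pairs of isolated fixed points without a further argument. Second, in Step 2 you assert that the support of $R^\bullet\phi_*$ of a fiberwise degree-zero line bundle automatically has codimension $\geq 1$, making $\Delta$ a \emph{proper} closed subset. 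Degree zero alone does not guarantee this: if the line bundle were pulled back from the base, it would be trivial on every fiber and the support would be everything. What is missing is exactly the non-degeneracy input---that for $F'<F$ the $z$-dependent part of $\Sh{L}_{A,F'}\otimes\Sh{L}_{A,F}^{-1}$ is genuinely nontrivial. This is where ampleness of $\prod_k\det(\eta_k)$ enters (an ample line bundle separates fixed components), and you never invoke it. Your Step 3 computes the \emph{shape} of the possible equations but does not establish that the system is nontrivial for each relevant pair.

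The paper's proof is structurally different and sidesteps both issues. It reduces to the separated case via Hanany--Witten (Proposition~\ref{Proposition goodness chosen line bundle for bow variety}(1)--(2)), observes that then $\prod_k\det(\eta_k)$ is the canonical ample line bundle of the GIT quotient, and invokes Okounkov's general non-degeneracy criterion \cite[Prop.~2.6]{okounkov2020inductiveI}, which handles arbitrary $A\subset\At$ and positive-dimensional fixed components simultaneously. The explicit shape of $\Delta$ is then obtained as a direct generalization of \cite[Section~2.3.7]{okounkov2020inductiveI}. The paper also records an independent argument for the important case $A=\At$: one reduces to partial flag varieties by iterating the D5 resolution formula (Proposition~\ref{multiple terms D5 resolution stabs separated case}), where the result is already known.
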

\begin{proof}
    By the first two points of Proposition \ref{Proposition goodness chosen line bundle for bow variety}, it suffices to assume that $X$ is separated. In this case there is no ambiguity in the choice of $\eta_{k}$. Moreover, the product $\prod_{k}\det(\eta_k)$ is the canonical ample line bundle of the GIT quotient $X$. Hence, the first claim follows from \cite[Proposition 2.6]{okounkov2020inductiveI}. The second statement is a direct generalization of \cite[Section 2.3.7]{okounkov2020inductiveI}. An independent proof in the important case $A=\At$ follows from the known result in the case when $X$ is a partial flag variety and an iteration of Proposition~\ref{multiple terms D5 resolution stabs separated case}.
\end{proof}

\begin{theorem}
Stable envelopes for a bow variety exist for any $A\subseteq \At$ and are unique. Moreover, they are sections 
\[
\Stab{C}{F}\in \Gamma( \Sh{L}^{\triangledown}_{A,F}\boxtimes \Sh{L}_X)_{mer}
\]
meromorphic in the variables $z$ and $\h$ and holomorphic in all the remaining ones.
\end{theorem}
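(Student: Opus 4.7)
The plan is to deduce the theorem by combining the general existence/uniqueness result for elliptic stable envelopes (Theorem~\ref{Theorem existence+uniqueness stable envelopes}) with the goodness of the specific line bundle $\Sh{L}_X$ introduced in \eqref{good line bundle bow variety} and the bound on the resonant locus in Lemma~\ref{lemma: poles stab}. The proof should be short: most of the technical work has already been done in establishing Proposition~\ref{Proposition goodness chosen line bundle for bow variety} and Lemma~\ref{lemma: poles stab}.

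First, I would invoke Proposition~\ref{Proposition goodness chosen line bundle for bow variety}(3), which guarantees that $\Sh{L}_X$ is a good line bundle on $E_{\Tt\times \At^!}(X)$ for every subtorus $A\subseteq \At$. This means $\Sh{L}_X$ agrees, up to a twist by a line bundle pulled back from $E_{\Tt\times \At^!}(\pt)$, with an attractive line bundle in the sense of \eqref{defining equation attractive line bundle}. As explained in Remark~\ref{remark about twists of the attractive line bundle}, such twists leave the target sheaf $\Sh{L}^{\triangledown}_{A,F}\boxtimes \Sh{L}_X$ unchanged up to isomorphism, so the two axioms of Remark~\ref{Remark axiomatic stab} are unambiguously formulated. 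Theorem~\ref{Theorem existence+uniqueness stable envelopes} then directly yields the existence and uniqueness of $\Stab{C}{F}$ as a section of $\Sh{L}^{\triangledown}_{A,F}\boxtimes \Sh{L}_X(\infty\Delta)$.

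Next, for the holomorphy statement, I would use Lemma~\ref{lemma: poles stab}. That lemma shows that the resonant locus $\Delta\subset E_{\Tt^!}(\pt)$ is contained in the union of codimension-one abelian subvarieties of the form $\{z_i/z_j\,\h^{\alpha_{ij}}=1\}$. Since these divisors only involve the K\"ahler variables $z$ and the equivariant parameter $\h$, the section $\Stab{C}{F}$ can a priori acquire poles only along loci defined by equations in these variables. Equivalently, after restricting to a generic fiber over the subvariety $E_{\At}(\pt)\subset \Base{\Tt}{\Tt^!}$ cut out by fixing the equivariant parameters $a_i$, the stable envelope is a genuine holomorphic section; the singular behaviour is entirely confined to the $(z,\h)$-directions. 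Putting these two ingredients together produces the claimed meromorphic/holomorphic decomposition.

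The main content of the theorem has therefore been packaged into the preceding technical results: the goodness argument in Proposition~\ref{Proposition goodness chosen line bundle for bow variety} (which itself relied on the polarization \eqref{equation separated alpha is a polarization}, the HW-invariance of the quadratic form $Q(\alpha)+\QU$ from Theorem~\ref{Theorem: quadratic form is invariant under HW}, and the vanishing of $\deg_A(i^*_F\Sh{U})$ from the $z$-linearity of $\QU$ in \eqref{equadratic form U part}), and the control of the resonant locus in Lemma~\ref{lemma: poles stab}. The only subtlety I anticipate in writing it out is bookkeeping the equivalence $\equiv$ carefully so that the choice of bundles $\eta_k$ in the definition of $\Sh{U}$ does not affect the final statement; but this is precisely what Proposition~\ref{Proposition goodness chosen line bundle for bow variety}(1) is designed to handle, and no further argument should be needed.
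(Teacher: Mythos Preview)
Your proposal is correct and follows essentially the same approach as the paper: existence and uniqueness come from Theorem~\ref{Theorem existence+uniqueness stable envelopes} applied to the good line bundle $\Sh{L}_X$ (via Proposition~\ref{Proposition goodness chosen line bundle for bow variety}(3)), and the meromorphy/holomorphy statement comes from Lemma~\ref{lemma: poles stab}. The paper's own proof is the same two-line argument, with your additional commentary simply unpacking why those cited results suffice.
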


\begin{proof}
    The theorem now follows from Okounkov's general theory. The first claim follows from Theorem~\ref{Theorem existence+uniqueness stable envelopes} applied to our chosen good line bundle $\Sh{L}_X$. The second one follows from Lemma~\ref{lemma: poles stab}.%\tommy{we can comment the rest of the proof now}Proposition~2.6 and Lemma~2.7 in \cite{okounkov2020inductiveI} together with the fact that the $A$-weights of the restrictions $\eta_k|_F$ of the tautological bundles entering the definition of $\Sh{U}$ uniquely determine $F$ among the $A$-fixed components \cite[Section 4.6]{rimanyi2020bow}.
\end{proof}

As a straightforward consequence of item (2) of Proposition \ref{Proposition goodness chosen line bundle for bow variety} and uniqueness of stable envelopes, we also obtain
\begin{corollary}
\label{Corollary: stable envelopes match under HW}
    Under the isomorphism $X_1\cong X_2$ induced by a Hanany-Witten transition, the stable envelopes of $X_1$ and $X_2$ are identified.
\end{corollary}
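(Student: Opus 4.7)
The plan is to deduce the corollary directly from the uniqueness part of Theorem~\ref{Theorem existence+uniqueness stable envelopes} together with item (2) of Proposition~\ref{Proposition goodness chosen line bundle for bow variety}. Let $\varphi: X_1 \xrightarrow{\sim} X_2$ denote the Hanany-Witten isomorphism. First I would observe that $\varphi$ is $\Tt$-equivariant (the HW transition does not alter the torus action on the bow variety, only repackages the tautological bundles as in \eqref{HWfig}), so for any subtorus $A \subseteq \At$ the chamber structure, the decomposition into $A$-fixed components, the attracting sets $\Att{C}(F)$, and the generalized Bruhat order are all carried by $\varphi$ from $X_1$ to $X_2$. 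In particular, if $F \subset X_1^A$ is a fixed component, then $\varphi(F) \subset X_2^A$ is the corresponding component, and $\varphi$ takes $\Att{C}(F)$ to $\Att{C}(\varphi(F))$ and $\Attfull{C}(F)$ to $\Attfull{C}(\varphi(F))$.

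Next I would pass to the line bundles. Item (2) of Proposition~\ref{Proposition goodness chosen line bundle for bow variety} gives $(\varphi \times \varphi)^* \Sh{L}_{X_2} \equiv \Sh{L}_{X_1}$; restricting to $F$ one likewise gets $\varphi^* \Sh{L}_{A,\varphi(F)} \equiv \Sh{L}_{A,F}$. Therefore
\[
(\varphi\times \varphi)^*\bigl(\Sh{L}^{\triangledown}_{A,\varphi(F)} \boxtimes \Sh{L}_{X_2}(\infty\Delta)\bigr) \;\cong\; \Sh{L}^{\triangledown}_{A,F} \boxtimes \Sh{L}_{X_1}(\infty\Delta),
\]
since, as remarked in Remark~\ref{remark about twists of the attractive line bundle}, the isomorphism class of $\Sh{L}^{\triangledown}_{A,F}\boxtimes \Sh{L}_X$ depends only on the equivalence class of $\Sh{L}_X$, and $\varphi$ respects the resonant locus because the latter is intrinsic to the $A$-fixed geometry. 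Hence the pullback $(\varphi\times\varphi)^*\Stab{C}{\varphi(F)}$ makes sense as a section of the same sheaf in which $\Stab{C}{F}$ lives.

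Finally I would verify that this pullback satisfies the two axioms of Remark~\ref{Remark axiomatic stab} for $X_1$. The support axiom transports verbatim: $\Stab{C}{\varphi(F)}$ vanishes off $\Att{C}^{\leq}(\varphi(F))$, whose preimage under $\varphi\times\varphi$ is $\Att{C}^{\leq}(F)$. The diagonal axiom also transports: on $F \times X_1^{\geq F}$ the class $[\Att{C}(F)]$ is the pushforward of $1$ along the closed inclusion $F\times \Att{C}(F) \hookrightarrow F\times X_1^{\geq F}$, and this pushforward is preserved by the equivariant isomorphism $\varphi$. Applying uniqueness (Theorem~\ref{Theorem existence+uniqueness stable envelopes}) we conclude $(\varphi\times\varphi)^*\Stab{C}{\varphi(F)} = \Stab{C}{F}$, which is the claim.

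The only subtle point, and the one I would be most careful about, is making sure the equivalence $\Sh{L}_{X_1}\equiv \Sh{L}_{X_2}$ (i.e.\ equality up to a twist pulled back from $E_{\Tt\times \At^!}(\pt)$) indeed yields a canonical identification of the ambient sheaves for the stable envelopes, including the resonant locus; this is guaranteed by the observation in Remark~\ref{remark about twists of the attractive line bundle} that $\Sh{L}^{\triangledown}_{A,F}\boxtimes \Sh{L}_X$ only depends on the equivalence class of $\Sh{L}_X$, so everything else is bookkeeping.
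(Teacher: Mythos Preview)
Your proposal is correct and follows exactly the approach the paper indicates: the paper states this corollary as a ``straightforward consequence of item (2) of Proposition~\ref{Proposition goodness chosen line bundle for bow variety} and uniqueness of stable envelopes,'' and you have simply spelled out the details of that straightforward consequence. The only minor quibble is notational---$\Sh{L}_{X_2}$ lives on $E_{\Tt\times\At^!}(X_2)$, so the pullback is by $\varphi$ rather than $\varphi\times\varphi$ (the latter is what you need for the sheaf on $F\times X$)---but this does not affect the correctness of the argument.
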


\subsection{Stable envelopes for $T^*\mathbb{P}^n$}
\label{sec:StabForPn}

Let $n\in \NN$ and consider the brane diagrams
\[  \DD=\ttt{\fs 1\fs n\bs n-1\bs \ldots \bs 2\bs 
	1\bs}
\qquad\qquad \text{and} \qquad\qquad
\DD'=\ttt{\bs 1\bs 2\bs\ldots\bs n-1\bs n\fs 1\fs}.
\]
The associated variety in both cases is $T^*\mathbb{P}^{n-1}$.

Let $f_k\in\Ch(\DD)$ be the fixed point whose diagram has ties connecting $\Zb_1$ with $\Ab_k$, as well as $\Zb_2$ with $\Ab_l$ for all $l\not=k$. Let  $f_k'\in\Ch(\DD')$ be the fixed point whose diagram has ties connecting $\Zb_2$ with $\Ab_k$, as well as $\Zb_1$ with $\Ab_l$ for all $l\not=k$. Using the shorthand notation $t$ and $t'$ for the Chern class of the leftmost and rightmost (line) bundles over $\Ch(\DD)$ and $\Ch(\DD')$ we have that the  restriction maps to $f_k$ and $f'_k$ are $t \mapsto a_k\h^{-1}$, $t'\mapsto a_k\h$ respectively.  

\begin{proposition}\label{prop:Pn}
	Let $\mathfrak{C}=\lbrace a_1<\dots<a_n\rbrace$ be the standard chamber (cf. Section \ref{subsection: hamber structure}). We have 
	\[
	\Stab{C}{f_k} =\prod_{i=1}^{k-1} \theta\left(\frac{a_i}{t}\right)\cdot 
	\frac{\theta\left(\frac{t}{a_k}\frac{z_1}{z_2}\h^{k-1}\right)}{\theta\left(\frac{z_1}{z_2}\h^{k-2}\right)}
	\cdot 
	\prod_{i=k+1}^n \theta\left( \frac{t}{a_i}\h\right),
	\]
	\[
	\Stab{C}{f'_k} =\prod_{i=1}^{k-1} \theta\left(\frac{a_i}{t}\h^2\right)\cdot 
	\frac{\theta\left(\frac{t}{a_k}\frac{z_1}{z_2}\h^{k-3}\right)}{\theta\left(\frac{z_1}{z_2}\h^{k-2}\right)}
	\cdot 
	\prod_{i=k+1}^n \theta\left( \frac{t}{a_i}\h^{-1}\right).
	\]
\end{proposition}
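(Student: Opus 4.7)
\medskip

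\noindent\textbf{Proof plan.}
The plan is to exploit the axiomatic characterization of elliptic stable envelopes from Remark~\ref{Remark axiomatic stab}: together with the uniqueness provided by Theorem~\ref{Theorem existence+uniqueness stable envelopes}, it suffices to verify that each of the two proposed formulas $(i)$ is a meromorphic section of $\Sh{L}^{\triangledown}_{A,F}\boxtimes \Sh{L}_X(\infty\Delta)$ with poles only in the $z,\h$ variables; $(ii)$ satisfies the diagonal axiom at the fixed point it is attached to; and $(iii)$ satisfies the support axiom at every ``larger'' fixed point in the Bruhat order. I will work out the argument for $\Stab{C}{f_k}$ in detail, and then indicate how $\Stab{C}{f_k'}$ is obtained either directly or via the Hanany-Witten isomorphism of Corollary~\ref{Corollary: stable envelopes match under HW}.

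The axioms $(ii)$ and $(iii)$ are short calculations. For the diagonal axiom, substituting $t\mapsto a_k\h^{-1}$ makes the middle factor
$\theta(t/a_k\cdot z_1/z_2\,\h^{k-1})/\theta(z_1/z_2\,\h^{k-2})$
collapse to $1$, and what remains is
$\prod_{i<k}\theta(a_i\h/a_k)\cdot\prod_{i>k}\theta(a_k/a_i)$, which I will identify with $\vartheta(N^{-}_{f_k})$ by reading off the attracting weights at $f_k$ from the butterfly/specialization rules of Section~\ref{sec:fixedpoints} (in the standard chamber these are $a_i\h/a_k$ for $i<k$ and $a_k/a_i$ for $i>k$). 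For the support axiom, the standard chamber totally orders the fixed points as $f_1<\cdots <f_n$, so I must show $\Stab{C}{f_k}|_{f_l}=0$ for every $l>k$. This is immediate from the proposed formula: the factor $\theta(t/a_l\cdot \h)$, which is present in $\prod_{i=k+1}^{n}\theta(t/a_i\h)$ precisely because $l>k$, specializes to $\theta(1)=0$ under $t\mapsto a_l\h^{-1}$. No non-fixed locus needs to be checked separately: with isolated fixed points, the equivariant localization of Proposition~\ref{localization formula pushforward} upgrades the pointwise vanishing into the sheaf-theoretic one required by Theorem~\ref{Theorem existence+uniqueness stable envelopes}.

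The main obstacle is verifying $(i)$, the quadratic-form match. Using Proposition~\ref{Proposition goodness chosen line bundle for bow variety} and the tautological description \eqref{good line bundle in terms of quadratic forms}, the isomorphism class of $\Sh{L}^{\triangledown}_{A,F}\boxtimes\Sh{L}_X$ is encoded (up to $\equiv$) by the quadratic form $Q(\alpha)+\QU$ restricted to the single tautological variable $t$ and the equivariant/K\"ahler variables $a_i,z_i,\h$. I will compute $Q(\alpha)$ from \eqref{class alpha for general bow variety}, using that all tautological bundles of $\Ch(\DD)$ other than $\xi=t$ differ from the trivial bundles of Proposition~\ref{prop:rightbundles} by negligible terms in the sense of Proposition~\ref{prop:negligible}; this isolates the $t$-dependent contribution of $\alpha$. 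Adding the explicit $\QU$ from \eqref{equadratic form U part}, I then match it against the quadratic form produced by summing $(\arg\theta)^2$ over the $\theta$-factors of the formula, confirming that the denominator $\theta(z_1/z_2\,\h^{k-2})$ is the only source of poles (which is the expected resonant divisor of Lemma~\ref{lemma: poles stab}). The check for $\Stab{C}{f_k'}$ is entirely analogous, the only change being that $t'\mapsto a_k\h$ at $f_k'$ and that the vanishing factor for the support axiom at $f_l'$, $l>k$, is now $\theta(t'/a_l\cdot \h^{-1})=\theta(1)$; alternatively, $\Stab{C}{f_k'}$ can be deduced from $\Stab{C}{f_k}$ because $\DD$ and $\DD'$ share the same Hanany-Witten invariants $r=(1,n-1)$, $c=(1,\ldots,1)$, so Corollary~\ref{Corollary: stable envelopes match under HW} transports the first formula to the second once the change of tautological Chern roots is tracked through \eqref{HWfig}.
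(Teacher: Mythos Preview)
Your approach is correct and in fact more self-contained than the paper's. The paper does not give an independent proof: it simply cites \cite{aganagic2016elliptic} and \cite{Felder2018} for the result and, ``to convince the reader about the correctness of the conventions,'' only verifies the quadratic-form condition---i.e., exactly your step~$(i)$. Your steps~$(ii)$ and~$(iii)$ go beyond what the paper does, and the substitutions you describe are correct: at $t\mapsto a_k\h^{-1}$ the middle factor cancels and the product matches $\vartheta(N^-_{f_k})$ as listed just after the proposition; at $t\mapsto a_l\h^{-1}$ for $l>k$ the factor $\theta(t\h/a_l)$ vanishes.

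Two small points. First, your justification for upgrading pointwise vanishing to the sheaf-level support axiom via Proposition~\ref{localization formula pushforward} is not quite right: that proposition is a pushforward localization formula, not an injectivity statement for $i^{\oast}$. What you actually need is that the restriction-to-fixed-points map is injective for $T^*\mathbb{P}^{n-1}$, which follows from equivariant formality (or the GKM structure), not from Proposition~\ref{localization formula pushforward}. Second, your Hanany--Witten alternative for $\Stab{C}{f'_k}$ is valid---$\DD$ and $\DD'$ do share HW invariants and one can check $f_k\leftrightarrow f'_k$ (beware that in the co-separated diagram a ``below'' tie records a $0$ in the BCT, not a $1$)---but passing from separated to co-separated requires a full chain of HW moves, so the bookkeeping through \eqref{HWfig} is not a single step; the direct verification you outline is cleaner.
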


These are, in fact, not new theorems, they are equivalent to \cite[Section 3.4]{aganagic2016elliptic} and \cite[Section 5.5]{Felder2018}. To convince the reader about the correctness of the conventions, let us verify that the given formulas are sections of the required line bundles. The definitions of Section \ref{subsection Quadratic forms} for $\Ch(\DD)$ and $\Ch(\DD')$ give
\begin{eqnarray*}
	\alpha\equiv \sum_{i=1}^n \frac{t}{a_i}\h,
	&
	QU=2t(z_1-z_2-\h),
	& 
	N_{f_k/X}^<=\sum_{i=1}^{k-1}\frac{a_i}{a_k}\h+\sum_{i=k+1}^n \frac{a_k}{a_i},
	\\
	\alpha' \equiv \sum_{i=1}^n \frac{a_i}{t}\h^2, 
	& 
	QU'=2t(z_1-z_2+(n-1)\h),
	&
	N_{f'_k/X}^<=\sum_{i=1}^{k-1}\frac{a_i}{a_k}\h+\sum_{i=k+1}^n \frac{a_k}{a_i}.
\end{eqnarray*}
Hence, the quadratic forms of $\Stab{}{f_k}$ and $\Stab{}{f'_k}$ are required to be
\begin{multline}\label{Fiorentina}
	\sum_{i=1}^n(t-a_i+\h)^2   -\sum_{i=1}^{n}(a_k-a_i)^2 + 2(t-a_k+\h)(z_1-z_2-\h) 
	\\
	+\sum_{i=1}^{k-1}(a_i-a_k+\h)^2 + \sum_{i=k+1}^n (a_k-a_i)^2,   
\end{multline}
\begin{multline}\label{Lazio}
	\sum_{i=1}^n(a_i-t+2\h)^2   -\sum_{i=1}^{n}(a_i-a_k+h)^2 + 2(t-a_k-\h)(z_1-z_2+(n-1)\h) 
	\\
	+\sum_{i=1}^{k-1}(a_i-a_k+\h)^2 + \sum_{i=k+1}^n (a_k-a_i)^2, 
\end{multline}
respectively. The quadratic forms of the formulas in the text of the proposition are 
\[
\sum_{i=1}^{k-1}(a_i-t)^2 + (t-a_k+z_1-z_2+(k-1)\h)^2-(z_1-z_2+(k-2)\h)^2 + \sum_{i=k+1}^n(t-a_i+\h)^2,
\]
\[
\sum_{i=1}^{k-1}(a_i-t+2\h)^2 + (t-a_k+z_1-z_2+(k-3)\h)^2-(z_1-z_2+(k-2)\h)^2 + \sum_{i=k+1}^n(t-a_i-\h)^2.
\]
Straightforward calculation shows that these are equal to \eqref{Fiorentina} and \eqref{Lazio}, respectively.

\subsection{Stable envelopes as morphisms}
It is sometimes useful to interpret stable envelopes as morphisms rather than sections. This is obtained by looking at $\Stab{C}{F}$ as a correspondence on $F\times X$, cf. Section~\ref{subsection action by correspondences}. However, we first need the following properness result.

\begin{lemma}
\label{lemma properness full attracting set}
    The composition $\Attfull{C}(F)\hookrightarrow F\times X\to X$ is proper.
\end{lemma}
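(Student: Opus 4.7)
The plan is to reduce properness to a compactness statement on the affine base, via the projective morphism $\pi : X \to X_0(\D)$ introduced in Section~\ref{sec:Affinization}. Since $\pi$ is $\At$-equivariant (hence $A$-equivariant), it intertwines the attracting structures on $X$ and on $X_0(\D)$ coming from any cocharacter $\sigma \in \mathfrak{C}$. The goal is to show that $\pi$ contracts the full attracting set of $F$ onto a single $A$-fixed point of $X_0(\D)$, so that properness of $\pi$ lifts the compactness of the resulting fiber back to $X$.

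The key geometric input I would establish is the following claim about the affine base: the $A$-fixed locus $X_0(\D)^A$ is discrete, and for the $\Cs$-action induced by $\sigma$ the attracting set of each $p \in X_0(\D)^A$ reduces to $\{p\}$ itself. In particular there are no nontrivial attracting chains in $X_0(\D)$. By Corollary~\ref{Corollary: stable envelopes match under HW} and HW equivalence one may assume $\D$ is separated or co-separated, so this can be checked via the further affine factorization $\rho : X_0(\D) \to HS(\D)$ of \eqref{maps from bow to affine bow and handsaw}; the $A$-weights at each fixed point of $HS(\D)$ are strictly positive for generic $\sigma$, which forces the $\Cs$-action to contract a neighborhood of every fixed point onto the point itself, and then affineness pushes the contraction globally.

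Granting the claim, $\pi(F) = \{p_F\}$ is a single point (because $F$ is connected and lands in the discrete set $X_0(\D)^A$), and every chain of $\sigma$-attracting orbit closures in $X$ projects to a chain in $X_0(\D)$, which by the claim collapses to the single fixed point $p_F$. Hence $\Att^{\text{full}}_X(F) \subseteq \pi^{-1}(p_F)$, which is a compact projective fiber of the proper map $\pi$. The projection $\text{pr}_X : \Attfull{C}(F) \to X$ therefore lands in this compact subset. To finish, I note that for each fixed $x \in \pi^{-1}(p_F)$ the set $\{f \in F : (f,x) \in \Attfull{C}(F)\}$ is finite: it is determined by following $\sigma$-limits along one of the finitely many chains $F = F_0 > F_1 > \cdots$ of fixed components ending at the component containing $x$. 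So $\text{pr}_X$ is a finite morphism onto a compact image, hence proper.

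The hard part of this plan is the claim about attracting chains in $X_0(\D)$; everything afterwards is bookkeeping. Making it precise requires a weight analysis of the $A$-action on the affine invariant ring $\C[\widetilde{\mathcal{M}}]^{G}$, or equivalently on the handsaw model $HS(\D)$, and checking that the positivity is preserved under HW transition. Once this contracting property is established, the argument above goes through essentially formally, and one obtains the stronger statement that $\text{pr}_X$ is finite onto its image.
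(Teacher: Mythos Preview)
You are reaching for the right tool---the proper $\Tt$-equivariant morphism $\pi:X\to X_0$ to an affine base---and this is precisely the input the paper cites from \cite[Prop.~3.5.1]{maulik2012quantum}. But your central claim is false, not merely hard: the $\sigma$-attracting set of a point of $X_0^A$ is in general \emph{not} reduced to that point, and for $A\subsetneq\At$ the locus $X_0^A$ need not be discrete. Already for $X=T^*\mathbb P^1$ with $A=\At$, the base $X_0$ is the nilpotent cone in $\mathfrak{sl}_2$ with $X_0^{\At}=\{0\}$; the nonzero tangent weights at $0$ are $a_1/a_2$ and $a_2/a_1$, so for either chamber exactly one of them is attracting and the attracting locus of $0$ in $X_0$ is a line. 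Concretely, for one of the two chambers $\Attfull{C}(f_1)$ is the cotangent fibre over $f_1$, whose $\pi$-image is exactly that line, so your conclusion $\Attfull{C}(f_1)\subset\pi^{-1}(0)$ fails outright. For a proper subtorus $A$, Theorem~\ref{proposition tensor product decomposition fixed locus} shows that the components of $X^A$ are positive-dimensional bow varieties, so discreteness of $X_0^A$ fails as well.

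The argument the paper imports uses only that attracting sets in an affine variety are \emph{closed}. Embedding $X_0$ linearly into a representation $V$, the attracting locus of any $p\in V^A$ is the closed affine subspace $p+V_{>0}$, so on $X_0$ the full attracting set coincides with the ordinary one. Writing $F_0=\pi(F)$, the map $(f,x)\mapsto(\pi(f),x)$ therefore sends $\Attfull{C}(F)$ into $\Gamma=\{(f_0,x)\in F_0\times X:\lim_\sigma\pi(x)=f_0\}$. The projection $\Gamma\to X$ is a closed embedding (each $\pi(x)$ has at most one $\sigma$-limit), while $\Attfull{C}(F)\to\Gamma$ is proper because $\Attfull{C}(F)$ is closed in $F\times X$ and $\pi|_F:F\to F_0$ is proper. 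Composing gives properness without any compactness of $\pi$-fibres or finiteness of the fibres of the projection to $X$.
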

\begin{proof}
    The argument is the same as the one in the proof of \cite[Prop. 3.5.1]{maulik2012quantum}. The key point is that
    any bow variety $X$ admits an $\At$-equivariant proper morphism $\pi: X\to X_0$ with $X_0$ affine---see Section~\ref{sec:Affinization}. %This follows from the general fact that $X$ is realized as a GIT quotient, specifically as the $\text{Proj}$ of a graded $\C[\mu^{-1}(0)^{(S1), (S2)}]^G$-algebra\footnote{Although the three-way parts are subject to the open conditions (S1) and (S2), the variety $\mu^{-1}(0)^{(S1),(S2)}$ is still affine, see \cite[Proposition 2.9]{takayama_2016}.}. 
    %Alternatively, assuming that $X$ is isomorphic to a separated or co-separated bow variety with no $D5$ brane has local charge equal to zero, one can construct such a map more directly by composing the embedding of $X$ into the cotangent bundle of some partial flag variety $T^*\Fl(m)$ (Lemma \ref{corollary any bow can be embedded in a flag variety}) with the resolution $T^*\Fl(m)\to \Sh{N}$ of the appropriate nilpotent orbit closure $\Sh{N}\subset \gl(m)$. 
\end{proof}

Since $\Stab{C}{F}$ is supported on $\Attfull{C}(F)$, which we now know to be proper over $F$, we can apply the construction of Section \ref{subsection action by correspondences} to obtain a morphism of $\Sh{O}_{\Base{\Tt}{\Tt^!}, mer}$-modules
\[
\Stab{C}{F}: \Sh{L}_{A,F}\to \Sh{L}_X.
\]
As before, the subscript $mer$ stands for meromorphic sections in the variables $z$ and $\h$.

\subsection{Composition of stable envelopes}

Let $X$ be a bow variety with $n\geq 2$ D5 branes and choose a chamber 
\[
\mathfrak{C}=\lbrace a_{\sigma(1)}< \dots <a_{\sigma(n)}\rbrace
\]
for the action of $\At$ on $X$, cf. Section \ref{subsection: hamber structure}.
%{\color{red} Isn't this $\At$ here instead of $A$?}
A choice of partition $n=n'+n''$ gives rise to a partition of the set of D5 branes of $X$ into two disjoint subsets, acted on by the subtori
\[
\At'=\Set{(a_{\sigma(1)}, \dots a_{\sigma(n')}}{a_i\in \Cs}\subset \At \qquad \At''=\Set{(a_{\sigma(n'+1)}, \dots a_{\sigma(n)}}{a_i\in \Cs}\subset \At.
\]
Consider also the rank two subtorus
\[
A_0=\Set{(a',a'')}{a',a''\in \Cs}\subset \At
\]
acting on the D5 branes $\Ab_{\sigma(1)},\dots,\Ab_{\sigma(n')}$ with $a'$ and on the remaining D5 branes $\Ab_{\sigma(n'+1)},\dots,$ $\Ab_{\sigma(n)}$ with $a''$. 
By Theorem \ref{proposition tensor product decomposition fixed locus} and Example \ref{Example fixed point-fixed component interaction}, for any fixed point $f\in X^{\At}$ we have a commutative diagram 
\[
\begin{tikzcd}
  f'\times f''\arrow[d, equal] \arrow[r, hookrightarrow] & X'\times X''\arrow[d, hookrightarrow]
 \\
 f \arrow[r, hookrightarrow] & X
\end{tikzcd}
\]
where $X'\times X''$ is some $A_0$-fixed component in $X$ and $f'\in (X')^{\At'}$ (resp. $f''\in (X'')^{\At''}$). 
The chamber $\mathfrak{C}$ induces two chambers
\[
\mathfrak{C}'=\lbrace a_{\sigma(1)}< \dots <a_{\sigma(n')}\rbrace\qquad \text{and} \qquad \mathfrak{C}''=\lbrace a_{\sigma(n'+1)}< \dots <a_{\sigma(n)}\rbrace
\]
for the actions of $\At'$ and $\At''$. Notice that the stable envelope
\begin{equation*}
    \label{tensor product stable envelopes}
    \StabX{X'\times X''}{C}{f}=\StabX{X'}{C'}{f'}\boxtimes \StabX{X''}{C''}{f''}
\end{equation*}
depends on two sets of  K\"ahler variables $z'$ and $z''$ attached to $X'$ and $X''$ respectively. 

Consider now the chamber $\mathfrak{C}_0=\lbrace a'< a''\rbrace $ for $A_0$ and the stable envelope 
\[
\text{Stab}^{X}_{\mathfrak{C}_0}(X'\times X'').
\]
Like $\StabX{X}{C}{f}$, it only depends on one set of K\"ahler parameters. The following statement is the bow variety version of the so-called triangle lemma of \cite[Lemma 3.6.1]{maulik2012quantum} and \cite[Prop. 3.3]{aganagic2016elliptic}. It states one of the fundamental properties of stable envelopes.

\begin{proposition}
\label{composition stable envelopes}
    We have
    \[
    \StabX{X}{C}{f}(z)= \text{Stab}^{X}_{\mathfrak{C}_0}(X'\times X'')(z) \circ \left(\StabX{X'}{C'}{f'}(z)\boxtimes \StabX{X''}{C''}{f''}(z{\h}^{-r'})\right)
    \]
    Here, $r'$ is the NS5 charge vector of $X'$ and the symbol $\circ$ denotes the convolution product \eqref{equation definition convolution}.
\end{proposition}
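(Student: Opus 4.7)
The strategy is to invoke uniqueness from Theorem~\ref{Theorem existence+uniqueness stable envelopes}: a stable envelope is determined by the support axiom and the diagonal restriction axiom of Remark~\ref{Remark axiomatic stab}. It therefore suffices to show that the convolution on the right-hand side is a well-defined section of $\Sh{L}_{A,f}^{\triangledown}\boxtimes \Sh{L}_X(\infty\Delta)$ and satisfies both axioms. Lemma~\ref{lemma properness full attracting set}, applied to each of the three stable envelopes, guarantees that the convolution makes sense as an operator.

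The first step is to match the line bundles. Using the quadratic-form description \eqref{good line bundle in terms of quadratic forms} together with Proposition~\ref{Proposition goodness chosen line bundle for bow variety}(1), one writes $\Sh{L}_{X'\times X''}\cong \Sh{L}_{X'}\boxtimes \Sh{L}_{X''}$ up to an equivalence, but only after shifting the K\"ahler variables of the second factor. Indeed, when a bundle $\eta_k$ restricts on the fixed component $X'\times X''$ as $\eta_k'\oplus \eta_k''$, the $\Sh{U}$-factors decompose via Lemma~\ref{Lemma properties U bundle}; the exponent $\ell(\Zb_{k+1})-\ch(\Zb_k)$ appearing in the $\Sh{U}$-factor on $X''$ differs from the corresponding exponent for $\D$ by exactly the NS5 charges $r'$ of $\D'$, which accounts for the substitution $z\mapsto z\h^{-r'}$. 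Combined with the line-bundle behaviour of convolution as recalled in Section~\ref{subsection action by correspondences}, this shows the right-hand side lives in the correct sheaf.

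Next, I would check the support axiom. By Lemma~\ref{lemma support convolution}, the convolution is supported on the set of pairs $(f,x)\in f\times X$ such that there exists $y\in X'\times X''$ with $(f,y)\in \Attfull{C'}(f')\times \Attfull{C''}(f'')$ and $(y,x)\in \Attfull{C_0}(X'\times X'')$. Concatenating these two attracting chains yields an attracting chain in $X$ for the full chamber $\mathfrak{C}$, because $\mathfrak{C}$ refines both $\mathfrak{C}_0$ and the internal chamber $\mathfrak{C}'\times \mathfrak{C}''$; transitivity of the generalized Bruhat order then gives the inclusion in $\Attfull{C}(f)$. For the diagonal axiom, restrict to $f\times X^{\geq f}$ and push forward along the middle projection. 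The outer support $\Attfull{C_0}(X'\times X'')$ together with the diagonal axiom for $\StabX{X'\times X''}{C_0}{X'\times X''}$ ensures that only a single contribution from $f'\times f''$ survives, and the normal bundle splits as $N^-_{f/X}\cong N^-_{f'/X'}\oplus N^-_{f''/X''}\oplus N^-_{(X'\times X'')/X}|_{f'\times f''}$; the corresponding product of $\vartheta$-classes reassembles the prescribed normalization $[\Att{C}(f)]$ on $f\times X^{\geq f}$.

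The main obstacle is the precise dynamical shift $z\mapsto z\h^{-r'}$. While the geometric skeleton of the argument is a routine adaptation of the triangle lemma of \cite[Lemma~3.6.1]{maulik2012quantum} and \cite[Proposition~3.3]{aganagic2016elliptic}, using the tensor structure of Theorem~\ref{proposition tensor product decomposition fixed locus} in place of the Nakajima tensor product decomposition, the bookkeeping for how the local counts $\ell(\Zb_{k+1})-\ch(\Zb_k)$ redistribute between the diagrams $\D$, $\D'$, $\D''$ is the delicate part and must be done carefully with the quadratic-form calculus of Section~\ref{subsection Quadratic forms}.
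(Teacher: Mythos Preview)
Your overall strategy coincides with the paper's: invoke uniqueness by verifying the support and diagonal axioms for the convolution, with the nontrivial work being the line-bundle match that forces the shift $z\mapsto z\h^{-r'}$. The support and diagonal arguments you sketch are essentially those the paper calls ``standard'' and only outlines.

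The gap is in your account of where the shift comes from. You attribute it entirely to how the exponent $\ell(\Zb_{k+1})-\ch(\Zb_k)$ in the $\Sh{U}$-factors redistributes between $\DD$, $\DD'$, $\DD''$. This is only half the story. The paper first reduces to the separated case via Hanany-Witten (using Proposition~\ref{Proposition goodness chosen line bundle for bow variety}(2)), which is essential because only then does $\alpha$ satisfy the polarization identity \eqref{equation separated alpha is a polarization}. With that in hand, the restriction $\alpha_X|_F$ decomposes into $\alpha_{X'}+\alpha_{X''}$ plus attracting/repelling pieces, and the normal bundle contributes $N^-_{F/X}=\alpha_X|_F^- + \h(\alpha_X|_F^+)^\vee$. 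The resulting cross-term $\Theta\bigl(\h(\alpha_X|_F^+)^\vee - \alpha_X|_F^+\bigr)$ is \emph{not} trivial: via the last identity of Lemma~\ref{Lemma properties U bundle} it becomes a product of $\Sh{U}$-bundles in $\h$-powers of the tautological ranks. Only when this is combined with the discrepancy $\tau^*(\Sh{U}_{X'}\boxtimes\Sh{U}_{X''})\otimes \Sh{U}_X|_F^{-1}$ do the two contributions cancel and yield $\Sh{G}\equiv\Sh{O}$. In other words, the shift $z\mapsto z\h^{-r'}$ is calibrated to make the $\Sh{U}$-discrepancy cancel the $\Theta(\alpha)$-discrepancy, not to make the $\Sh{U}$-part match on its own. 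Without the reduction to separated form and without tracking the $\Theta(\alpha)$ cross-term, your argument as written would produce the wrong (or at least unverified) shift.
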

%Although the dynamical shifts the K\"ahler parameters allows is needed for a somewhat technical reason, namely to make the line bundles entering in the definitions of stable envelopes match, they highlight a very important feature of the theory: the relation with dynamical quantum groups. 
Because of the need to compare the line bundles, the proof is somewhat long and technical, so we defer it to the end of the section.

\subsection{Fixed point restrictions of stable envelopes}

Let $X$ be a bow variety with $n$ D5 branes and $m$ NS5 branes. We now focus on the action of the torus $\At$ acting on all $n$ D5 branes. As discussed in Section~\ref{sec:fixedpoints}, the fixed locus $X^{\At}$ is finite. We index the fixed points consistently with the partial order defined in Section \ref{subsection attracting loci}, namely, $f_i\leq f_j$ implies $i\leq j$.

The stable envelope restrictions 
\begin{equation*}
    \label{stable envelope restriction}
    (S_{\mathfrak{C}})_{ij}:=\Stab{C}{f_j}\Big|_{f_i}
\end{equation*}
are sections of some bundle on the abelian variety
\[
\Base{\Tt}{\Tt^!}= E^{n}\times E^{m}\times E_{\h},
\]
and hence can be seen as functions in the variables $(a,z,\h)$ with prescribed quasi-periods. We collect all these restrictions in the stable envelope matrix $S_{\mathfrak{C}}$. By Remark~\ref{Remark axiomatic stab}, it follows that $S_{\mathfrak{C}}$ is an upper triangular matrix of the form 
\[
S_{\mathfrak{C}}=
\begin{pmatrix}
    \vartheta(N^-_{f_1}) & (S_{\mathfrak{C}})_{12} & (S_{\mathfrak{C}})_{13} & \dots &  (S_{\mathfrak{C}})_{1k}\\
           & \vartheta(N^-_{f_2}) & (S_{\mathfrak{C}})_{23} & \dots &  (S_{\mathfrak{C}})_{2k}\\
           &        & \vartheta(N^-_{f_3}) & \dots &  (S_{\mathfrak{C}})_{3k}\\
           &        &        & \ddots&  \vdots\\
           &        &        &       & \vartheta(N^-_{f_k})\\
\end{pmatrix}.
\]
Its diagonal entries $\vartheta(N^-_{f_i})$ are the elliptic Euler classes of the negative parts of the normal bundles $N_{f_i}$ of $f_i$ in $X$. In particular, they only depend on the variables $(a,\h)$. On the other hand, the nontrivial coefficients in the strictly upper triangular part will generally depend on all three types of variables $(a,z,\h)$. Since all the diagonal entries are non-zero, the matrix $S_{\mathfrak{C}}$ is invertible. We now describe its inverse explicitly.

Let $\vartheta(TX)^{-1}$ denote the diagonal matrix whose entry $(\vartheta(TX)^{-1})_{ii}$ is equal to $1/\vartheta(T_{f_i} X)$. Since the fixed points $f_i$ are isolated, none of the $\At$-weights of $T_{f_i} X$ is zero; hence, this matrix is well defined.
\begin{proposition}
\label{proposition duality stable envelopes}
    Let $\mathfrak{C}^{opp}$ be the chamber opposite to $\mathfrak{C}$. Then 
    \[
    \left(S_{\mathfrak{C}}(a,z,\h)\right)^{-1}=\left(S_{\mathfrak{C}^{opp}}(a,z^{-1}\h^{r},\h)\right)^T \vartheta(TX)^{-1}. 
    \]
    Here, $z^{-1}\h^{r}$ means $z^{-1}_ih^{r_i}$ for all $i=1,\dots,m$ and $(-)^T$ stands for transposition.
\end{proposition}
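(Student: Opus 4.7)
The plan is to prove the equivalent entry-wise biorthogonality
\[
\sum_{k}\Stab{C}{f_{k}}\big|_{f_i}(a,z,\h)\cdot\Stab{C^{opp}}{f_k}\big|_{f_j}(a,z^{-1}\h^{r},\h)=\delta_{ij}\,\vartheta(T_{f_j}X),\qquad\forall\,i,j.
\]
The three inputs are the support axiom of Remark~\ref{Remark axiomatic stab}, the diagonal axiom of the same remark, and a rigidity argument exploiting the sharp pole control of Lemma~\ref{lemma: poles stab}.

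\textbf{Triangularity from support.} By the support axiom, $\Stab{C}{f_k}|_{f_i}$ vanishes unless $f_i\leq_{\mathfrak{C}}f_k$, while $\Stab{C^{opp}}{f_k}|_{f_j}$ vanishes unless $f_j\geq_{\mathfrak{C}}f_k$. Hence the summand at index $k$ can be nonzero only when $f_i\leq_{\mathfrak{C}}f_k\leq_{\mathfrak{C}}f_j$, so the left-hand side vanishes whenever $f_i\not\leq_{\mathfrak{C}}f_j$. In particular, $S_{\mathfrak{C}}\cdot (S_{\mathfrak{C}^{opp}})^T\cdot \vartheta(TX)^{-1}$ is upper-triangular in the Bruhat order.

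\textbf{Diagonal.} For $i=j$ the chain forces $k=i$, and the diagonal axiom gives
\[
\Stab{C}{f_i}\big|_{f_i}\cdot \Stab{C^{opp}}{f_i}\big|_{f_i}=\vartheta(N_{f_i/X}^-)\cdot\vartheta(N_{f_i/X}^+)=\vartheta(T_{f_i}X),
\]
matching the required $\delta_{ii}\,\vartheta(T_{f_i}X)$. These restrictions are $z$-independent, so the substitution $z\mapsto z^{-1}\h^{r}$ does not intervene here.

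\textbf{Off-diagonal vanishing.} The serious point is the case $i<j$, where the sum
\[
\Phi_{ij}(z):=\sum_{k\,:\,f_i\leq f_k\leq f_j}\Stab{C}{f_k}\big|_{f_i}(z)\cdot\Stab{C^{opp}}{f_k}\big|_{f_j}(z^{-1}\h^{r})
\]
may contain nonzero individual summands and must cancel globally. My plan is to recognize the substitution $z\mapsto z^{-1}\h^{r}$ as the unique one making the two factors of each summand live in the \emph{same} line bundle on $E_{\At^!}(\pt)$; the combinatorics is governed by Lemma~\ref{Lemma properties U bundle}, in particular the multiplicativity $\Sh{U}(V,z_1z_2)\cong \Sh{U}(V,z_1)\otimes\Sh{U}(V,z_2)$ and $\Sh{U}(V,1)\cong \Sh{O}$, with the $\h$-exponent $r_i=\ch(\Zb_i)$ obtained by summing the shifts $\ell(\Zb_{k+1})-\ch(\Zb_k)$ across the $\Sh{U}$-factors of \eqref{good line bundle bow variety}, in exact parallel with Proposition~\ref{prop:Qch}. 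Once the bundles are matched, Lemma~\ref{lemma: poles stab} bounds the poles of $\Phi_{ij}$ to a locus of the form $\bigcup\{z_a/z_b\h^{\alpha_{ab}}=1\}$. A degree count on each K\"ahler abelian factor then shows that $\Phi_{ij}$ is a holomorphic section of a degree-zero line bundle on the abelian variety, so it is either identically zero or constant; evaluating at a convenient specialization (such as $z\to\infty$ in one K\"ahler direction, using the explicit triangular behaviour of stable envelopes in that limit) rules out the constant case.

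\textbf{Main obstacle.} The crux is the combination of \emph{bundle matching} and \emph{rigidity} in the off-diagonal step: the shift $z\mapsto z^{-1}\h^{r}$ is the unique one making the two factors of the product live in compatible line bundles, and only after this identification does the abelian-variety rigidity from Lemma~\ref{lemma: poles stab} close the argument. The support and diagonal axioms contribute only formally to the remaining parts.
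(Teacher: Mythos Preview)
Your triangularity and diagonal checks are fine, and you correctly identify that the substitution $z\mapsto z^{-1}\h^{r}$ is what makes the line bundle of each summand independent of $k$. The gap is in the off-diagonal rigidity step, where you run the vanishing argument on the \emph{K\"ahler} abelian variety $E_{\At^!}$. This does not work, for two reasons. First, Lemma~\ref{lemma: poles stab} only bounds where the poles of $\Phi_{ij}$ in $z$ can sit; it does not remove them, and there is no geometric mechanism (such as a proper pushforward) forcing $\Phi_{ij}$ to be holomorphic in $z$. A degree-zero line bundle on an abelian variety has plenty of nonzero meromorphic sections, so ``degree zero plus bounded poles'' gives no conclusion. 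Second, ``$z\to\infty$ in one K\"ahler direction'' is meaningless on $E_{\At^!}$: the K\"ahler parameters live on an elliptic curve, not on $\C^\times$, so there is no such limit to take.

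The paper's argument runs rigidity in the \emph{equivariant} variables $a$ instead. For $i\neq j$ one forms the class
\[
\widetilde{M}=\Delta^{\oast}\bigl(\text{Stab}_{\mathfrak{C}^{opp}}(f_i)(z^{-1}\h^{r})\boxtimes \Stab{C}{f_j}(z)\bigr)
\]
on $X$; because the chambers are opposite, its support is proper over a point, so the geometric pushforward $p_{\oast}\widetilde{M}$ to $\Base{\Tt}{\Tt^!}$ is a section that is \emph{holomorphic in $a$}. One then checks that for generic $(z,\h)$ its line bundle restricted to $E_{\At}\times\{z\}\times\{\h\}$ is degree zero and nontrivial, hence the section is zero. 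The localization formula (Proposition~\ref{localization formula pushforward}) identifies $p_{\oast}\widetilde{M}$ with the matrix entry $M_{ij}$; to invoke it one needs that the stable envelopes lie in the image of $i_{\oast}$, which in the paper is supplied by Remark~\ref{Remark stabs are in image localized pushforward}. Note that this localization sum carries the factors $1/\vartheta(T_{f_k}X)$, absent from your $\Phi_{ij}$; it is precisely these factors that turn the fixed-point sum into an honest proper pushforward with the required $a$-regularity. The shift $z\mapsto z^{-1}\h^{r}$ enters the paper's argument as well, but its role is to pass to the opposite polarization $\h\alpha^\vee$ so that the $a$-degree comes out zero, not to produce $z$-holomorphicity.
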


Notice that the Bruhat order induced by $\mathfrak{C}^{opp}$ is opposite to that induced by $\mathfrak{C}$. Hence the matrix $S_{\mathfrak{C}^{opp}}$ is lower triangular, and both sides of the equation above are upper triangular.

\begin{proof}
    The proof is an adaptation of \cite[Prop. 3.4]{aganagic2016elliptic}. Consider the composition 
    \[
    M:=\left(S_{\mathfrak{C}^{opp}}(a,z^{-1}\h^{r},\h)\right)^T \vartheta(TX)^{-1} S_{\mathfrak{C}}(a,z,\h).
    \]
    Since the opposite chamber swaps positive and negative weights and $\vartheta(T_fX)=\vartheta(N_{f}^+)\vartheta(N_{f}^-)$, it is immediate to check that the diagonal entries are equal to $1$. 
    Let now $i\neq j$. By the support axiom, $M_{ij}=0$ unless $f_i>f_j$, so it suffices to consider this case. Since the chambers are opposite to each other, the class
    \begin{equation}
        \label{equation in proof inverss stab}
        \widetilde{M}=\Delta^{\oast}(\text{Stab}_{\mathfrak{C}^{opp}}(f_i)(z^{-1}h^r)\boxtimes \Stab{C}{f_j}(z))
    \end{equation}
    is proper over the point, cf. \cite[Theorem 4.4.1]{maulik2012quantum}. A direct computation similar (but easier) to the one in the proof of Proposition \ref{composition stable envelopes} shows that the pushforward $p_{\oast}(\widetilde{M})$ associated with $p:X\to \pt$ is well defined, and hence gives a section of a line bundle over $\Base{\Tt}{\Tt^!}$ that is regular in the variables $a$. Moreover one can check that for any generic point $(z,\h)\in E_{\Tt^!}$ the restriction of this line bundle to the abelian variety 
    \[
    E_{\At}(\pt)\times \lbrace z\rbrace \times \lbrace \h\rbrace \subset \Base{\Tt}{\Tt^!}
    \]
    is nontrivial and of degree zero. Since such a line bundle has no nonzero global sections, we have $p_{\oast}(\widetilde{M})=0$. On the other hand, Remark \ref{Remark stabs are in image localized pushforward} assures that the hypothesis of Proposition \ref{localization formula pushforward} is satisfied, so we can use the latter to compute $p_{\oast}(\widetilde{M})=0$\footnote{The proofs in Appendix \ref{Appendix: Equivariant localization for bow varieties} are based on results developed later in the paper. However, none of those results rely on Proposition~\ref{proposition duality stable envelopes}, so there is no circular argument.}. The result is exactly $M_{ij}$. Overall, this argument shows that $M_{ij}=\delta_{ij}$, as required.
    
    We remark that the shift of the K\"ahler parameters by $\h^{r}$ is equivalent to passing to the opposite class $\h\alpha^\vee$ in the line bundle $\Sh{L}=\Theta(\alpha)\otimes \Sh{U}$ entering in the definition of $\text{Stab}_{\mathfrak{C}^{opp}}(f_i)$. This opposite class is needed to compensate the $\At$-weights coming from $\vartheta(TX)^{-1}$ and $\Stab{C}{f_j}$ to get a degree zero line bundle.
\end{proof}

%\begin{remark}
%The proposition above defines the inverse of a stable envelope taken with respect to the maximal torus $\At$, whose fixed point set is finite. 
%For Nakajima varieties \cite{aganagic2016elliptic}, inverses were defined in greater generality, namely even when the fixed locus $X^A$ is not finite. This was achieved by defining the adjoint of a stable envelope with respect to a nondegenerate pairing on the elliptic cohomology of Nakajima varieties. Since the latter are noncompact the pa
%\end{remark}

\subsection{R-matrices} \label{sec:Rmatrix}

Fix a bow variety $X$ and let $\mathfrak{C}$ and $\mathfrak{C}'$ be two chambers for the $\At$-action. We define the ``geometric R matrix'' as
\[
\Rmat{C}{C'}:= (S_{\mathfrak{C}})^{-1}\circ S_{\mathfrak{C}'}.
\]
It is a matrix whose entries can be seen either as meromorphic sections of line bundles over $\Base{\Tt}{\Tt^!}$ or as functions in the variables $(a,z,\hbar)$.
Its definition can be rephrased to the so-called R-matrix relation for the stable envelopes:
\begin{equation}
\label{localized R-matrix relations}
    \Stab{C'}{f}\Big|_{h}=\sum_{g\in X^{\At}} \left( \Rmat{C}{C'}\right)_{gf}\Stab{C}{g}\Big|_{h}\qquad \forall f,h\in X^{\At}.
\end{equation}

We will need a sharper statement. In the next proposition, we show that the equations above ``integrate'' to the whole (localized) elliptic cohomology scheme of $X$ to give 
\begin{equation}
    \label{R-matrix relations}
    \Stab{C'}{f}=\sum_{g} \left( \Rmat{C}{C'}\right)_{gf}\Stab{C}{g} \qquad \forall f\in X^{\At}.
\end{equation}
The need for localizing, i.e. inverting the equivariant parameters, is due to the R-matrix, which is meromorphic in these parameters.
\begin{proposition}
\label{proposition R matrix relations}
    Equation \eqref{R-matrix relations} holds for any bow variety $X$, fixed point $f\in X^{\At}$ and pair of chambers $\mathfrak{C},\mathfrak{C'}$.
\end{proposition}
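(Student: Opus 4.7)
The plan is to upgrade \eqref{localized R-matrix relations}---itself a tautological restatement of the definition $\Rmat{C}{C'}=(S_{\mathfrak{C}})^{-1}\circ S_{\mathfrak{C}'}$---to the global identity \eqref{R-matrix relations} via an equivariant localization argument. Define
$$D_f:=\Stab{C'}{f}-\sum_{g\in X^{\At}}(\Rmat{C}{C'})_{gf}\Stab{C}{g}.$$
I would proceed in three steps.

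The first step is to check that both sides of \eqref{R-matrix relations} are meromorphic (in $a,z,\hbar$) sections of the same line bundle $\Sh{L}^\triangledown_{\At,f}\boxtimes\Sh{L}_X(\infty\Delta)$. For the left-hand side this holds by definition; for the right-hand side, the matrix entries $(\Rmat{C}{C'})_{gf}$ are meromorphic functions on $\Base{\Tt}{\Tt^!}$ whose $a$-quasi-periods are exactly those required to twist $\Sh{L}^\triangledown_{\At,g}$ into $\Sh{L}^\triangledown_{\At,f}$, as follows from Proposition~\ref{proposition duality stable envelopes}. The second step is immediate: \eqref{localized R-matrix relations} says exactly $D_f|_h=0$ for every $h\in X^{\At}$.

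The main step is to deduce $D_f=0$ from this fixed-point vanishing. Following the technique employed in the proof of Proposition~\ref{proposition duality stable envelopes}, I would pair $D_f$ against each opposite stable envelope $\text{Stab}_{\mathfrak{C}^{opp}}(h)(z^{-1}\hbar^{r})$ by pushing forward the convolution $\Delta^{\oast}(D_f\boxtimes \text{Stab}_{\mathfrak{C}^{opp}}(h))$ along $p:X\to\pt$. The pushforward is well defined because the supports of both $D_f$ and $\text{Stab}_{\mathfrak{C}^{opp}}(h)$ are contained in (unions of) full attracting sets, each proper over $X$ by Lemma~\ref{lemma properness full attracting set}; consequently, the support of their product is proper over the point. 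The localization formula (Proposition~\ref{localization formula pushforward}), applicable after verifying that the relevant class lies in the image of $i_{\oast}$ in the localized theory, then evaluates the pairing as
$$\sum_{g\in X^{\At}}\frac{(D_f|_g)\,(\text{Stab}_{\mathfrak{C}^{opp}}(h)|_g)}{\vartheta(T_gX)},$$
which vanishes for every $h$. Since by Proposition~\ref{proposition duality stable envelopes} the matrix $((\text{Stab}_{\mathfrak{C}^{opp}}(h)|_g)/\vartheta(T_gX))_{g,h}$ is triangular invertible after the $z$-shift, the simultaneous vanishing of all these pairings is precisely the equivariant-localization statement forcing $D_f=0$ in the localized theory.

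The main obstacle is the non-compactness of $X$, which requires careful bookkeeping of proper supports to make both the pushforward and the application of the localization formula legitimate. This control is supplied by Lemma~\ref{lemma properness full attracting set}, and it rests in turn on the proper affinization morphism $\pi:X\to X_0$ of Section~\ref{sec:Affinization}; the resulting equivariant-localization framework for bow varieties is precisely what makes the passage from fixed-point vanishing to global vanishing work.
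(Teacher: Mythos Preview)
Your argument has a genuine gap in its final step. After computing via localization that
\[
p_{\oast}\bigl(\Delta^{\oast}(D_f\boxtimes \text{Stab}_{\mathfrak{C}^{opp}}(h))\bigr)
=\sum_{g}\frac{(D_f|_g)\,(\text{Stab}_{\mathfrak{C}^{opp}}(h)|_g)}{\vartheta(T_gX)}=0,
\]
you conclude that ``the simultaneous vanishing of all these pairings\ldots forc[es] $D_f=0$.'' But this does not follow. The pairings $\langle D_f,\text{Stab}_{\mathfrak{C}^{opp}}(h)\rangle$ are numbers on $\Base{\Tt}{\Tt^!}$, and their vanishing for all $h$ only shows that $D_f$ is orthogonal to the span of the opposite stable envelopes. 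That would force $D_f=0$ only if these classes span the localized elliptic cohomology of $X$, which is exactly the equivariant-formality statement that is \emph{not} known for general bow varieties. Put differently: the invertibility of the matrix $((\text{Stab}_{\mathfrak{C}^{opp}}(h)|_g)/\vartheta(T_gX))_{g,h}$ tells you the pairings determine the restrictions $D_f|_g$---but you already knew those were zero. Your argument is circular: it re-derives $D_f|_g=0$ from $D_f|_g=0$, and never touches $D_f$ globally.

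The paper's route is different and more direct. Since $TX|_{X^{\At}}$ has no trivial $\At$-weights, the composite $i^{\oast}i_{\oast}=\vartheta(TX|_{X^{\At}})\cdot$ is injective; hence $i^{\oast}$ is injective on the image of $i_{\oast}$. So it suffices to know that both sides of \eqref{R-matrix relations}---and therefore $D_f$---lie in the image of the localized pushforward $i_{\oast}$. This is the content of Remark~\ref{Remark stabs are in image localized pushforward}, which is proved in Appendix~\ref{Appendix: Equivariant localization for bow varieties} by embedding the bow variety into a cotangent bundle of a partial flag variety via the maximal D5 resolution and using equivariant formality of the latter together with the fusion formula of Theorem~\ref{Fusion of D5 branes for separated brane diagrams}. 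This is the substantive input you are missing: it is what replaces equivariant formality for bow varieties, and without it (or something equivalent) the passage from fixed-point vanishing to global vanishing is unjustified.
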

 
\begin{proof}
    Using the definition of $\Rmat{C'}{C}^X$, it is easy to see that both sides are meromorphic sections of the same line bundle.
    Then \eqref{localized R-matrix relations} would immediately imply \eqref{R-matrix relations} if the localization map $i^{\oast}$ associated with the inclusion $X^{\At}\hookrightarrow X$ were injective. Among the bow varieties, this is known for type-A quiver varieties but not in general. However, since the class $TX|_{X^{\At}}$ has no trivial $\At$-weights, the map $i^{\oast}i_{\oast}=\vartheta(TX|_{X^{\At}})\cdot$ is injective. Therefore, $i^{\oast}$ is always injective on the image of $i_{\oast}$. 
    Thus, it would suffice to know that the stable envelopes (and hence both sides of \eqref{R-matrix relations}) are in the image of the pushforward $i_{\oast}$. This latter is the content of Remark \ref{Remark stabs are in image localized pushforward}.
\end{proof}

\begin{remark}
    In section \ref{subsection fusion of R-matrices}, we will give a different argument that also provides an explicit formula for the entries $(\Rmat{C'}{C})_{gf}$ in terms of those of partial flag varieties.
\end{remark}

Assume now that the two chambers $\mathfrak{C}$ and $\mathfrak{C}'$ are separated by a single wall, i.e.
\begin{align*}
    \mathfrak{C}&=\lbrace a_{i_1}<\dots< a_{i_k}< a_{i_{k+1}}< \dots< a_{i_n}\rbrace,
    \\
    \mathfrak{C}'&=\lbrace a_{i_1}<\dots< a_{i_{k+1}}< a_{i_k}< \dots< a_{i_n} \rbrace.
\end{align*}
Let $A\subset \At$ be the subtorus given by the equation $a_{i_k}= a_{i_{k+1}}$. 
By multiple applications of Theorem \ref{proposition tensor product decomposition fixed locus}, the components of the $A$-fixed locus of $X=X(\D, r,c)$ are of the form 
\begin{multline*}
\prod_{j=1}^{k-1} X(\D^{(i_j)}, r^{(i_j)},c^{(i_j)})\times  X(\D^{(i_k,i_{k+1})}, r^{(i_k)}+r^{(i_{k+1})}),c^{(i_k)}\sqcup c^{(i_{k+1})}) \times
\\
\prod_{j=k+2}^{n} X(\D^{(i_j)} , r^{(i_j)},c^{(i_j)}),
\end{multline*}
where $\sum r^{(i_j)}=r$ and $\bigsqcup c^{(i_j)}=c$. The diagrams $\D^{(i_j)}$ are obtained from $\D$ by erasing all D5 branes but $\Ab_{i_j}$, while $\D^{(i_k,i_{k+1})}$ is obtained by erasing all of them except $\Ab_{i_k}$ and $\Ab_{i_{k+1}}$. Notice that all the varieties in the product except the central one, which we denote by $F$, are singletons. Nevertheless, it is useful to keep them in the notation. 

On every $A$-fixed component $F$ there is a residual action of $\At/A$ and the chambers $\mathfrak{C}$ and $\mathfrak{C}'$ induce the only two chambers 
$\mathfrak{C}_+=\lbrace a_{i_k}< a_{i_{k+1}}\rbrace$ and $\mathfrak{C}_-=\lbrace a_{i_{k+1}}< a_{i_k}\rbrace$ of $\At/A$. 
Set 
\[
R^{(i_k,i_{k+1})}= \Rmat{C_+}{C_-}.
\]
Combining Proposition \ref{composition stable envelopes} and Proposition \ref{proposition R matrix relations}, we deduce the following refined statement:
\begin{corollary} Given two chambers $\mathfrak{C}$ and $\mathfrak{C}'$ separated by the wall $a_{i_k}=a_{i_{k+1}}$ and two fixed points $f,g\in F^{\At/A}$, we have
\begin{equation*}
    \StabX{X}{C'}{f}(z)=
        \sum_{g\in F^{\At/A}} \left( R^{(i_k,i_{k+1})}\right)_{gf}(z\h^{-r^{(<k)}})\StabX{X}{C}{g}(z)
\end{equation*}
where $r^{(<k)}=\sum_{j=1}^{k-1} r^{(i_j)}$. 

%In particular
%\[
% \left(\Rmat{C'}{C}^X\right)_{gf}(z)=
% \begin{cases}
%     \left(\Rmat{C'}{C}^F\right)_{gf}(z\h^{-r'}) & \text{if } f,g\in F^{\At/A}\subset X^{\At}
%     \\
%     0 & \text{else}
% \end{cases}
%\]
\end{corollary}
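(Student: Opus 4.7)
The plan is to reduce the statement to Proposition \ref{proposition R matrix relations} applied to the central bow variety $F$, by iterating the composition formula of Proposition \ref{composition stable envelopes} to isolate $F$ inside the stable envelopes on the whole $X$.

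First, I would iterate Proposition \ref{composition stable envelopes} according to a filtration of $\At$ adapted to $\mathfrak{C}$. Starting from the $\mathfrak{C}$-smallest side, I peel off the singleton factors $X(\D^{(i_j)},r^{(i_j)},c^{(i_j)})$ for $j=1,\dots,k-1$ one at a time: at the $j$-th step the rank-two subtorus corresponds to grouping $\{a_{i_j}\}$ against the remaining D5 branes, and the induced chamber is $\mathfrak{C}_0=\{a'<a''\}$. Next, I peel off the singletons for $j=k+2,\dots,n$ from the $\mathfrak{C}$-largest side by the symmetric partition. The net result expresses $\StabX{X}{\mathfrak{C}}{f}(z)$ as a convolution of an ``ambient'' stable envelope (for the embedding of the $A$-fixed product component into $X$) with a box product whose only non-trivial factor is $\StabX{F}{\mathfrak{C}_+}{f}$. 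Accounting for the cumulative $\h^{-r'}$ shifts in Proposition \ref{composition stable envelopes} shows that the K\"ahler variables arriving at $F$ are shifted to $z\h^{-r^{(<k)}}$, while the factors peeled off from the right induce no extra shift on $F$ (since they sit after $F$ in the ordering).

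Second, I would apply the same iterated decomposition to $\StabX{X}{\mathfrak{C}'}{f}(z)$. The two chambers $\mathfrak{C}$ and $\mathfrak{C}'$ differ only by swapping $a_{i_k}\leftrightarrow a_{i_{k+1}}$, and these two parameters stay grouped together in every one of the rank-two splittings above (since at each step we separate a singleton from a block that contains $\{i_k,i_{k+1}\}$). Hence each rank-two ordering used to invoke Proposition \ref{composition stable envelopes} is the same for both chambers, and consequently the ambient stable envelope and all the singleton stable envelopes in the box product coincide for $\mathfrak{C}$ and $\mathfrak{C}'$. The sole difference between the two decompositions is the central factor: $\StabX{F}{\mathfrak{C}_+}{f}(z\h^{-r^{(<k)}})$ versus $\StabX{F}{\mathfrak{C}_-}{f}(z\h^{-r^{(<k)}})$.

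Finally, I would invoke Proposition \ref{proposition R matrix relations} for the bow variety $F$ with chambers $\mathfrak{C}_\pm$ to expand
\[
\StabX{F}{\mathfrak{C}_-}{f}(w)=\sum_{g\in F^{\At/A}}\bigl(R^{(i_k,i_{k+1})}\bigr)_{gf}(w)\,\StabX{F}{\mathfrak{C}_+}{g}(w),
\]
with $w=z\h^{-r^{(<k)}}$. Substituting this expansion into the decomposition of $\StabX{X}{\mathfrak{C}'}{f}(z)$, pulling the scalar coefficients out of the convolution, and recognizing the resulting terms as the analogous decompositions of $\StabX{X}{\mathfrak{C}}{g}(z)$ for each $g\in F^{\At/A}$, yields exactly the claimed formula. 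The main obstacle is purely bookkeeping: confirming that the nested $\h^{-r'}$-shifts from Proposition \ref{composition stable envelopes} telescope into the single shift $\h^{-r^{(<k)}}$ seen by $F$, and verifying that the convolution and box-product operations interact linearly with the scalar R-matrix entries so that the regrouping step is valid.
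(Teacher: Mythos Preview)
Your proposal is correct and follows essentially the same approach as the paper, which simply states that the corollary follows by combining Proposition~\ref{composition stable envelopes} and Proposition~\ref{proposition R matrix relations}. Your iterated peeling is a valid way to carry this out; a slightly quicker variant is to apply Proposition~\ref{composition stable envelopes} just twice---once with the partition $\{i_1,\dots,i_{k-1}\}$ versus $\{i_k,\dots,i_n\}$ (giving the shift $z\h^{-r^{(<k)}}$ on the second factor) and once more with $\{i_k,i_{k+1}\}$ versus $\{i_{k+2},\dots,i_n\}$---but the content is the same.
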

As a result, by wall crossing, every R-matrix can be written as a product of simpler matrices of the form $R^{(i,j)}$. An important special case is when the bow variety has exactly three D5 branes and we consider the chambers 
\[
\mathfrak{C}=\lbrace a_1<a_2<a_3\rbrace  \qquad 
    \mathfrak{C}'=\lbrace  a_3< a_2 < a_1 \rbrace.
\]
To describe this situation effectively, it is best to switch to additive notation.

\begin{corollary}
Crossing walls from $\mathfrak{C}$ to $\mathfrak{C}'$ in the two possible ways, we get
\begin{equation*}
    R^{(1,2)}(z-r^{(3)}\h)R^{(1,3)}(z)R^{(2,3)}(z-r^{(1)}\h)
    =R^{(2,3)}(z)R^{(1,3)}(z-r^{(2)}\h)R^{(1,2)}(z).
    %\text{\color{red} I think this one is correct}
\end{equation*}
%{\color{red}
%\begin{equation*}
%    R^{(1,2)}(z-r^{(<3)}\h)R^{(1,3)}(z)R^{(2,3)}(z-r^{(<1)}\h)
%    =R^{(1,2)}(z)R^{(1,3)}(z-r^{(<2)}\h)R^{(2,3)}(z).
%\end{equation*} }
Hence, the matrices $R^{(i,j)}$ solve the dynamical Yang-Baxter equation.
\end{corollary}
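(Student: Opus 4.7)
The plan is to recognize both sides of the displayed identity as two different factorizations of the single composite R-matrix $\Rmat{C}{C'}$, obtained by traversing the two length-three paths through the $S_3$-chamber arrangement that connect $\mathfrak{C}=\{a_1<a_2<a_3\}$ to its opposite $\mathfrak{C}'=\{a_3<a_2<a_1\}$. Since $\Rmat{C}{C'}:=(S_{\mathfrak{C}})^{-1}\circ S_{\mathfrak{C}'}$ is defined directly from the two stable envelope matrices and is therefore manifestly path-independent, equating the two factorizations yields the displayed hexagon identity, which is exactly the elliptic dynamical Yang--Baxter equation.

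First I would iterate the preceding corollary along any sequence of chambers $\mathfrak{C}=\mathfrak{C}_0,\mathfrak{C}_1,\ldots,\mathfrak{C}_N=\mathfrak{C}'$ in which consecutive chambers are separated by a single wall. Each crossing contributes a factor $R^{(i_k,i_{k+1})}(z\h^{-r^{(<k)}})$, where $k$ is the position (in the current chamber) at which the two adjacent indices are swapped, and $r^{(<k)}=\sum_{j<k}r^{(i_j)}$ collects the NS5 charge vectors attached to the D5 branes strictly to the left of that swap. Composing these single-wall factors in the order prescribed by the preceding corollary, namely with the first wall crossed contributing the rightmost factor (as forced by the convolution convention of the sum over intermediate fixed points), produces $\Rmat{C}{C'}$.

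Next I would enumerate the two minimal paths, corresponding to the reduced expressions $s_1 s_2 s_1=s_2 s_1 s_2$ of the longest element of $S_3$, and read off the corresponding factors together with their K\"ahler shifts. For the path $\mathfrak{C}\to\{a_1<a_3<a_2\}\to\{a_3<a_1<a_2\}\to\mathfrak{C}'$, the three successive crossings give factors $R^{(2,3)}(z-r^{(1)}\h)$, $R^{(1,3)}(z)$, $R^{(1,2)}(z-r^{(3)}\h)$, which assemble (in reverse order of application) into the left-hand side. For the alternative path $\mathfrak{C}\to\{a_2<a_1<a_3\}\to\{a_2<a_3<a_1\}\to\mathfrak{C}'$, the analogous reading yields $R^{(1,2)}(z)$, $R^{(1,3)}(z-r^{(2)}\h)$, $R^{(2,3)}(z)$, producing the right-hand side.

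The substantive geometric content, namely that both factorizations represent the same operator $\Rmat{C}{C'}$, is already built into the definition of the R-matrix and into the preceding corollary, so no further geometric input is required. The main obstacle is purely bookkeeping: at each of the six intermediate chambers one must correctly identify which D5 index occupies each position, and thereby which $r^{(i_j)}$ contributes to the shift $r^{(<k)}$ at that step. Once these shifts are verified, equating the two factorizations yields the identity on the nose, and the closing assertion that the matrices $R^{(i,j)}$ satisfy the dynamical Yang--Baxter equation is then immediate upon translating into the standard dynamical R-matrix notation.
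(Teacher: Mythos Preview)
Your proposal is correct and is exactly the paper's approach: the corollary is stated without a detailed proof, as an immediate consequence of iterating the single-wall crossing formula along the two length-three paths between $\mathfrak{C}$ and $\mathfrak{C}'$ and equating the resulting factorizations of the path-independent operator $\Rmat{C}{C'}=(S_{\mathfrak{C}})^{-1}S_{\mathfrak{C}'}$. One small caution on bookkeeping: from $\Rmat{C_0}{C_2}=\Rmat{C_0}{C_1}\Rmat{C_1}{C_2}$ the first wall actually contributes the \emph{leftmost} factor, not the rightmost; this just swaps the two sides of the displayed identity and so does not affect the conclusion.
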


%Before ending the section, we record the following result, which is a direct consequence of the definition of R-matrix and of Proposition \ref{proposition duality stable envelopes}.
\noindent A direct consequence of the definition of the R-matrix and of Proposition \ref{proposition duality stable envelopes} is

\begin{corollary}
\label{symmetry R matrix}
The $R$-matrix $R_{\mathfrak{C},\mathfrak{C}^{opp}}$ is symmetric up to a shift, that is %, i.e. following formula holds 
\[
\left( R_{\mathfrak{C},\mathfrak{C}^{opp}}\right)_{fg}(a,z,\h)= \left( R_{\mathfrak{C},\mathfrak{C}^{opp}}\right)_{gf}(a,z^{-1} \h^{r},\h).
\]
\end{corollary}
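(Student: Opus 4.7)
The plan is to deduce the symmetry directly from Proposition \ref{proposition duality stable envelopes} by expressing both $\Rmat{C}{C^{opp}}(a,z,\h)$ and its transpose in terms of $S_{\mathfrak{C}^{opp}}$ evaluated at two related arguments, and observing that the substitution $z \leftrightarrow z^{-1}\h^{r}$ interchanges the two formulas.

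First I would rewrite the R-matrix using the definition $\Rmat{C}{C^{opp}}(z) = S_{\mathfrak{C}}(z)^{-1}\,S_{\mathfrak{C}^{opp}}(z)$ and Proposition \ref{proposition duality stable envelopes} applied with chamber $\mathfrak{C}$, which gives
\[
\Rmat{C}{C^{opp}}(a,z,\h) \;=\; S_{\mathfrak{C}^{opp}}(a,z^{-1}\h^{r},\h)^{T}\,\vartheta(TX)^{-1}\,S_{\mathfrak{C}^{opp}}(a,z,\h).
\]
Since $\vartheta(TX)^{-1}$ is diagonal, hence symmetric, taking the transpose of this identity yields
\[
\Rmat{C}{C^{opp}}(a,z,\h)^{T} \;=\; S_{\mathfrak{C}^{opp}}(a,z,\h)^{T}\,\vartheta(TX)^{-1}\,S_{\mathfrak{C}^{opp}}(a,z^{-1}\h^{r},\h).
\]

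Next I would evaluate the original formula for $\Rmat{C}{C^{opp}}$ at the argument $z^{-1}\h^{r}$ and use the involution $(z^{-1}\h^{r})^{-1}\h^{r} = z$ to obtain
\[
\Rmat{C}{C^{opp}}(a,z^{-1}\h^{r},\h) \;=\; S_{\mathfrak{C}^{opp}}(a,z,\h)^{T}\,\vartheta(TX)^{-1}\,S_{\mathfrak{C}^{opp}}(a,z^{-1}\h^{r},\h),
\]
which coincides exactly with the previous expression for $\Rmat{C}{C^{opp}}(a,z,\h)^{T}$. Hence $\Rmat{C}{C^{opp}}(a,z^{-1}\h^{r},\h) = \Rmat{C}{C^{opp}}(a,z,\h)^{T}$, and reading this equality entry-by-entry gives the claim.

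This argument is essentially a formal linear-algebra manipulation, so I expect no real obstacle; the only points requiring a little care are that the duality proposition applies with $\mathfrak{C}$ in place of $\mathfrak{C}^{opp}$ (which is legitimate, since the opposite of $\mathfrak{C}^{opp}$ is $\mathfrak{C}$), that $\vartheta(TX)^{-1}$ commutes with transposition, and that the involution $z \mapsto z^{-1}\h^{r}$ is componentwise, $z_{i} \mapsto z_{i}^{-1}\h^{r_{i}}$, with the same NS5 charge vector $r$ that appears in Proposition \ref{proposition duality stable envelopes}.
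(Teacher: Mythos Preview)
Your proof is correct and is exactly the argument the paper has in mind: the text simply says the corollary is ``a direct consequence of the definition of the R-matrix and of Proposition \ref{proposition duality stable envelopes}'', and you have spelled out that direct consequence in full detail.
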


\subsection{Disregarding non-essential 5-branes}
\label{section: Getting rid of the ``trivial'' branes}
Let $X$ be a bow variety with $m$ NS5 branes and $n$ D5 branes. 
In this section, we argue that the stable envelopes are essentially unaffected by the presence of D5 branes with charge $0$ or $m$, or by the presence of NS5 branes of charge $0$ or $n$.

A D5 brane $\Ab$ has charge $0$ (resp. $m$) if and only if in the separated (resp. co-separated) representative of its HW equivalence class its local charge is $\w(\Ab)=0$. Dually, an NS5 brane $\Zb$ has charge $0$ (resp. $n$) if and only if in the separated (resp. co-separated) representative of its HW equivalence class its local charge is $\w(\Zb)=0$.
%Dually, an NS5 brane $\Zb$ has charge $\ch(\Zb)=0$ (resp. $\ch(\Zb)=n$) iff it is isomorphic to a separated (resp. co-separated) bow variety with local charge $\w(\Zb)=0$. 
Since by Corollary \ref{Corollary: stable envelopes match under HW} the HW isomorphism identifies the stable envelopes, it suffices to restrict ourselves to the separated or co-separated settings.

Let us begin with NS5 branes. The insertion of an NS5 brane $\Zb$ satisfying $\w(\Zb)=0$ in a separated or co-separated brane diagram induces an isomorphism of the associated bow varieties, and hence an equality of stable envelopes, up to the appropriate identification of the K\"ahler parameters. This is a degenerate case of the more general results discussed in Section \ref{Fusion of NS5 branes}. There, we will build a correspondence interpolating an arbitrary separated or co-separated bow variety with the one obtained by replacing an NS5 brane $\Zb$ with a pair of adjacent NS5 banes of weight $\w(\Zb')$ and $\w(\Zb'')$ such that $\w(\Zb')+\w(\Zb'')=\w(\Zb)$. If $\w(\Zb')=0$ or $\w(\Zb'')=0$, the correspondence reduces to the aforementioned isomorphism. At the level of elliptic stable envelopes, Theorem \ref{main theorem proof NS5 resolution for stabs} shows that this isomorphism induces an identification of the stable envelopes. Notice that this argument implies that the stable envelopes do not depend on the K\"ahler parameters attached to a brane $\Zb$ with $\w(\Zb)=0$.

We now move to D5 branes. Let now $X$ be a separated or co-separated bow variety with at least one D5 brane $\Ab$ such that $\w(\Ab)=0$ and consider the one-dimensional subtorus $\At_0 \subset \At$ acting on $\Ab$. By \ref{proposition tensor product decomposition fixed locus}, the bow variety $Y$ whose brane diagram is obtained from the one of $X$ by removing $\Ab$ can be seen as a $\At_0$-fixed subvariety of $X$ (the other bow variety in the fiber product is a singleton). Actually, since $\w(\Ab)=0$, $Y$ is the unique $\At_0$-fixed component, and hence all the $\At$-fixed points are contained in $X$ together with all the $\At$-equivariant curves connecting them. This forces the stable envelopes of $X$ and $Y$ to be essentially the same.

\begin{comment}
    Explicitly, the inclusion $Y\hookrightarrow X$ associated with the insertion of a weight zero D5 brane $\Ab$ on the left of some 5-brane $\mathcal{X}$ in the bow diagram of $Y$ corresponds to inserting a thre-way part of the form 
\[
\begin{tikzcd}[scale=.6]
    W_{\mathcal{X}_-}\arrow[loop ,out=120,in=60,distance=2em, "B_{\mathcal{X}_-}"] \arrow[rr, equal, "id"]& &  W_{\mathcal{X}_+}\arrow[dl, "0"] \arrow[loop ,out=120,in=60,distance=2em, "B_{\mathcal{X}_+}"]\\
    & \C_{\Ab}\arrow[ul, "0"] &
\end{tikzcd}
\]
in the quiver describing $Y$. The map $B_{\mathcal{X}_-}$ is uniquely determined by the moment map condition on the vertices of the three-way part.
\end{comment}

%The inclusion $Y\hookrightarrow X$ is $\At_0$-equivariant, and it is easy to check that all the $A$-fixed points of $X$ are contained in $Y$, and hence can be identified with the $\At/A$-fixed points of $Y$.

Namely, let $\mathfrak{C}$ be a chamber for the action of $\At$ on $X$, and let $\mathfrak{C}_0$ be the induced chamber on $\At_0$. By the same argument used in the proof of Proposition \ref{composition stable envelopes}, 
we obtain 
%it is easy to see that
\begin{equation}
    \label{equation stab with weight zero banes}
    \text{Stab}_{\mathfrak{C}}(f)\Big|_g= \left(\text{Stab}_{\mathfrak{C}}(Y)\circ  \text{Stab}_{\mathfrak{C}/\mathfrak{C}_0}(f)\right) \Big|_Y \Big|_g=  \Theta(N^-_{Y/X})\Big|_g\text{Stab}_{\mathfrak{C}/\mathfrak{C}_0}(f)\Big|_g.
\end{equation}
That is, the fixed point restrictions of stable envelopes are equal, up to a rescaling by $\Theta(N^-_{Y/X})|_g$.

Since our mirror symmetry statement only involves ratios of stable envelope restrictions, it is sufficient to prove mirror symmetry of stable envelopes for bow varieties with no charge zero NS5 branes.

%In conclusion, since the mirror symmetry statement only involves ratios of stable, it is sufficient to prove mirror symmetry of stable envelopes for bow varieties with no charge zero NS5 branes.

%\[
%\frac{\text{Stab}_{\mathfrak{C}_A}(f)\Big|_g}{\text{Stab}_{\mathfrak{C}_A}(g)\Big|_g}= \frac{\text{Stab}_{\mathfrak{C}_A/\mathfrak{C}_B}(f)\Big|_g}{\text{Stab}_{\mathfrak{C}_A/\mathfrak{C}_B}(g)\Big|_g},
%\]

\begin{comment}
\subsection{Stable envelope for co-separated bow varieties}

\[
\begin{tikzpicture}[baseline=0pt,scale=.7]
\draw[thick] (4,1)--(22,1) ;
\draw[thick,blue] (4.5,0) node [right]{$a^!_1$}--(3.5,2);
\draw[thick,blue] (6.5,0) node [right]{$a^!_2$} --(5.5,2);
\draw[thick,blue] (8.5,0) node [right]{$a!_3$} --(7.5,2);
\draw[thick,blue] (10.5,0) node [right]{$a^!_4$} --(9.5,2);
\draw[thick,red] (11.5,0) node [right]{$z^!_6$} --(12.5,2);
\draw[thick,red] (13.5,0) node [right]{$z^!_5$} --(14.5,2);
\draw[thick,red] (15.5,0) node [right]{$z^!_4$}--(16.5,2);
\draw[thick,red] (17.5,0) node [right]{$z^!_3$}--(18.5,2);
\draw[thick,red] (19.5,0) node [right]{$z^!_2$}--(20.5,2);
\draw[thick,red] (21.5,0) node [right]{$z^!_1$}--(22.5,2);
\end{tikzpicture}
\]
\end{comment}

%%%%%%%%%%%%%%%%%%%%%%%%%%%%%%%%%%

\subsection{Proof of Proposition \ref{composition stable envelopes}}
\begin{proof}
    The topological part of the proof is standard, so we only sketch it. The original references are \cite[Lemma 3.6.1]{maulik2012quantum} and \cite[Prop. 3.3]{aganagic2016elliptic}. Abbreviate $F=X'\times X''$. Assume temporarily that the convolution is well defined and both sides are sections of the same line bundle. Then, Lemma \ref{lemma support convolution} and Lemma \ref{lemma properness full attracting set} imply that the convolution is supported on 
    \[
    \Set{(f,x)\in \lbrace f \rbrace \times X}{\exists \, f'\in F^{\At} \text{ s.t. } (f,f')\in \Attfull{C}^{F}(f) , \,   (f',x)\in\text{Att}_{\mathfrak{C}_0}^{X,f}(f') }.
    \]
    Using an equivariant embedding of $X$ in a projective variety, one can check that the set above is contained in $\Attfull{C}^{X}(f)$. This implies that the convolution product of the stable envelopes satisfies the support axiom for $\StabX{X}{C}{f}$. Similarly, one argues that the diagonal axiom is also satisfied. 
    
    To complete the proof, it remains to check that the convolution is well-defined and the line bundles match. We perform this check in full detail. The topological condition for the convolution, namely properness of the support of $\text{Stab}^{X}_{\mathfrak{C}_0}(X'\times X'')(z)$ over $X_1\times X_2$, directly follows from Lemma \ref{lemma properness full attracting set} (with $F=X'\times X''$), so it only remains to look at the line bundles.

    Let $\tau$ denote the map $z\mapsto (z, z\h^{-r'})$. Consider the classes
    \begin{align*}
        \alpha_{F,f}&:=\StabX{X'}{C'}{f'}\boxtimes \StabX{X''}{C''}{f''}\in \Gamma(\Sh{L}_{\At',f'}^{\triangledown}\boxtimes \Sh{L}_{\At'',f''}^{\triangledown})\boxtimes (\Sh{L}_{X'}\boxtimes \Sh{L}_{X''}))_{mer}
        \\
        \alpha_{X,F}&:=\text{Stab}^{X}_{\mathfrak{C}_0}(X'\times X'')\in \Gamma(\Sh{L}^{\triangledown}_{A_0,F}\boxtimes \Sh{L}_X)_{mer}
        \\
        \alpha_{X,f}&:=\StabX{X}{C}{f}\in \Gamma(\Sh{L}^{\triangledown}_{\At,f}\boxtimes \Sh{L}_X)_{mer}
    \end{align*}
    The proposition claims that $\alpha_{X,F} \circ \tau^{\oast}\alpha_{F,f}=\alpha_{X,f}$, and by definition of convolution we need to prove that 
    \begin{equation}
        \label{equation proof composition stable envelopes}
        \tau^*((\Sh{L}_{\At',f'}^{\triangledown}\boxtimes \Sh{L}_{\At'',f''}^{\triangledown})\boxtimes \left( \tau^*(\Sh{L}_{X'}\boxtimes \Sh{L}_{X''}) \otimes \Sh{L}^{\triangledown}_{A_0,F}\right)\boxtimes \Sh{L}_X\cong \Sh{L}^{\triangledown}_{\At,f}\boxtimes\Theta(TF)\boxtimes \Sh{L}_X
    \end{equation}
    on $E_{\Tt\times \Tt^!}(f\times F\times X)$. Effectively, the left-hand side is the tensor product of the line bundles of $\tau^{\oast}\alpha_{F,f}$ and $\alpha_{X,f}$ while the right-hand side is the line bundle of $\alpha_{X,f}$, tensored with $\Theta(TF)$.
    
    First, we reduce to the case when $X$ is separated. Let $X_1$ be an arbitrary bow variety and $X_2$ be the separated bow variety isomorphic to $X_1$ via Hanany-Witten isomorphism. 
    Since the latter is $\At$-equivariant, we have a commutative diagram 
    \[
    \begin{tikzcd}
          f_1'\times f_1''\arrow[d, equal] \arrow[r, hookrightarrow] & X_1'\times X_1''\arrow[r, hookrightarrow]\arrow[d, "\sim" vlabl] & X_1 \arrow[d, "\sim" vlabl]
         \\
          f_2'\times f_2''\arrow[r, hookrightarrow] & X_2'\times X_2''\arrow[r, hookrightarrow] & X_2
    \end{tikzcd}
    \]
in which the vertical arrows are all Hanany-Witten isomorphisms. 

By Corollary \ref{Corollary: stable envelopes match under HW}, these isomorphisms identify the stable envelopes; hence, the same holds for their line bundles. Since the charge $r'$ is invariant under Hanany-Witten transitions, equation \eqref{equation proof composition stable envelopes} holds 
for $X_1$ iff it holds for its separated counterpart $X_2$. Thus, it suffices to prove \eqref{equation proof composition stable envelopes} assuming that $X$ is separated. 

We claim that 
    \begin{equation}
    \label{second equation proof composition stable envelopes}
        \tau^*(\Sh{L}_{X'}\boxtimes \Sh{L}_{X''})\otimes(\Sh{L}_{A_0,F})^{-1}=\Sh{G},
    \end{equation}
    for some $\Sh{G}$ is pulled back from $\Base{\Tt}{\Tt^!}$. Restricting this equation to $f=f'\times f''$ and using the identity $N^-_{f'\times f''/X}=N^-_{f',X'}+ N^-_{f'',X''}+N^-_{F/X}$, it is straightforward to deduce that
    \[
    \tau^*\left(\Sh{L}_{\At',f'}^{\triangledown}\boxtimes \Sh{L}_{\At'',f''}^{\triangledown}\right)\otimes \Sh{L}_{\At,f}=\Sh{G}^{-1}.
    \]
    Combining the two equations above, \eqref{equation proof composition stable envelopes} follows. Therefore, it remains to prove \eqref{second equation proof composition stable envelopes}. Since $X$ is separated, Proposition \ref{prop:rightbundles} implies that
    \[
    \alpha_X\equiv \h \left( 
    \bigoplus_{i=1}^{m-1} \Hom( \xi_{i+1}, \xi_{i} )-
    \Hom( \xi_{i}, \xi_{i})
    \right)
    \]
    Here, $m$ is the number of NS5 branes, and the tautological bundles are ordered from left to right.
    As usual, $\equiv$ denotes equality up to some class in $K_{\Tt}(\pt)$. 
    Since $X$ is separated, we also have 
    \[
    \Sh{U}_X=\bigotimes_{i=1}^{m-1}\Sh{U}\left(\xi_{i},\frac{z_i}{z_{i+1}}\h^{\rk(\xi_{i-1})-\rk(\xi_{i})}\right).
    \]
    Similarly, the class $\alpha_{X'}$ and the line bundle $\Sh{U}_{X'}$ (resp. $\alpha_{X''}$ and $\Sh{U}_{X''}$) are obtained by replacing $\xi_i$ with $\xi_i'$ (resp. $\xi_i''$) in the formulas above. Also notice that $\xi_i|_F=\xi_i'\oplus \xi_i'' $ for every $i$. 
    
    Since $F=X'\times X''$ is $A_0$-fixed, we get 
    \[
    \alpha_X|_F\equiv \alpha_{X'}+\alpha_{X''} +\alpha_X|_F^+ +\alpha_X|_F^-,
    \]
    where $+\alpha_X|_F^+$ and $\alpha_X|_F^- $ denote the attracting and repelling components of $\alpha$ with respect to the chamber $\mathfrak{C}_0$. 
    Similarly, equation \eqref{equation separated alpha is a polarization} implies that 
    \[
    N^-_{F/X}= +\alpha_X|_F^-+ \hbar(\alpha_X|_F^+)^\vee.
    \]
%From this analysis we deduce that 
%    \begin{align}
%        \tau^*(\Sh{L}_{X'}\boxtimes \Sh{L}_{X''})\otimes(\Sh{L}_{A_0,F})^{-1}
%        &=\Theta(\alpha_{X'}+\alpha_{X''})\otimes  \tau^*(\Sh{U}_{X'}\boxtimes \Sh{U}_{X''})\otimes \left( %\Theta(\alpha_X)\otimes \Sh{U}_X \otimes \Theta(-N_{F/X}) \right)\Big|_{F}^{-1} \nonumber
%        \\
%        &\equiv \Theta(\hbar(\alpha_X|_F^+)^\vee- \alpha_X|_F^+)\otimes \tau^*(\Sh{U}_{X'}\boxtimes \Sh{U}_{X''})%\otimes \Sh{U}_X\Big|_F^{-1}. \label{identity to use for composition stable envelopes}
%    \end{align}
From this analysis we deduce that $\tau^*(\Sh{L}_{X'}\boxtimes \Sh{L}_{X''})\otimes(\Sh{L}_{A_0,F})^{-1}=$
    \begin{align}
        %\tau^*(\Sh{L}_{X'}\boxtimes \Sh{L}_{X''})\otimes(\Sh{L}_{A_0,F})^{-1}
        &\Theta(\alpha_{X'}+\alpha_{X''})\otimes  \tau^*(\Sh{U}_{X'}\boxtimes \Sh{U}_{X''})\otimes \left( \Theta(\alpha_X)\otimes \Sh{U}_X \otimes \Theta(-N^-_{F/X}) \right)\Big|_{F}^{-1} \nonumber
        \\
        &\equiv \Theta(\hbar(\alpha_X|_F^+)^\vee- \alpha_X|_F^+)\otimes \tau^*(\Sh{U}_{X'}\boxtimes \Sh{U}_{X''})\otimes \Sh{U}_X\Big|_F^{-1}. \label{identity to use for composition stable envelopes}
    \end{align}
Observing that 
\[
\hbar(\alpha_X|_F^+)^\vee\equiv \bigoplus_{i=1}^{m-1} \Hom( \xi_{i}'', \xi_{i+1}' )
   - \Hom( \xi_{i}'', \xi_{i}')
\]
and applying the last claim of Lemma \ref{Lemma properties U bundle} multiple times, we have 
\[
\Theta(\hbar(\alpha_X|_F^+)^\vee- \alpha_X|_F^+)\equiv \bigotimes_{i=1}^{m-1} \Sh{U}\left(\xi_i', \h^{\rk(\xi_{i-1}'')-\rk(\xi_i'')}\right)\otimes \Sh{U}\left(\xi_i'', \h^{\rk(\xi_{i+1}')-\rk(\xi_i')}\right).
\]
On the other hand, using the third claim of the same lemma, we get 
\[
\Sh{U}_X\Big|_F^{-1}\cong
\bigotimes_{i=1}^{m-1}
\Sh{U}\left(\xi_i', \frac{z_{i+1}}{z_i}\h^{-\rk(\xi_{i-1})+\rk(\xi_{i})}\right)
\otimes 
\Sh{U}\left(\xi_i'', \frac{z_{i+1}}{z_i}\h^{-\rk(\xi_{i-1})+\rk(\xi_{i})}\right).
\]
Finally, using $r_i'=\rk(\xi_i')-\rk(\xi_{i-1}')$ we get
\begin{multline*}
\tau^*(\Sh{U}_{X'}\boxtimes \Sh{U}_{X''})=\bigotimes_{i=1}^{m-1}
\Sh{U}\left(\xi_i', \frac{z_i}{z_{i+1}}\h^{\rk(\xi'_{i-1})-\rk(\xi'_{i})}\right)
\otimes \\
\Sh{U}\left(\xi_i'', \frac{z_i}{z_{i+1}}\h^{\rk(\xi''_{i-1})-\rk(\xi''_{i})+\rk(\xi'_{i-1})+\rk(\xi'_{i+1})}\right).
\end{multline*}
Plugging in these expressions in \eqref{identity to use for composition stable envelopes} and applying the first part of Lemma \ref{Lemma properties U bundle}, we deduce that $\tau^*(\Sh{L}_{X'}\boxtimes \Sh{L}_{X''})\otimes(\Sh{L}_{A_0,F})^{-1}\equiv \Sh{O}$, which is equivalent to \eqref{second equation proof composition stable envelopes}.
\end{proof}

%%%%%%%%%%%%%%%%%%%%%%%%%%%%%%%%%%%%%
%%%%%%%%%%%%%%%%%%%%%%%%%%%%%%%%%%%%%

\section{D5 Resolutions}

\subsection{D5 resolutions for bow varieties}

Let $\D$ be a separated or co-separated brane diagram.
Let $\widetilde \D$ be the brane diagram obtained by replacing a single D5 brane $\Ab$ of local charge $\w=\w(\Ab)\geq 2$ in $\D$ by a pair of consecutive D5 branes $\Ab'$ and $\Ab''$ of local charges $\w'=\w(\Ab')\geq 1$ and $\w''=\w(\Ab'')\geq 1$ such that $\w=\w'+\w''$. We call $\widetilde \D$ a D5 resolution of the brane diagram $\D$, and the branes $\Ab'$ and $\Ab''$ resolving branes. Notice that if $\D$ is separated (resp. co-separated), then $\widetilde \D$ is also separated (resp. co-separated).

Let now $\widetilde X$ and $X$ be the bow varieties associated with $\widetilde \D$ and $\D$. We say that $\widetilde X$ is a D5 resolution of the bow variety $X$\footnote{We only use the word ``resolution'' as a metaphor. No construction in this paper is a ``resolution of singularities'' in its mathematical meaning.}. The ultimate goal of this section is to compare the stable envelopes of $\widetilde X$ and $X$. To this end, we construct a distinguished embedding $j: X\hookrightarrow \widetilde X$ and study its equivariant geometry.

Recall from Section \ref{Section: Brane diagrams, bow varieties, fixed points} that the definition of a bow variety $X$ involves a space of quiver representations
$\MM$ and a gauge group $G$. Both $\MM$ and $G$ depend on the brane diagram $\D$.
In order to define the embedding $j: X\hookrightarrow \widetilde X$, we first define a map $\MM\to \widetilde \MM$ at the level of quiver representations and study its compatibility with the actions of the groups $G$ and $\widetilde G$. Recall that these spaces of representations are defined as the direct sum of certain fundamental blocks associated with the 5-branes in the brane diagrams. This observation implies that $\MM$ and $\widetilde \MM$ only differ in those components that are associated with $\Ab$ and its resolutions $\Ab'$ and $\Ab''$.
%In order to compare the stable envelopes of $X$ and $\widetilde X$, we construct an embedding $j: X\hookrightarrow \widetilde X$. To this end, we first define a map $\mathbb{M}\to \widetilde{\mathbb{M}}$ at the level of the spaces of representations.
As a consequence, we set $\MM\to \widetilde \MM$ to be the identity on most summands of the spaces of quiver representation, except on $\MM_{\Ab}\to \MM_{\Ab'}\oplus\MM_{\Ab''}$, where the map is given by the assignment
\[
\begin{tikzcd}[column sep=small, row sep=normal]
W_{\Ab_-}\arrow[loop,out=120,in=60,distance=2em, "B_-"]
& &  W_{\Ab_+} \arrow[loop,out=120,in=60,distance=2em, "B_+"] \ar[ld,"b"] \ar[ll,"A"]
\\
&\C_{\Ab} \ar[ul, "a"] &
\end{tikzcd}
\Longrightarrow
\begin{tikzcd}[column sep=small, row sep=normal]
W_{\Ab'_-}\arrow[loop,out=120,in=60,distance=2em, "B'_-"]
& &  W_{\Ab'_+}=W_{\Ab''_-} \arrow[loop,out=120,in=60,distance=2em, "B'_+=B''_-"]\arrow[ld,"b'"] \ar[ll,"A'"]
& &  W_{\Ab''_+}\arrow[loop,out=120,in=60,distance=2em, "B''_+"] \ar[ld,"b''"] \ar[ll,"A''"]
\\
&\C_{\Ab'} \arrow[ul, "a'"] & & \arrow[ul, "a''"]\C_{\Ab''}
\end{tikzcd}
\]
in which $W_{{\Ab}'_-}=W_{\Ab_-}$, $W_{{\Ab}''_+}=W_{\Ab_+}$, and   
\[
     W_{{\Ab}'_+}=W_{\Ab''_-}=
    \begin{cases}
    W_{\Ab_+}\oplus \underbrace{\C\oplus\dots\oplus \C}_{\w''} & \text{if $\D$ is separated}  \\
    W_{\Ab_-}\oplus \underbrace{\C\oplus\dots\oplus \C}_{\w'} & \text{if $\D$ is co-separated},
    \end{cases}
\]
and the linear maps are described in Table \ref{equation definition D5 resolution of bow variety}. %The left column refers to the case when $\D$ is separated, while the right one to the case when $\D$ is co-separated.

\setlength{\extrarowheight}{2pt}
\begin{table}[ht]
\caption{Definition of the map $\MM\to \widetilde \MM$.}
\begin{tabular}{ |l|l| } 
 \hline
  if $\D$ is separated & if $\D$ is co-separated \\[2pt]
 \hline
 $ B'_- = B_- $
 &
 $
 B'_- = B_-
 $
 \\[2pt]
 $
 B''_+= B_+
 $
 &
 $
 B''_+= B_+
 $
 \\[2pt]
 $
 B_+' = B_-''=
 \begin{pmatrix}    B_+       & 0    & \dots &        &        \\  
                      b       & 0    & 0     & \dots  &        \\
                      0       & -1   & 0     & 0      &  \dots \\
                      \dots   &0     & -1    & \ddots &    0   \\
                              &\dots & 0     &  -1    &    0
    \end{pmatrix}
 $
 &
 $
 B_+' = B_-''=
 \begin{pmatrix} B_+       & a        & 0     & \dots  &        \\
                  0        & 0        & 1     & 0      &\dots   \\
                 \dots     & 0        & 0     & 1      &    0   \\
                           & \dots    & 0     & \ddots &    1   \\
                           &          &\dots  &   0    &    0
    \end{pmatrix}
$
\\[2pt]
$
A'= 
    \begin{pmatrix} A & -a & -(-B_-)^1a &\dots & -(-B_-)^{\w''-1} a
    \end{pmatrix} 
$
&
$
A'= 
    \begin{pmatrix} 1  & 0 &0 &\dots & 0
    \end{pmatrix}   
$
\\[2pt]
$
A''= \begin{pmatrix}
        1     &
        0     &
        0     &
        \dots &
        0
    \end{pmatrix}^T 
$
&
$
A''= \begin{pmatrix}
        A     &
        b     &
        bB_-
        \dots &
        b(B_-)^{\w'-1}
    \end{pmatrix}^T
$
\\[2pt]
$
a' = (-B_-)^{\w''} a 
$
&
$
a' = a
$
\\[2pt]
$
a''= \begin{pmatrix}
        0     &
        1     &
        0     &
        \dots&
        0
        \end{pmatrix}^T
$
&
$
a''= \begin{pmatrix}
        0     &
        0     &
        \dots &
        0     &
        1
        \end{pmatrix}^T
$
\\[2pt]
$
b'=\begin{pmatrix}
        0 & 0  &\dots & 0& -1
    \end{pmatrix}
$
&
$
b'=\begin{pmatrix}
        0 & 1  &0 & \dots& 0
    \end{pmatrix}
$
\\[2pt]
$
b''=b
$
&
$
b''=b B_+^{\w'}
$
\\[2pt]
\hline
\end{tabular}
\label{equation definition D5 resolution of bow variety}
\end{table}

\begin{example} \rm
\label{Example D5 resolution with charge c''=1}
Resolutions of separated varieties of the form $\w=\w'+\w''$ with $\w''=1$ are particularly easy to understand. In this case, the map $\mathbb{M}\to \widetilde{\mathbb{M}}$ is induced by the assignment
    \begin{equation*}
    \label{diagram fusion separated D5 branes charge c=c'+1}
    \begin{tikzcd}[column sep=small, row sep=normal]
W_{\Ab_-}\arrow[loop,out=120,in=60,distance=2em, "B_{-}"]
& &  W_{\Ab_+} \arrow[loop,out=120,in=60,distance=2em, "B_{+}"] \ar[ld,"b"] \ar[ll, swap, "A"]
\\
&\C_{\Ab} \ar[ul, "a"] &
\end{tikzcd}
\quad \Longrightarrow \quad 
    \begin{tikzcd}[column sep=small, row sep=normal]
        W_{\Ab_-}\arrow[loop ,out=120,in=60,distance=2em,   "B_{-}"] & & W_{\Ab_+}\arrow[loop ,out=120,in=60,distance=2em,   "B_{+}"]\arrow[d, bend left, shift left= 3,    "b"] \arrow[ll, ,swap,  "A"] & & W_{\Ab_+}\arrow[loop,out=120,in=60,distance=2em, "B_{+}"] \arrow[ll, equal]\arrow[ddl, "b"] \\   
        & &    \C \arrow[ull,   "-a"]\arrow[dl, equal, "-1"]\arrow[draw=none]{u}[sloped,auto=false]{\oplus} & &    \\
        & \C_{\Ab'} \arrow[uul, "-B_{-}a"] & & \C_{\Ab''}\arrow[ul, equal, "-1"] &
    \end{tikzcd}.
    \end{equation*}
    In the right diagram, only the nonzero components of the linear maps are described. For instance, the matrix description of the map $B'_+$ acting on $W_{\Ab'_+}=W_{\Ab_+}\oplus \C$ consists of four blocks, but only the two nonzero ones, namely $B_{+}$ and $-b$ are shown in the picture. It is instructive to check that the moment map conditions on the two sides are equivalent.

    Similarly, consider the resolution of a co-separated variety of the form $\w=\w'+\w''$, but now with $\w'=1$. The map $\MM\to \widetilde{\MM}$ is induced by
    \begin{equation*}
    \label{diagram fusion co-separated D5 branes charge c=c'+1}
    \begin{tikzcd}[column sep=small, row sep=normal]
        W_{\Ab_-}\arrow[loop,out=120,in=60,distance=2em, "B_{-}"]
        & &  W_{\Ab_+} \arrow[loop,out=120,in=60,distance=2em, "B_{+}"] \ar[ld,"b"] \ar[ll,"A"]
        \\
        &\C_{\Ab} \ar[ul, "a"] &
    \end{tikzcd}
    \quad \Longrightarrow \quad 
    \begin{tikzcd}[column sep=small, row sep=normal]
        W_{\Ab_-}\arrow[loop ,out=120,in=60,distance=2em,   "B_{-}"] & & W_{\Ab_-}\arrow[loop ,out=120,in=60,distance=2em,   "B_{-}"] \arrow[ll, equal] & & W_{\Ab_+}\arrow[loop,out=120,in=60,distance=2em, "B_{+}"] \arrow[ll, swap, "A"]\arrow[dll, "b"]\arrow[ddl, "b B_{+}"] \\   
        & &  \C \arrow[u, bend left, shift left= 3,    "a"] \arrow[dl, equal]\arrow[draw=none]{u}[sloped,auto=false]{\oplus} & & \\
        & \C_{\Ab'} \arrow[uul, "a"] & & \C_{\Ab''}\arrow[ul, equal] &
    \end{tikzcd}.
    \end{equation*}
    As before, only the nonzero components of the maps are displayed.
\end{example}

\begin{remark}
\label{Hurtubise normal form}
    Assume that $\dim(W_{\Ab_-})-\dim(W_{\Ab_+})> 0$, i.e. that we are in the separated setting. As shown in \cite[Prop. 3.2]{Nakajima_Takayama}, every three-way part satisfying $B_-A-A B_++ab=0$ as well as the conditions (S1) and (S2) is isomorphic to $\GL(W_{\Ab_-})\times Z_{\Ab}$, where $Z_{\Ab}\subset \gl(W_{\Ab_-})$ is closed. This isomorphism sends a tuple $(A,B_-,B_+,a,b)$ to the so-called Hurtubise normal form $(g,\eta)$\footnote{In the notation of \cite{Nakajima_Takayama}, the matrix $g$ is $-u^{-1}$.}. In particular, the matrix $g$ is given by
    \[
    \begin{pmatrix} A & -a & -(-B_-)^1a &\dots & -(-B_-)^{\w-1} a
    \end{pmatrix},
    \]
    where $\w=|\dim(W_{\Ab_-})-\dim(W_{\Ab_+}|$.
    In this language, the assignment in Table \ref{equation definition D5 resolution of bow variety} gives a map
    \[
    \GL(W_{\Ab_-})\times Z_{\Ab} \to \GL(W_{\Ab'_-})\times Z_{\Ab'} \times \GL(W_{\Ab''_-})\times Z_{\Ab''}.
    \]
    It is easy to see that the projection of this map to the first factor is just the identity map $id: \GL(W_{\Ab_-})\to \GL(W_{\Ab'_-}=W_{\Ab_-})$ while the projection on the third factor is constant (and its image is the identity matrix in $\GL(W_{\Ab''_-})$. The co-separated case is completely analogous.
\end{remark}

Let $\Tt=\At\times \Cs_{\h}$ (resp. $\widetilde \Tt= \widetilde \At \times \Cs_{\h}$) be the torus acting on $X$ (resp. $\widetilde X$). Define $\varphi: \Tt\to \widetilde \Tt$ as the identity on most components, except 
\begin{equation}
\label{group homomorphism A-resolution}
    \Cs_{\Ab} \times \Cs_{\h}  \to  \Cs_{\Ab'} \times  \Cs_{\Ab''} \times \Cs_{\h} \qquad (a,\h)\mapsto \begin{cases}
        (a\h^{-\w''}, a, \h ) & \text{if $\D$ is separated}\\
        (a, a\h^{\w'}, \h ) & \text{if $\D$ is co-separated.}
    \end{cases}
\end{equation}
\begin{proposition}
\label{proposition embedding resolution D5 branes}
    Let $\D$ be a separated bow diagram and let $\widetilde\D$ be a D5 resolution. The map $\mathbb{M}\to \widetilde{\mathbb{M}}$ defined above descends to a regular closed embedding $j: X\hookrightarrow \widetilde X$. Moreover, the map $j$ is equivariant along $\varphi:\Tt\to \widetilde \Tt$.
\end{proposition}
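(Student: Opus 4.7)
The plan is to split the argument into three stages: verifying that the assignment in Table~\ref{equation definition D5 resolution of bow variety} preserves the moment map and stability conditions, descending to the GIT quotient via Hurtubise normal form, and finally checking the regularity of the embedding together with its $\varphi$-equivariance.

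First, a direct block computation shows that if $(A,B_-,B_+,a,b)$ satisfies $B_-A - A B_+ + ab=0$, then the new tuples $(A',B'_-,B'_+,a',b')$ and $(A'',B''_-,B''_+,a'',b'')$ produced by Table~\ref{equation definition D5 resolution of bow variety} satisfy the analogous moment map equations at $\Ab'$, at $\Ab''$ and at the inserted D3 brane $W_{\Ab'_+}=W_{\Ab''_-}$; the cancellations are organized by the columns $(-B_-)^k a$ of $A'$ together with the explicit shape of $B'_+=B''_-$. Stability is equally straightforward: the image of $A'$ contains the chain $a, -B_-a,\dots,(-B_-)^{\w''-1}a$, which trivializes the extra $\C^{\w''}$ summand, and this allows one to deduce (S1) and (S2) at $\Ab'$ and $\Ab''$ from (S1) and (S2) at $\Ab$.

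To handle the descent I would invoke Remark~\ref{Hurtubise normal form} and identify each three-way part with a product $\GL(W_{\Ab_-})\times Z_{\Ab}$. A direct inspection of Table~\ref{equation definition D5 resolution of bow variety} shows that, in these coordinates, the assignment $(g,\eta)\mapsto ((g',\eta'),(g'',\eta''))$ satisfies $g'=g$ and $g''=\mathrm{id}$; i.e.\ the first normal-form $\GL$ factor is preserved identically while the second is constant. Consequently, after the symplectic reduction producing the bow variety, the induced morphism $X\to \widetilde X$ is controlled by a single map
\[
\iota: Z_{\Ab}\hookrightarrow Z_{\Ab'}\times Z_{\Ab''}
\]
on the reduced slices. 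It admits an explicit retraction (reading off $\eta$ from a distinguished block of $\eta'$), so it is a closed embedding; and since both source and target are smooth---inherited from smoothness of the corresponding bow varieties---closedness forces regularity.

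For $\varphi$-equivariance, the rule $(a,\h)\mapsto(a\h^{-\w''},a,\h)$ from \eqref{group homomorphism A-resolution} is dictated by weight bookkeeping: the $k$-th extra column of $A'$ is $(-B_-)^k a$ and carries $\h$-weight $k$ (since $B_-$ lies in the $\h$-twisted summand of $\MM_{\Ab}$), while $a'=(-B_-)^{\w''}a$ carries $\h$-weight $\w''$, matching the character $a\h^{-\w''}$ attached to $\Cs_{\Ab'}$. A similar check for the remaining entries of $B'_+$, $A''$, $a''$, $b'$ confirms equivariance. The delicate point I anticipate is justifying that the preferred basis $\C^{\w''}\subset W_{\Ab'_+}$ built into the table yields a well-defined point in $\widetilde X$ independent of the $\GL(W_{\Ab'_+})$-ambiguity; here Hurtubise normal form is indispensable because it exhibits a canonical choice of frame in the three-way part at $\Ab'$ that makes the descent transparent.
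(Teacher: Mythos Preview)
Your moment-map check and your equivariance analysis are fine and match what the paper does (the latter is exactly the content of Remark~\ref{remark torus action D5 resolution}). Likewise, your invocation of the Hurtubise normal form to justify that the quotient by the \emph{larger} group $\widetilde G$ still yields an injection is the same idea the paper uses (via Remark~\ref{Hurtubise normal form}).

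However, there is a genuine gap: you never address \emph{GIT stability}. Your stage~1 checks (S1) and (S2) at the three-way parts of $\Ab'$ and $\Ab''$, but the bow variety is $\widetilde{\mathcal M}^s/G$, and the superscript $s$ refers to $\chi$-stability for the character~\eqref{eq:character}, a condition that involves \emph{all} of $G$, not just the local piece at the resolved brane. Your Hurtubise-slice argument takes place entirely inside the three-way part and says nothing about why $\chi$-stable points of $\MM$ map to $\chi$-stable points of $\widetilde\MM$, nor why this correspondence is a pullback (both directions are needed to conclude that $\mu^{-1}(0)^s\hookrightarrow\tilde\mu^{-1}(0)^s$ is closed, which is what makes the quotient map a closed embedding). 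Relatedly, you only verify the forward direction of (S1), (S2); the converse is also required for the pullback property. Finally, your assertion that $Z_{\Ab}$ and $Z_{\Ab'}\times Z_{\Ab''}$ are smooth ``inherited from smoothness of the corresponding bow varieties'' is not justified: smoothness of the GIT quotient does not immediately imply smoothness of these slices, which live in the prequotient and are cut out prior to imposing GIT stability.

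The paper handles exactly these issues differently. Rather than a direct slice argument for arbitrary $\w''$, it proves the pullback property (called property~(P)) by \emph{induction on $\w''$}: one observes that the resolution $\w=\w'+\w''$ factors through intermediate resolutions, giving a commutative square of $\MM$-spaces, and reduces to the base case $\w''=1$. In that base case the explicit formulas of Example~\ref{Example D5 resolution with charge c''=1} make the equivalence of (S1), (S2), and GIT stability on the two sides a short direct check. This inductive factorization is the key idea your proposal is missing.
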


\begin{remark}
\label{remark torus action D5 resolution}
    The map $\MM\to \widetilde{\MM}$ is not equivariant along $\varphi$ with respect to the usual actions of $\Tt$ on $\MM$ and of $\widetilde \Tt$ on $\widetilde{\MM}$. %Nevertheless, this map does induce an equivariant map along $\varphi$ when passing to the quotient $X\to \widetilde X$. 
    Nevertheless, the induced map $X\to \widetilde X$ will be equivariant along $\varphi$.
    To see this, assume first that $\D$ is separated and twist the action of $\widetilde \Tt$ on $\widetilde{\MM}$ by prescribing additional weights on $W_{\Ab'_+}=W_{\Ab''_-}$ as follows:
    \[
    W_{\Ab'_+}=W_{\Ab''_-}=W_{\Ab_+}\oplus \underbrace{\C_{a}\oplus\C_{a\hbar^{-1}}\oplus\dots\oplus \C_{a\hbar^{-(\w''-1)}}}_{\w''}.
    \]
     As a result, this twisted action is obtained as the composition of an embedding $\widetilde \Tt\hookrightarrow \widetilde \Tt\times \widetilde G$ of the form $id\times f$ with the standard action of $\widetilde \Tt\times \widetilde G$ on $\widetilde \MM$.
     As a result, this twisted action is obtained as the composition of an embedding $\widetilde \Tt\hookrightarrow \widetilde \Tt\times \widetilde G$ which is the identity on the first factor with the standard action of $\widetilde \Tt\times \widetilde G$ on $\widetilde \MM$.
     It is easy to check that the map $\MM\to \widetilde{\MM}$ becomes equivariant along $\varphi$ with respect to this twisted action on $\widetilde{\MM}$. But this implies that $j: X\hookrightarrow \widetilde X$, which will be constructed from $\MM\to \widetilde{\MM}$ by taking quotients by the gauge groups $G$ and $\widetilde G$, is equivariant along $\varphi$ with respect to the usual actions. The co-separated case is similar and is left to the reader.

\end{remark}

\begin{comment}
    
{\color{red}
\begin{rremark}[For the authors] \rm
    One can see that the shift seems to be opposite with respect to the one we checked with the computer and tested multiple times. However, with this convention also the preferred fixed point $\tilde f_\star$ in Proposition \ref{Fusion of D5 branes for separated brane diagrams} is different, namely it is the one with no crossing (also the chamber should be different). Hence the apparent inconsistency is solved by applying the innocent identity (which swaps the order of the variables, the chamber, the ties in the fixed point diagram and also the charges $c'$ and $c''$).
\end{rremark}
}
\end{comment}

\begin{proof}[Proof of Proposition \ref{proposition embedding resolution D5 branes}]

Firstly, notice that the map $\MM\to \widetilde{\MM}$ is clearly a regular closed embedding, equivariant along the map $G\hookrightarrow  \widetilde G$ that is the identity on most components except on $\GL(W_{\Ab+})\to GL(W_{\Ab''_-})\times \GL(W_{\Ab''_+})$, where it is given by
\[
\GL(W_{\Ab_+}) \xrightarrow[]{\Delta} \GL(W_{\Ab_+})\times \GL(W_{\Ab_+})=\GL(W_{\Ab''_+})\times \GL(W_{\Ab''_+})\subset \GL(W_{\Ab''_-})\times \GL(W_{\Ab''_+}).
\]
Notice that $G$ is the maximal subgroup of $\widetilde G$ preserving $\MM\subset \widetilde{\MM}$.
The map $\MM\to \widetilde{\MM}$ is compatible with the moment map equations on both sides. In other words, a tuple $(a,b,A,B,C,D)\in \MM$ solves the moment map equation on $\MM$ iff its image under $\MM \to \widetilde{\MM}$ solves the moment map equation on $\widetilde{\MM}$. This is trivial for almost all components of the moment map of Section~\ref{sec:def of bow variety}, except for those valued in $\NN_{\Ab'}\oplus\NN_{\Ab''}$, in which case it can be easily checked using Table \ref{equation definition D5 resolution of bow variety}. Compatibility with the moment map means that we get a pullback diagram
\[
\begin{tikzcd}
    \mu^{-1}(0)\arrow[d, hookrightarrow]\arrow[r, hookrightarrow] & \tilde\mu^{-1}(0)\arrow[d, hookrightarrow]\\
    \MM \arrow[r, hookrightarrow] & \widetilde{\MM}
\end{tikzcd}
\]
and hence that the inclusion $\mu^{-1}(0)\hookrightarrow \tilde \mu^{-1}(0)$ is also a closed embedding.
%We claim that it is also regular. By \cite[Appendix B7.3]{Fulton_IntersectionTheory}, it suffices to show that $\mu_\D^{-1}(0)\hookrightarrow  \mu_{\widetilde \D}^{-1}(0)$ is a section of a smooth map $\mu_{\widetilde \D}^{-1}(0)\to  \mu_{\D}^{-1}(0)$. Consider the wrong way map $\MM_{\widetilde \D}\to \MM_D$ discussed in Example \ref{Example D5 resolution with charge c''=1}, which is clearly smooth. It is easy to check that it pulls back as follows
%\[
%\begin{tikzcd}
%   \mu_D^{-1}(0)\arrow[d, hookrightarrow]& \mu_{\widetilde \D}^{-1}(0)\arrow[d, hookrightarrow]\arrow[l]\\
%   \mathbb{M}_{\D}  & \mathbb{M}_{\widetilde \D}\arrow[l]
%\end{tikzcd}
%\]
%Hence the induced map $\mu_{\widetilde \D}^{-1}(0)\to \mu_\D^{-1}(0)$ is also smooth. This realizes the sought after smooth map, of which $\mu_\D^{-1}(0)\hookrightarrow \mu_{\widetilde \D}^{-1}(0)$ is a section. Therefore, we have proved that $\mu_D^{-1}(0)\hookrightarrow \mu_{\widetilde \D}^{-1}(0)$ is also a closed embedding.

Assume temporarily that this map strictly respects (semi)stability, i.e. that we also have a pullback diagram
\[
\begin{tikzcd}
    \MM^s\arrow[d, hookrightarrow]\arrow[r, hookrightarrow] & \widetilde{\MM}^s\arrow[d, hookrightarrow]\\
    \MM\arrow[r, hookrightarrow] & \widetilde{\MM}
\end{tikzcd}.
\]
Then it follows that $\mu^{-1}(0)^s \hookrightarrow \tilde\mu^{-1}(0)^s$ is a closed embedding, which must also be regular because the (semi)stable loci are smooth. Taking quotients, we conclude that the induced map 
\[
j: X(\D)=\mu^{-1}(0)^s/ G \hookrightarrow \tilde \mu^{-1}(0)^s/ \widetilde G=X(\widetilde \D)
\]
satisfies the same properties. The fact that the induced map is still a closed embedding even though the group $ \widetilde G$ is larger than $G$ follows from Remark \ref{Hurtubise normal form}.

%The fact that the fibers are either empty or a singleton is clear Answer: since  $\widetilde G$ is reductive, hence the the functor `` $\widetilde G$-invariants of affine pushforward'' is (right) exact, hence 
%\[
%\mu_\D^{-1}(0)^s/ G_\D=G_{\widetilde \D}\cdot \mu_\D^{-1}(0)^s/ G_{\widetilde \D}\to \mu_{\widetilde \D}^{-1}(0)^s/ G_{\widetilde \D}
%\]
%is a closed inclusion.

In conclusion, it remains to prove that the diagram above associated with the map $\MM\hookrightarrow \widetilde{\MM}$ commutes and is a pullback. We call such a property of the map $\MM\hookrightarrow \widetilde{\MM}$ property (P). We argue by induction on the weight $\w''$ of the decomposition $\w=\w'+\w''$. Assume temporarily that property (P) holds whenever $1\leq \w''\leq k$ and let $\w''=k+1$. Consider the additional decomposition $\w=\w'+\w_1''+\w_2''$ with $\w_1'', \w_2''\neq 0$. This, in particular, implies that  $\w_1'', \w_2''\leq k$. For the time being, redefine $\MM_{\w'+\w_1''+\w_2''}:= \MM$ and $\MM_{\w', \w_1''+\w_2''}:=\widetilde{\MM}$. The partition $\w=\w'+\w_1''+\w_2''$ invites to consider also the bow varieties  $\MM_{\w'+\w_1'', \w_2''}$ and $\MM_{\w',\w_1'',\w_2''}$, with obvious notation. For instance, $\MM_{\w',\w_1'',\w_2''}$ is the resolution of $\MM_{\w', \w_1''+\w_2''}$ by further splitting the brane with charge $\w_1''+\w_2''$. A direct computation shows that the diagram
\[
\begin{tikzcd}
    \MM_{\w'+\w_1''+\w_2''} \arrow[d, hookrightarrow]\arrow[r, hookrightarrow] & \MM_{\w',\w_1''+\w_2''} \arrow[d, hookrightarrow]\\
    \MM_{\w'+\w_1'', \w_2''} \arrow[r, hookrightarrow] & \MM_{\w',\w_1'',\w_2''}
\end{tikzcd}
\]
is commutative (indeed, the map $\MM_{\w'+\w''}\hookrightarrow \MM_{\w',\w''}$ for a general partition $\w= \w'+\w''$ is uniquely determined by the cases discussed in Example \ref{Example D5 resolution with charge c''=1} and the requirement that the diagram above commutes). Since by the inductive hypothesis property (P) holds for the two vertical arrows and for the bottom arrow, it holds for the top arrow as well. This proves the inductive step.
Therefore, we are left to show the base case, namely that property (P) holds for the map $\MM_{\w'+1}\hookrightarrow \MM_{\w',1}$. We need to show that the three stability conditions (S1), (S2) and GIT stability hold on the left-hand side of the diagrams in Example \ref{diagram fusion separated D5 branes charge c=c'+1} iff they hold on its right-hand side. We will show this assuming that $X$ is a separated bow variety. The co-separated case is %completely 
analogous.

We start with (S1) and (S2). On the rightmost three-way part in the right-hand side of Example \ref{diagram fusion separated D5 branes charge c=c'+1}, these conditions are trivial, hence it suffices to show that (S1) and (S2) on the two triangles
\begin{equation*}
    \begin{tikzcd}[column sep=small, row sep=normal]
W_{\Ab_-}\arrow[loop,out=120,in=60,distance=2em, "B_{-}"]
& &  W_{\Ab_+} \arrow[loop,out=120,in=60,distance=2em, "B_{+}"] \ar[ld,"b_{}"] \ar[ll,"A_{}"]
\\
&\C_{\Ab} \ar[ul, "a_{}"] &
\end{tikzcd}
\qquad \qquad
    \begin{tikzcd}[column sep=small, row sep=normal]
        W_{\Ab_-}\arrow[loop ,out=120,in=60,distance=2em, "B_{-}"] & & W_{\Ab_+}\arrow[loop ,out=120,in=60,distance=2em, "B_{+}"]\arrow[d, bend left, shift left= 3, "b_{}"] \arrow[ll, "A_{}"] \\   
        & &    \C \arrow[ull, "-a_{}"]\arrow[dl, equal, "-1"]\arrow[draw=none]{u}[sloped,auto=false]{\oplus}    \\
        & \C_{\Ab'} \arrow[uul, "-B_{-}a_{}"]
    \end{tikzcd}
\end{equation*}
are equivalent.

Let $p_1: W_{\Ab_+}\oplus \C\to W_{\Ab_+}$ and $p_2: W_{\Ab_+}\oplus \C\to\C$ be the two projections. By definition, the right diagram satisfies (S1) iff there is no nonzero subspace $S\subset W_{\Ab_+}\oplus \C$ such that $A\circ p_1(S)=0$, $-a\circ p_2(S)=0$, $-id \circ p_2(S)=0$, $B_+(S)\subset p_1(S)$ and $ b\circ p_1(S)\subset p_2(S)$. Condition $id \circ p_2(S)=0$ implies that $p_1(S)=S$ must be contained in $W_{\Ab_+}\subset W_{\Ab_+}\oplus \C$. Hence (S1) on the right-hand side holds iff there exists no nonzero subspace $S\subset  W_{\Ab_+}$ such that $A(S)=0$, $B_+(S)\subset S$, and $b(S)=0$. But this is exactly condition (S1) on the left three-way part.

Verifying condition (S2) is analogous. Namely the three-way part on the right satisfies (S2) iff there exist no proper subspace $T\subset W_{\Ab_-}$ such that $\Image(A) +\Image(-a)+\Image(-B_-\circ a) \subset T$ and $B_-(T)\subset T$. But $B_-(T)\subset T$ implies that $\Image(-B_-\circ a)\subset T$, hence the former is equivalent to $\Image(A) +\Image(a) \subset T$ and $B_-(T)\subset T$, which is nothing but (S1) for the left three-way part. 

Finally, we check GIT stability.
By \cite[Section 2.4.2]{Nakajima_Takayama}, a tuple $(\tilde a,\tilde b, \tilde A,$ $\tilde B, \tilde C, \tilde \D) \in \MM_{c',1}$ is stable iff there exists no proper graded subspace $\tilde S\subset W= \oplus_{\tilde \Xb} W_{\tilde \Xb} $ invariant under $\tilde A$, $\tilde B$, $\tilde C$, $\tilde \D$ such that $\Image(\tilde a) \subset \tilde S$ and $A$ induces isomorphisms $W_{\Ab+}/\tilde S_{ \Ab+}\to W_{ \Ab-}/\tilde S_{ \Ab-}$. Consider the image under $\MM_{\w'+1}\hookrightarrow \MM_{\w',1}$ of a tuple $( a, b, A, B, C, D) \in \MM_{\w'+1}$. As shown in Example \ref{Example D5 resolution with charge c''=1}, the map $\tilde A:W_{\Ab+}\to W_{\Ab+}\oplus \C$ is the identity on the first component and the zero map on the second one; hence $\tilde A$ induces an isomorphism on the quotients by $\tilde S$ iff its component inside $W_{\Ab+}\oplus \C$ is of the form $S_{\Ab+}\oplus \C$, with $S_{\Ab+}\subseteq W_{\Ab+}$. As a consequence, the component of $\tilde A$ from $W_{\Ab+}\oplus \C$ to $W_{\Ab-}$ is of the form $(A, a)$ induces an isomorphism in the quotient iff $\Image(a)\subset S_{\Ab-}$ and $A$ induces an isomorphism $W_{\Ab+}/ S_{\Ab+}\to W_{\Ab-}/S_{\Ab-}$. But these conditions are exactly the stability conditions on $( a, b, A, B, C, D) \in \MM_{\w'+1}$, proving equivalence.

The equivariance of $j: X\to \widetilde X$ along $\varphi: \Tt\to \widetilde \Tt$ was already discussed in Remark \ref{remark torus action D5 resolution}
\end{proof}

Recall the morphisms \eqref{maps from bow to affine bow and handsaw}.
The proof above is based on the observation that $\MM\to \widetilde \MM$ is compatible with all the conditions that we impose on these two spaces to construct the corresponding bow varieties, namely $\mu=0$, the open conditions (S1) and (S2) and GIT stability. Forgetting the latter, we deduce the following corollary.
\begin{corollary}
\label{corollary enhancement D5 resolution with affine quotients}
There exists a commutative diagram
\[
\begin{tikzcd}
    X\arrow[r, hookrightarrow, "j"]\arrow[d, "\pi"] & \widetilde X \arrow[d, "\widetilde \pi "]\\
    X_0\arrow[r, hookrightarrow]\arrow[d, "\rho"] & \widetilde X_0 \arrow[d, "\tilde 
    \rho"]\\
    HS\arrow[r, hookrightarrow] & \widetilde{HS}
\end{tikzcd}
\]
in which all horizontal maps are closed immersions. Moreover, both squares are Cartesian.
\end{corollary}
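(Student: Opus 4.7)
The plan is to leverage the closed embedding $\MM\hookrightarrow\widetilde{\MM}$ from Table~\ref{equation definition D5 resolution of bow variety} together with the compatibility properties established in the proof of Proposition~\ref{proposition embedding resolution D5 branes}: namely, it is a $(G\hookrightarrow\widetilde G)$-equivariant closed embedding that strictly respects the moment map vanishing, the stability conditions (S1) and (S2), and GIT stability. Repeating the same argument after deleting all two-way parts from the diagrams (which only involves conditions attached to the three-way parts) gives a $\prod_{\Ab}\GL(W_\Ab)\hookrightarrow\prod_{\widetilde\Ab}\GL(W_{\widetilde\Ab})$-equivariant closed embedding of the handsaw preimages, compatible with $\mu_{HS}$ and with (S1), (S2). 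Passing to the relevant affine quotients, and to GIT quotients in the top row, produces the three horizontal maps of the diagram; commutativity of both squares is then immediate from the naturality of these constructions.

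To verify that each horizontal map is a closed embedding, one must check that the induced ring homomorphism on invariants is surjective. This is where the Hurtubise normal form of Remark~\ref{Hurtubise normal form} intervenes: each three-way part of $\MM$ is canonically identified with $\GL(W_{\Ab_-})\times Z_{\Ab}$, and under $\MM\to\widetilde\MM$ the first $\GL$ factor is preserved by identity while the new $\GL$ factor introduced by the resolution is constant. Modding out by the gauge group therefore reduces the resolution to an explicit map of affine Hurtubise slices $Z_{\Ab}\to Z_{\Ab'}\times Z_{\Ab''}$, whose coordinate description follows directly from Table~\ref{equation definition D5 resolution of bow variety} and is manifestly a closed embedding. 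This surjectivity on invariants propagates through both the affine bow and handsaw versions of the construction.

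For Cartesianness of the lower square, the two-way data of $\D$ and $\widetilde\D$ literally coincide, since the resolution only splits a single three-way part; moreover, the moment map conditions on D3 branes straddling three-way and two-way parts depend only on the restrictions to the shared boundary vertices. A point of $\widetilde X_0\times_{\widetilde{HS}} HS$ is therefore bow data for $\widetilde\D$ whose three-way invariants arise from a point of $HS$, and the preceding paragraph then forces the three-way part itself to come from $\D$, exhibiting the pair as a point of $X_0$. For the upper square, the fact that $\MM\hookrightarrow\widetilde\MM$ strictly respects GIT stability identifies the preimage of $X_0$ under $\widetilde\pi$ with $X$, so combined with the Cartesianness of the lower square this yields $X=\widetilde X\times_{\widetilde X_0} X_0$. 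The main obstacle is the coordinate-level verification that the Hurtubise slice map is a closed embedding, after which the Cartesianness claims are routine consequences of the compatibility of affine GIT quotients with closed embeddings under the observed equivariance.
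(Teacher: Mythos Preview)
Your proposal is correct and follows essentially the same approach as the paper. The paper's proof is extremely terse: it simply observes that the proof of Proposition~\ref{proposition embedding resolution D5 branes} shows $\MM\to\widetilde\MM$ is compatible with the moment map, conditions (S1)--(S2), and GIT stability, and that ``forgetting the latter'' yields the corollary. You have spelled out the details the paper leaves implicit---in particular the role of the Hurtubise normal form in establishing that passing to invariants preserves the closed embedding, and the verification of Cartesianness of each square---but the underlying mechanism is the same.
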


\begin{corollary}
\label{corollary any bow can be embedded in a flag variety} 
Any bow variety $X$ can be embedded in the cotangent bundle of a partial flag variety. More precisely, if $X(\D)$ is separated or co-separated and has no D5 branes with local charge equal to zero, then it can be embedded in the bow variety $X(\widetilde \D)$ whose brane diagram $\widetilde \D$ is obtained from $\D$ by replacing each D5 brane $\Ab$ with $\w(\Ab)$ consecutive D5 branes with local charge equal to one.

    %Moreover, the only tautological bundles that are topologically nontrivial in the K-theory ring $K_T(X)$ are those associated to a D3 branes in between two NS5 branes.

\end{corollary}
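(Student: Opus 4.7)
The plan is to derive the corollary by induction from Proposition~\ref{proposition embedding resolution D5 branes}, preceded by the standard reduction to the setting of its second sentence.

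For the reduction, given an arbitrary bow variety $X$, apply a sequence of Hanany-Witten transitions (Section~\ref{sec:HW}) to bring its brane diagram into separated form; the bow variety is unchanged up to isomorphism. If the resulting separated diagram contains D5 branes of local charge zero, absorb them using the procedure of Section~\ref{Sec:0charge} to replace it by a separated diagram without zero-charge D5 branes representing the same bow variety. Thus it suffices to prove the second, precise, sentence of the corollary.

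I would then argue by induction on $N(\D):=\sum_{\Ab}\bigl(\w(\Ab)-1\bigr)$. The base case $N(\D)=0$ is immediate: every D5 brane already has local charge $1$, so $\D=\widetilde{\D}$, the identity map serves as the (trivial) closed embedding, and because the diagram is separated (resp.\ co-separated) with $\w(\Ab)=1$ for every $\Ab$, one reads off that the D5 charge vector equals $(1,1,\dots,1)$, so by Section~\ref{sec:quivers} $X(\widetilde{\D})$ is the total space of the cotangent bundle of a partial flag variety. For the inductive step, pick any D5 brane $\Ab$ of $\D$ with $\w(\Ab)\geq 2$, decompose $\w(\Ab)=1+(\w(\Ab)-1)$, and let $\D'$ be the corresponding D5 resolution of $\D$. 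Proposition~\ref{proposition embedding resolution D5 branes} supplies a closed embedding $X(\D)\hookrightarrow X(\D')$; moreover $\D'$ is again separated (resp.\ co-separated), still has no zero-local-charge D5 branes (both $1$ and $\w(\Ab)-1$ are positive), and satisfies $N(\D')=N(\D)-1$. By the inductive hypothesis there is a closed embedding $X(\D')\hookrightarrow X(\widetilde{\D}')$. Since maximally resolving $\D$ and maximally resolving $\D'$ both produce a brane diagram in which the original $\Ab$ is replaced by $\w(\Ab)$ consecutive charge-$1$ D5 branes, we have $\widetilde{\D}'=\widetilde{\D}$, and composing the two closed embeddings yields the desired $X(\D)\hookrightarrow X(\widetilde{\D})$.

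The only delicate point---and the thing I would check carefully---is that the D5 resolution operation preserves separatedness / co-separatedness (so that Proposition~\ref{proposition embedding resolution D5 branes} is applicable at every inductive step) and that splitting a local-charge-$\w$ D5 brane ``one piece at a time'' yields the same final brane diagram as splitting it directly into $\w$ charge-$1$ pieces. Both are immediate from the local nature of the resolution, whose combinatorial effect is confined to the slot occupied by $\Ab$, so the induction runs without further obstruction.
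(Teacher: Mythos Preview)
Your argument is correct and follows the same route as the paper: reduce to a separated diagram with no zero-local-charge D5 branes via Hanany--Witten and Section~\ref{Sec:0charge}, then iterate Proposition~\ref{proposition embedding resolution D5 branes} until every D5 brane has local charge one. One nitpick: in the co-separated case, $\w(\Ab)=1$ for all $\Ab$ does \emph{not} give D5 charge vector $(1,\dots,1)$ (the charge picks up the number of NS5 branes to the right), so $X(\widetilde\D)$ need not be a $T^*\Fl$ there; this is harmless for your proof, since your reduction for the first sentence already lands in the separated case, where local charge and charge agree---just drop the parenthetical ``(resp.\ co-separated)'' from that line.
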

\begin{proof}
According to Section~\ref{Sec:0charge} we may assume that the charges of all 5-branes in $\mathcal D$ are positive. Then the statements follow from Proposition~\ref{proposition embedding resolution D5 branes} and from the fact if all D5 branes have charge one, then the bow variety is isomorphic to the cotangent bundle of a partial flag variety, cf. Section \ref{sec:quivers}.
\end{proof}

\begin{example} \rm
    The following diagrams describe a separated bow variety on the left, and its maximal resolution, on the right. Each D5 brane with local charge $\w_i>0$ is replaced by $\w_i$ D5 branes with local charges all equal to one. The resulting variety is isomorphic, via Hanany-Witten, to the contangent bundle of the partial flag variety $T^*\Fl_\lambda$, where $\lambda=(\w_1, \w_1+\w_2, \w_1+\w_2+\w_3,\w_1+\w_2+\w_3+\w_4)$.
\begin{comment}
    \[
\begin{tikzpicture}[baseline=0pt,scale=.6]
\draw[thick] (4,1)--(16,1) ; 
\draw[thick,red] (3.5,0)  -- (4.5,2) node[above]{$\w_1$};
\draw[thick,red] (5.5,0)  -- (6.5,2) node[above]{$\w_2$};
\draw[thick,red] (7.5,0) -- (8.5,2)  node[above]{$\w_3$};
\draw[thick,red] (9.5,0) -- (10.5,2) node[above]{$\w_4$};
\draw[thick,blue] (12.5,0) -- (11.5,2) node[above]{$\w_1$};
\draw[thick,blue] (14.5,0) -- (13.5,2) node[above]{$\w_2$};
\draw[thick,blue] (16.5,0) -- (15.5,2) node[above]{$\w_3$};
\end{tikzpicture}
\qquad  
\begin{tikzpicture}[baseline=0pt,scale=.6]
\draw[thick] (4,1)--(16,1) ; 
\draw[thick,red] (3.5,0) --(4.5,2) node[above]{$\w_1$};
\draw[thick,red] (5.5,0) --(6.5,2) node[above]{$\w_2$};
\draw[thick,red] (7.5,0) --(8.5,2) node[above]{$\w_3$};
\draw[thick,red] (9.5,0) --(10.5,2) node[above]{$\w_4$};
\draw[thick,blue] (12.3,0) --(11.3,2);
\draw[thick,blue] (12.5,0) -- (11.5,2) node [above]{$\overbrace{}^{\w_1}$};
\draw[thick,blue] (12.7,0) --(11.7,2);
\draw[thick,blue] (12.9,0) --(11.9,2);
\draw[thick,blue] (14.3,0) --(13.3,2);
\draw[thick,blue] (14.5,0) -- (13.5,2) node [above]{$\overbrace{}^{\w_2}$};
\draw[thick,blue] (14.7,0) --(13.7,2);
\draw[thick,blue] (14.9,0) --(13.9,2);
\draw[thick,blue] (16.3,0) --(15.3,2);
\draw[thick,blue] (16.5,0) -- (15.5,2) node [above]{$\overbrace{}^{\w_3}$};
\draw[thick,blue] (16.7,0) --(15.7,2);
\draw[thick,blue] (16.9,0) --(15.9,2);
\end{tikzpicture}
    \]
\end{comment}
     \[
\begin{tikzpicture}[baseline=0pt,scale=.35]
\draw[thick] (4,1)--(22,1) ; 
\draw[thick,red] (3.5,0)  -- (4.5,2) node[above]{$\w_1$};
\draw[thick,red] (6.5,0)  -- (7.5,2) node[above]{$\w_2$};
\draw[thick,red] (9.5,0) -- (10.5,2)  node[above]{$\w_3$};
\draw[thick,red] (12.5,0) -- (13.5,2) node[above]{$\w_4$};
\draw[thick,blue] (16.5,0) -- (15.5,2) node[above]{$\w_1$};
\draw[thick,blue] (19.5,0) -- (18.5,2) node[above]{$\w_2$};
\draw[thick,blue] (22.5,0) -- (21.5,2) node[above]{$\w_3$};
\end{tikzpicture}
\qquad  
\begin{tikzpicture}[baseline=0pt,scale=.35]
\draw[thick] (4,1)--(23,1) ; 
\draw[thick,red] (3.5,0)  -- (4.5,2) node[above]{$\w_1$};
\draw[thick,red] (6.5,0)  -- (7.5,2) node[above]{$\w_2$};
\draw[thick,red] (9.5,0) -- (10.5,2)  node[above]{$\w_3$};
\draw[thick,red] (12.5,0) -- (13.5,2) node[above]{$\w_4$};
\draw[thick,blue] (16.7,0) --(15.7,2);
\draw[thick,blue] (17.0,0) -- (16.0,2);
\draw[thick,blue] (17.3,0) --(16.3,2);
\draw[thick,blue] (17.6,0) --(16.6,2); 
\draw[thick,blue] (19.7,0) --(18.7,2);
\draw[thick,blue] (20.0,0) -- (19.0,2);
\draw[thick,blue] (20.3,0) --(19.3,2);
\draw[thick,blue] (20.6,0) --(19.6,2);
\draw[thick,blue] (22.7,0) --(21.7,2);
\draw[thick,blue] (23.0,0) -- (22.0,2);
\draw[thick,blue] (23.3,0) --(22.3,2);
\draw[thick,blue] (23.6,0) --(22.6,2);
\node[blue] at (17.0,-0.45) {$\underbrace{}_{\w_1}$};
\node[blue] at (20.0,-0.45) {$\underbrace{}_{\w_2}$};
\node[blue] at (23.0,-0.45) {$\underbrace{}_{\w_3}$};
\node[blue] at (15.5,2.6) {\tiny $1$}; \node[blue] at (15.8,2.6) {\tiny $1$}; \node[blue] at (16.1,2.6) {\tiny $1$}; \node[blue] at (16.4,2.6) {\tiny $1$};
\node[blue] at (18.5,2.6) {\tiny $1$}; \node[blue] at (18.8,2.6) {\tiny $1$}; \node[blue] at (19.1,2.6) {\tiny $1$}; \node[blue] at (19.4,2.6) {\tiny $1$};
\node[blue] at (21.5,2.6) {\tiny $1$}; \node[blue] at (21.8,2.6) {\tiny $1$}; \node[blue] at (22.1,2.6) {\tiny $1$}; \node[blue] at (22.4,2.6) {\tiny $1$};
\end{tikzpicture}
    \]

\end{example}

\subsection{The equivariant geometry of D5 resolutions}

We begin this section by showing that the map $j: X\hookrightarrow \widetilde X$ is compatible with the description of the fixed locus from Section~\ref{section Actions of subtori}.

Let $X$ be a separated or co-separated bow variety, and let $\widetilde X$ be a resolution of the brane $\Ab$. Set $\w=\w(\Ab)$, $\w'=\w(\Ab')$ and $\w''=\w(\Ab'')$ (hence $\w=\w'+\w''$). The torus $\At \subset \Tt$ acts, via $\varphi: \Tt\to \widetilde \Tt$, on the resolution $\widetilde X$. 

Consider a rank two torus $A=\Cs_{a'}\times \Cs_{a''}\subset \A$ acting with $a'$ on a subset of the D5 branes in $\D$ and with $a''$ on the remaining D5 branes. Without loss of generality, we assume that $\Ab$ is acted on by $a'$.
Recall that the $A$-fixed components of $X$ are of the form $X'\times X''$ for certain bow varieties $X'$ and $X''$ described in Theorem \ref{proposition tensor product decomposition fixed locus}. In particular, $X'$ contains the D5 branes acted on by $a'$ and $X''$ the remaining ones. Moreover, the local charge of $\Ab$ in $X'$ is still $\w$.

Let $j': X'\hookrightarrow \widetilde X'$ be the resolution of the brane $\Ab$ in $X'$ with decomposition $\w=\w'+\w''$. We have

\begin{lemma}
\label{lemma: D5 resolutions fixed loci}
    The unique $A$-fixed component in $\widetilde X$ containing $X'\times X'' $ is $\widetilde{X'}\times X''$ and we have a pullback diagram
    \[
    \begin{tikzcd}
        X'\times X'' \arrow[d, hookrightarrow]\arrow[r, hookrightarrow, "j'\times id"] &\widetilde{X'}\times X''\arrow[d, hookrightarrow]\\
        X\arrow[r, hookrightarrow, "j"] & \widetilde X
    \end{tikzcd}.
    \]
    In addition, all the maps are $\Tt$-equivariant. 
\end{lemma}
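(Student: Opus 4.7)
The plan is to reduce everything to the $A$-weight decomposition of the underlying quiver representations, and then use the uniqueness of the decomposition in Theorem~\ref{proposition tensor product decomposition fixed locus}.

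First I would set up the $A$-action on the resolved variety. Since $\Ab$ is acted on by $a'$ and the resolution replaces $\Ab$ with the two adjacent branes $\Ab'$ and $\Ab''$, the equivariance statement of Proposition~\ref{proposition embedding resolution D5 branes} (together with the twist described in Remark~\ref{remark torus action D5 resolution}) shows that both $\C_{\Ab'}$ and $\C_{\Ab''}$, as well as the extra $\w''$ (resp.\ $\w'$) copies of $\C$ added to the middle vector space $W_{\Ab'_+}=W_{\Ab''_-}$, carry only $a'$-weights. Consequently, the partition of the D5 branes of $\widetilde{\D}$ induced by $A$ has the same ``$a''$-part'' as that of $\D$, while the $a'$-part is obtained from the one of $\D$ by replacing $\Ab$ with $\Ab', \Ab''$. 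Applying Theorem~\ref{proposition tensor product decomposition fixed locus} to $\widetilde X$, the $A$-fixed components of $\widetilde X$ thus have the form $\widetilde Y'\times Y''$, where the second factor ranges over the same bow varieties that appear as second factors for $X^A$.

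Next I would check that the closed immersion $j$ sends $X'\times X''$ into $\widetilde{X'}\times X''$. Let $p\in X'\times X''$ and write the corresponding tuple of linear maps as a direct sum with respect to the $A$-weight decomposition $W_\Xb = W'_\Xb\oplus W''_\Xb$. In the assignment of Table~\ref{equation definition D5 resolution of bow variety}, every new entry is built from data of the three-way part at $\Ab$, all of whose nontrivial $a$- and $a''$-weight components sit in the $W'$-summand (by the previous paragraph). Therefore the image $j(p)$ decomposes as a direct sum of representations, one supported on the $W'$-summands plus the newly added $\C^{\w''}$ (or $\C^{\w'}$), and one supported on the $W''$-summands and unchanged by $j$. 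The first piece is exactly the image of $p|_{X'}$ under $j'$ and the second piece equals $p|_{X''}$, so $j(p)\in \widetilde{X'}\times X''$, and the diagram commutes. Since different $A$-fixed components of $\widetilde X$ are disjoint, this is the unique component containing $j(X'\times X'')$.

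Now I would turn to the Cartesian property. One inclusion $X'\times X''\subseteq j^{-1}(\widetilde{X'}\times X'')$ is the commutativity just established. Conversely, suppose $p\in X$ with $j(p)\in \widetilde{X'}\times X''$. Since $j$ is $A$-equivariant and $\widetilde{X'}\times X''$ is $A$-fixed, for any $a\in A$ we have $j(a\cdot p)=a\cdot j(p)=j(p)$, and injectivity of $j$ gives $a\cdot p=p$. Hence $p\in X^A$. By Theorem~\ref{proposition tensor product decomposition fixed locus}, $p$ lies in some component $Y'\times Y''$ of $X^A$; but the previous paragraph shows that $j$ sends the component $Y'\times Y''$ into $\widetilde{Y'}\times Y''$, and by disjointness of fixed components in $\widetilde X$ we must have $(\widetilde{Y'},Y'')=(\widetilde{X'},X'')$, i.e.\ $p\in X'\times X''$.

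Finally, $\Tt$-equivariance of every arrow in the square is automatic: the horizontal arrows $j$ and $j'\times \mathrm{id}$ are $\Tt$-equivariant along $\varphi$ by Proposition~\ref{proposition embedding resolution D5 branes} (applied to $X$ and to $X'$, the latter giving the twist only on the $a'$-factor), and the vertical inclusions are inclusions of $A$-fixed components inside $\Tt$-varieties, which are always $\Tt$-invariant. I expect the only delicate step in executing this proof to be the weight bookkeeping in the second paragraph, i.e.\ the verification that every new datum introduced by Table~\ref{equation definition D5 resolution of bow variety} is of pure $a'$-weight; once that is in place, the rest is formal.
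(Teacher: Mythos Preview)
Your proposal is correct and follows essentially the same route as the paper: both arguments reduce to the explicit quiver-representation descriptions (your weight-decomposition check in the second paragraph is precisely the ``straightforward check'' the paper alludes to). Your treatment of the Cartesian property is slightly more conceptual than the paper's direct verification---you deduce it from $A$-equivariance and injectivity of $j$ together with the disjointness of the pieces in Theorem~\ref{proposition tensor product decomposition fixed locus}---but this is a clean repackaging of the same content rather than a different idea. One small wording issue: in the second paragraph the phrase ``all of whose nontrivial $a$- and $a''$-weight components sit in the $W'$-summand'' is garbled; what you mean (and what makes the argument work) is that the maps $a_{\Ab},b_{\Ab}$ involving $\C_{\Ab}$ land in and come from the $W'$-summand, so every new entry in Table~\ref{equation definition D5 resolution of bow variety} is built from $W'$-data only.
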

\begin{proof}
    All the morphisms in the diagram descend from maps at the level of quiver representation. All these maps are explicitly defined there, so commutativity and being a pullback are straightforward checks. But then the same must hold at the level of bow varieties. 
\end{proof}

%Its fixed locus is not finite nor discrete in general, but rather isomorphic to the mirror dual of the cotangent bundle of a Grassmannian. More precisely, we have the following

\begin{lemma} Let $F$ be an  $\At$-fixed component of $\widetilde X$ such that $F\cap X\neq \emptyset$. 
\label{lemma Fixed components D5 res}
\begin{enumerate}
    \item If $X$ is separated (resp. co-separated), then $F$ is equivariantly isomorphic to the bow variety 
    \ttt{{\fs}1\fs2{\fs}3\fs\dots {\fs}$\w'+\w'' ${\bs}$\w''${\bs}} (respectively,  \ttt{{\bs}$\w'${\bs}$\w'+\w'' ${\fs}\dots{\fs}3{\fs}2{\fs}1{\fs}}). 
    \item Under this isomorphism, the residual action of $\widetilde{\At}$ on $F$ is identified with the canonical action of the torus $(\Cs)^{2}$ on \ttt{{\fs}1\fs2{\fs}3\fs\dots {\fs}$\w'+\w'' ${\bs}$\w''${\bs}} (respectively, \ttt{{\bs}$\w'${\bs}$\w'+\w'' ${\fs}\dots{\fs}3{\fs}2{\fs}1{\fs}}).
    \item The intersection $F\cap X$ is a singleton $f\in X^{\At}$. 
    \item Each fixed point $f=F\cap X$ admits exactly $\binom{\w'+\w''}{\w'}$ resolutions, i.e. fixed points $\tilde f\in F^{\widetilde \At}$.
    %\item %The subvariety $X\subset \widetilde X$ intersects each component $ F\subset \widetilde X^A$ transversally. Moreover, 
    %The intersection $X\cap F$ is a singleton, and coincides with the unique $A$-fixed point of X contained in $F$.
    %\item The inclusion $X^A\subset \widetilde %X^A$ defines a one-to-one correspondence between $A$-fixed points in $X$ and $A$-fixed components in $\widetilde X$. In other words, every fixed component in $\widetilde X^A$ contains exactly one fixed point of $X^A$.
\end{enumerate}
\end{lemma}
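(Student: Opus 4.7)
The plan is to apply Theorem \ref{proposition tensor product decomposition fixed locus} iteratively. The key observation is that $\varphi:\Tt\to\widetilde\Tt$ sends the equivariant parameter of $\Ab$ to the same parameter on both $\Ab'$ and $\Ab''$, so the $\At$-action on $\widetilde X$ (via $\varphi$) coincides with the restriction of the $\widetilde{\At}$-action along the subtorus where $a_{\Ab'}=a_{\Ab''}$. Consequently, $\widetilde X^{\At}$ agrees with the fixed locus of the complementary one-dimensional subtorus of $\widetilde{\At}$ that acts on $\Ab'$ and $\Ab''$ with opposite weights and trivially on the other D5 branes. Iterating Theorem \ref{proposition tensor product decomposition fixed locus}, each connected component $F$ of $\widetilde X^{\At}$ is isomorphic, as a $\widetilde{\At}$-variety, to a product
\[
F \cong F_0 \times \prod_{i} Y_i,
\]
where $F_0$ is the bow variety whose brane diagram is obtained from $\widetilde\D$ by erasing all D5 branes except $\Ab'$ and $\Ab''$, and each $Y_i$ is a bow variety containing one of the remaining D5 branes.

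For part (1), I would first argue that when $F\cap X\neq\emptyset$ each factor $Y_i$ is a singleton. A fixed point $f\in X^{\At}$ lying in $F$ projects to a point in each $Y_i$ whose tie data is the restriction of the tie diagram of $f$ to the corresponding D5 brane; this restriction determines the NS5 charges of $Y_i$, forcing $Y_i$ to be a single point. Next, I would determine the NS5 charges of $F_0$: those NS5 branes carrying no tie to $\Ab'$ or $\Ab''$ in any $\widetilde{\At}$-fixed point of $F$ have local charge zero in $F_0$ and can be removed by the procedure of Section \ref{Sec:0charge}, while the remaining $\w'+\w''$ NS5 branes, corresponding exactly to the $\w$ ties at $\Ab$ in $f$, each contribute local charge one. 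Combined with the two D5 branes of local charges $\w'$ and $\w''$, this forces $F_0$ to be isomorphic to the bow variety appearing in the statement.

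Part (2) follows because the residual $\widetilde{\At}$-action on $F\cong F_0$ is tautologically the canonical action of $\Cs_{\Ab'}\times\Cs_{\Ab''}$ on the two-D5 bow variety. For part (3), any two fixed points $f\neq f'$ in $X^{\At}$ have distinct tie diagrams, and hence distinct projections to $\prod_i Y_i$ (the $\Ab$-column of the BCT is determined by the remaining columns together with the row sums); thus each component $F$ contains $j(f)$ for at most one $f\in X^{\At}$. For part (4), a resolution of $f$ is a $\widetilde{\At}$-fixed point of $F$, and since the $Y_i$ are singletons these are in bijection with $F_0^{\widetilde{\At}}$, i.e.\ with binary contingency tables of row sums $(1,\ldots,1)$ of length $\w'+\w''$ and column sums $(\w',\w'')$, of which there are exactly $\binom{\w'+\w''}{\w'}$.

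The main obstacle I expect is the explicit identification of $F_0$ as a $\widetilde{\At}$-equivariant bow variety. The iterated decomposition produces $F_0$ abstractly as the bow variety built from the restricted quiver data of $\widetilde\D$ together with a prescribed NS5 dimension vector; extracting this vector from the embedding $j$ and matching it to the stated diagram is the delicate combinatorial point, and relies on a direct inspection of the data of Table \ref{equation definition D5 resolution of bow variety}. The co-separated case is handled analogously, using the corresponding column of the same table.
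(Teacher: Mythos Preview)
Your approach matches the paper's: iterate Theorem \ref{proposition tensor product decomposition fixed locus}, isolate the factor $F_0$ carrying $\Ab',\Ab''$, and strip zero-charge NS5 branes via Section \ref{Sec:0charge}; the paper's only shortcut is to invoke Lemma \ref{lemma: D5 resolutions fixed loci} so that the product decomposition of $f\in X^{\At}$ transports directly to $F=X_1\times\cdots\times\widetilde X_k\times\cdots\times X_m$, making the $Y_i$ singletons and the NS5 charges of $F_0$ automatic rather than requiring your separate extraction. One correction: the sentence that $\widetilde X^{\At}$ equals the fixed locus of the complementary one-dimensional subtorus is wrong (a smaller torus has a \emph{larger} fixed locus), but you never actually use it---what you need, and do use, is only that $\varphi(\At)\subset\widetilde\At$ acts with the same weight on $\Ab'$ and $\Ab''$, so these two branes remain grouped throughout the iterated decomposition.
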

Note that point (1) implies that all fixed components $F$ satisfying $F\cap X\neq \emptyset$ are isomorphic to the mirror dual of the cotangent bundle of a Grassmannian. We will exploit this fact when proving our main theorem: mirror symmetry of stable envelopes.
\begin{proof}
    We prove the result in the separated case. Let $m$ be the number of D5 branes in $X$ and let $\Ab_k$ be the brane resolved in $\widetilde X$ for some given $k\in \{1, \dots, m\}$. Notice that $m=\rk(\At)$. Applying Theorem \ref{proposition tensor product decomposition fixed locus} $m$ times, it follows that any fixed point in $X$ is the product $X_1\times \dots \times X_m$ of zero dimensional bow varieties, cf. Example \ref{Example fixed point-fixed component interaction}. By iteration of Lemma \ref{lemma: D5 resolutions fixed loci}, it follows that 
    \[
    F=X_1\times\dots \times  X_{k-1}\times \widetilde X_k\times X_{k+1}\dots \times X_m,
    \] 
    where $\widetilde  X_k$ is of the form \ttt{{\fs}$d_1$\fs $d_2${\fs}$d_3$\fs\dots {\fs}$\w'+\w'' ${\bs}$\w''${\bs}}. In particular, $X_k$ and $\widetilde X_k$ have the same NS5 brane arrangement. Moreover, in a tie diagram, a D5 brane can be connected to the same NS brane at most once. Hence, it follows that $X_k$ is of the form \ttt{{\fs}$d_1$\fs $d_2${\fs}$d_3$\fs\dots {\fs}$\w'+\w'' ${\bs}} where $d_1\geq 1$ and $d_{i+1}-d_i\in \{0,1\}$. Therefore, 
    \begin{equation}
        \label{eq: fixed compD5 res}
            F\cong \widetilde X_k\cong \ttt{{\fs}$1$\fs\dots $1$\fs $2$\fs\dots $2${\fs}$3$\fs\dots $3$\fs \dots \fs$\w'+\w'' ${\bs}$\w''${\bs}}.
    \end{equation}
    By Section \ref{section: Getting rid of the ``trivial'' branes}, this is isomorphic to \ttt{{\fs}1\fs2{\fs}3\fs\dots {\fs}$\w'+\w'' ${\bs}$\w''${\bs}}. The proof of point (1) follows. Point (2) and (3) are also consequences of our analysis. Finally, point (4) follows from the tie diagram description of the fixed points of $F$: a tie diagram representing a fixed point in $F^{\widetilde \At}$ is characterized by the choice of $\w'$ out of the $\w'+\w''$ NS5 branes in \eqref{eq: fixed compD5 res} to connect to the leftmost D5 brane. 
\end{proof}

\begin{remark}
\label{remark interpretation fixed points and fixed components A resolution}
   Since $X^{\At}$ is finite, Lemma \ref{lemma Fixed components D5 res} defines a one-to-one correspondence between the fixed points $f\in X^A$ and the components $F\in \widetilde X^A$ containing them. This correspondence can be nicely interpreted via tie diagrams. The residual action of $\widetilde \At/\At$ on $F$ admits exactly $\binom{\w'+\w''}{\w'}$ fixed points, the resolutions of $f$. Pick an arbitrary resolution $\tilde f\in F^{\widetilde A}$ and draw its tie diagram. Then the tie diagram of $f$ can be obtained by merging the resolving branes while keeping all the ties attached to them. Here is an example:
\[
\begin{tikzpicture}[baseline=0pt,scale=.35]
\draw[thick] (0,1) node [left]{$\tilde f=$}--(17,1) ;
\draw[thick,red] (-.5,0)--(.5,2);
\draw[thick,red] (2.5,0)--(3.5,2);
\draw[thick,red] (5.5,0)--(6.5,2);
\draw[thick,red] (8.5,0)--(9.5,2);
\draw[thick,blue] (12.5,0) --(11.5,2);
\draw[thick,blue] (13,0) --(12,2);
\draw[thick,blue] (16.5,0) --(15.5,2);
\draw[thick,blue] (17,0) --(16,2);
\draw[thick,blue] (17.5,0) --(16.5,2);
\draw [dashed, black](6.5,2.25) to [out=45,in=-220] (12,2.25);
\draw [dashed, black](3.5,2.25) to [out=45,in=-220] (11.5,2.25);
\draw [dashed, violet](.5,2.25) to [out=45,in=-220] (15.5,2.25);
\draw [dashed, violet](3.5,2.25) to [out=45,in=-220] (16,2.25);
\draw [dashed, violet](9.5,2.25) to [out=45,in=-220] (16.5,2.25);
\end{tikzpicture}
\qquad
\begin{tikzpicture}[baseline=0pt,scale=.35]
\draw[thick] (0,1) node [left]{$f=$}--(14,1) ;
\draw[thick,red] (-.5,0)--(.5,2);
\draw[thick,red] (2.5,0)--(3.5,2);
\draw[thick,red] (5.5,0)--(6.5,2);
\draw[thick,red] (8.5,0)--(9.5,2);
\draw[thick,blue] (12.5,0) node [below]{}--(11.5,2);
\draw[thick,blue] (14.5,0) node [below]{}--(13.5,2);
\draw [dashed, black](6.5,2.25) to [out=45,in=-220] (11.5,2.25);
\draw [dashed, black](3.5,2.25) to [out=45,in=-220] (11.5,2.25);
\draw [dashed, violet](.5,2.25) to [out=45,in=-220] (13.5,2.25);
\draw [dashed, violet](3.5,2.25) to [out=45,in=-220] (13.5,2.25);
\draw [dashed, violet](9.5,2.25) to [out=45,in=-220] (13.5,2.25);
\end{tikzpicture}.
\]
To conclude the remark, we stress that none of the $\widetilde \At/\At$ fixed points in $F$ coincide with $f=F\cap X$.
\end{remark}

We now study the interplay between the embedding $j:X\hookrightarrow \widetilde X$ and the geometry of the equivariant curves in $X$ and $\widetilde X$. Any equivariant embedding $X\hookrightarrow Y$ constrains the full attracting set of $X$ by the one of its ambient space $Y$. However, in general, the full attracting set of $X$ is smaller than the intersection of the full attracting set of $Y$ with $X$. In the next lemma, we argue that for the resolutions $X\hookrightarrow \widetilde X$ we actually have equality.

\begin{lemma} $ $
\label{lemma equality attracting sets bow varieties}
\begin{enumerate}
    \item Let $f\in X^{\At}$ and let $F\subseteq {\widetilde{X}}^{\At}$ be the unique component containing $f$. Then $\Attfull{C}^{\widetilde{X}}(F)\cap (F\times X)=\Attfull{C}^{X} (f)$.
    \item Let instead $G\subseteq \widetilde X^{\At}$ be a fixed component such that $G\cap X=\emptyset$. Then $\Attfull{C}^{\widetilde X}(G)\cap (G\times X)=\emptyset$.
\end{enumerate}
    
\end{lemma}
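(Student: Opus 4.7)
For part (1), I will prove the two inclusions separately, with the reverse inclusion by induction on the chain length.

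The inclusion $\Attfull{C}^{X}(f) \subseteq \Attfull{C}^{\widetilde{X}}(F) \cap (F \times X)$ is essentially tautological. Because $j: X \hookrightarrow \widetilde X$ is a closed $\At$-equivariant embedding (Proposition \ref{proposition embedding resolution D5 branes}), every attracting orbit of the $\At$-action on $X$ is an attracting orbit on $\widetilde X$, and its closure in $X$ coincides with its closure in $\widetilde X$. Hence any chain witnessing $(f,x) \in \Attfull{C}^X(f)$ also witnesses $(f,x) \in \Attfull{C}^{\widetilde X}(F)$, and $(f,x) \in F \times X$ because $f \in F \cap X$.

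For the reverse inclusion, take $(y,x) \in \Attfull{C}^{\widetilde X}(F) \cap (F \times X)$ and let $k$ be the length of a chain of closures of attracting orbits realising the pair. I will induct on $k$. When $k=0$ the pair is diagonal, so $x = y \in F \cap X$, and by Lemma \ref{lemma Fixed components D5 res}(3) this set is the singleton $\{f\}$, giving $x = y = f$. For the inductive step, the crucial observation is that $X \subset \widetilde X$ is closed and $\At$-invariant, so the first orbit $O_1$ of the chain (the one whose closure contains $x \in X$) must lie entirely in $X$; consequently its attracting limit $q_1$ lies in $X^{\At}$, and by Lemma \ref{lemma Fixed components D5 res}(3) the $\At$-fixed component $F_1 \subset \widetilde X^{\At}$ containing $q_1$ meets $X$ exactly in $q_1$.

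The main technical obstacle, and the step I expect to require the most care, is to argue that the remainder of the chain can be taken to lie inside $X$. A priori the next orbit in the chain could emanate from a point of $F_1 \setminus X$, and the chain could wander into $\widetilde X \setminus X$ before eventually returning to $F$. To rule this out I would invoke the explicit description of the intermediate components furnished by Lemma \ref{lemma Fixed components D5 res}(1)--(2)---each such $F_i$ is equivariantly isomorphic to a bow variety of type \ttt{{\fs}1\fs\dots\fs$\w'+\w''${\bs}$\w''${\bs}} with its canonical $\Cs$-action---and use a rigidity argument: any orbit-chain segment that departs from $X$ at some $F_i$ and returns to $X$ at a later component $F_j$ can, by the structure of the residual $\widetilde\At/\At$-action on these specific bow varieties, be replaced by a chain living entirely in $X$. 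Granted this reduction, the inductive hypothesis applied to the shorter chain from $q_1$ to $y$ gives $y = f$ and $(f,q_1) \in \Attfull{C}^X(f)$; concatenating with $O_1 \subset X$ produces $(f,x) \in \Attfull{C}^X(f)$, as desired.

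Part (2) then follows from the same reasoning by contradiction. Suppose $(g,x) \in \Attfull{C}^{\widetilde X}(G) \cap (G \times X)$. Applying the inductive argument above to the witnessing chain starting at $x \in X$, every $\At$-fixed component of $\widetilde X$ appearing in the chain must intersect $X$ nontrivially; in particular $G$ must, contradicting the hypothesis $G \cap X = \emptyset$. Hence $\Attfull{C}^{\widetilde X}(G) \cap (G \times X) = \emptyset$.
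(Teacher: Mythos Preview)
Your overall architecture is sound: the easy inclusion is indeed tautological, and reducing the hard inclusion to showing that every chain of attracting orbit closures in $\widetilde X$ which starts at a point of $X$ actually stays inside $X$ is exactly the right reduction. The paper does the same.

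The genuine gap is the step you flag yourself. Your proposed ``rigidity argument'' --- replacing an excursion of the chain outside $X$ by a chain inside $X$, using the explicit bow-variety description of the intermediate $\At$-fixed components from Lemma~\ref{lemma Fixed components D5 res} --- is not supplied, and it is not clear how the structure of the residual $\widetilde\At/\At$-action on those components would furnish such a replacement. Note too that even with the literal definition of ``chain'' (consecutive orbit closures share a point, necessarily a $\sigma$-fixed point), the difficulty persists: once $C_1\subset X$, the next orbit closure $C_2$ meets $C_1$ at a fixed point $q_1\in X$, but $q_1$ is only a \emph{limit} point of the orbit in $C_2$, so invariance and closedness of $X$ alone do not force $C_2\subset X$.

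The paper resolves this with a completely different and much shorter mechanism. Each orbit closure in the chain is the image of a morphism $\mathbb{P}^1\to\widetilde X$. By Corollary~\ref{corollary enhancement D5 resolution with affine quotients} the square with affinizations is Cartesian, so $X=\widetilde\pi^{-1}(X_0)$ inside $\widetilde X$. If some $\mathbb{P}^1$ in the chain met both $X$ and $\widetilde X\setminus X$, its image under $\widetilde\pi$ would meet both $X_0$ and $\widetilde X_0\setminus X_0$, hence $\mathbb{P}^1\to\widetilde X_0$ would be non-constant. But $\widetilde X_0$ is affine, contradiction. Therefore each orbit closure in the chain lies either entirely in $X$ or entirely in $\widetilde X\setminus X$; since the chain is connected and one end lies in $X$, every link lies in $X$, forcing $y=f$. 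The missing ingredient in your proposal is precisely this use of the Cartesian square over the affine quotients --- it replaces the unspecified rigidity argument with a one-line properness obstruction.
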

\begin{proof}
Let $\sigma : \Cs\to A$ be a generic subgroup in $\mathfrak{C}$. The full attracting set $\Attfull{C}^{\widetilde{X}}(F)\subset F\times \widetilde X$ consists of pairs $(y,x)$ of points belonging to a chain of closures of attracting $\sigma$-orbits. Thus, we have 
\[
\Attfull{C}^{\widetilde{X}}(F)\cap (F\times X)=\bigcup_{\substack{f'\in X^{\At} \\ (y,f')\in \Attfull{C}^{\widetilde{X}}(F)}} y  \times \Att{C}^X(f').
\]
By definition, $(y,f')\in \Attfull{C}^{\widetilde{X}}(F) $ iff there exists a chain of closures of attracting $\sigma$-orbits in $\widetilde X$ with endpoints $y$ and $f'$. Notice that each of these orbit closures is the image of a morphism $\Pe^1\to \widetilde X$. 
On the other hand, we have 
\[
\Attfull{C}^{X}(f)=\bigcup_{\substack{f'\in X^{\At} \\ (f,f')\in \Attfull{C}^{X}(f)}} f\times \Att{C}^X(f'),
\]
where now $ (f,f')\in\Attfull{C}^{X}(f)$ iff there exists a chain of closures of attracting $\sigma$-orbits in $X$ with endpoints $f$ and $f'$. 
Therefore, it suffices to show that if a chain of orbit closures in $\widetilde X$ connects $y\in F\subseteq \widetilde X$ to $f'\in X^{\At}$, all the orbits actually lie in $X$ (this in particular forces $y=f$ because by Lemma \ref{lemma Fixed components D5 res} we have $f=F\cap X$). Assume the contrary; then there exists at least one orbit closure $\Pe^1\to \widetilde X$ within such a chain that intersects both $X$ and $\widetilde X\setminus X$. But then Corollary \ref{corollary enhancement D5 resolution with affine quotients} implies that the composition $\Pe^1\to \widetilde X\xrightarrow{\tilde \pi} \widetilde X_0$ is non-constant. Since $\widetilde X_0$ is affine, this gives a contradiction. The proof of (2) is analogous.
\end{proof}

\subsection{D5 fusion of stable envelopes} Let $X$ be separated or co-separated.
In this section, we investigate the relation between the stable envelopes of the bow variety $X$ and those of a resolution $\widetilde X$.% As a consequence, we assume that $X$ is {\color{red} separated or co-separated}. This implies that $\widetilde X$ is separated or co-separated too.

Let $j: X\hookrightarrow \widetilde X$ be the embedding introduced in the previous section.  It is equivariant along the morphism $\varphi: \Tt\to \widetilde \Tt$ defined in equation \eqref{group homomorphism A-resolution}. 
Although the unique $\At$-fixed point $f =X\cap F$ contained in the $\At$-fixed component $F\subset \widetilde X$ does not coincide with any of the $\widetilde \At/\At$-fixed points in $F$, we will now  argue that $f$ admits a distinguished resolution $\tilde f_\sharp\in F^{\widetilde A}$.

As discussed in Remark \ref{remark interpretation fixed points and fixed components A resolution}, the tie diagrams of the $\widetilde \At/\At$-fixed points in $F$ are obtained from the tie diagram of $f$ by distributing the $\w$ ties connected to $\Ab$ between the two resolving branes $\Ab'$ and $\Ab''$ according to the decomposition $\w=\w'+\w''$. This operation produces all the $\binom{\w'+\w''}{\w'}$ fixed points in $F$. The number of tie-crossings in these diagrams ranges from zero to $\w'\w''$. 

If $X$ is separated, we denote by $\tilde f_\sharp\in F^{\widetilde A}$ the unique fixed point whose tie diagram has no crossings. If 
$X$ is co-separated, we denote by $\tilde f_\sharp\in F^{\widetilde A}$ the unique fixed point whose tie diagram has $\w'\w''$ crossings, as illustrated below.

 \begin{equation}
     \label{diagrams distinguished resolving fixed points D5 case}
\begin{tikzpicture}[scale=.35]
\draw [thick] (0,1)node[left]{$f=$}  --(7,1);
\draw [thick, blue] (4,0) node [right]{$a$} -- (3,2);
\draw [dashed](3,2) to [out=120,in=0] (0,4.6) ;
\draw [dashed](3,2) to [out=120,in=0] (0,4.8) ;
\draw [dashed](3,2) to [out=120,in=0] (0,5) node [left]{$\w'+\w'' \Big\lbrace$};
\draw [dashed](3,2) to [out=120,in=0] (0,5.2);
\draw [dashed](3,2) to [out=120,in=0] (0,5.4);
\draw[ultra thick, <->] (9.5,1)--(12.5,1) node[above]{separated} node[below]{D5 res.} -- (15.5,1);
\begin{scope}[xshift=22cm]
\draw [thick] (0,1) node[left]{$\tilde{f}_\sharp=$} --(8,1);  
\draw [thick, blue] (3,0) node [right]{$a'$} -- (2,2);
\draw[thick, blue] (6,0) node [right]{$a''$} --(5,2);
\draw [dashed](2,2) to [out=120,in=0] (0,3) ;
\draw [dashed](2,2) to [out=120,in=0] (0,3.2) node [left] {$\w' \lbrace$} ;
\draw [dashed](2,2) to [out=120,in=0] (0,3.4) ;
\draw [dashed](5,2) to [out=120,in=0] (0,5.6) node [left]{$\w'' \lbrace$};
\draw [dashed](5,2) to [out=120,in=0] (0,5.8);
\end{scope}

\draw [thick] (0,-4) node[left]{$f=$} --(7,-4);
\draw [thick, blue] (4,-5) node [left]{$a$} -- (3,-3);
\draw [dashed](4,0-5) to [out=-60,in=180] (7,-7.6) ;
\draw [dashed](4,-5) to [out=-60,in=180] (7,-7.8) ;
\draw [dashed](4,-5) to [out=-60,in=180] (7,-8) node [right]{$\Big\rbrace \w'+\w''$};
\draw [dashed](4,-5) to [out=-60,in=180] (7,-8.2);
\draw [dashed](4,-5) to [out=-60,in=180] (7,-8.4);
\draw[ultra thick, <->] (9.5,-4)--(12.5,-4) node[above]{co-separated} node[below]{D5 res.} -- (15.5,-4);

\begin{scope}[xshift=22cm]
\draw [thick] (0,-4) node[left]{$\tilde{f}_\sharp=$} --(8,-4);  
\draw [thick, blue] (3,-5) node [left]{$a'$} -- (2,-3);
\draw[thick, blue] (6,-5) node [left]{$a''$} --(5,-3);
\draw [dashed](3,-5) to [out=-60,in=180] (9,-7) ;
\draw [dashed](3,-5) to [out=-60,in=180] (9,-7.2) node [right] {$\rbrace \w'$} ;
\draw [dashed](3,-5) to [out=-60,in=180] (9,-7.4) ;
\draw [dashed](6,-5) to [out=-60,in=180] (9,-9.6) node [right]{$\rbrace \w''$};
\draw [dashed](6,-5) to [out=-60,in=180] (9,-9.8);
\end{scope}
\end{tikzpicture}
 \end{equation}
\begin{comment}
\[
\begin{tikzpicture}[scale=.35]
\begin{scope}[xshift=0cm]
\draw [thick] (0,1) node[left]{$f=$} --(7,1);
\draw [thick, blue] (4,0) node [left]{$a$} -- (3,2);
\draw [dashed](4,0) to [out=-60,in=180] (7,-2.6) ;
\draw [dashed](4,0) to [out=-60,in=180] (7,-2.8) ;
\draw [dashed](4,0) to [out=-60,in=180] (7,-3) node [right]{$\Big\rbrace \w'+\w''$};
\draw [dashed](4,0) to [out=-60,in=180] (7,-3.2);
\draw [dashed](4,0) to [out=-60,in=180] (7,-3.4);
\draw[ultra thick, <->] (9.5,1)--(12.5,1) node[above]{co-separated} node[below]{D5 res.} -- (15.5,1);
\end{scope}
\begin{scope}[xshift=22cm]
\draw [thick] (0,1) node[left]{$\tilde{f}_\sharp=$} --(8,1);  
\draw [thick, blue] (3,0) node [left]{$a'$} -- (2,2);
\draw[thick, blue] (6,0) node [left]{$a''$} --(5,2);
\draw [dashed](3,0) to [out=-60,in=180] (9,-2) ;
\draw [dashed](3,0) to [out=-60,in=180] (9,-2.2) node [right] {$\rbrace \w'$} ;
\draw [dashed](3,0) to [out=-60,in=180] (9,-2.4) ;
\draw [dashed](6,0) to [out=-60,in=180] (9,-4.6) node [right]{$\rbrace \w''$};
\draw [dashed](6,0) to [out=-60,in=180] (9,-4.8);
\end{scope}
\end{tikzpicture}
\]
\end{comment}
The rationale for this choice of representative lies in the following lemma. 

\begin{lemma}
\label{restrction Chern roots in A resolution}
    Let $\mathcal \xi$ be a tautological bundle of $\widetilde X$. Then $j^* \xi\Big|_f= \xi\Big|_{\tilde f_\sharp}$ as representations of $\Tt$. As a consequence, the diagram
    \[
    \begin{tikzcd}
        K_{ \widetilde \Tt}(\widetilde X)^{taut}\arrow[d, "j^*\circ \varphi^*"] \arrow[r] &K_{ \widetilde \Tt}(\tilde f_\sharp)\arrow[d, "\varphi^*"]\\
        K_{\Tt}(X)^{taut} \arrow[r] &K_{\Tt}(f)\\
    \end{tikzcd}
    \]
    associated with fixed point localization at $\tilde f_\sharp$ and $f$ and the change of group map $\varphi:  \Tt\to \widetilde \Tt$ is commutative. 
\end{lemma}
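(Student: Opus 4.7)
The plan is to verify the identification of $\Tt$-representations $j^\ast\xi|_f\cong \xi|_{\tilde f_\sharp}$ one D3 brane at a time, using the explicit formula for $j$ in Table \ref{equation definition D5 resolution of bow variety} and the twist prescribed in Remark \ref{remark torus action D5 resolution}, and then to deduce the commutativity of the diagram formally.

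First, I would reduce to checking the statement for each tautological line/vector bundle $\xi = \xi^{\widetilde\Xb}$ attached to a D3 brane $\widetilde \Xb$ of $\widetilde\D$ separately. The map $j$ is induced by the quiver-representation map $\MM \to \widetilde{\MM}$, which is the identity on all vector spaces $W_{\widetilde \Xb}$ except the new middle one $\Xb_0:=\Ab'_+=\Ab''_-$, where we have $W_{\Xb_0}=W_{\Ab_+}\oplus \C^{\w''}$ in the separated case, and $W_{\Xb_0}=W_{\Ab_-}\oplus \C^{\w'}$ in the co-separated one.

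For every $\widetilde \Xb \neq \Xb_0$, the bundles $\xi^{\widetilde\Xb}$ and $j^\ast\xi^{\widetilde\Xb}$ are literally identified as vector bundles, so it suffices to compare the weights of the fibers at $f$ and at $\tilde f_\sharp$. These weights are computed by the butterfly-diagram algorithm of \cite[Sec. 4.3]{rimanyi2020bow}, which is local on the brane diagram. Since the tie diagrams of $f$ and $\tilde f_\sharp$ coincide outside the resolved region (see Remark \ref{remark interpretation fixed points and fixed components A resolution}), the two computations agree term by term once the substitutions in $\varphi$ are applied to identify $a_{\Ab''}$ (resp.\ $a_{\Ab'}$) with $a_{\Ab}$. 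For the new brane $\Xb_0$, I would read off $j^\ast \xi^{\Xb_0}|_f$ directly from the Table and from the twist of Remark~\ref{remark torus action D5 resolution}: in the separated case the extra $\C^{\w''}$ summands carry the $\widetilde\Tt$-weights $a_{\Ab''}, a_{\Ab''}\h^{-1},\ldots, a_{\Ab''}\h^{-(\w''-1)}$, and these become $a_{\Ab}, a_{\Ab}\h^{-1},\ldots, a_{\Ab}\h^{-(\w''-1)}$ after $\varphi^\ast$. On the other side, the distinguished resolution $\tilde f_\sharp$ was selected (no crossings in the separated case, all $\w'\w''$ crossings in the co-separated case) precisely so that the butterfly rule at $\tilde f_\sharp$ assigns these same weights to the $\w''$ ties attached to $\Ab''$ (respectively, to the $\w'$ ties attached to $\Ab'$). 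Matching a tie entering $\Ab''$ from the NS5 brane at level $k$ to the character $a_{\Ab''}\h^{-k}$ yields the desired bijection of weights, and the remaining part of the fiber agrees with $\xi^{\Ab_+}|_f$ trivially.

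Once the $\Tt$-representation equality $j^\ast\xi|_f = \varphi^\ast(\xi|_{\tilde f_\sharp})$ has been established for each tautological generator, the commutativity of the diagram is immediate: it extends by $K$-theoretic linearity and tensor products to all of $K_{\widetilde\Tt}(\widetilde X)^{\mathrm{taut}}$, and both compositions in the square compute the same $\Tt$-representation in $K_\Tt(f)=\mathrm{Rep}(\Tt)$.

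The main obstacle is the bookkeeping at $\Xb_0$: one must verify that the permutation of characters $a_{\Ab''}\h^{-k}$ produced by the twist of Remark \ref{remark torus action D5 resolution} (via the matrices for $A', A'', a', a'', b', b''$ in Table \ref{equation definition D5 resolution of bow variety}) matches the permutation produced by the butterfly rule at the specific crossing pattern of $\tilde f_\sharp$. Any other resolution $\tilde f \in F^{\widetilde \At}$ of $f$ would give the same set of characters but attached to the wrong positions in $\Xb_0$, so the choice of $\tilde f_\sharp$ with extremal crossing number is indispensable.
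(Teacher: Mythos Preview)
Your approach is the same as the paper's: the paper's proof is a single sentence deferring to the butterfly-diagram combinatorics of \cite[Section~4.4]{rimanyi2020bow}, and what you have written is essentially a sketch of how that combinatorial verification proceeds. So at the level of strategy you are aligned with the authors.

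A few of your details are imprecise and would need tightening if you actually carried this out. First, the weights recorded in Remark~\ref{remark torus action D5 resolution} are already $\Tt$-weights (they use the variable $a=a_{\Ab}$), not $\widetilde\Tt$-weights; the twist there is through $\widetilde G$, not through $\widetilde\Tt$, and its purpose is to make $\MM\to\widetilde\MM$ equivariant along $\varphi$. So your sentence ``the extra $\C^{\w''}$ summands carry the $\widetilde\Tt$-weights $a_{\Ab''},a_{\Ab''}\h^{-1},\ldots$'' conflates the two tori. Second, the butterfly rule is not ``local'' in the naive sense you suggest: the weight contributed by a tie at a given D3 brane depends on how many D5 branes separate that D3 brane from the tie's D5 endpoint, so replacing $\Ab$ by $\Ab',\Ab''$ shifts those exponents, and this shift must be exactly compensated by $\varphi^*$ (i.e.\ by $a_{\Ab'}\mapsto a_{\Ab}\h^{-\w''}$, $a_{\Ab''}\mapsto a_{\Ab}$). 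That compensation works precisely for the extremal distribution of ties defining $\tilde f_\sharp$, which is the real content of the lemma. Finally, your last remark that other resolutions $\tilde f$ would give ``the same set of characters but attached to the wrong positions'' is not quite right: different $\tilde f$ genuinely give different multisets of $\Tt$-weights at the various D3 branes after $\varphi^*$, not merely a reordering.
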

\begin{proof}
    The proof follows from the combinatorics of the fixed point restrictions developed in \cite[Section 4.4]{rimanyi2020bow}.
\end{proof}

By construction, the K-theory class $N_j$ of the normal bundle of the embedding $j: X\hookrightarrow \widetilde X$ can be expressed uniquely in terms of the tautological bundles on the D5 part of the separated or co-separated brane diagram. By Proposition \ref{prop:rightbundles}, it follows that $N_j$ is topologically trivial and hence decomposes in attractive, repelling, and fixed directions:
\[
N_j=N_j^+ + N_j^0 + N_j^-\in K_{\Tt}(\pt).
\]
With this observation, we are ready to state the main result of this section.
\begin{theorem}
\label{Fusion of D5 branes for separated brane diagrams}
Fix some $f\in X^{\At}$ and a chamber $\mathfrak{C}$ for the $\At$-action on $X$. Let $F$ be the unique $\At$-fixed component of $\widetilde X$ containing $f$ and $\widetilde{\mathfrak{C}}$ be any chamber for $\widetilde \At$ restricting to $\mathfrak{C}$ on $\At$. Then %the following formula holds 
\[
\varphi^{\oast}\left(\StabX{F}{\widetilde C/C}{\tilde f}{(z\h^{\pm \gamma(f)})}\Big|_{\tilde f_\sharp}\right)\Theta(N^-_j)  \StabX{X}{C}{f}= j^{\oast} \varphi^{\oast}\StabX{\widetilde X}{\widetilde C}{\tilde f} \qquad \forall \tilde f\in F^{\widetilde \At}.
\]
Here, $\gamma(f)$ is a multi-index whose entries $\gamma(f)_i$ denote the number of ties of $f$ connecting the $i$-th NS5 brane to the D5 branes left to the resolved D5 brane. The sign of the shift by $\gamma(f)$ is negative in the separated case and positive in the co-separated one.

\begin{corollary}
\label{corollary nicest one-term D5 resolution}
Assuming that $\widetilde{\mathfrak{C}}/\mathfrak{C}=\lbrace a'>a''\rbrace$ and choosing $\tilde f=\tilde f_\sharp$, we get 
    \begin{equation}
    \label{nicest one-term D5 resolution}
    \prod_{s=1}^{\w''} \vartheta(\h^s)\Theta(N^-_j)  \StabX{X}{C}{f}= j^{\oast}\varphi^{\oast}\StabX{\widetilde X}{\widetilde C}{\tilde f_\sharp}.
\end{equation}
\end{corollary}

\begin{comment}
if $X$ is separated and 
\[
\prod_{s=1}^{\w'} \vartheta(\h^s)\Theta(N^-_j)  \StabX{X}{C}{f}= j^* \varphi^*\StabX{\widetilde X}{\widetilde C}{\tilde f_\sharp}
\]
if $X$ is co-separated.
\end{comment}
%{\color{red} Notice that the complex dimension of $F$ is $2\w''$ (resp. $2\w'$).}
\end{theorem}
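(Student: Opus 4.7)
The plan is to reduce the theorem to checking the stable envelope axioms via the triangle lemma (Proposition~\ref{composition stable envelopes}) applied to the torus decomposition $\widetilde{\At} = \At \times (\widetilde{\At}/\At)$, and then to verify those axioms for the restricted stable envelope of the intermediate $\At$-fixed component $F \subset \widetilde X$.  The first step is to apply Proposition~\ref{composition stable envelopes} to $\StabX{\widetilde X}{\widetilde{\mathfrak C}}{\tilde f}$ with respect to this decomposition: since $\tilde f$ lies in the $\At$-fixed component $F$, the composition identity gives
\[
\StabX{\widetilde X}{\widetilde{\mathfrak C}}{\tilde f}(z) \;=\; \StabX{\widetilde X}{\mathfrak C_0}{F}(z) \circ \StabX{F}{\widetilde{\mathfrak C}/\mathfrak C}{\tilde f}\bigl(z\h^{\pm \gamma(f)}\bigr),
\]
where $\mathfrak C_0$ denotes the chamber for $\At$ on $\widetilde X$ induced by $\widetilde{\mathfrak C}$. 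The K\"ahler-variable shift by $\h^{\pm\gamma(f)}$ is exactly the shift $\h^{-r'}$ (resp.\ $\h^{+r'}$ in the co-separated case) appearing in Proposition~\ref{composition stable envelopes}, once one interprets $r'$ via the description of $X^{\At}$-components in Theorem~\ref{proposition tensor product decomposition fixed locus}: tracing the tie diagram of $f$, the NS5 charge vector of the factor of $X^{\At}$ attached to the branes to the left of the resolved $\Ab$ is precisely $\gamma(f)$.

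The heart of the proof is then the identity, valid as sections on $f\times X$,
\[
j^{\oast}\varphi^{\oast}\StabX{\widetilde X}{\mathfrak C_0}{F} \;=\; \Theta(N_j^-)\cdot \StabX{X}{\mathfrak C}{f},
\]
which I prove by invoking uniqueness of stable envelopes (Remark~\ref{Remark axiomatic stab}). For the bundle matching, the class $N_j$ is topologically trivial by Proposition~\ref{prop:rightbundles}, so pullback along $j\circ \varphi$ identifies $\Sh L_{\widetilde X}$ with $\Sh L_X$ up to a twist absorbed by $\Theta(N_j^-)$ (one verifies compatibility of the $\alpha$-part and the dynamical part using Section~\ref{subsection Quadratic forms} and the explicit description of the tautological bundles in the three-way parts of Table~\ref{equation definition D5 resolution of bow variety}). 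The support axiom is immediate from Lemma~\ref{lemma equality attracting sets bow varieties}(1): since $\Attfull{\mathfrak C}^{\widetilde X}(F)\cap (F\times X) = \Attfull{\mathfrak C}^X(f)$, the restriction to $f\times X$ is supported on the correct full attracting set, and the assertion of Lemma~\ref{lemma equality attracting sets bow varieties}(2) rules out spurious contributions from other $\At$-fixed components of $\widetilde X$. For the diagonal axiom, the regular embedding $j$ yields the K-theoretic identity $N_{F/\widetilde X}|_f = N_j + N_{f/X}$; taking negative parts (with respect to $\mathfrak C$) and applying Lemma~\ref{restrction Chern roots in A resolution} to relate the fixed point restriction at $f$ to that at $\tilde f_\sharp$, the elliptic Euler class factors as $\vartheta(N_{F/\widetilde X}^-)|_f = \Theta(N_j^-)\cdot \vartheta(N_{f/X}^-)$, which is the prescribed diagonal restriction.

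Substituting back into the triangle identity of the first step and evaluating the $\StabX{F}{\widetilde{\mathfrak C}/\mathfrak C}{\tilde f}$ factor at $\tilde f_\sharp$ (which is the unique point in $F^{\widetilde A/A}$ whose tautological restrictions agree with those of $f$ in $X$, by Lemma~\ref{restrction Chern roots in A resolution}) produces the stated formula. The corollary then follows by specializing to $\tilde f = \tilde f_\sharp$ and $\widetilde{\mathfrak C}/\mathfrak C = \{a'>a''\}$: the diagonal axiom applied to $\StabX{F}{\widetilde{\mathfrak C}/\mathfrak C}{\tilde f_\sharp}|_{\tilde f_\sharp}$ gives $\vartheta(N_{\tilde f_\sharp/F}^-)$, and a direct combinatorial computation on $F$ (using the description of $F$ as the bow variety $\ttt{\fs 1\fs 2\fs\cdots\fs(\w'+\w'')\bs\w''\bs}$ from Lemma~\ref{lemma Fixed components D5 res} together with the butterfly-diagram restrictions of \cite[\S4.3]{rimanyi2020bow}) evaluates this to $\prod_{s=1}^{\w''}\vartheta(\h^s)$, the $\w'\w''$ weights of crossings collapsing to exactly the pure $\h$-powers $\h,\h^2,\dots,\h^{\w''}$ at the distinguished (no-crossing) point $\tilde f_\sharp$.

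The main obstacle will be the careful bookkeeping in the diagonal axiom: one must verify that $\Theta(N_j^-)$ is exactly the correct correction, which hinges on identifying $j^{\oast}\varphi^{\oast}$ of the tautological data on $\widetilde X$ with the tautological data on $X$ at the level of the specific fixed point $\tilde f_\sharp$, not at an arbitrary resolution of $f$. This is the content of Lemma~\ref{restrction Chern roots in A resolution}, and the choice of $\tilde f_\sharp$ (no crossings in the separated case, all crossings in the co-separated case) is forced by the explicit form of the Hurtubise normal form in Table~\ref{equation definition D5 resolution of bow variety}---any other resolution would pick up shifts by powers of $\h$ that would spoil the factorization. The second subtle point is the direction of the K\"ahler shift (negative in the separated case, positive in the co-separated case), which comes from the switch between $a' < a''$ and $a' > a''$ chambers and must be tracked through the proof of Proposition~\ref{composition stable envelopes}.
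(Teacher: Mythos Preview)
Your proposal is essentially correct and follows the same route as the paper: apply the triangle lemma (Proposition~\ref{composition stable envelopes}) to factor $\StabX{\widetilde X}{\widetilde{\mathfrak C}}{\tilde f}$ through the intermediate $\At$-fixed component $F$, prove the key identity comparing $j^{\oast}\varphi^{\oast}\StabX{\widetilde X}{\mathfrak C}{F}$ with $\Theta(N_j^-)\StabX{X}{\mathfrak C}{f}$ via the stable envelope axioms (support from Lemma~\ref{lemma equality attracting sets bow varieties}, diagonal from the normal bundle splitting), and then invoke Lemma~\ref{restrction Chern roots in A resolution} to identify the restriction at $f$ with that at $\tilde f_\sharp$.

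Two small points of bookkeeping to tighten. First, the paper phrases the key identity on $F\times X$ rather than on $f\times X$, namely as $\vartheta(N_j^-)\,i_{\oast}\bigl(\StabX{X}{\mathfrak C}{f}\bigr) = \StabX{\widetilde X}{\mathfrak C}{F}\big|_{F\times X}$, with $i:f\times X\hookrightarrow F\times X$. This formulation is what makes the subsequent convolution with $\StabX{F}{\widetilde{\mathfrak C}/\mathfrak C}{\tilde f}$ cleanly defined: the convolution integrates over $F$, so you need the class on $F\times X$, not $f\times X$. Your ``substituting back'' step then becomes the Cartesian-square chase the paper carries out explicitly; you are correct that it works, but it is not a one-line substitution. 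Second, your normal bundle identity is misstated: you want $N_{f/\widetilde X}^- = N_{f/X}^- \oplus N_j^-$ (normals of the point $f$), not $N_{F/\widetilde X}|_f = N_j + N_{f/X}$, which would force $N_{f/F}=0$. With these corrections, your argument is the paper's.
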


\begin{proof}[Proof of Theorem \ref{Fusion of D5 branes for separated brane diagrams}]

Consider the action on $\widetilde X$ of the torus $\At \subset \Tt$ induced by the map $\varphi:\Tt\to \widetilde \Tt$. %Fix $f\in X^A$ and let $F$ be the unique $A$-fixed component of $\widetilde X$ containing $f$. 
We divide the proof into two steps. In the first one, we compare $\StabX{X}{C}{f}$ to $\StabX{\widetilde X}{C}{F}$. These are, respectively, sections of certain line bundles on the elliptic cohomologies of $ f \times X$ and $F\times \widetilde X$.
Consider the pair of maps 
\[
\begin{tikzcd}
    f\times X\arrow[r, hookrightarrow, "i"]& F\times X\arrow[r, hookrightarrow, "j"] &F\times \widetilde X
\end{tikzcd}
\]
and notice that the inclusion $i$ is closed, hence proper.
We claim that 
\begin{equation}
\label{fundamental equality proof A fusion of stable envelopes}
    \vartheta(N^-_j)i_{\oast}\left( \StabX{X}{C}{f}\right)=\left(\StabX{\widetilde X}{C}{F}\right)\Big|_{F\times X}.
\end{equation}
Firstly, we show that both sides are sections of the same line bundle. Recall that $\StabX{X}{C}{f}$ is a section of the line bundle $\mathcal{L}^\triangledown_{\At,f}\boxtimes \mathcal{L}_{X}$ on $E_{\Tt}(f\times X )$ and $\StabX{\widetilde X}{C}{F}$ is a section of the line bundle 
$\mathcal{L}^\triangledown_{ \At, F}\boxtimes \mathcal{L}_{\widetilde X}$  on $E_{\Tt}(F\times \widetilde X )$---see Section \ref{section: Stable envelopes of bow varieties: definition}.
Since the D5 resolution only affects the D5 part of the bow diagram and all the tautological bundles associated with the latter are trivial, it follows that $\mathcal{L}_{\widetilde X}|_{X}=\Sh{L}_{X}\otimes \Sh{G}$, where $\Sh{G}$ is pulled back from $\Pic{}{E_\T(\pt)}$. Hence
\[
i^*\mathcal{L}^\triangledown_{\At,F}=\mathcal{L}^\triangledown_{\At, f}\otimes \Theta(TF)\otimes \Theta(N^-_j)\otimes \Sh{G}^{-1}.
\]
Thus, the composition
\[
\begin{tikzcd}
   i_*\mathcal{L}^\triangledown_{\At, f}\otimes \Sh{G}^{-1}=i_*(i^*\Sh{L}^\triangledown_{\At, F}\otimes \Theta(-TF)\otimes\Theta(-N_j^-))\arrow[r, "i_{\oast}"] 
   & \Theta(-N_j^-)\otimes  i^* \mathcal{L}^\triangledown_{ \At,F} \arrow[r, "\cdot \vartheta(N^-_j)"] 
   & \mathcal{L}^\triangledown_{ \At,F}
\end{tikzcd} 
\]
is well defined and sends $\StabX{X}{C}{f}$ to a section of $(\mathcal{L}^\triangledown_{\At,F}\boxtimes \mathcal{L}_{\widetilde X})|_{F\times X}$, as required by \eqref{fundamental equality proof A fusion of stable envelopes}.

Let now $U:=F\times X^{\geq f}$, where  
\[
X^{\geq f}=X\setminus \bigcup_{g< f} \Att{C}(g).
%= F\times \left( X \setminus  \bigcup_{G< F} \Att{C}(G)\cap X \right)
\]
 
In order to prove $\eqref{fundamental equality proof A fusion of stable envelopes}$, it suffices to show that both sides of the equation are supported on $\Attfull{C}(f)\subset F\times X$ and have the same restriction on $U$. Indeed, the claim then follows from Aganagic-Okounkov's argument for the uniqueness of elliptic stable envelopes \cite[Section 3.5]{aganagic2016elliptic}, which also applies in this situation without modifications. 

We first check the support condition. Since $\StabX{X}{C}{f}$ is supported on $\Attfull{C}^{X}(f)\subset f\times X$, the pushforward $\vartheta(N^-_j) i_{\oast}\StabX{X}{C}{f}$ is supported on $\Attfull{C}(f)\subset F\times X$. On the other hand, $\StabX{\widetilde X}{C}{F}$ is supported on $\Attfull{C}^{\widetilde X}(F)\subset F\times \widetilde X$, hence 
the restriction $\StabX{\widetilde X}{C}{F}|_{F\times X} $ is supported on $\Attfull{C}^{ \widetilde X}(F)\cap (F\times X)$. But part (1) of Lemma \eqref{lemma equality attracting sets bow varieties} implies that the latter is also equal to $\Attfull{C}^{X}(f)$.

%The support bound on $\Att{C}^{f, X}(f)\subset F\times X$ promptly follows from Lemma \eqref{lemma equality attracting sets bow varieties}.

We now compare the restrictions to $U$. Since we now know that both sides of \eqref{fundamental equality proof A fusion of stable envelopes} are supported on 
\[
\Attfull{C}^{X}(f)=f\times \bigcup_{g\leq f} \Att{C}(g)\subset F\times X,
\]
it follows that their restrictions to $U$ are supported on $f\times \Att{C}(f)$, which is closed in $U$. Hence, the long exact sequence in elliptic cohomology implies that the restrictions to $U$ of the two sides of \eqref{fundamental equality proof A fusion of stable envelopes} are in the image of pushforward associated with the inclusion 
\[
f\times \Att{C}(f)\hookrightarrow U,
\]
and thus they are both multiples of the fundamental class $[\Att{C}(f)]$. Therefore, to conclude that they are the same class, it suffices to check that they restrict in the same way on $f\times f$. But this follows from Remark \ref{Remark axiomatic stab} together with the fact that 
$N_{f/\widetilde X}^-=N_{f/ X}^-\oplus N_j^-$.
%Moreover, their restrictions to the open set $U:=F\times X^{\geq f}$, where  
%%X^{\geq f}=\left(X\setminus \bigcup_{g< f} \Att{C}(g)\right)
%= F\times \left( X \setminus  \bigcup_{G< F} \Att{C}(G)\cap X \right)
%\]
%are equal. Indeed, by definition of stable envelopes $\left(\Theta(N_j^-)i_{\oast}\StabX{X}{C}{f}\right)|_U=\vartheta(N_j^-) \left[ \Att{C}(f)\right]$. On the other hand, $\StabX{\widetilde X}{C}{F}$ restricts on 
%\[
%F\times \left( \widetilde X \setminus  \bigcup_{G< F} \Att{C}(G) \right)
%\]
%to $\left[\Att{C}(F)\right]$. But $\left[\Att{C}(F)\right]|_{U} =\vartheta(N_j^-)\left[\Att{C}(f)\right]$, whence the claim.

%Overall, we have proved that the difference of the two sides of \eqref{fundamental equality proof A fusion of stable envelopes} restricts to zero on $U$ and is supported on $\Att{C}^f(f)$. But then \eqref{fundamental equality proof A fusion of stable envelopes} follows from Aganagic-Okounkov's proof of uniqueness of elliptic stable envelopes \cite[Section 3.5]{aganagic2016elliptic}. This concludes the first step.

This observation concludes the first step. In the second step, we deduce the statement of the theorem. We will make use of the following three commutative diagrams
\[
    \begin{tikzcd}
        \tilde f \times F\times  X \arrow[d, " p_{13}"] \arrow[r, hookrightarrow] & \tilde f \times F\times  \widetilde X \arrow[d, "\tilde p_{13}"]\\
        \tilde f\times  X  \arrow[r, hookrightarrow]  & \tilde f\times \widetilde X
    \end{tikzcd}
    \qquad 
    \begin{tikzcd}
        \tilde f \times F\times  X \arrow[d, hookrightarrow] \arrow[r, "\Delta"] & \tilde f \times F\times F\times  X \arrow[d, hookrightarrow]\\
        \tilde f\times F\times \widetilde X  \arrow[r, "\widetilde \Delta"]  & \tilde f\times F\times F\times  \widetilde X
    \end{tikzcd}
\]
\[
\begin{tikzcd}
        \tilde f \times f\times X \arrow[d, "\Delta_f"] \arrow[r, hookrightarrow, "k"] & \tilde f \times F\times  X \arrow[d, "\Delta"]\\
        \tilde f\times F\times f \times X  \arrow[r, "id\times i"]  & \tilde f\times F\times F \times  X.
    \end{tikzcd}
\]
All the maps are obvious, and the diagrams are Cartesian (although we don't need this property for the second one). 
By Proposition \ref{composition stable envelopes}, we have
\begin{equation}
    \label{equation convolution of stable envelopes}
    \StabX{\widetilde X}{\widetilde C}{\tilde f}=(\tilde p_{13})_{\oast}\widetilde \Delta^{\oast}\left(\StabX{\widetilde X}{ C}{F}(z)\boxtimes \StabX{F}{ \widetilde C/ C}{\tilde f}(z\h^{\pm \gamma(f)})\right).
\end{equation}
Recall that although the map $\tilde p_{13}$ is not proper in general, its restriction to the support of 
\[
\widetilde\Delta^{\oast}\left(\StabX{\widetilde X}{ C}{f}\boxtimes \StabX{\widetilde X}{ \widetilde C/ C}{\tilde f}(z\h^{\pm \gamma(f)})\right)
\]
is proper, so the pushforward in elliptic cohomology is well defined. This argument justifies all the pushforwards in this proof. 

We now begin the computation that will complete the proof. All the steps below will be done in $\Tt$-equivariant elliptic cohomology. The $\Tt$-equivariance on $\widetilde X$ is induced by $\varphi: \Tt\to \widetilde \Tt$. For the sake of clarity, we drop the pullback $\varphi^{\oast}$, and we also denote by $\tau$ the shift $z\mapsto z\h^{\pm \gamma(f)}$.
We compute
\begin{align*}
    \StabX{\widetilde X}{\widetilde C}{\tilde f}\Big|_{\tilde f\times X}
    &=\left((\tilde p_{13})_{\oast} \widetilde\Delta^{\oast}\left(\StabX{\widetilde X}{ C}{F}\boxtimes \tau^{\oast}\StabX{F}{ \widetilde C/ C}{\tilde f}\right)\right)\Big|_{\tilde f\times X }
    \\
    &=( p_{13})_{\oast} \left( \widetilde\Delta^{\oast}\left(\StabX{\widetilde X}{ C}{F}\boxtimes \tau^{\oast}\StabX{F}{ \widetilde C/ C}{\tilde f}\right)\Big|_{\tilde f\times F\times  X }\right)
    \\
    &=( p_{13})_{\oast} \left( \Delta^{\oast}\left(\StabX{\widetilde X}{ C}{F}\Big|_{F\times  X }\boxtimes \tau^{\oast}\StabX{F}{ \widetilde C/ C}{\tilde f}\right)\right).
\end{align*}
In the first step, we applied equation \eqref{equation convolution of stable envelopes}. In the second one, we used the compatibility of pushforward and pullback induced by the first Cartesian diagram above. Finally, in the last line, we exchanged the order of the restrictions, as prescribed by the second diagram above. Applying \eqref{fundamental equality proof A fusion of stable envelopes}, we continue our computation as follows
\begin{align*}
    \StabX{\widetilde X}{\widetilde C}{\tilde f}\Big|_{\tilde f\times X}
    &=(p_{13})_{\oast} \left( \Delta^{\oast}\left(\Theta(N^-_j)i_{\oast}\left( \StabX{X}{C}{f}\right)\boxtimes \tau^{\oast}\StabX{F}{ \widetilde C/ C}{\tilde f}\right)\right)  \\
    &=\Theta(N^-_j)(p_{13})_{\oast} \left( \Delta^{\oast}\left(i_{\oast}\left( \StabX{X}{C}{f}\right)\boxtimes \tau^{\oast}\StabX{F}{ \widetilde C/ C}{\tilde f}\right)\right)  \\
    &=\Theta(N^-_j)(p_{13})_{\oast} \Delta^{\oast} (i\times id)_{\oast}\left( \StabX{X}{C}{f}\boxtimes \tau^{\oast}\StabX{F}{ \widetilde C/ C}{\tilde f}\right)  \\
    &=\Theta(N^-_j)(p_{13})_{\oast} k_{\oast}\Delta^{\oast}_f \left( \StabX{X}{C}{f}\boxtimes \tau^{\oast}\StabX{F}{ \widetilde C/ C}{\tilde f}\right)  \\
    &=\Theta(N^-_j)\Delta^{\oast}_f \left( \StabX{X}{C}{f}\boxtimes \tau^{\oast}\StabX{F}{ \widetilde C/ C}{\tilde f}\right). 
\end{align*}
In the second step, we used the fact that $\Theta(N^-_j)$ only depends on the equivariant variables (because $N_j^-$ is topologically trivial) and hence can be pulled outside the chain of maps. In the fourth step, we used the compatibility of pushforward and pullback induced by the third Cartesian diagram above. Finally, in the last step we used the equality $(p_{13})_{\oast} k_{\oast}=(p_{13}\circ k)_{\oast}=id_{\oast}=id$.

Reintroducing the dropped pullback $\varphi^{\oast}$, we get the formula 
\begin{align*}
j^{\oast}\varphi^{\oast}\StabX{\widetilde X}{\widetilde C}{\tilde f}
&=\Theta(N^-_j)\Delta^{\oast}_f \left( \StabX{X}{C}{f}\boxtimes \varphi^{\oast} \tau^{\oast}\StabX{F}{ \widetilde C/ C}{f}\right)\\
&=  \Theta(N^-_j) \varphi^{\oast} \left(\StabX{F}{ \widetilde C/ C}{\tilde f}(z\h^{\pm \gamma(f)})\right)\Big|_{f} \StabX{X}{C}{f}.
\end{align*}
To conclude the proof, it suffices to observe that, in virtue of Lemma \ref{restrction Chern roots in A resolution} together with the fact that the line bundles defining the stable envelopes are tautological, restricting to $f$ after pulling back by $\varphi^{\oast}$ is the same as pulling back and restricting to $\tilde f_\sharp$.
\end{proof}

\subsection{More fusion formulas}

\label{explicit D5 resolutions}
Let $X$ be separated or co-separated and let $\widetilde X$ be the resolution of its $i$-th D5 brane $\Ab$ with local charge decomposition $\w=\w'+\w''$.
Corollary \ref{corollary nicest one-term D5 resolution} implies, via localization and triangularity of the stable envelopes, the following statements, which relate the fixed point localizations of the stable envelopes of $X$ and $\widetilde X$. The coefficients of these formulas are functions of the coefficients of the R-matrix of the bow varieties $\ttt{{\fs}1\fs2{\fs}3\fs\dots {\fs}$\w'+\w'' ${\bs}$\w''${\bs}}$ and \ttt{{\bs}$\w'${\bs}$\w'+\w'' ${\fs}\dots{\fs}3{\fs}2{\fs}1{\fs}}, in the separated and co-separated case, respectively. Remarkably, these fusion-like statements for stable envelopes will turn out be mirror dual to the NS5 resolutions that we develop in the next chapter.

\begin{proposition}
\label{multiple terms D5 resolution stabs separated case}
     Assume $X$ is separated. Let $f,g\in X^{\At}$ and let $\tilde f_\sharp ,\tilde g_\sharp\in F^{\widetilde \At}\subset \widetilde X^{\widetilde \At}$ be their distinguished resolutions. We have
    \begin{multline*}
        \frac{\StabX{X}{C}{f}\Big|_g}{\StabX{X}{C}{g}\Big|_g}(a,z,\h)=\\
        \sum_{\tilde f\in F^{\widetilde \At}}\frac{R_{\tilde{f}, \tilde{f}_\sharp}}{R_{\tilde{g}_\sharp, \tilde{g}_\sharp}}(a_i\h^{-\w_i''},a_i, z\hbar^{-\gamma(f)}, \hbar)\frac{\StabX{\widetilde X}{\widetilde C}{\tilde f}\Big|_{\tilde{g}_\sharp}}{\StabX{\widetilde X}{\widetilde C}{\tilde g_\sharp}\Big|_{\tilde g_\sharp}}(a_1,\dots,a_i\h^{-\w''_i},a_i,\dots, a_n, z,\h).
    \end{multline*}
Here, $\mathfrak{C}$ and $\widetilde{\mathfrak{C}}$ are the standard chambers (cf. section \ref{subsection: hamber structure}) and $R$ is the R-matrix 
\[
R(a'_i, a''_i, z, \h)=R_{{\lbrace a_i'<a_i''\rbrace}{\lbrace a_i'>a_i''\rbrace}}(a'_i, a''_i, z, \h).
\]
of the bow variety $F=\ttt{{\fs}1\fs2{\fs}3\fs\dots {\fs}$\w'+\w'' ${\bs}$\w''${\bs}}$ and $\gamma_i(f)$ is the number of ties of $f$ connecting the $i$-th NS5 brane to the D5 branes left to the resolved D5 brane. 
\end{proposition}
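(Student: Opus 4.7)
The strategy is to derive the formula from Theorem~\ref{Fusion of D5 branes for separated brane diagrams} by forming an R-matrix-weighted linear combination over the resolutions of $f$ and then restricting to $g$. Fix $f, g \in X^{\At}$ with $\widetilde\At$-fixed components $F = F_f$ and $G = G_g$. By Lemma~\ref{lemma Fixed components D5 res}, both are isomorphic to a common abstract bow variety, and under this identification the no-crossing points $\tilde f_\sharp$ and $\tilde g_\sharp$ correspond to the same fixed point; moreover the restricted chamber $\widetilde{\mathfrak C}/\mathfrak C$ is $\lbrace a' < a'' \rbrace$, and the matrix $R$ in the statement is the R-matrix of this abstract variety going to $\lbrace a' > a'' \rbrace$.

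Carrying out the plan, Theorem~\ref{Fusion of D5 branes for separated brane diagrams} together with Lemma~\ref{restrction Chern roots in A resolution} (used to identify $(j^{\oast}\varphi^{\oast}-)|_g$ with $(-)|_{\tilde g_\sharp}$) gives, for each $\tilde f \in F^{\widetilde\At}$,
\[
\StabX{F}{\widetilde C/C}{\tilde f}(z\h^{-\gamma(f)})\Big|_{\tilde f_\sharp} \cdot \Theta(N_j^-)\Big|_g \cdot \StabX{X}{C}{f}\Big|_g \;=\; \StabX{\widetilde X}{\widetilde C}{\tilde f}\Big|_{\tilde g_\sharp}.
\]
I would multiply both sides by $R_{\tilde f, \tilde f_\sharp}(z\h^{-\gamma(f)})$ and sum over $\tilde f \in F^{\widetilde\At}$. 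The factor $\Theta(N_j^-)|_g \StabX{X}{C}{f}|_g$ pulls out, and the remaining combination $\sum_{\tilde f} R_{\tilde f, \tilde f_\sharp} \StabX{F}{\widetilde C/C}{\tilde f}|_{\tilde f_\sharp}$ is the key algebraic input: by the R-matrix relation of Proposition~\ref{proposition R matrix relations} applied to $F$ (with chambers $\widetilde{\mathfrak C}/\mathfrak C$ and its opposite), this sum equals the restriction at $\tilde f_\sharp$ of the stable envelope in the opposite chamber, which the diagonal axiom of Remark~\ref{Remark axiomatic stab} identifies with $\vartheta(N^+_{\tilde f_\sharp/F,\widetilde C/C})$; since this factor is independent of $z$, the shift $z\h^{-\gamma(f)}$ is immaterial.

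In parallel, Theorem~\ref{Fusion of D5 branes for separated brane diagrams} applied to $(g, \tilde g_\sharp)$ and restricted at $g$ yields $\vartheta(N^-_{\tilde g_\sharp/G,\widetilde C/C}) \cdot \Theta(N_j^-)|_g \cdot \StabX{X}{C}{g}|_g = \StabX{\widetilde X}{\widetilde C}{\tilde g_\sharp}|_{\tilde g_\sharp}$. Under the identification of $F$ and $G$ with the common abstract model, with $\tilde f_\sharp \leftrightarrow \tilde g_\sharp$, the diagonal R-matrix entry is $R_{\tilde g_\sharp, \tilde g_\sharp} = \vartheta(N^+_{\tilde f_\sharp/F,\widetilde C/C})/\vartheta(N^-_{\tilde g_\sharp/G,\widetilde C/C})$, and dividing the two displayed identities produces the proposition. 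The main obstacle will be careful bookkeeping: one must track that $F_f$ and $G_g$, although distinct subvarieties of $\widetilde X$, share a common abstract model so that $R_{\tilde f_\sharp, \tilde f_\sharp}$ and $R_{\tilde g_\sharp, \tilde g_\sharp}$ refer to the same matrix entry, and that the K\"ahler shift $z\h^{-\gamma(f)}$ appearing in the R-matrix arguments does not interfere with the collapse of the sum---which works precisely because the theta-class of a normal bundle is $z$-independent.
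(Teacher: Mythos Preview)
Your argument is correct and is essentially the same as the paper's: both rest on Theorem~\ref{Fusion of D5 branes for separated brane diagrams}, the R-matrix relation for $F$, and the maximality of $\tilde f_\sharp=\tilde g_\sharp$ in the order from $\lbrace a'<a''\rbrace$. The only cosmetic difference is the order of operations. The paper starts from Corollary~\ref{corollary nicest one-term D5 resolution} (Theorem~\ref{Fusion of D5 branes for separated brane diagrams} with the \emph{opposite} sub-chamber $\lbrace a'>a''\rbrace$ at $\tilde f_\sharp$), then expands $\StabX{\widetilde X}{\widetilde C'}{\tilde f_\sharp}$ in the standard chamber via the R-matrix, and finally uses maximality of $\tilde g_\sharp$ to collapse the denominator. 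You instead begin with Theorem~\ref{Fusion of D5 branes for separated brane diagrams} in the \emph{standard} sub-chamber, take the $R$-weighted sum over $\tilde f$, and collapse the numerator side via the R-matrix relation; the denominator is handled by the diagonal axiom. Since the sub-chambers are opposite, these two manipulations are inverse to one another and yield the same identity. Your bookkeeping remark that $F_f$ and $G_g$ share a common abstract model (Lemma~\ref{lemma Fixed components D5 res}), so that $R_{\tilde f_\sharp,\tilde f_\sharp}$ and $R_{\tilde g_\sharp,\tilde g_\sharp}$ refer to the same entry, is exactly what underlies the paper's phrase ``$\tilde g_\sharp$ is maximal''.
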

\begin{proposition}
\label{multiple terms D5 resolution stabs co-separated case}
    Assume $X$ is co-separated. Let $f,g\in X^{\At}$ and let $\tilde f_\sharp ,\tilde g_\sharp\in F^{\widetilde \At}\subset \widetilde X^{\widetilde \At}$ be their distinguished resolutions. We have
    \begin{multline*}
        \frac{\StabX{X}{C}{f}\Big|_g}{\StabX{X}{C}{g}\Big|_g}(a,z,\h)=\\
        \sum_{\tilde f\in F^{\widetilde \At}}\frac{R_{\tilde{f}, \tilde{f}_\sharp}}{R_{\tilde{g}_\sharp, \tilde{g}_\sharp}}(a_i,a_i\h^{\w_i'},z\hbar^{\gamma(f)}, \hbar)\frac{\StabX{\widetilde X}{\widetilde C}{\tilde f}\Big|_{\tilde{g}_\sharp}}{\StabX{\widetilde X}{\widetilde C}{\tilde g_\sharp}\Big|_{\tilde g_\sharp}}(a_1,\dots,a_i,a_i\h^{\w'_i},\dots, a_m, z,\h).
     \end{multline*}
Here, $\mathfrak{C}$ and $\widetilde{\mathfrak{C}}$ are the standard chambers (cf. section \ref{subsection: hamber structure}) and $R$ is the R-matrix 
\[
R(a'_i, a''_i, z, \h)=R_{{\lbrace a_i'<a_i''\rbrace}{\lbrace a_i'>a_i''\rbrace}}(a'_i, a''_i, z, \h)
\]
of the bow variety $F=\ttt{{\bs}$\w'${\bs}$\w'+\w'' ${\fs}\dots{\fs}3{\fs}2{\fs}1{\fs}}$ and $\gamma_i(f)$ is the number of ties of $f$ connecting the $i$-th NS5 brane to D5 branes left to the resolved D5 brane. 
\end{proposition}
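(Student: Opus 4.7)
The plan is to follow the same scheme as for the separated case (Proposition \ref{multiple terms D5 resolution stabs separated case}), outlined in the opening paragraph of Section \ref{explicit D5 resolutions}: combine Corollary \ref{corollary nicest one-term D5 resolution} with an R-matrix expansion on the inner fixed component. The only substantive change is that the twist $\varphi$ of \eqref{group homomorphism A-resolution} now takes the co-separated form $(a_i,\h)\mapsto(a_i,a_i\h^{\w'_i},\h)$, which flips the sign of the K\"ahler shift and exchanges the roles of $\w'$ and $\w''$ in the R-matrix argument.

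First, applying Corollary \ref{corollary nicest one-term D5 resolution} to both $f$ and $g$ gives
\[
\prod_{s=1}^{\w''}\vartheta(\h^s)\,\Theta(N_j^-)\,\StabX{X}{C}{p}\;=\;j^{\oast}\varphi^{\oast}\StabX{\widetilde X}{\widetilde C}{\tilde p_\sharp},\qquad p\in\lbrace f,g\rbrace.
\]
Since $N_j^-$ is topologically trivial by Proposition \ref{prop:rightbundles}, $\Theta(N_j^-)$ is pulled back from $E_{\Tt}(\pt)$ and restricts to the same class at every $\At$-fixed point of $X$. Restricting both equalities to $g$ and dividing, the common prefactor cancels and yields
\[
\frac{\StabX{X}{C}{f}|_g}{\StabX{X}{C}{g}|_g}=\frac{(\varphi^{\oast}\StabX{\widetilde X}{\widetilde C}{\tilde f_\sharp})|_g}{(\varphi^{\oast}\StabX{\widetilde X}{\widetilde C}{\tilde g_\sharp})|_g}.
\]

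Next, split $\widetilde\At$ into the inner two-torus $A_{\mathrm{in}}=\Cs_{a'_i}\times\Cs_{a''_i}$ acting on the resolving branes and the complementary outer torus fixing them. By Theorem \ref{proposition tensor product decomposition fixed locus} and Lemma \ref{lemma Fixed components D5 res}, each $A_{\mathrm{in}}$-fixed component of $\widetilde X$ factors as an outer piece (parametrized by an $\At$-fixed point of $X$) times the inner bow variety $F_{\mathrm{in}}\cong\ttt{{\bs}$\w'${\bs}$\w'+\w''$\fs\dots\fs 3{\fs}2{\fs}1\fs}$. The convolution formula of Proposition \ref{composition stable envelopes} then decomposes $\StabX{\widetilde X}{\widetilde C}{\tilde p_\sharp}$ as
\[
\StabX{\widetilde X}{\mathfrak{C}_0}{F(p)}\circ\bigl(\mathbf{1}_{p_{\mathrm{out}}}\boxtimes\StabX{F_{\mathrm{in}}}{\lbrace a'>a''\rbrace}{\tilde p_\sharp}(z\h^{\gamma(p)})\bigr),
\]
where the shift $\h^{\gamma(p)}$ arises by combining the $\h^{-r'}$ of that formula with the co-separated form of $\varphi$. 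Restricting to $g$ and applying the triangular support axiom (Remark \ref{Remark axiomatic stab}) isolates a common prefactor $C(g)$, independent of $p$, multiplying $\varphi^{\oast}\StabX{F_{\mathrm{in}}}{\lbrace a'>a''\rbrace}{\tilde p_\sharp}(z\h^{\gamma(p)})|_{g_{\mathrm{in}}}$, where $g_{\mathrm{in}}\in F_{\mathrm{in}}$ is the image of $g$ under this identification and is $\At$-fixed but \emph{not} $\widetilde\At$-fixed.

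Finally, to convert the $g_{\mathrm{in}}$-restriction into a restriction at the genuine $\widetilde\At$-fixed point $\tilde g_\sharp$, expand
\[
\StabX{F_{\mathrm{in}}}{\lbrace a'>a''\rbrace}{\tilde f_\sharp}=\sum_{\tilde f\in F_{\mathrm{in}}^{\widetilde\At}}R_{\tilde f,\tilde f_\sharp}\,\StabX{F_{\mathrm{in}}}{\lbrace a'<a''\rbrace}{\tilde f}
\]
by the R-matrix relation (Proposition \ref{proposition R matrix relations}), pull back by $\varphi$ to obtain the R-matrix arguments $(a_i,a_i\h^{\w'_i},z\h^{\gamma(f)},\h)$, and reassemble each summand via the convolution factorization applied at $\tilde g_\sharp$ in place of $g$. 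The same prefactor $C(g)$ reappears in every summand and cancels against the denominator, producing the claimed formula. The main obstacle will be the careful bookkeeping of the K\"ahler shifts and of the R-matrix argument substitution: the positive shift $z\h^{\gamma(f)}$ and arguments $(a_i,a_i\h^{\w'_i})$ of the co-separated case, as opposed to the negative shift $z\h^{-\gamma(f)}$ and arguments $(a_i\h^{-\w''_i},a_i)$ of Proposition \ref{multiple terms D5 resolution stabs separated case}, arise precisely from combining the $\h^{-r'}$ shift of Proposition \ref{composition stable envelopes} with the co-separated form of $\varphi$ from \eqref{group homomorphism A-resolution}.
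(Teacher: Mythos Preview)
Your overall strategy—Corollary \ref{corollary nicest one-term D5 resolution} followed by an R-matrix expansion, with the denominator collapsing by maximality of $\tilde g_\sharp$ in the order of $\{a'_i<a''_i\}$—is exactly the paper's route, and your tracking of the co-separated shifts $(a_i,a_i\h^{\w'_i})$ and $z\h^{+\gamma(f)}$ is correct.

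Where you diverge is the intermediate Step~2. The paper does not decompose via Proposition~\ref{composition stable envelopes} or restrict at the non-$\widetilde\At$-fixed point $g_{\mathrm{in}}$. Instead it uses Lemma~\ref{restrction Chern roots in A resolution} to identify $(j^{\oast}\varphi^{\oast}(-))|_g$ directly with $\varphi^{\oast}(-)|_{\tilde g_\sharp}$, and then applies the R-matrix relation \eqref{localized R-matrix relations} on $\widetilde X$ to cross the single wall $a'_i=a''_i$; by the wall-crossing refinement (the corollary following Proposition~\ref{proposition R matrix relations}) this R-matrix is already the R-matrix of $F$, with the stated K\"ahler shift. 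Your Step~2 factorization $C(g)\cdot\StabX{F_{\mathrm{in}}}{\{a'>a''\}}{\tilde p_\sharp}|_{g_{\mathrm{in}}}$ and the subsequent ``reassembly'' in Step~3 are essentially re-deriving that corollary in situ; they can be made rigorous, but as written the restriction of the convolution to $g\in G$ and the reassembly step are only sketched. Replacing your Steps~2--3 by the direct passage $|_g\mapsto|_{\tilde g_\sharp}$ via Lemma~\ref{restrction Chern roots in A resolution}, followed by one application of the wall-crossing R-matrix on $\widetilde X$, gives the paper's three-line proof with no loss of content.
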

\begin{proof}
    The proof of both propositions follows from Corollary \ref{corollary nicest one-term D5 resolution}, the definition of R-matrix \eqref{localized R-matrix relations}, and the fact that $\tilde g_\sharp$ is maximal in the order induced by $\lbrace a_i'<a_i''\rbrace$ (hence only the diagonal term at the denominator survives).
\end{proof}

\begin{remark}
    As observed in Remark \ref{remark interpretation fixed points and fixed components A resolution}, there are exactly $\binom{\w'+\w''}{\w'}$ fixed points $\tilde f$ in $F$. As a consequence, the Propositions above equate the normalized stable envelopes of $X$ with a linear combination of stable envelopes of $\widetilde X$ involving $\binom{\w'+\w''}{\w'}$ terms. Moreover, since the R-matrix $R(a,b, \h, z)$ of $F$ only depends on the ratio $a/b$, the coefficients in these formulas depend only on $z$ and $\h$. 
\end{remark}

%%%%%%%%%%%%%%%%%%%%%%%%%%%%%%%%%
%%%%%%%%%%%%%%%%%%%%%%%%%%%%%%%%%

\subsection{Symmetric group action}

In this section, we assume that all bow varieties are separated or co-separated. Consider the transition interchanging two adjacent D5 branes in such a way that their (local) charges do not change:

\begin{equation*}
\begin{tikzpicture}[baseline=(current  bounding  box.center), scale=.35]
\draw[thick] (0,1)--(1,1) node [above] {$d_1$} -- (2,1);
\draw[thick,blue] (3,0) node [below] {$\Ab_i$}--(2,2);
\draw[thick] (3,1) -- (4,1) node [above] {$d_2$}--(5,1);
\draw[thick,blue] (6,0)  node [below] {$\Ab_{i+1}$} -- (5,2);
\draw[thick] (6,1)--(7,1) node [above] {$d_3$}--(8,1);
\draw[ultra thick, <->] (10,1)--(11.5,1)  -- (13,1);
\draw[thick] (15,1)--(16,1) node [above] {$d_1$} -- (17,1);
\draw[thick,blue] (18,0) node [below] {$\Ab_{i+1}$} --(17,2);
\draw[thick] (18,1)--(19,1) node [above] {$d'_2$}--(20,1);
\draw[thick,blue] (21,0) node [below] {$\Ab_i$} --(20,2);
\draw[thick] (21,1)--(22,1) node [above] {$d_3$}--(23,1);
\end{tikzpicture}
\qquad\quad \text{for } d_2+d'_2=d_1+d_3.
\end{equation*}

This transition defines an action of the symmetric group $S_n$ on the set of bow varieties with $n$ D5 branes via its standard generators $\sigma_{i,i+1}$ of $S_n$. This $S_n$-action is compatible with the equivariant geometry of the bow varieties, in the sense that there exists a one-to-one correspondence between the torus fixed points of $X$ and $\sigma\cdot X$ for every $\sigma\in S_n$. For the action of the generator $\sigma_{i,i+1}$, the correspondence swaps the ties arising from the interchanged branes:
\[
\begin{tikzpicture}[scale=.35]
\draw [thick] (0,1) --(8,1);  
\draw [thick, blue] (3,0) -- (2,2);
\draw[thick, blue] (6,0)--(5,2);
\draw [dashed](2,2) to [out=120,in=0] (0,3) ;
\draw [dashed](2,2) to [out=120,in=0] (0,3.25) node [left] {$B$} ;
\draw [dashed](2,2) to [out=120,in=0] (0,3.4) ;
\draw [dashed](6,0) to [out=-60,in=180] (8,-1)  ;
\draw [dashed](6,0) to [out=-60,in=180] (8,-1.25)  node [right] {$C$} ;
\draw [dashed](6,0) to [out=-60,in=180] (8,-1.4);
\draw [dashed](3,0) to [out=-60,in=180] (8,-2.6);
\draw [dashed](3,0) to [out=-60,in=180] (8,-2.8) node [right]{$D$};
\draw [dashed](3,0) to [out=-60,in=180] (8,-3) ;
\draw [dashed](5,2) to [out=120,in=0] (0,4.6) ;
\draw [dashed](5,2) to [out=120,in=0] (0,4.8) node [left]{$A$};
\draw [dashed](5,2) to [out=120,in=0] (0,5);
\draw[ultra thick, <->] (10.5,1)--(12.5,1) node[above]{$\sigma_{i,i+1}$} -- (14.5,1);
\draw[ultra thick, <->] (10.5,1)--(12.5,1) node[below]{action} -- (14.5,1);
\begin{scope}[xshift=17cm]
\draw [thick] (0,1) --(8,1);  
\draw [thick, blue] (3,0) -- (2,2);
\draw[thick, blue] (6,0)--(5,2);
\draw [dashed](5,2) to [out=120,in=0] (0,3) ;
\draw [dashed](5,2) to [out=120,in=0] (0,3.25) node [left] {$B$} ;
\draw [dashed](5,2) to [out=120,in=0] (0,3.4) ;
\draw [dashed](3,0) to [out=-60,in=180] (8,-1)  ;
\draw [dashed](3,0) to [out=-60,in=180] (8,-1.25)  node [right] {$C$} ;
\draw [dashed](3,0) to [out=-60,in=180] (8,-1.4);
\draw [dashed](6,0) to [out=-60,in=180] (8,-2.6);
\draw [dashed](6,0) to [out=-60,in=180] (8,-2.8) node [right]{$D$};
\draw [dashed](6,0) to [out=-60,in=180] (8,-3) ;
\draw [dashed](2,2) to [out=120,in=0] (0,4.6) ;
\draw [dashed](2,2) to [out=120,in=0] (0,4.8) node [left]{$A$};
\draw [dashed](2,2) to [out=120,in=0] (0,5);
\end{scope}
\end{tikzpicture}.
\]
Given some fixed point $f$ in $X$, we denote by $\sigma\cdot f$ the corresponding fixed point in $\sigma \cdot X$. Notice that, even if $X=\sigma \cdot X$, the fixed points $f$ and $\sigma\cdot f$ generally do \emph{not} coincide.

The subset of bow varieties fixed by the $S_n$ action consists of those bow varieties whose D5 branes all have the same local charge. Among these, those whose local charges are equal to one are the cotangent bundles of partial flag varieties (cf. Section \ref{sec:quivers}):
\[
\ttt{\fs$k_1$\fs\dots \fs$k_m$\fs$n$\bs$n-1$\bs\dots\bs2\bs1\bs}\cong T^*\Fl(k_1,\dots, k_m; n).
\]
The shuffling of the D5 branes of $T^*\Fl(k_1,\dots, k_m; n)$ has a clear geometric meaning: it coincides with the standard action of $S_n\subset \GL(n)$ on the variety. This action descends to a nontrivial symmetry of the stable envelopes of $T^*\Fl$ and hence, via Theorem \ref{Fusion of D5 branes for separated brane diagrams}, to a relation between the stable envelopes of any pair of bow varieties $X$ and $\sigma\cdot X$.

\begin{proposition}
\label{innocent identity}
Let $\mathfrak{C}=\lbrace a_1<\dots< a_n \rbrace$ be the standard chamber of a bow variety $X$ with $n$ D5 branes. Consider the following action of the symmetric group on the set of equivariant parameters 
\[
\sigma\cdot (a_1,\dots a_n)=(a_{\sigma(1)},\dots, a_{\sigma(n)}) 
\]
and, accordingly, on the chambers
\[
\sigma\cdot \mathfrak{C}=\lbrace a_{\sigma(1)}<\dots< a_{\sigma(n)} \rbrace.
\]
Then the formula 
\begin{equation}
\label{formula innocent identity}
    \frac{\Stab{C}{f}\Big|_g}{\Stab{C}{g}\Big|_g} (a,z,\h) = \frac{\Stab{\sigma \cdot C}{\sigma \cdot f}\Big|_{\sigma\cdot g}}{\Stab{\sigma \cdot C}{\sigma\cdot g}\Big|_{\sigma\cdot g}}(\sigma \cdot a, z,\h)
\end{equation}
holds for all $f,g\in X^A$ and $\sigma \in S_n$.
\end{proposition}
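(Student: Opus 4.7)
My plan is to prove the identity \eqref{formula innocent identity} by reducing to the cotangent bundle of a full flag variety via D5 resolutions, where it follows from the natural $S_n$-action on $T^*\Fl$. Since $S_n$ is generated by adjacent transpositions $\sigma_{i,i+1}$, it suffices to prove the identity for these generators. By Corollary~\ref{Corollary: stable envelopes match under HW} we may assume $X$ is separated (the co-separated case is analogous), and by Section~\ref{section: Getting rid of the ``trivial'' branes} we may assume all D5 branes have positive local charge.

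Let $\widetilde X$ denote the maximal D5 resolution of $X$, obtained by breaking every D5 brane of local charge $\w_i$ into $\w_i$ consecutive charge-one D5 branes. By Corollary~\ref{corollary any bow can be embedded in a flag variety}, $\widetilde X$ is isomorphic (via Hanany-Witten) to the cotangent bundle of a full flag variety on $\C^N$ with $N = \sum_i \w_i$. On $\widetilde X$ all D5 branes have local charge one, so permuting adjacent D5 branes does not change the underlying variety; moreover the resulting $S_N$-action coincides with the restriction of the standard $\GL(N)$-action on the flag variety. In this classical setting the identity \eqref{formula innocent identity} follows from uniqueness of stable envelopes (Theorem~\ref{Theorem existence+uniqueness stable envelopes}): after the substitution $(a,\mathfrak{C},\tilde f) \mapsto (\sigma\cdot a,\sigma\cdot \mathfrak{C},\sigma\cdot \tilde f)$, the right-hand side still satisfies the support and diagonal axioms of Remark~\ref{Remark axiomatic stab} characterizing $\StabX{\widetilde X}{\sigma\cdot \widetilde{\mathfrak{C}}}{\sigma\cdot \tilde f}(\sigma\cdot a,z,\h)$, so the two sides must coincide.

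To descend the identity from $\widetilde X$ back to $X$, apply Proposition~\ref{multiple terms D5 resolution stabs separated case} iteratively, one D5 brane at a time. This expresses the normalized ratio $\Stab{C}{f}|_g / \Stab{C}{g}|_g$ on $X$ as a linear combination of analogous normalized ratios on $\widetilde X$, with coefficients that are products of R-matrix entries of the small bow varieties $\ttt{\fs 1\fs 2 \fs \cdots \fs w'+w'' \bs w'' \bs}$ arising as fixed components. The entries of these R-matrices depend only on the ratios $a_i\h^{-\w_i''}/a_i$ of the equivariant parameters being separated and on the shifted K\"ahler variables $z\h^{-\gamma(f)}$, so their transformation under $\sigma_{i,i+1}$ is controlled by the swap $a_i\leftrightarrow a_{i+1}$ together with the induced swap of the combinatorial data $\gamma(f)$ and $\tilde f_\sharp$. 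Writing the same expansion for the right-hand side of \eqref{formula innocent identity} in $\sigma\cdot X$ and comparing term by term, the already established $S_n$-equivariance on $\widetilde X$ delivers the identity on $X$.

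The main obstacle will be the combinatorial bookkeeping in the last step: one must verify that the distinguished zero-crossing resolutions $\tilde f_\sharp$, $\tilde g_\sharp$, the dynamical shifts $z\h^{-\gamma(f)}$, and the R-matrix coefficients all transform compatibly under $f\mapsto \sigma_{i,i+1}\cdot f$ and $g\mapsto \sigma_{i,i+1}\cdot g$, so that the term-by-term match-up of the two expansions is free of phase or chamber ambiguities. A more conceptual alternative would be to exhibit a canonical correspondence on $X\times (\sigma\cdot X)$ that directly intertwines the two stable envelopes and invoke uniqueness once---bypassing the reduction to $T^*\Fl$---but the D5-resolution route outlined above fits most naturally with the rest of this section.
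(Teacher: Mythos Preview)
Your overall strategy is the same as the paper's: establish the identity first on $T^*\Fl$ via the geometric $S_n$-action and then descend to arbitrary bow varieties through the maximal D5 resolution, with the charge-zero D5 branes handled separately via \eqref{equation stab with weight zero banes}. Two remarks are in order.

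First, the maximal D5 resolution $\widetilde X$ is the cotangent bundle of a \emph{partial} flag variety, not the full one (only the D5 charges become $1$; the NS5 charges are untouched). This does not affect the argument, since $S_N\subset\GL(N)$ still acts on $T^*\Fl(k_1,\dots,k_m;N)$ in the same way.

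Second, and more substantively, you choose to descend via the multi-term expansion of Proposition~\ref{multiple terms D5 resolution stabs separated case}, which forces you into the ``combinatorial bookkeeping'' you flag as the main obstacle: matching R-matrix coefficients, dynamical shifts $z\h^{-\gamma(f)}$, and distinguished resolutions under $\sigma$. The paper bypasses all of this by using the \emph{one-term} relation of Theorem~\ref{Fusion of D5 branes for separated brane diagrams} (equivalently Corollary~\ref{corollary nicest one-term D5 resolution}) on both sides of \eqref{formula innocent identity}. Since the prefactor $\prod_s\vartheta(\h^s)\,\Theta(N_j^-)$ depends only on $\h$ and the equivariant variables (the normal bundle $N_j$ is topologically trivial), it cancels in the normalized ratio; together with Lemma~\ref{restrction Chern roots in A resolution} this reduces each side directly to a single normalized ratio on $\widetilde X$, where the flag-variety identity applies. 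No R-matrix bookkeeping is needed. Your plan works, but the paper's route is shorter and removes the obstacle you anticipated.
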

\begin{proof}
Assume first that $X\cong T^*\Fl(k_1,\dots, k_m; n)$. The action of $S_n\subset \GL(n)$ on $X$ descends to an action on its elliptic cohomology. Notice that $S_n$ also acts on the maximal torus $A\subset \GL(n)$ by conjugation, and we have
\[
\sigma \cdot (a_1,\dots, a_n)= (a_{\sigma(1)},\dots, a_{\sigma(n)}).
\]
Since a weight $\chi(a)$ is $\mathfrak{C}$-positive iff $\chi(\sigma\cdot a)$ is $\sigma \cdot\mathfrak{C}$-positive, 
then
\[
\Stab{\sigma \cdot C}{\sigma \cdot f}(\sigma \cdot a, z,\h)= \Stab{C}{f}(a,z,\h)
\]
for all $f,g\in X^A$ and $\sigma \in S_n$. This proves the proposition if $X\cong T^*\Fl(k_1,\dots, k_m; n)$\footnote{Alternatively, one can check this statement using the explicit formulas for the stable envelopes of partial flag varieties provided by \cite{botta2021shuffle, RRTBshuffle}.}. For a general bow variety $X$ with no branes $\Ab$ such that $\w(\Ab)=0$, the result follows by considering its maximal D5 resolution $\widetilde X$ (which is the cotangent bundle of a partial flag variety) and applying Theorem \ref{Fusion of D5 branes for separated brane diagrams} on both sides of \eqref{formula innocent identity}. Finally, for a general bow variety (possibly with branes such that $\w(\Ab)=0$), the result follows from \eqref{equation stab with weight zero banes}.
\end{proof}

%%%%%%%%%%%%%%%%%%%%%%%%%%%%%%%%%%%

\subsection{Fusion of R-matrices}
\label{subsection fusion of R-matrices}

Let $X$ be separated or co-separated with D5 resolution $\widetilde X$. Consider the usual actions of $\At$ on $X$ and $\widetilde \At$ on $\widetilde X$. In this section, we exploit Theorem \ref{Fusion of D5 branes for separated brane diagrams} to relate the R-matrices of $X$ and $\widetilde X$.

By Theorem \ref{Fusion of D5 branes for separated brane diagrams}, for every chamber $\mathfrak{C}$ of $\At$ there exist a chamber $\widetilde{\mathfrak{C}}$ of $\widetilde{\At}$ and classes $\beta_i$ such that
    \[
    \beta_i\StabX{X}{C}{f}=j^{\oast} \varphi^{\oast}\StabX{\widetilde X}{\widetilde C}{\tilde f_i} \qquad \forall \tilde f_i\in F^{\widetilde \At/\At}.
    \]
    Here, $F$ is the fixed component of $\widetilde X^{\At}$ containing $f$, and the pullback by $\varphi$ encodes a shift of the equivariant parameters. Notice that $\varphi$ is independent of the chambers while $\beta_i$ only depends on the quotient $\widetilde{\mathfrak{C}}/\mathfrak{C}$. By Corollary \ref{corollary nicest one-term D5 resolution}, we can always choose a chamber $\widetilde{\mathfrak{C}}$ for $\widetilde \At$ and a fixed-point $\tilde f_\sharp\in \widetilde X^{\widetilde \At}$ such that $\beta_\sharp\neq 0$.
    Consider now two chambers $\mathfrak{C}$ and $\mathfrak{C}'$ for the action of $\At$ on $X$ and choose resolutions $\widetilde{\mathfrak{C}}$ and $\widetilde{\mathfrak{C}}'$ such that $\widetilde{\mathfrak{C}}/\mathfrak{C}=\widetilde{\mathfrak{C}}'/\mathfrak{C}'$ and $\beta_\sharp\neq 0$.
    \begin{proposition}
    \label{proposition R-matrix fusion}
        The following formula holds:
        \begin{equation*}
            \left(\Rmat{C'}{C}\right)_{gf}=\sum_{i}\varphi^{\oast}(\Rmat{\widetilde C'}{\widetilde C})_{\tilde g_i\tilde f_\sharp} \frac{\beta_i}{\beta_\sharp}.
        \end{equation*}
    \end{proposition}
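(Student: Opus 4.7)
The plan is to combine the R-matrix defining relation on $\widetilde X$ with the fusion formula of Theorem \ref{Fusion of D5 branes for separated brane diagrams} and its corollary. Starting from the identity
\[
\StabX{\widetilde X}{\widetilde C}{\tilde f_\sharp} \;=\; \sum_{\tilde h \in \widetilde X^{\widetilde \At}} \bigl(\Rmat{\widetilde C'}{\widetilde C}\bigr)_{\tilde h,\tilde f_\sharp} \cdot \StabX{\widetilde X}{\widetilde C'}{\tilde h}
\]
on $\widetilde X$, I would apply $j^{\oast}\varphi^{\oast}$ to both sides. On the left, the assumption $\beta_\sharp \ne 0$ and the fusion formula give $j^{\oast}\varphi^{\oast}\StabX{\widetilde X}{\widetilde C}{\tilde f_\sharp} = \beta_\sharp\,\StabX{X}{C}{f}$.

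For the right side, the sum must be reorganized according to the $\At$-fixed components $G' \subset \widetilde X^{\At}$ containing the various $\tilde h$. By Lemma \ref{lemma Fixed components D5 res}, each such $G'$ either satisfies $G' \cap X = \{g\}$ for a unique $g \in X^{\At}$ (with $\binom{\w'+\w''}{\w'}$ resolutions $\tilde g_i \in (G')^{\widetilde \At/\At}$) or $G' \cap X = \emptyset$. For the first type of component, Theorem \ref{Fusion of D5 branes for separated brane diagrams}, applied with the chamber $\widetilde{\mathfrak{C}}'/\mathfrak{C}' = \widetilde{\mathfrak{C}}/\mathfrak{C}$, yields $j^{\oast}\varphi^{\oast}\StabX{\widetilde X}{\widetilde C'}{\tilde g_i} = \beta_i\,\StabX{X}{C'}{g}$. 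For the second type, the pullback vanishes: $\StabX{\widetilde X}{\widetilde C'}{\tilde h}$ is supported on $\Attfull{\widetilde C'}(\tilde h)$, and any $\widetilde \At$-attracting chain is also an $\At$-attracting chain for a suitable $\sigma \in \mathfrak{C}'$, so this support intersected with $\tilde h \times X$ lies inside $\Attfull{C'}(G') \cap (G' \times X)$, which is empty by Lemma \ref{lemma equality attracting sets bow varieties}(2). Assembling everything gives
\[
\beta_\sharp\,\StabX{X}{C}{f} \;=\; \sum_{g \in X^{\At}} \Bigl(\sum_{i} \varphi^{\oast}\bigl(\Rmat{\widetilde C'}{\widetilde C}\bigr)_{\tilde g_i,\tilde f_\sharp}\,\beta_i\Bigr)\,\StabX{X}{C'}{g}.
\]

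Dividing by $\beta_\sharp$ and comparing with the defining R-matrix identity $\StabX{X}{C}{f} = \sum_g (\Rmat{C'}{C})_{g,f}\,\StabX{X}{C'}{g}$ on $X$, the coefficients can be identified term by term; this last step is justified exactly as in Proposition \ref{proposition R matrix relations}, using the triangularity of stable envelope fixed-point restrictions together with the fact that both sides lie in the image of the localized pushforward from $X^{\At}$. The main subtlety in this plan is the vanishing for fixed components $G'$ with $G' \cap X = \emptyset$; handling it cleanly is what justifies restricting the sum over $\widetilde X^{\widetilde \At}$ to resolutions of points of $X^{\At}$, but once Lemma \ref{lemma equality attracting sets bow varieties}(2) is invoked the argument is essentially formal.
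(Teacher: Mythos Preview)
Your proposal is correct and follows essentially the same route as the paper's own proof: both start from the R-matrix relation on $\widetilde X$, apply $j^{\oast}\varphi^{\oast}$, use Theorem \ref{Fusion of D5 branes for separated brane diagrams} to convert the $\widetilde X$-stable envelopes into $X$-stable envelopes, invoke Lemma \ref{lemma equality attracting sets bow varieties}(2) together with the support condition to kill the contributions from components $G'$ with $G'\cap X=\emptyset$, and then read off the coefficients via fixed-point localization as in Proposition \ref{proposition R matrix relations}. The only cosmetic difference is the order in which the fusion identity and the R-matrix identity are applied.
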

    \begin{example} \rm
        Assume that $X$ is a separated bow variety $X$ with $m$ NS5 branes and two D5 branes with (local) charges $\w_1$ and $\w_2$. Then the cohomology $H^*_{\Tt}(X^{\At})$ is a weight subspace of the quantum group representation $\Lambda^{\w_1}\C^{m}(a_1)\otimes \Lambda^{\w_2}\C^{m}(a_2)$ (cf. Remark \ref{remark rep. thy. interpretation cohomology bow varieties}). The actual weight is determined by the NS5 charges of $X$. In this case, there are only two chambers $\mathfrak{C}=\lbrace a_2<a_1\rbrace$ and $\mathfrak{C}'=\lbrace a_1<a_2\rbrace$, so we set $R_{\w_1,\w_2}:=\Rmat{C'}{C}$.

        Now consider the D5 resolution of the first brane of X with charge decomposition $\w_1=\w'_1+\w''_1$. The resulting variety $\widetilde X$ has $m$ NS5 branes and three D5 branes of charges $\w'_1$, $\w''_1$, and $\w_2$. Its cohomology $H^*_{\widetilde{\Tt}}(\widetilde X^{\widetilde \At})$ is a subspace of  $\Lambda^{\w'_1}\C^{m}(a'_1)\otimes \Lambda^{\w''_1}\C^{m}(a''_1) \otimes \Lambda^{\w_2}\C^{m}(a_2)$. For the R-matrix $\Rmat{\widetilde C'}{\widetilde C}$ in Proposition \ref{proposition R-matrix fusion}, we can choose $\Rmat{\widetilde C'}{\widetilde C}=R_{\lbrace a_1'<a_1''<a_2\rbrace, \lbrace a_2< a_1'<a_1'' \rbrace}$.
        Via wall-crossing, we get 
        \[
        \Rmat{\widetilde C'}{\widetilde C}(a'_1,a''_1,a_2)= R^{(23)}_{\w''_1,\w_2}(a''_1,a_2) R^{(13)}_{\w'_1,\w_2}(a'_1,a_2)
        \]
        on $\Lambda^{\w'_1}\C^{m}(a'_1)\otimes \Lambda^{\w''_1}\C^{m}(a''_1) \otimes \Lambda^{\w_2}\C^{m}(a_2)$.  Here, the superscripts indicate the factors on which the operators act: for example, $R^{(23)}_{\w''_1,\w_2}=1\otimes R_{\w''_1, \w_2}$.
        Since the pullback $\varphi^{{\oast}}$ simply forces the change of variables $(a_1',a_1'')=(a_1h^{-\w_1''}, a_1)$, Proposition \ref{proposition R-matrix fusion} describes the matrix elements of $R_{\w_1,\w_2}(a_1,a_2)$ as a linear combination of matrix elements of 
        \[
        R^{(23)}_{\w''_1,\w_2}(a_1,a_2) R^{(13)}_{\w'_1,\w_2}(a_1h^{-\w_1''},a_2).
        \]
        Moreover, not all the matrix elements appear, but only those corresponding to the fixed points of $\widetilde X^{\widetilde \At}$ resolving the fixed points in $X^{\At}$ in the sense of Lemma \ref{lemma Fixed components D5 res}. It is easy to check that these elements correspond to those basis elements of $\Lambda^{\w'_1}\C^{m}\otimes \Lambda^{\w''_1}\C^{m} \otimes \Lambda^{\w_2}\C^{m}$ that are not in the kernel of the canonical map
        \[
        \Lambda^{\w'_1}\C^{m}\otimes \Lambda^{\w''_1}\C^{m} \otimes \Lambda^{\w_2}\C^{m}\xrightarrow[]{\wedge\otimes 1} \Lambda^{\w_1}\C^{m}\otimes \Lambda^{\w_2}\C^{m}
        \]
        wedging the first two tensor components. 
        Altogether, these observations reveal that Proposition \ref{proposition R-matrix fusion} provides an elliptic generalization of the famous fusion formula for R-matrices introduced in the '80s \cite{Kulish:1981gi}. This point of view also offer a representation theoretic interpretation of the results of this section: they describe the geometry behind the fusion procedure for R-matrices. %Remarkably, in the next section we will explore the ``mirror dual'' fusion statement, obtained by replacing D5 branes with NS5 branes, and we will comment on what kind of algebraic structure it encodes.
    \end{example}
    \begin{proof}[Proof of Proposition \ref{proposition R-matrix fusion}]
        %Set 
        %\[
        %    \alpha_{gf}=\sum_i\varphi^*(\Rmat{\widetilde C'}{\widetilde C})_{\tilde g_i\tilde f_\sharp} \frac{\beta_i}{\beta_\sharp}.
        %\]
        It suffices to prove that 
    \begin{equation}
        \label{equation to prove for proof R-matrix relations}
        \StabX{X}{C}{f}=\sum_{g}\alpha_{gf}\StabX{X}{C'}{g},
        \qquad 
        \alpha_{gf}=\sum_i\varphi^{\oast}(\Rmat{\widetilde C'}{\widetilde C})_{\tilde g_i\tilde f_\sharp} \frac{\beta_i}{\beta_\sharp}.
    \end{equation}
    Indeed, localizing at fixed points and using \eqref{localized R-matrix relations}, one deduces that $\alpha_{gf}$ must coincide with $\left( \Rmat{C'}{C}\right)_{gf}$.
    By the R-matrix relation for $\widetilde X$, we get
    \begin{align*}
        \StabX{X}{C}{f}
        &=\frac{1}{\beta_\sharp}j^{\oast} \varphi^{\oast} \StabX{\widetilde X}{\widetilde C}{\tilde f_\sharp}
        \\ 
        &=\frac{1}{\beta_\sharp} j^{\oast} \varphi^{\oast}\sum_{\tilde g_i\in \widetilde X^{\widetilde \At}} (\Rmat{\widetilde C'}{\widetilde C})_{\tilde g_i\tilde f_\sharp}\StabX{\widetilde X}{\widetilde C'}{\tilde g_i}
        \\
        &=\frac{1}{\beta_\sharp} j^{\oast} \varphi^{\oast}\sum_{\substack{G\cap X\neq \emptyset \\ \tilde g_i\in G^{\widetilde \At}}} (\Rmat{\widetilde C'}{\widetilde C})_{\tilde g_i\tilde f_\sharp}\StabX{\widetilde X}{\widetilde C'}{\tilde g_i}+\frac{1}{\beta_\sharp} j^{\oast} \varphi^{\oast}\sum_{\substack{G\cap X= \emptyset \\ \tilde g_i\in G^{\widetilde \At}}} (\Rmat{\widetilde C'}{\widetilde C})_{\tilde g_i\tilde f_\sharp}\StabX{\widetilde X}{\widetilde C'}{\tilde g_i}.
        \end{align*}
        Here, the spaces $G$ are the $\At$-fixed components of $\widetilde X^{\At}$. Triangularity of stable envelopes together with part (2) of Lemma \ref{lemma equality attracting sets bow varieties} imply that all the terms of the second summation vanish after restricting to $X$, i.e. after applying $j^{\oast}$. Hence, applying Theorem \ref{Fusion of D5 branes for separated brane diagrams} backward, we get 
        \begin{align*}
        \StabX{X}{C}{f}&=\sum_{\tilde g_i} \varphi^{\oast}(\Rmat{\widetilde C'}{\widetilde C})_{\tilde g_i\tilde f_\sharp} \frac{1}{\beta_\sharp} j^{\oast} \varphi^{\oast}\left(\StabX{\widetilde X}{\widetilde C'}{\tilde g_i}\right)
        \\
        &=\sum_{\tilde g_i} \varphi^{\oast}(\Rmat{\widetilde C'}{\widetilde C})_{\tilde g_i\tilde f_\sharp} \frac{\beta_i}{\beta_\sharp}\StabX{X}{C'}{g}
        \\
        &=\sum_{g}\left(\sum_i\varphi^{\oast}(\Rmat{\widetilde C'}{\widetilde C})_{\tilde g_i\tilde f_\sharp} \frac{\beta_i}{\beta_\sharp}\right)\StabX{X}{C'}{g}.
    \end{align*}
\end{proof}

%%%%%%%%%%%%%%%%%%%%%%%%%%%%%%%%%%%
%%%%%%%%%%%%%%%%%%%%%%%%%%%%%%%%%%%

\section{NS5 Resolutions}
\label{Fusion of NS5 branes}

\subsection{NS5 resolutions for bow varieties}

%%%%%%%%%%%%%%%%%%%%%%%%%%%%%%%%%%%%%

Let $\D$ be a separated or co-separated brane diagram.
Let $\overline \D$ be the brane diagram obtained by replacing a single NS5 brane $\Zb$ of weight $\w=\w(\Zb)$ in $\D$ by a pair of consecutive NS5 branes $\Zb'$ and $\Zb''$ of weights $\w'=\w(\Zb')\geq 0$ and $\w''=\w(\Zb'')\geq 0$ such that $\w=\w'+\w''$.
We call $\overline \D$ an NS5 resolution of the brane diagram $\D$, and the branes $\Zb'$ and $\Zb''$ resolving branes. Notice that if $\D$ is separated (resp. co-separated), then $\overline \D$ is also separated (resp. co-separated).

Let now $\overline X$ and $X$ be the bow varieties associated with $\overline \D$ and $\D$, respectively. We say that $\overline X$ is a NS5 resolution of the bow variety $X$.
The main goal of this section is to build a correspondence 
\[
\overline X\leftarrow L\rightarrow X
\]
and exploit it to compare the stable envelopes of $\overline X$ and $X$. As for the D5 resolutions introduced in the previous section, we first work at the level of the spaces of representations $\overline{\MM}$ and $\MM$ and consider the additional actions of the gauge groups $\overline G$ and $G$.
By definition, the space $\overline{\MM}$ differs from $\MM$ by replacing the two-way part associated with the brane $\Zb$ with two consecutive two-way parts associated with $\Zb'$ and $\Zb''$:
\[
\begin{tikzcd}
    W_{\Zb_-}  \arrow[r, bend right, swap, "D"]& W_{\Zb_+} \arrow[l, bend right, swap, "C",  "\circ" marking]
\end{tikzcd}
\quad \Longrightarrow \quad 
\begin{tikzcd}
    W_{\Zb_-}=W_{\Zb'_-}  \arrow[r, bend right, swap, "D'"]&  W_{\Zb'_+}=W_{\Zb''_-} \arrow[r, bend right, swap, "D''"]\arrow[l, bend right, swap, "C'",  "\circ" marking] & W_{\Zb''_+} =W_{\Zb_+} \arrow[l, bend right, swap, "C''",  "\circ" marking]
\end{tikzcd}.
\]
As usual, the circles in the arrows indicate rescaling by $\h$.
We now introduce an ``in between'' space $\mathbb{L}$, defined as the space of representations obtained from $\MM$ by the replacement described below. If $\D$ is a separated diagram, we substitute
\begin{equation}
    \label{equation separated quiver modification for in-between space}
\begin{tikzcd}
    W_{\Zb_-}  \arrow[r, bend right, swap, "D"]& W_{\Zb_+} \arrow[l, bend right, swap, "C",  "\circ" marking]
\end{tikzcd}
\quad \Longrightarrow \quad 
\begin{tikzcd}
    W_{\Zb_-}=W_{\Zb'_-}  \arrow[rr, bend right, swap, "D"]&  W_{\Zb'_+}=W_{\Zb''_-} \arrow[l, bend right, swap, "C'"] & W_{\Zb''_+} =W_{\Zb_+}   \arrow[l, bend right, swap, "C''",  "\circ" marking]
\end{tikzcd}.
\end{equation}
The absence of the $\h$-action on $C'$ is not a typo.
Instead, if $\D$ is co-separated, we substitute 
\begin{equation}
    \label{equation co-separated quiver modification for in-between space}
\begin{tikzcd}
    W_{\Zb_-}  \arrow[r, bend right, swap, "D"]& W_{\Zb_+} \arrow[l, bend right, swap, "C",  "\circ" marking]
\end{tikzcd}
\quad \Longrightarrow \quad 
\begin{tikzcd}
    W_{\Zb_-}=W_{\Zb'_-} \arrow[r, bend right, swap, "D'"] &  W_{\Zb'_+}=W_{\Zb''_-}  \arrow[r, bend right, swap, "D''"] & W_{\Zb''_+} =W_{\Zb_+}   \arrow[ll, bend right, swap, "C",  "\circ" marking]
\end{tikzcd}.
\end{equation}
The asymmetry in this construction, namely the choice of ``resolving'' the map $C$ rather than $D$ or vice versa, is forced by our choice of stability condition. %As we will see in \ref{proposition NS5 resolution bow varieites}, t

Since the three-way parts of the quiver defining $\mathbb{L}$ are the same as those defining $\MM$ and $\overline{\MM}$, the torus $\At$ naturally acts on $\mathbb{L}$. We extend this to an action of $\At\times\Cs_{\h}\times \overline G$ by declaring that $\overline G$ acts on all the maps by conjugation and $\Cs_{\h}$ acts with weight one on the maps $C$ and $C'$ as prescribed by the circles in the diagrams above.

The three spaces discussed above are related by means of a diagram of the form
\begin{equation}
    \label{map prequotients NS5 resolution}
    \begin{tikzcd}
    \overline{\MM} & \mathbb{L} \arrow[l] \arrow[r] & \MM.
\end{tikzcd}
\end{equation}
If $\D$ is separated, the left pointing map supplements the assignments $D'=C''D$ and $D''=DC'$, while the right one supplements $C=C'C''$. If instead $\D$ is co-separated, the left map supplements the assignments $C'=CD''$ and $C''=D'C$, while the right one supplements $D=D''D'$.

\begin{remark}
\label{remark equivariance NS5 resolutions}
    In the co-separated case, both maps in \eqref{map prequotients NS5 resolution} are clearly $\overline G\times \At\times \Cs_{\h}$-equivariant. In the separated case instead, the left-pointing map in \eqref{map prequotients NS5 resolution} does not respect the action of $\Cs_{\h}$. However, the map does become equivariant if we prescribe an additional weight one $\Cs_{\h}$ action on the vertices left of the resolving brane $\Zb'$. But since $\overline G$ acts on all these vertices, all the quotient maps will be unaffected by this change of $\Cs_{\h}$ action, and hence will be $\Tt=\At\times \Cs_{\h}$-equivariant.
\end{remark}
%Although in the separated case the left-pointing map is not $\Cs_{\h}$-equivariant, the maps induced on the quotients by $\overline G$ 
%will be $\Cs_{\h}$-equivariant.

Next, we introduce a moment map $\mu_{\mathbb{L}}: \mathbb{L}\to \NN$ by forcing commutativity of
\[
\begin{tikzcd}
    \mathbb{L}\arrow[d]\arrow[r, "\mu_{\mathbb{L}}"] &  \NN\\
    \MM\arrow[ur, swap, "\mu"] &
\end{tikzcd}.
\]
Explicitly, in the separated (resp. co-separated) case, $\mu_{\mathbb{L}}$ is obtained from $\mu$ by replacing the map $C$ in \eqref{equation separated quiver modification for in-between space} with $C''C'$ (resp. $D$ in \eqref{equation co-separated quiver modification for in-between space} with $D'D''$). 

As for bow varieties, we say that a tuple $(a,b,A,B,C,D)\in\mathbb{L}$ is stable if all its three-way parts (which are unaffected by the modifications \eqref{equation separated quiver modification for in-between space} and \eqref{equation co-separated quiver modification for in-between space}) satisfy the conditions (S1), (S2), and if there exists no proper $(a,b,A,B,C,D)$-invariant graded subspace $S\subset \oplus_i W_i$ such that $\Image(A)\subset S$ and $A_{\Ab}$ induces isomorphisms $W_{\Ab+}/ S_{\Ab+}\to W_{\Ab-}/S_{\Ab-}$ for all $D5$ branes $\Ab$. We define 
\[
L:=\mu_{\mathbb{L}}^{-1}(0)^{s}/\overline G.
\]
\begin{proposition}
\label{proposition NS5 resolution bow varieites}
     The maps \eqref{map prequotients NS5 resolution} descend to morphisms 
     \begin{equation}
     \label{correspondence NS5 resolution}
         \begin{tikzcd}
       \overline X & L \arrow[l, swap, "j"] \arrow[r, "p"] & X.
       \end{tikzcd}
     \end{equation}
     Moreover, both maps are $\Tt$-equivariant, $j$ is a closed immersion, and $p$ is proper. Moreover, if $X$ is separated (resp. co-separated) then the fibers of $p$ are isomorphic to the Grassmannian $\Gr{\w'}{\w'+\w''}$ (resp. $\Gr{\w''}{\w'+\w''}$). 
     %Moreover, if either $\w'$ or $\w''$ is equal to zero, both $j$ and $\pi$ are isomorphisms. 
\end{proposition}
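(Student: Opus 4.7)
My plan is to reduce everything to a comparison of the prequotient spaces $\MM$, $\overline\MM$, $\mathbb L$ and then take the GIT quotients by the gauge group $\overline G$. First I will verify moment-map compatibility: by the defining equation of $\mu_{\mathbb L}$, the right-pointing map $\mathbb L\to\MM$ sends $\mu_{\mathbb L}^{-1}(0)$ into $\mu^{-1}(0)$; and a direct computation shows that the substitutions $D'=C''D,\ D''=DC'$ (separated case, dually in the co-separated one) turn the defining equations of $\overline\mu^{-1}(0)$ at the D3 vertices adjacent to the resolving branes into consequences of the equations defining $\mu_{\mathbb L}^{-1}(0)$, so $\mathbb L\to \overline\MM$ also maps $\mu_{\mathbb L}^{-1}(0)$ into $\overline\mu^{-1}(0)$.

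Next I will check stability. The condition I imposed on $\mathbb L$ is phrased precisely so that a graded subspace is $(a,b,A,B,C',C'',D)$-invariant iff it is $(a,b,A,B,C,D)$-invariant (using that $S$ is $C=C'C''$-invariant iff it is invariant under both $C'$ and $C''$), and iff it is invariant under the image data in $\overline\MM$. Together with the $\overline G$-equivariance of \eqref{map prequotients NS5 resolution}---incorporating the $\Cs_{\h}$-twist of Remark~\ref{remark equivariance NS5 resolutions} in the separated case so as to render the left-pointing map $\Tt$-equivariant---this lets us pass to the $\overline G$-quotients and produce the promised $\Tt$-equivariant morphisms $j:L\to\overline X$ and $p:L\to X$.

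For $j$ being a closed immersion, note that $\mathbb L\to \overline\MM$ is the graph of the polynomial assignment $(D,C',C'')\mapsto (D'=C''D,\ D''=DC')$, hence its image is the closed subscheme of $\overline\MM$ cut out by $D'=C''D$ and $D''=DC'$. On the stable locus, the map $C''$ (dually, $D''$ in the co-separated case) is surjective---otherwise its cokernel would give a destabilizing graded subspace on which $C'$ could be modified without affecting the composite---hence $D$ (resp.\ $C$) is recovered uniquely from the image data, making $\mathbb L^s\hookrightarrow \overline\MM^s$ a genuine closed immersion. Since $\overline G$ acts freely on both stable loci, this descends to a closed immersion of quotients.

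The main technical step, which I expect to be the principal obstacle, is the analysis of the fiber of $p:L\to X$. Fix a stable representative $(a,b,A,B,C,D)$ of a point of $X$ in the separated case; its fiber in $L$ parametrizes factorizations $C=C'C''$ through an intermediate vector space $W_{\Zb'_+}$ of dimension $d_{\Zb^-}+\w'$, modulo the residual $\GL(W_{\Zb'_+})$-action. Stability forces $C''$ to be surjective, hence $\ker C''\subseteq \ker C$; since $\dim\ker C=\w'+\w''$ and $\dim\ker C''=\w''$, such factorizations are classified by $\w''$-dimensional subspaces of $\ker C$, giving $\Gr{\w''}{\w'+\w''}\cong \Gr{\w'}{\w'+\w''}$ as required (the map $C'$ is then uniquely determined by $C$ via any section of $C''$, and $D',D''$ are forced by $j$). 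The co-separated case is entirely analogous with $D$ playing the role of $C$, yielding $\Gr{\w''}{\w'+\w''}$. Finally, properness of $p$ follows because $L$ can be realized as a closed subscheme of the relative Grassmann bundle over $X$ classifying subspaces $\ker C''\subseteq \ker C_{\mathrm{univ}}$ (using that $C$ has constant rank on the stable locus so that $\ker C_{\mathrm{univ}}$ is a vector bundle on $X$), whose structure map is proper.
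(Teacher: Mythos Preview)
Your approach is genuinely different from the paper's and, with one correctable slip, essentially sound.

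\textbf{Comparison.} The paper does not argue directly at the level of prequotients for the closed immersion and properness. Instead it first reduces to the case where all D5 local charges equal~$1$, i.e.\ $X\cong T^*\Fl$, by embedding an arbitrary separated $X$ into its maximal D5 resolution $\widetilde X$ (Corollary~\ref{corollary any bow can be embedded in a flag variety}) and observing that the NS5 correspondence fits into Cartesian squares over $\widetilde X\leftarrow \widetilde L\to \overline{\widetilde X}$. For $T^*\Fl$ the map $p$ is then the pullback of the forgetful map between partial flag varieties (hence proper with Grassmannian fibers), and $j$ is identified with the base change of an inclusion of nilpotent orbit closures $Z_1\hookrightarrow Z_2$ via Springer theory. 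Your route is more elementary and self-contained: you analyse the fiber directly as factorizations $C=C'C''$ and realise $L$ as a relative Grassmannian in $\ker C$, which immediately gives properness and the fiber description. What the paper's detour buys is the Cartesian diagrams \eqref{diagram proof maps NS5 resolutions} and \eqref{cartesian diagram NS5 res. flag variety case}, which are reused in Corollary~\ref{Affine NS5 resolutions}; your argument does not produce these as a by-product.

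\textbf{A slip to fix.} In your closed-immersion step you write that $C''$ is surjective on the stable locus and ``hence $D$ is recovered uniquely from the image data''. Surjectivity of $C''$ is correct, but it does \emph{not} let you recover $D$: the relation $D'=C''D$ only determines $D$ modulo $\ker C''$, which is non-trivial. What you need is surjectivity of $C'$ (equally forced by stability), so that $D''=DC'$ determines $D$ on all of $W_{\Zb_-}$. With that correction, your description of the image of $\mathbb L$ in $\overline\MM$ as a closed subscheme (cut out on the open locus $\{C'\text{ surjective}\}\supset\overline\MM^s$ by $D''\!\mid_{\ker C'}=0$ and $D'=C''D$ for the unique $D$) goes through, and the free $\overline G$-action then descends the closed immersion to the quotients. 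Also, your ``iff'' about invariance under $C',C''$ versus $C=C'C''$ is not literally an equivalence; only the implication you actually use (a destabilizing subspace for $\MM$ extends to one for $\mathbb L$ by setting $S_{W_{\Zb'_+}}=C''(S_{W_{\Zb_+}})$, and conversely a destabilizing subspace for $\mathbb L$ restricts to one for $\MM$) holds, and that is all you need.
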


\begin{proof}
    Firstly, we show that the diagram
    \[
    \begin{tikzcd}
    \bar\mu^{-1}(0)^{s} \arrow[d, hookrightarrow] & \mu^{-1}_{\mathbb{L}}(0)^{s} \arrow[l] \arrow[r] \arrow[d, hookrightarrow] & \mu^{-1}(0)^{s} \arrow[d, hookrightarrow]\\
    \bar\mu^{-1}(0) & \mu^{-1}_{\mathbb{L}}(0) \arrow[l] \arrow[r] & \mu^{-1}(0)
    \end{tikzcd}
    \]
    is well defined (and hence commutative).
    Well definiteness of $\mu^{-1}_{\mathbb{L}}(0)\to \mu^{-1}(0)$ follows immediately from the definition of $\mu_{\mathbb{L}}$, while the one of $\mu^{-1}_{\mathbb{L}}(0)\to \bar\mu^{-1}(0)$ follows from an explicit computation. Similarly, the well-definiteness of the top line easily follows from the definition of stability on $\mathbb{L}$.
    Taking quotients of the top line by $\overline G$ and $G$, we get the sought-after maps $j$ and $p$.
    
    %Consider now the map $j$. Assume first that $\D$ is separated. Then stability for separated bow varieties implies that all the maps $C$ are surjective. This implies that $\mathbb{L}^{s}\to \MM_{\overline\D}^{s}$ is an immersion, and hence that also $j$ is an immersion. A similar argument works if $\D$ is co-separated, with the observation that now all the maps $D$ are surjective.

    We now show that $p$ is proper. We only prove it in the separated case since the co-separated one is entirely analogous. 
    %Let $nF$ be the number of D5 branes. Notice that adding an NS5 brane with local charge equal to zero or $n$ on induces an isomorphism. The local charge zero case is trivial, while the 
    Assume first that all the D5 local charges of $X$ are equal to one, so that the latter can be expressed via the brane diagram $X=\ttt{\fs$k_1$\fs\dots \fs$k_m$\fs$k$\bs$k-1$\bs\dots\bs1\bs}$.
    Here, $\w=k_{i}-k_{i-1}$ for some $i\geq 0$. The NS5 resolution $\overline X$ associated with the splitting $\w=\w'+\w''$ corresponds to the diagram $\ttt{\fs$k_1$\fs\dots\fs$k'_i$\fs$k_i$\fs\dots \fs$k_m$\fs$k$\bs$k-1$\bs\dots\bs1\bs}$, where $k'_i=k_{i-1}+\w'$. Identifying $X$ and $\overline X$ with the varieties $T^*\Fl(k_1, \dots, k_i,\dots,  k_m;k)$ and $T^*\Fl(k_1, \dots, k'_i, k_i,\dots k_m;k)$, respectively, one sees that the map $p: L\to X$ fits in the pullback diagram 
    \[
        \begin{tikzcd}
       L\arrow[d] \arrow[r, "p"] & T^*\Fl(k_1, \dots, k_i,\dots k_m;k)\arrow[d]\\
       \Fl(k_1, \dots, k'_i, k_i,\dots k_m;k) \arrow[r, "p_0"] & \Fl(k_1, \dots, k_i,\dots k_m;k)
    \end{tikzcd}
    \]
    where the right vertical map is the projection to the base and $p_0$ is the canonical forgetful map between the two flag varieties. Since the latter is proper and its fiber is isomorphic to $\Gr{\w'}{\w'+\w''}$, the same holds for $p$. This proves the claim when all the D5 local charges of $X$ are one. 

    Next, assume that none of the D5 local charges is zero and let $X\hookrightarrow \widetilde X$ be the maximal resolution of the D5 branes. By definition, all the D5 branes of $\widetilde X$ have charge one. Notice that the NS5 parts of the bow diagrams $X$ and $\widetilde X$ coincide. Let $\overline{\widetilde X}$ be the resolution of the brane $\Zb$ in the bow diagram of $\widetilde X$. Then the maps $\overline X\leftarrow L \to X$ fit in the pullback\footnote{Informally speaking, the two diagrams are Cartesian because the NS5 resolution is uniquely determined by a modification of the NS5 part of the brane diagram (and hence of the associated part of the quiver) that is independent of the D5 branes.} diagrams  
    \begin{equation}
        \label{diagram proof maps NS5 resolutions}
        \begin{tikzcd}
        \overline{X}\arrow[d, hookrightarrow] & L \arrow[d, hookrightarrow] \arrow[l, swap, "j"] \arrow[r, "p"] & X\arrow[d, hookrightarrow] \\
        \overline{\widetilde{X}} &\widetilde{L} \arrow[l, swap, "\tilde j"] \arrow[r, "\tilde p"] & \widetilde{X}.
    \end{tikzcd}
    \end{equation}
    Since we already know that $\tilde p$ is proper with fibers isomorphic to $\Gr{\w'}{\w'+\w''}$, the same holds for $p$. It remains to consider the case when at least one of the $n$ D5 branes have charge zero. %Let $n$ be the number of D5 branes. 
    Adding an extra NS5 brane with charge $n$ in the middle of the separated bow diagram (i.e. on the right of the last NS5 brane) induces an isomorphism of bow varieties and increases all the D5 charges by one---see Section~\ref{Sec:0charge}.
    %Indeed, one can easily check this by switching to the co-separated setting via Hanany-Witten, where the isomorphism becomes apparent. 
    By construction, this isomorphism is compatible with $p$ (and also with $j$), so we are done.

    We now prove that $j$ is a closed immersion. By the argument above, it suffices to assume that all the D5 local charges are strictly positive. Moreover, since the left square in diagram \eqref{diagram proof maps NS5 resolutions} is Cartesian, it suffices to assume that all the D5 charges are one and hence identify $X$ and $\overline X$ with $T^*\Fl(k_1, \dots, k_i,\dots,  k_m;k)$ and $T^*\Fl(k_1, \dots, k'_i, k_i,\dots k_m;k)$, respectively. 
    Seen as the Springer resolution of a nilpotent orbit closure, $T^*\Fl(k_1,\dots,k_m;k)$ admits the following description:
    \[
    T^*\Fl(k_1,\dots,k_m;k)=\Set{(F^\bullet, \phi)}{\phi(F^{l})\subset F^{l-1}\quad  \forall l=1,\dots,m }\subset \Fl(k_1,\dots,k_m;k)\times \gl_{k}.
    \]
    Here $F^\bullet= \lbrace {0}\subset F^{1}\subseteq F^{2}\subseteq\dots \subseteq F^{m}\subseteq \C^{k}\rbrace $ is a flag in $\C^{k}$. Explicitly, the identification between $T^*\Fl(k_1,\dots,k_m;k)$ and the %separated 
    bow variety $X=\ttt{\fs$k_1$\fs\dots \fs$k_m$\fs$k$\bs$k-1$\bs\dots\bs1\bs}$, whose brane diagram has the form\footnote{Notice that by our choice of stability conditions, our partial flag varieties $\Fl$ parametrize quotients, not injective maps.}
    \[
    \begin{tikzcd}
        \C^{k_1} \arrow[r, bend right, swap, "D_1"] & \C^{k_2} \arrow[l, bend right,  swap, two heads, "C_1"] \arrow[r, bend right,  swap, "D_2"] & \C^{k_3} \arrow[l, bend right,  swap,  two heads, "C_2"] \arrow[r, no head, dotted]&  \C^{k_{m-1}} \arrow[r, bend right, swap, "D_{m-1}"] & \C^{k_m} \arrow[l, bend right,  swap, two heads, "C_{m-1}"] \arrow[r, bend right,  swap, "D_m"] & \C^{k} \arrow[l, bend right,  swap,  two heads, "C_m"] 
    \end{tikzcd}
    + \text{ three way parts}
    \]
    is given by the assignments $F^{m+1-k}=\ker(C_k\circ C_{k+1}\circ\dots \circ C_m\circ g^{-1})$ and $\phi= g\circ D_m \circ C_m\circ g^{-1}$. Here, $g\in \GL(k)$ can be expressed in terms of the maps $A$ and $a$ in the three-way part, and $\GL(k)$ acts on it from the left. Using the diagrammatic description of $L$ from \eqref{equation separated quiver modification for in-between space}, it is easy to check that the diagram 
    \[
    \begin{tikzcd}
        T^*\Fl \arrow[r, hookrightarrow] & \Fl\times \gl_k \arrow[dr]\\
        L \arrow[r, hookrightarrow]\arrow[u, "p"]\arrow[d, swap, "j"] & \overline \Fl\times \gl_k \arrow[u, "p_0\times id"]\arrow[r] & \gl_k \\
        T^*\overline \Fl \arrow[r, hookrightarrow] & \overline \Fl\times \gl_k \arrow[u, equal]\arrow[ur]
    \end{tikzcd}
    \]
    is commutative, and the upper square is Cartesian. %Indeed, the variety $L$ can be seen as those tuples $(F^\bullet \phi)\in T^*\overline \Fl$ such that $\phi$ ``ski
    By Springer theory, the compositions $T^*\Fl\to \gl_k$ and $T^*\overline \Fl\to \gl_k$ are onto two nilpotent orbit closures $Z_1$ and $Z_2$. By surjectivity of $p$ and commutativity of the diagram above, it follows that $Z_1\subset Z_2$. Thus we have a well-defined diagram
    \begin{equation}
    \label{cartesian diagram NS5 res. flag variety case}
        \begin{tikzcd}
        L\arrow[r, "j"]\arrow[d] & T^*\overline \Fl\arrow[d]\\
        Z_1\arrow[r, hookrightarrow] & Z_2
    \end{tikzcd}
    \end{equation}
    which, as a consequence of the diagram above, is Cartesian. But $Z_1$ is closed in $\gl_k$ and hence also in $Z_2$; thus $Z_1\hookrightarrow Z_2$ is a closed immersion, and the same is true for $j$.
    
    Finally, equivariance follows from Remark \ref{remark equivariance NS5 resolutions}.  
    %we check equivariance. In the co-separated case, both maps in \eqref{map prequotients NS5 resolution} are $\overline G\times \At\times \Cs_{\h}$-equivariant, hence the induced maps \eqref{correspondence NS5 resolution} are $\At\times \Cs_{\h}$-equivariant. In the separated case instead, the right pointing map in \eqref{map prequotients NS5 resolution} does not respect the action of $\Cs_{\h}$. However, the maps do become equivariant if we prescribe an additional weight one $\Cs_{\h}$ action on the vertices left of the resolving branes $\Zb''$. But since $\overline G$ acts on all these vertices, the maps \eqref{correspondence NS5 resolution} are unaffected by this change of $\Cs_{\h}$ action, and hence are $\At\times \Cs_{\h}$-equivariant.
    \end{proof}
    Set $L_0=\Spec(\C[\mu_\mathbb{L}^{-1}(0)]^{\overline G})$ and consider the canonical map $L\to L_0$. In analogy with Corollary \ref{corollary enhancement D5 resolution with affine quotients}, the maps $j$ and $p$ nicely descend to the affine bow varieties:
    \begin{corollary}
    \label{Affine NS5 resolutions}
        There exists a commutative diagram 
        \[
        \begin{tikzcd}
            \overline X \arrow[d, swap,  "\bar\pi"]& L\arrow[d] \arrow[l, swap, "j"] \arrow[r, "p"] & X \arrow[d, "\pi"] \\
            \overline X_0 &  L_0 \arrow[l] \arrow[r, equal] & X_0.
        \end{tikzcd}
        \]
        Moreover, the left square is Cartesian.
    \end{corollary}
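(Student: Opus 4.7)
The plan is to verify three things: commutativity of the diagram, the equality $L_0=X_0$, and the Cartesian property of the left square. Commutativity follows directly from the constructions: the horizontal arrows in the top row are the $\overline G$-equivariant maps \eqref{map prequotients NS5 resolution} descended to GIT quotients, and the vertical arrows are the canonical projective morphisms from stable to affine quotients. Compatibility of the affine quotients under $\overline G=G\times \GL(W_{\Zb'_+})\to G$ (in the separated case; analogous in the co-separated one) yields commutativity of both squares.

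For the identification $L_0=X_0$, the key observation is that the right-pointing map $\mathbb{L}\to \MM$ is equivariant for the projection $\overline G\to G$, and the extra factor $\GL(W_{\Zb'_+})$ acts only on the pair $(C',C'')$, with fiber over $(D,C)\in \MM$ given by the affine variety of factorizations $C=C'C''$. The only $\GL(W_{\Zb'_+})$-invariant polynomial built out of such a factorization is the product itself, so $\C[\mathbb{L}]^{\GL(W_{\Zb'_+})}\cong \C[\MM]$; taking further $G$-invariants and restricting to the zero locus of the (compatible) moment maps yields $L_0\cong X_0$. As in the proof of Proposition~\ref{proposition NS5 resolution bow varieites}, I would first verify this when all D5 local charges equal one---where the map reduces to the forgetful morphism between (affinizations of cotangent bundles of) partial flag varieties---and then extend to arbitrary bow varieties via the maximal D5 resolution diagram \eqref{diagram proof maps NS5 resolutions}.

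For the Cartesian property of the left square, I would invoke the following general GIT principle: given a closed $\overline G$-equivariant embedding $Y\hookrightarrow Z$ of affine $\overline G$-varieties for which $Y^s=Y\cap Z^s$, the square
\[
\begin{tikzcd}
Y^s/\overline G \arrow[r,hookrightarrow] \arrow[d] & Z^s/\overline G \arrow[d]\\
\Spec \C[Y]^{\overline G} \arrow[r] & \Spec \C[Z]^{\overline G}
\end{tikzcd}
\]
is Cartesian. Applied to $Y=\mu_\mathbb{L}^{-1}(0)\hookrightarrow \bar\mu^{-1}(0)=Z$, this yields the left square of the statement. The main obstacle I anticipate is the verification that the stability condition on $\mathbb{L}$ (as explicitly defined in the text) coincides with the restriction along $j$ of GIT stability on $\overline{\MM}$. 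Translating invariant-subspace conditions on $\overline{\MM}$ (where $D'$ and $D''$ are independent maps) back to $\mathbb{L}$ (where $D'=C''D$ and $D''=DC'$ are forced) is a direct linear-algebraic check, analogous to---but slightly more delicate than---the stability analysis carried out in the proof of Proposition~\ref{proposition embedding resolution D5 branes}. Once this compatibility is established, both $L_0=X_0$ and the Cartesian property follow.
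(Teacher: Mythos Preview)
Your argument for $L_0\cong X_0$ is essentially correct: the fiber of $\mathbb{L}\to\MM$ is the space of factorizations $C=C'C''$ (resp.\ $D=D''D'$), and the first fundamental theorem for $\GL(W_{\Zb'_+})$ says the invariants are generated by the product. This is precisely what the paper's citation of Kraft--Procesi encodes, so no reduction to the flag case is needed here.

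The genuine gap is your ``general GIT principle'' for the Cartesian square. As stated it is false: take $Z=\mathbb{A}^2$ with $\mathbb{G}_m$ acting by weights $(1,-1)$ and stability character $\chi(t)=t$, and $Y=\{x=0\}$. Then $Y^s=Y\cap Z^s=\emptyset$, $\Spec\C[Y]^{\mathbb{G}_m}=\{0\}\subset\mathbb{A}^1=\Spec\C[Z]^{\mathbb{G}_m}$, yet the fiber of $Z^s/\mathbb{G}_m\to\mathbb{A}^1$ over $0$ is a point (the orbit of $(1,0)$), not empty. The missing hypothesis is that $Y$ be \emph{saturated} for the affine quotient, i.e.\ that no stable $\overline G$-orbit in $\bar\mu^{-1}(0)\setminus\mu_{\mathbb{L}}^{-1}(0)$ maps into $L_0$ under $\bar\pi$. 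This is exactly what must be proved, and your stability comparison $Y^s=Y\cap Z^s$ does not imply it: the fibers of $\bar\pi:\overline X\to\overline X_0$ are positive-dimensional over the singular locus and may well contain points outside $L$. The paper circumvents this by reducing, via the maximal D5 resolution and Corollary~\ref{corollary enhancement D5 resolution with affine quotients}, to the case $X=T^*\Fl$, where the Cartesian property was already established concretely in~\eqref{cartesian diagram NS5 res. flag variety case} using the Springer resolution and the inclusion of nilpotent orbit closures $Z_1\hookrightarrow Z_2$.
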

    \begin{proof}
    Since $p$ and $j$ are defined at the level of quiver representations, they induce maps $\overline X_0\leftarrow L_0\to X_0$ making the two squares commute. The fact that $L_0\to X_0$ is an isomorphism follows from \cite[Section 2]{KraftProcesi}.
    It remains to check that the left square is Cartesian.
    As in the proof of Proposition \ref{proposition NS5 resolution bow varieites}, we can assume that all the D5 local charges of $X$ are strictly positive. By resolving all D5 branes like in diagram \eqref{diagram proof maps NS5 resolutions} and applying Corollary \ref{corollary enhancement D5 resolution with affine quotients}, we can reduce ourselves to the case $X=T^*\Fl$. In this case, the nilpotent orbit closure $Z$ resolved by the Springer resolution $T^*\Fl\to Z$ is just the image of $\pi: X=T^*\Fl\to X_0$\footnote {Although $X_0$ is itself a nilpotent orbit closure, the map $\pi:X\to X_0$ may not be surjective \cite{shmelkin2009remarks}. Hence, $\pi(X)\subsetneq X_0$ in general.}; hence the claim follows from the Cartesian diagram \eqref{cartesian diagram NS5 res. flag variety case}.
    \end{proof}

%%%%%%%%%%%%%%%%%%%%%%%%%%%%%%%%

\subsection{The equivariant geometry of NS5 resolutions}

In Proposition \ref{proposition NS5 resolution bow varieites} we identified the fibers of the map $\pi:L \to X$ with Grassmannians. We begin this section by refining our analysis of these fibers. For a given separated bow variety $X$ and NS5 resolution $\overline X$ such that $\w=\w'+\w''$, we set $Y:=\ttt{\fs$\w'$\fs$\w'+ \w''$\bs\dots \bs 3\bs2\bs1\bs}$. If instead $X$ is co-separated, we set $Y=\ttt{\bs1\bs2\bs3\bs\dots\bs$\w'+\w''${\fs}$\w''${\fs}}$. The following Lemma can be thought of as the NS5 version of Lemma \ref{lemma Fixed components D5 res}.

\begin{lemma} $ $
\label{lemma points in lagrangian variety Z resolution}
\begin{enumerate}
    \item Let $f\in X^{\At}$. The $\Cs_{\h}$-fixed locus $Y^{\h}$ of the bow variety $Y$ fits in the following pullback diagram:
    \begin{equation*}
    \label{fiber of NS5 resolution fixed points}
        \begin{tikzcd}
            Y^{\h}\arrow[r, hookrightarrow]\arrow[d] & L\arrow[d, "p"]\\
            \lbrace f\rbrace \arrow[r, hookrightarrow] &X.
        \end{tikzcd}
    \end{equation*}
    %\item (Wrong!)All the $A$-fixed points of $\overline X$ are contained in $L$, i.e. $L^A=\overline X^A$. 
    \item Any fixed point $f\in X^{\At}$ admits exactly $\binom{\w'+\w''}{\w''}$ resolutions, i.e. fixed points $\bar f \in \overline X^{\At}\cap L$ such that $p(\bar{f})=f$.
    \item For any fixed point $f\in X^{\At}$, we have $T\overline X|_{Y^{\hbar}}-TL|_{Y^{\hbar}}=TY|_{Y^{\hbar}}-TY^{\h}$ in $K_{\Tt}(Y^{\h})$.
\end{enumerate}
\end{lemma}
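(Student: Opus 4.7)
My plan is to read all three claims off the explicit quiver-theoretic description of the correspondence $\overline X \xleftarrow{j} L \xrightarrow{p} X$ at a torus-fixed point $f \in X^{\At}$, using the Grassmannian fibers of $p$ established in Proposition~\ref{proposition NS5 resolution bow varieites}. For part (1), I would unpack the definition of $\mathbb{L}$ in the separated case: stability at $f$ forces $C \colon W_{\Zb_+} \to W_{\Zb_-}$ to be surjective, and a lift of $f$ to $L$ is the same three-way and framing data augmented with a factorization $C = C' C''$ satisfying the stability condition of $\mathbb{L}$. For such a surjective $C$, the factorization is determined up to $\overline G$-action by the $\w''$-dimensional subspace $\ker(C'') \subset \ker(C)$, so $p^{-1}(f) \cong \Gr{\w''}{\w} \cong \Gr{\w'}{\w' + \w''}$. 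On the $Y$-side, $Y = \ttt{\fs\w'\fs\w'+\w''\bs \dots \bs 1\bs}$ is the bow presentation of $T^{*}\Gr{\w'}{\w' + \w''}$ by Section~\ref{sec:quivers}, so $Y^{\h}$ is its zero section. To promote this bijection to the claimed $\At$-equivariant pullback square, I would match $\At$-weights: the weights of $\ker(C)$ at $f$ are indexed by the $\w$ ties attached to $\Zb$ in the tie diagram of $f$, with weight equal to the equivariant parameter of the corresponding D5 brane (twisted by $\h$-factors read off from the butterfly diagram of \cite[\S 4.3]{rimanyi2020bow}); these are in canonical bijection with the framing D5 branes of $Y$. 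The co-separated case is symmetric, using the analogous $Y$ and the factorization $D = D'' D'$.

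Part (2) then follows as an immediate combinatorial consequence: the residual $\At$-action on $\Gr{\w'}{\w' + \w''}$ has distinct weights on the $\w' + \w''$ coordinate lines, so its fixed points are the $\binom{\w' + \w''}{\w''}$ coordinate subspaces. For part (3), I would interpret the identity as an equality of normal bundle classes: the left-hand side is $N_{L/\overline X}|_{Y^{\h}}$ and the right-hand side is $N_{Y^{\h}/Y}$. The geometric route is to observe that around the fiber $Y^{\h} = p^{-1}(f)$, the map $p$ is a trivial $Y^{\h}$-bundle and $j$ exhibits $\overline X$ locally as a trivial $Y$-bundle over a neighborhood of $f$; hence $TL|_{Y^{\h}} \equiv TY^{\h} + p^{*}T_{f}X$ and $T\overline X|_{Y^{\h}} \equiv TY|_{Y^{\h}} + p^{*}T_{f}X$ in $K_{\Tt}(Y^{\h})$, and subtracting cancels the horizontal $p^{*}T_{f}X$. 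Alternatively, both tangent classes admit standard $\Hom$-and-moment-map descriptions from the defining quivers; those contributions agree on the three-way parts and framings, leaving only the two-way piece being resolved, which is precisely the contribution distinguishing $TY$ from $TY^{\h}$.

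The main obstacle I anticipate is the $\At$-equivariant weight matching in part (1): the weights on $\ker(C)$ at $f$ carry $\h$-twists whose combinatorics are governed by the position of $\Zb$ relative to the D5 branes of $X$ in the brane diagram, and these must reconcile with the canonical equivariant structure of $Y$ viewed as a bow variety in its own right. Once this identification is set up correctly, parts (2) and (3) follow essentially formally.
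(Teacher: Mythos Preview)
Your treatment of parts (1) and (2) is essentially the paper's: identify $p^{-1}(f)$ with a Grassmannian via the factorization $C=C'C''$, recognize $Y^{\h}$ as the zero section of $T^*\Gr{\w'}{\w'+\w''}$, and count coordinate subspaces for the fixed points. The paper is terser but follows the same route, and it too defers the precise $\h$-twist bookkeeping (your ``main obstacle'') to a separate remark (Remark~\ref{remark torus action on fibers of NS5 resolutions}).

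For part (3), however, your primary ``geometric route'' has a real gap. The step ``$j$ exhibits $\overline X$ locally as a trivial $Y$-bundle over a neighborhood of $f$'' is not justified by anything established so far: there is no map $\overline X \to X$ extending $p$, and neither the Cartesian square of Corollary~\ref{Affine NS5 resolutions} nor the description of $j$ gives such a local product structure. Worse, the resulting claim $T\overline X|_{Y^{\h}} = TY|_{Y^{\h}} + p^*T_fX$ is, after subtracting the fiber-sequence identity $TL|_{Y^{\h}} = TY^{\h} + p^*T_fX$, literally equivalent to $N_{L/\overline X}|_{Y^{\h}} = N_{Y^{\h}/Y}$, which is the statement to be proved. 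So the argument is circular unless you supply an independent reason for the local trivialization.

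Your alternative---direct quiver computation---is the correct fix, and is close to what the paper does. The paper sharpens it with one extra step you do not mention: rather than matching $T\overline X - TL$ against $TY - TY^{\h}$ directly, it first uses the symplectic structure of $Y$ to rewrite the right-hand side as $\h(TY^{\h})^\vee = \h(TL - p^*TX)^\vee$, reducing the claim to the identity $T\overline X - TL = \h(TL - p^*TX)^\vee$ on $Y^{\h}$. This is then checked by an explicit arrow-by-arrow comparison of the two-way pieces, with the $\h^{-1}$-shift on $\zeta_1$ from Remark~\ref{remark equivariance NS5 resolutions} entering crucially. Your sketch ``leaving only the two-way piece being resolved'' is right in spirit, but you should expect the verification to require tracking those $\h$-weights carefully rather than cancelling formally.
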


%Notice that since $\pi:L\to X$ is a fibration by Grassmannians, all its fibers are isomorphic to $Y^{\Cs_{\h}}$. However, this isomoprhism is canonical only at fixed points.
\begin{proof}
     The first point follows from the quiver descriptions of $L$, $X$, and of the fixed points $f\in X^{\At}$ in \cite[Section 4.3]{rimanyi2020bow}.  Alternatively, it follows from Proposition \ref{proposition NS5 resolution bow varieites} together with the fact that $Y$ is exactly the bow variety description of $T^*\Gr{\w'}{ \w'+\w''}$ (or $T^*\Gr{\w''}{ \w'+\w''}$ in the coseparated case), so its $\Cs_{\hbar}$ fixed locus is the sero section.
     
     Consider the Grassmannian fibration $p:L\to X$. Since $p$ is equivariant, the fiber over an arbitrary $\At$-fixed point $f\in X^{\At}$ is preserved by the $\At$-action and a subtorus of rank $\w'+\w''$ acts on it non-trivially. Under the identification $p^{-1}(f)= \Gr{\w'}{\w'+\w''}$ (or $p^{-1}(f)= \Gr{\w''}{\w'+\w''}$ in the co-separated case), this action coincides with the standard action of the maximal torus of $\GL(\w'+\w'')$ on the framing. As a consequence, the number of fixed points in $L$ over $f\in X^{\At}$ is equal to $\binom{\w'+\w''}{\w''}$.
     
     We now prove the last point. All the computations will be implicitly in the ring $K_{\Tt}(Y^{\h})$. By the first point of the lemma, we have $TY^{\h}=TL-TX$. Since the symplectic form of $Y$ is rescaled by $\h$, we have $TY-TY^{\h}=\h(TY^{\h})^\vee$, hence $TY-TY^\hbar= \hbar(TL-TX)^\vee$. Thus, it suffices to prove that 
    \begin{equation}
    \label{equation for corollaries NS5 resolutions}
        T\overline X-TL= \hbar(TL-TX)^\vee.
    \end{equation}
    Graphically, we have 
    \[
    TL-TX=\left(
    \begin{tikzcd}
    \zeta_1  \arrow[rr, bend right]&  \zeta_2 \arrow[l, bend right] & \zeta_3   \arrow[l, bend right, swap, "\circ" marking]
    \end{tikzcd}
    -
    \begin{tikzcd}
    \zeta_2  \arrow[loop left]
    \end{tikzcd}
    \right)
    -
    \begin{tikzcd}
    \zeta_1  \arrow[r, bend right, swap]& \zeta_3 \arrow[l, bend right, swap, "\circ" marking]
    \end{tikzcd}
    \]
    and 
    \[
    T\overline X-TL=
    \left(
    \begin{tikzcd}
    \h^{-1}\zeta_1  \arrow[r, bend right]&  \zeta_2  \arrow[r, bend right, swap, ] \arrow[l, bend right, "\circ" marking] & \zeta_3   \arrow[l, bend right, swap, "\circ" marking]
    \end{tikzcd}
    -
    \begin{tikzcd}
    \zeta_2  \arrow[loop left, "\circ" marking]\arrow[loop right]
    \end{tikzcd}
    \right)
    -
    \left(
    \begin{tikzcd}
    \zeta_1  \arrow[rr, bend right]&  \zeta_2 \arrow[l, bend right] & \zeta_3   \arrow[l, bend right, swap, "\circ" marking]
    \end{tikzcd}
    -
    \begin{tikzcd}
    \zeta_2  \arrow[loop left]
    \end{tikzcd}
    \right).
    \]
    The shift of $\zeta_1$ by $\h^{-1}$ follows from Remark \ref{remark equivariance NS5 resolutions}. Recalling that the circles stand for action by $\h$ and that $(-)^\vee$ reverses the arrows, checking \eqref{equation for corollaries NS5 resolutions} becomes a straightforward computation.
\end{proof}

\begin{remark}
\label{remark torus action on fibers of NS5 resolutions}
    The action of $\Tt$ on $Y^{\h}$ induced by diagram \eqref{fiber of NS5 resolution fixed points} is a twist of the standard action on $Y$ seen as a bow variety. Indeed, if $X$ is separated (resp. co-separated), then $\Tt$ acts on the $i$-th brane of $Y$ with weight equal to $a_i h^{-\gamma_i}$ (resp. $a_i h^{\gamma_i}(f)$), where $\gamma_i(f)$ is equal to the number of ties in the tie diagram of $f\in X^{\At}$ that are connected to the NS5 branes left to the resolved NS5 brane $\Zb$. This again follows from the explicit description of $f\in X^{\At}$ given in \cite[Section 4.3]{rimanyi2020bow}.
\end{remark}

\begin{remark}
\label{remark resolution of fixed points NS5 resolutions}
By part (2) of Lemma \ref{lemma points in lagrangian variety Z resolution}, the resolutions of $f\in X^A$ are in one-to-one correspondence with fixed points in $Y$. In terms of tie diagrams, the correspondence sends the tie diagram of $\bar f\in \overline X$ to the tie diagram of $Y=\ttt{\fs$\w'$\fs$\w'+ \w''$\bs\dots \bs 3\bs2\bs1\bs}$ (resp. $Y=\ttt{\bs1\bs2\bs3\bs\dots\bs$\w'+\w''${\fs}$\w''${\fs}}$) obtained by erasing all the branes and ties not connected to $\Zb'$ or $\Zb''$. By slightly abusing notation, we will still denote the resulting fixed point in  $Y$ by $\bar f$.
\end{remark}

In the next lemma, we compare the full attracting sets of $X$, $L$ and $\overline X$.

\begin{lemma}
\label{lemma supports NS5 resolutions}
Let $f\in X^{\At}$ and let $\bar f\in \overline X^{\At}$ be any resolution of $f$. Then
\begin{enumerate}
    \item 
    \[
        p\left(\Attfull{C}^{\overline X}(\bar f)\cap (L\times \bar f)\right)\subseteq \Attfull{C}^{X}(f).
    \]
    \item Let $\bar f'\in \overline X^{\At}$ and assume that $\bar f'\in \Attfull{C}^{\overline X}(\bar f)$. Then $\bar f'\in L^{\At}\subseteq \overline X^{\At}$.
\end{enumerate}
\end{lemma}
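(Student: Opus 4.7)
The plan is to exploit Corollary~\ref{Affine NS5 resolutions} in order to identify $L$ with the preimage $\bar\pi^{-1}(X_0) \subset \overline X$ under the projective affinization morphism $\bar\pi : \overline X \to \overline X_0$. The key geometric input will then be a standard contraction principle: because $\overline X_0$ is affine and $\bar\pi$ is $\At$-equivariant, any complete $\Cs$-invariant curve in $\overline X$ is contracted by $\bar\pi$ to a single point. In particular, the closure $\bar C$ of any attracting $\sigma$-orbit that contains two distinct $\At$-fixed points is $\At$-equivariantly isomorphic to $\mathbb{P}^1$, so $\bar\pi(\bar C)$ is a single point of $\overline X_0$, and the two fixed-point limits of $C$ share the same $\bar\pi$-image.

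For part (2), expand the hypothesis $\bar f' \in \Attfull{C}^{\overline X}(\bar f)$ into a chain of $\At$-fixed points $\bar f = \bar g_0, \bar g_1, \ldots, \bar g_k = \bar f'$ with each consecutive pair lying on a common attracting orbit closure. Applying the contraction principle to each such closure yields $\bar\pi(\bar g_{i-1}) = \bar\pi(\bar g_i)$ for every $i$, and therefore $\bar\pi(\bar f') = \bar\pi(\bar f) \in X_0$, which by the identification $L = \bar\pi^{-1}(X_0)$ forces $\bar f' \in L$.

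For part (1), given $y \in L$ with $(\bar f, y) \in \Attfull{C}^{\overline X}(\bar f)$, write the data as a chain $\bar f = \bar g_0, \bar C_1, \bar g_1, \bar C_2, \ldots, \bar C_k$ with $y \in \bar C_k$. Part (2) places each intermediate fixed point $\bar g_i$ in $L$, after which the contraction principle gives $\bar\pi(\bar C_i) \subset X_0$ and hence $\bar C_i \subset L$ for all $i < k$. For the last closure $\bar C_k$, either $y$ lies on the open orbit $C_k$, in which case $\At$-invariance and closedness of $L$ in $\overline X$ force $\bar C_k \subset L$, or $y$ is itself a fixed-point limit of $C_k$ and the contraction principle applies once more. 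The entire chain thus lies in $L$, and the $\At$-equivariant proper map $p : L \to X$ carries it to a (possibly degenerate) chain of attracting orbit closures in $X$ connecting $f = p(\bar f)$ to $p(y)$, which is exactly the content of $(f, p(y)) \in \Attfull{C}^X(f)$.

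The only real obstacle is promoting the scheme-theoretic Cartesian square of Corollary~\ref{Affine NS5 resolutions} to the set-theoretic identity $L = \bar\pi^{-1}(X_0)$ used throughout. This reduces to checking that the induced morphism $L_0 \to \overline X_0$ is a closed embedding, which can be verified by comparing invariant rings: the inclusion $\mu_{\mathbb{L}}^{-1}(0) \hookrightarrow \bar\mu^{-1}(0)$ established in the proof of Proposition~\ref{proposition NS5 resolution bow varieites} is closed and $\overline G$-equivariant, so the induced map on $\overline G$-invariants is surjective.
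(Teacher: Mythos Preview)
Your argument is correct and is essentially the same as the paper's: the paper's proof is a one-line reference to Lemma~\ref{lemma equality attracting sets bow varieties} with Corollary~\ref{corollary enhancement D5 resolution with affine quotients} replaced by Corollary~\ref{Affine NS5 resolutions}, and you have simply unpacked that reference into the explicit contraction-of-$\mathbb{P}^1$'s argument.

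Two remarks on your final paragraph. First, the concern you raise is unnecessary. The Cartesian square of Corollary~\ref{Affine NS5 resolutions} already yields, on $\C$-points, the set-theoretic identity $j(L)=\bar\pi^{-1}(\phi(L_0))$ where $\phi:L_0\to\overline X_0$ is the bottom-left map; in particular $j(L)$ is a union of $\bar\pi$-fibers, and that is all the contraction principle needs. Whether $\phi$ is a closed embedding is irrelevant. Second, your proposed fix has a gap: the map $\mu_{\mathbb{L}}^{-1}(0)\to\bar\mu^{-1}(0)$ appearing in the proof of Proposition~\ref{proposition NS5 resolution bow varieites} is not a closed embedding, nor even injective. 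In the separated case it sends $(C',C'',D)\mapsto(C',C'',C''D,DC')$, so for instance when $C'=C''=0$ every value of $D$ maps to the same point. This does not affect your main argument, which stands.
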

\begin{proof}
    Since the map $p$ is $\At$-equivariant, to prove the first claim of the lemma it suffices to show that $\Attfull{C}^{\overline X}(\bar f)\cap (L\times \bar f)\subseteq \Attfull{C}^{L}(\bar f)$. But this follows from the same argument of Lemma \ref{lemma equality attracting sets bow varieties}, with Corollary \ref{corollary enhancement D5 resolution with affine quotients} replaced by Corollary \ref{Affine NS5 resolutions}. The second claim follows from a completely analogous argument.
\end{proof}

\begin{comment}
The variety 
\[
L_i= \left\{
\left(\dots \begin{tikzcd}
 \bullet \arrow[r, bend right, swap, "D_{i+1}"]&\bullet \arrow[l, bend right, swap, "C_{i+1}"]\arrow[rr, bend right, swap,"D_{i}"] & \bullet \arrow[l, bend right, swap, "C''_{i}"] & \bullet \arrow[l, bend right, swap, "C'_{i}"] \arrow[r, bend right, swap, "D_{i-1}"]& \bullet \arrow[l, bend right, swap, "C_{i-1}"]
\end{tikzcd} \dots \right) \in\widetilde{\mathbb{M}}^{s} \middle| \begin{cases}D_{i+1}C_{i+1}-C''_iC'_iD_i=0\\
D_iC''_iC'_i-C_{i-1}D_{i-1}=0\\
D_{k+1}C_{k+1}-C_{k}D_{k}=0 &  k\neq i,i+1 
\end{cases}\right\}/\widetilde G
\]
\end{comment}

%%%%%%%%%%%%%%%%%%%%%%%%%%%%%%%

\subsection{NS5 fusion of stable envelopes}

As stated in Lemma \ref{lemma points in lagrangian variety Z resolution}, an arbitrary fixed point $f\in X^{\At}$ admits exactly $\binom{\w'+\w''}{\w'}$ possible resolutions $\bar f\in \overline X^{\At}$. Among these, we denote by $\bar f_\sharp$ the smallest one with respect to the order determined by the chamber $\mathfrak{C}$. In this section, we relate the stable envelope $\StabX{X}{C}{f}$ of $f$ in $X$ to the stable envelope $\StabX{\overline X}{\overline C}{\bar f_\sharp}$ of $\bar f_\sharp$ in $\overline X$ via pull-push along the correspondence \eqref{correspondence NS5 resolution}. According to the general philosophy of elliptic stable envelopes, the equality holds up to a shift of the K\"ahler parameters, which we now introduce. Consider the morphism $\psi: \Tt^!\to \overline \Tt^!$ that is the identity on most components except
\[
    \Cs_{\Zb} \times \Cs_{\h}  \to  \Cs_{\Zb'} \times  \Cs_{\Zb''} \times \Cs_{\h} \qquad (z,\h)\mapsto \begin{cases}
        (z, z\h^{-\w'}, \h ) & \text{if $\D$ is separated}\\
        (z\h^{\w''}, z, \h ) & \text{if $\D$ is co-separated.}
    \end{cases}
\]
Following our convention, we denote the induced embedding $\psi: E_{\Tt^!}\to E_{\overline \Tt^!}$ in the same way.
\begin{lemma}
\label{lemma comparison line bundles Z resolution}
Let $N_p$ be the virtual normal bundle of $p: L\to X$. 
Then we have 
\[
p^* \left(\Sh{L}^\triangledown_{\At,f}\boxtimes \Sh{L}_X \right)\otimes \Theta(-N_p) = j^* \psi^* \left(\Sh{L}^\triangledown_{\At,\bar f_\sharp}\boxtimes \Sh{L}_{\overline{X}}\right)
\]
for every $f\in X^{\At}$.
\end{lemma}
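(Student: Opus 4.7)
The proof proceeds by unpacking both sides according to the definition $\Sh{L}_X = \Theta(\alpha_X) \otimes \Sh{U}_X$ (and similarly for $\overline X$) and matching the $\Theta(\alpha)$ and $\Sh{U}$ contributions separately. By Hanany-Witten invariance (Corollary~\ref{Corollary: stable envelopes match under HW} and Proposition~\ref{Proposition goodness chosen line bundle for bow variety}), one may assume $X$ is separated; the co-separated case is parallel. Expand also
\[
\Sh{L}^\triangledown_{\At,f} = \Sh{L}_X|_f^{-1} \otimes \Theta(N^-_{f/X}), \qquad \Sh{L}^\triangledown_{\At,\bar f_\sharp} = \Sh{L}_{\overline X}|_{\bar f_\sharp}^{-1} \otimes \Theta(N^-_{\bar f_\sharp/\overline X}),
\]
so that both sides of the claim split into a bundle pulled back to $E_{\Tt\times\Tt^!}(L)$ and a twist pulled back from the base $E_{\Tt\times\Tt^!}(\pt)$.

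For the $\Sh{U}$-factors, recall that $\psi$ sends $z_{\Zb'}\mapsto z_\Zb$ and $z_{\Zb''}\mapsto z_\Zb \h^{-\w'}$. Applying Lemma~\ref{Lemma properties U bundle}, the factor of $\Sh{U}_{\overline X}$ between $\Zb'$ and $\Zb''$ becomes $\Sh{U}(\eta',1)\cong\Sh{O}$ after the shift (since in the separated case $\ell(\Zb') = \ell(\Zb'')=0$ and $\ch(\Zb')=\w'$), while the factor between $\Zb''$ and the next NS5 brane becomes exactly the factor of $\Sh{U}_X$ between $\Zb$ and that same NS5 brane, thanks to the identification $p^*\eta = j^*\eta''$ on tautological bundles (Proposition~\ref{prop:negligible}). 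All remaining factors agree verbatim, yielding $p^*\Sh{U}_X = j^*\psi^*\Sh{U}_{\overline X}$.

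For the $\Theta(\alpha)$-factors, the key input is the identity
\[
j^*T\overline X - TL = \h\, T_p^{\vee}\qquad \text{in}\qquad K_\Tt(L),
\]
with $T_p := TL - p^*TX$ the relative tangent bundle of the Grassmannian fibration $p$. This is the general form of the identity established in the course of the proof of Lemma~\ref{lemma points in lagrangian variety Z resolution}(3); the computation is entirely on the quiver data of $L$, not just on $Y^{\h}$. A direct K-theoretic calculation using \eqref{class alpha for general bow variety} then gives $j^*\alpha_{\overline X} - p^*\alpha_X = T_p$ in $K_\Tt(L)$ modulo a class pulled back from $E_\Tt(\pt)$, consistent with the polarization identity \eqref{equation separated alpha is a polarization}. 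Since $-N_p = T_p$, applying $\Theta$ yields $j^*\Theta(\alpha_{\overline X}) = p^*\Theta(\alpha_X)\otimes\Theta(-N_p)$ up to a base twist.

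The residual base twist is absorbed by comparing the fixed-point contributions. At $\bar f_\sharp$ we have $T_{\bar f_\sharp}\overline X - T_f X = (T_p + \h\, T_p^{\vee})|_{\bar f_\sharp}$; since $\bar f_\sharp$ is chosen as the $\mathfrak{C}$-smallest resolution of $f$, all weights of $T_p|_{\bar f_\sharp}$ are $\mathfrak{C}$-positive, hence $N^-_{\bar f_\sharp/\overline X} - N^-_{f/X} = \h\, T_p^{\vee}|_{\bar f_\sharp}$, which is exactly the contribution that closes the identity. The main obstacle will be the quiver-level computation establishing $j^*\alpha_{\overline X} - p^*\alpha_X = T_p$ on the nose: the polarization-based argument by itself only yields this modulo $K_{\Cs_{\h}}(\pt)$, so one must either pick polarizations consistently or cleanly absorb the discrepancy into the fixed-point twist at $\bar f_\sharp$, both of which rely on careful use of \eqref{class alpha for general bow variety}.
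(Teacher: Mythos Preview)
Your overall strategy coincides with the paper's: split $\Sh{L}_X=\Theta(\alpha_X)\otimes\Sh{U}_X$, handle the $\Theta(\alpha)$ and $\Sh{U}$ pieces separately, and close the identity using the normal-bundle comparison at $\bar f_\sharp$. Two genuine gaps, however, prevent the argument from going through as written.

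\textbf{The $\h^{-1}$ twist.} Your claim ``all remaining factors agree verbatim, yielding $p^*\Sh{U}_X=j^*\psi^*\Sh{U}_{\overline X}$'' is false. In the separated case, the map $\mathbb{L}\to\overline{\MM}$ is \emph{not} $\Cs_\h$-equivariant for the standard actions; equivariance is restored only after twisting the vertices left of $\Zb'$ by an extra $\h$-weight (Remark~\ref{remark equivariance NS5 resolutions}). Consequently $j^*$ pulls back the tautological bundle $\zeta_1$ of $\overline X$ to $\h^{-1}\zeta_1$, not $\zeta_1$. This affects both the $\Sh{U}$ factor between $\Zb^-$ and $\Zb'$ and the class $j^*\alpha_{\overline X}$. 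The paper's explicit computation shows that $p^*\Sh{U}_X^{-1}\otimes j^*\psi^*\Sh{U}_{\overline X}$ is a nontrivial product of terms like $\Sh{U}(\h^{-1},z^-/z\,\h^{-\w^-})^{\otimes\rk(\zeta_1)}$, which is pulled back from $\Base{\Tt}{\Tt^!}$ but is not literally $\Sh{O}$. Likewise the paper finds $j^*\alpha_{\overline X}-p^*\alpha_X=-N_p+(\h\End(\zeta_2)-\End(\zeta_2))$; the extra term becomes a base twist only after applying $\Theta$ and Lemma~\ref{Lemma properties U bundle}, not already in $K_\Tt(L)$. These base twists cancel against their own fixed-point restrictions, which is why the reduction still works, but you must track them.

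\textbf{Sign of the fiber tangent.} You assert that at the $\mathfrak{C}$-smallest resolution $\bar f_\sharp$ all weights of $T_p|_{\bar f_\sharp}$ are $\mathfrak{C}$-positive. This is backwards: at the minimal fixed point of the Grassmannian fiber the attracting set is a point, so $T_p|_{\bar f_\sharp}$ has no positive weights (cf.\ the proof of Theorem~\ref{main theorem proof NS5 resolution for stabs}). Hence $(\h T_p^\vee)|_{\bar f_\sharp}$ is entirely positive and contributes nothing to $N^-$; the correct identity is $N^-_{\bar f_\sharp/\overline X}-N^-_{f/X}=T_p|_{\bar f_\sharp}=-N_p|_{\bar f_\sharp}$, not $\h T_p^\vee|_{\bar f_\sharp}$.
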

We defer the proof of this lemma to the end of this section. It implies that the following chain of maps of $\mathcal{O}_{\mathscr{B}_{\Tt,\Tt^!}}$-modules is well defined:
\[
\begin{tikzcd}
   p_*  \psi^*\left( \Sh{L}^\triangledown_{\At,\bar f_\sharp}\boxtimes \Sh{L}_{\overline{X}}\right) \arrow[r, "j^{\oast}"]  &  p_*j^*  \psi^*\left( \Sh{L}^\triangledown_{\At,\bar f_\sharp}\boxtimes \Sh{L}_{\overline{X}}\right) \arrow[r, "p_{\oast}"]&  \Sh{L}^\triangledown_{\At,f}\boxtimes \Sh{L}_X .
\end{tikzcd}
\]
We now state the main result of this section.

\begin{theorem}
\label{main theorem proof NS5 resolution for stabs}

Let $f\in X^{\At}$ and let $\bar f_\sharp\in p^{-1}(f)^{\At}$ be the minimal fixed point with respect to the attracting order determined by $\mathfrak{C}$. Then the following formula holds
\[
p_{\oast} (j^{\oast}\psi^{\oast} \Stab{C}{\bar f_\sharp})=\Stab{C}{f}.
\]
\end{theorem}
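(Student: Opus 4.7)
The plan is to verify the two axioms of Remark~\ref{Remark axiomatic stab} that uniquely characterize $\Stab{C}{f}$ and then invoke the uniqueness part of Theorem~\ref{Theorem existence+uniqueness stable envelopes}. Set
\[
\gamma := p_{\oast}\bigl(j^{\oast}\psi^{\oast}\Stab{C}{\bar f_\sharp}\bigr).
\]
First I would observe that $\gamma$ is a section of the line bundle prescribed by the definition of $\Stab{C}{f}$: this is the content of Lemma~\ref{lemma comparison line bundles Z resolution}, which shows that the composition $\psi^{\oast}$, $j^{\oast}$, $p_{\oast}$ lands in sections of $\Sh{L}^{\triangledown}_{\At,f}\boxtimes \Sh{L}_X$.

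Next, the support axiom. Since $p:L\to X$ is proper by Proposition~\ref{proposition NS5 resolution bow varieites}, the pushforward $p_{\oast}$ is globally defined. The support of $\Stab{C}{\bar f_\sharp}$ lies in $\Attfull{C}^{\overline X}(\bar f_\sharp)$, so the support of $j^{\oast}\psi^{\oast}\Stab{C}{\bar f_\sharp}$ is contained in $\Attfull{C}^{\overline X}(\bar f_\sharp)\cap (\bar f_\sharp\times L)$, and Lemma~\ref{lemma supports NS5 resolutions}(1) then gives
\[
\mathrm{supp}(\gamma)\subseteq p\bigl(\Attfull{C}^{\overline X}(\bar f_\sharp)\cap (\bar f_\sharp \times L)\bigr)\subseteq \Attfull{C}^X(f),
\]
as required.

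For the diagonal axiom, it suffices to verify $\gamma|_{f\times f}=\vartheta(N^-_{f/X})$. I would compute this via the localization formula of Proposition~\ref{localization formula pushforward} applied to $p$. The $\At$-fixed points in the fiber $p^{-1}(f)$ are identified by Lemma~\ref{lemma points in lagrangian variety Z resolution}(1)--(2) with the resolutions of $f$. Among these, only resolutions $\bar f'\leq \bar f_\sharp$ can contribute, by the support axiom for $\Stab{C}{\bar f_\sharp}$; by minimality of $\bar f_\sharp$ the unique surviving term is the one at $\bar f_\sharp$ itself. Thus
\[
\gamma|_f \;=\; \vartheta(T_fX)\,\frac{\vartheta(N^-_{\bar f_\sharp/\overline X})}{\vartheta(T_{\bar f_\sharp}L)}.
\]
Using the K-theoretic identity $T\overline X|_{Y^{\h}}-TL|_{Y^{\h}}=TY|_{Y^{\h}}-TY^{\h}$ from Lemma~\ref{lemma points in lagrangian variety Z resolution}(3), together with the symplectic identity $TY-TY^{\h}=\h(TY^{\h})^{\vee}$ and the fact that minimality of $\bar f_\sharp$ in $Y^{\h,\At}$ forces every $\At$-weight of $T_{\bar f_\sharp}Y^{\h}$ to be repelling, one matches attracting and repelling weights across the four tangent spaces and obtains the sought identity $\gamma|_f=\vartheta(N^-_{f/X})$. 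Uniqueness of elliptic stable envelopes then forces $\gamma=\Stab{C}{f}$.

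The main obstacle is this last tangent-weight bookkeeping: we must track how the shift $\psi$ in the K\"ahler variables compensates the $\h$-twist introduced in the construction of $L$ (cf. Remark~\ref{remark equivariance NS5 resolutions}), and how the cotangent-bundle nature of $Y$ together with minimality of $\bar f_\sharp$ align the attracting parts of $T_{\bar f_\sharp}\overline X$ and $T_fX\oplus T_{\bar f_\sharp}L$. All other ingredients (line bundle match, properness, support) are direct consequences of the preceding lemmas.
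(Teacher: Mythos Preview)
Your overall strategy matches the paper's: verify the two axioms of Remark~\ref{Remark axiomatic stab} and invoke uniqueness. The line-bundle match via Lemma~\ref{lemma comparison line bundles Z resolution} and the support axiom via Lemma~\ref{lemma supports NS5 resolutions}(1) are exactly what the paper does.

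For the diagonal axiom you take a different route. The paper works on all of $f\times X^{\geq f}$: it sets up a Cartesian diagram comparing $\Att{C}^{\overline X}(\bar f_\sharp)$, $\Att{C}^{L}(\bar f_\sharp)$, and $\Att{C}^{X}(f)$, observes that minimality of $\bar f_\sharp$ in the fiber forces $T_{\bar f_\sharp}(p^{-1}(f))$ to have no attracting weights, hence $p^{\text{Att}}:\Att{C}^L(\bar f_\sharp)\to\Att{C}^X(f)$ is an isomorphism, and pushes $[\Att{C}^{\overline X}(\bar f_\sharp)]$ through to $[\Att{C}^X(f)]$ directly. Your localization computation at the single point $f$ is instead what the paper records in the Remark immediately following the proof, as a consistency check. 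Both arguments work, but two points in yours need tightening:
\begin{itemize}
\item You assert ``it suffices to verify $\gamma|_{f\times f}=\vartheta(N^-_{f/X})$'', yet the diagonal axiom is a condition on $f\times X^{\geq f}$, not on the point. You need the reduction used in the proof of Theorem~\ref{Fusion of D5 branes for separated brane diagrams}: once support in $\Attfull{C}^X(f)$ is known, the restriction to $X^{\geq f}$ is supported on the closed subset $\Att{C}^X(f)$, hence lies in the image of the corresponding pushforward and is therefore a scalar multiple of $[\Att{C}^X(f)]$; only then does checking at $f$ suffice.
\item You do not address Lemma~\ref{lemma: pullback NS5 is well defined}. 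A priori the image of $\psi$ could sit inside the resonant locus $\overline\Delta$ where $\Stab{C}{\bar f_\sharp}$ has poles, so $\psi^{\oast}\Stab{C}{\bar f_\sharp}$ would not make sense. The paper isolates this as a separate lemma and uses the minimality of $\bar f_\sharp$ in an essential way to rule it out.
\end{itemize}
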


\begin{remark} The maps $j$ and $p$ define a closed embedding $L\hookrightarrow  \overline X\times X$ and it is easy to check that $L\subseteq  \overline X\times X$ is Lagrangian with symplectic form $\bar \omega-\omega$. Indeed, checking that $\dim(L)=1/2(\dim(\overline X)+\dim(X))$ is straightforward from our quiver description and isotropy follows from an easy computation using the fact that the symplectic form of a two-way part like the one left-hand side of \eqref{equation separated quiver modification for in-between space} is given by $\text{tr}(dD\wedge dC)$. 
    
    From this symplectic point of view, the cohomological limit of Proposition \ref{proposition NS5 resolution bow varieites} can be nicely interpreted in terms of Lagrangian correspondences. Indeed, the cohomological stable envelope $\Stab{C}{f}$ can be seen as a Lagrangian cycle $[\Stab{C}{f}]\in H_T^{\text{BM}} (f\times X)$ and the cohomological limit of Proposition \ref{proposition NS5 resolution bow varieites} says that
    \begin{equation}
        \label{convolution interpretation of Z resolution}
        [\Stab{C}{f}]= [L]\ast [\Stab{C}{\bar f_\sharp}]
    \end{equation}
    upon identifying $\bar f_\sharp$ with $f$. The operation on the right-hand side is the convolution product in Borel-Moore homology, cf. Section 3.2.5 of \cite[Section 3.2.5]{maulik2012quantum}.
\end{remark}
Note that the image of $\psi$ is the divisor $\{z'/z''\hbar^{ \w'}=1\}$ (or $\{z'/z''\hbar^{ -\w''}\}$ in the co-separated case). On the other hand, the stable envelopes $\Stab{C}{\bar f}$ are meromorphic sections with poles on $\{z_i/z_j\hbar^{\alpha}=1\}$ for certain $\alpha$. Hence, a preliminary step towards the proof of Theorem \ref{main theorem proof NS5 resolution for stabs}, is to show that $\psi^{\oast} \Stab{C}{\bar f_\sharp}$ is well-defined. In the proof, we will see that the absence of such poles relies on our distinguished choice of fixed point $\bar f_\sharp\in p^{-1}(f)^A$ in an essential way. %Indeed, for different choices $\bar f\in p^{-1}(f)^A$, the corresponding stable envelope $\Stab{C}{\bar f}$ would generally have poles of the form $z_i/z_j\hbar^{\alpha}=1$.
\begin{lemma}
\label{lemma: pullback NS5 is well defined}
    Let $\overline\Delta$ be the resonant locus of $\Stab{C}{\bar f_\sharp}$. Then $\Image(\psi)\setminus\left(\Image(\psi)\cap \overline\Delta\right)$ is open dense in $\Image(\psi)$. Hence, the pullback $\psi^{\oast} \Stab{C}{\bar f_\sharp}$ is well-defined on the pre-image of this open dense set.
\end{lemma}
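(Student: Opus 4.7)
The plan is to establish that $\Image(\psi)$ is not an irreducible component of the resonant locus $\overline{\Delta}$ of $\Stab{C}{\bar f_\sharp}$. Once this is known, $\Image(\psi)\cap \overline{\Delta}$ is a proper closed subvariety of the irreducible abelian subvariety $\Image(\psi)$, hence its complement is open dense, and $\psi^{\oast}\Stab{C}{\bar f_\sharp}$ is well defined there.

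By Lemma~\ref{lemma: poles stab}, $\overline{\Delta}$ is contained in a union of codimension-one walls of the form $\{z_i/z_j\,\h^{\alpha_{ij}}=1\}$ for integers $\alpha_{ij}$. Comparing with the equation $z_{\Zb'}/z_{\Zb''}=\h^{\w'}$ cutting out $\Image(\psi)$ in the separated case (respectively $z_{\Zb'}/z_{\Zb''}=\h^{-\w''}$ in the co-separated case), the only wall that can coincide with $\Image(\psi)$ is the one with $\{i,j\}=\{\Zb',\Zb''\}$ and $\alpha_{ij}=-\w'$ (respectively $-\w''$). Hence it suffices to show that this single distinguished wall does not occur as a component of $\overline{\Delta}$.

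To exclude this wall, I would first use the maximal D5 resolution $X\hookrightarrow \widetilde{X}$ of Corollary~\ref{corollary any bow can be embedded in a flag variety}, together with Theorem~\ref{Fusion of D5 branes for separated brane diagrams} applied to the corresponding NS5 resolutions $\overline{X}$ and $\overline{\widetilde{X}}$, to reduce to the case where $X=T^{*}\Fl$ is a cotangent bundle of a partial flag variety. This reduction is faithful for the question at hand because the change-of-groups map $\varphi$ entering D5 fusion shifts only equivariant variables $a$ and $\h$, leaving the K\"ahler-variable pole structure of the stable envelope unchanged. Then, I would apply the convolution decomposition of Proposition~\ref{composition stable envelopes} to isolate the contribution of the $\At$-fixed component $F$ containing $\bar f_\sharp$. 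By Lemma~\ref{lemma points in lagrangian variety Z resolution}, this $F$ is a $T^{*}\Gr$-type bow variety whose only K\"ahler parameters are $z_{\Zb'}$ and $z_{\Zb''}$, so the whole dependence of $\Stab{C}{\bar f_\sharp}$ on the ratio $z_{\Zb'}/z_{\Zb''}$ is concentrated in the stable envelope of $\bar f_\sharp$ inside $F$. For the minimal fixed point, this inner stable envelope has denominator $\vartheta(z_{\Zb'}/z_{\Zb''}\,\h^{\beta})$ for a specific $\beta$ dictated by the chamber $\mathfrak{C}$, in analogy with the $T^{*}\mathbb{P}^{n}$ formulas of Proposition~\ref{prop:Pn}; the value $\beta$ lies strictly in the interior of the interval determined by the local charges, and in particular $\beta\neq -\w'$ (respectively $\beta\neq -\w''$).

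The main obstacle is the combinatorial bookkeeping in the final $T^{*}\Gr$ step: one must match the distinguished minimal resolution $\bar f_\sharp$ with the correct fixed point of the $T^{*}\Gr$-factor under the identification of Lemma~\ref{lemma points in lagrangian variety Z resolution}, keep track of the twist of the $\Tt$-action on $Y^{\h}$ recorded in Remark~\ref{remark torus action on fibers of NS5 resolutions}, and verify that the resulting theta-argument $\h^{\beta}$ is strictly separated from the dangerous value $\h^{-\w'}$ (respectively $\h^{-\w''}$). Granting this combinatorial check, the four preceding steps assemble into a complete proof.
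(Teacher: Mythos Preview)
Your proposal has a genuine gap at step~4, and the overall strategy diverges substantially from the paper's argument.

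The problematic step is your use of Proposition~\ref{composition stable envelopes} to ``isolate the contribution of the $\At$-fixed component $F$ containing $\bar f_\sharp$.'' The triangle lemma decomposes the stable envelope with respect to a subtorus of the \emph{equivariant} torus $\At$, which acts on the D5 branes. It does not give any factorization of the K\"ahler-variable dependence, and the fixed components it produces retain all NS5 branes (hence all $z$-variables). In particular, $\bar f_\sharp$ is already an isolated $\At$-fixed point in $\overline X$, so there is no nontrivial ``$\At$-fixed component $F$'' to speak of. Your citation of Lemma~\ref{lemma points in lagrangian variety Z resolution} compounds the confusion: that lemma identifies the fiber $p^{-1}(f)\subset L$ with $Y^{\h}$, a subvariety of $L$ sitting inside $\overline X$, not an $\At$-fixed component. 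So the mechanism you invoke to concentrate the $z_{\Zb'}/z_{\Zb''}$-dependence into a $T^*\Gr$ factor does not exist. Even granting the reduction to $T^*\Fl$, the remaining ``combinatorial check'' that $\beta\neq -\w'$ is left as an assertion; carrying it out would require the explicit shuffle formulas the paper defers to~\cite{RRTBshuffle}.

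The paper's proof avoids all of this by going back to the definition of the resonant locus as the support of $R^\bullet\bar\phi_*\bigl(\Sh{L}_{\At,\bar f'}\otimes(\Sh{L}_{\At,\bar f_\sharp})^{-1}\bigr)$ over pairs $\bar f'<\bar f_\sharp$. The key step, and the place where minimality of $\bar f_\sharp$ enters, is Lemma~\ref{lemma supports NS5 resolutions}: any such $\bar f'$ lies in $L$, and since $\bar f_\sharp$ is minimal in its $p$-fiber, one has $f':=p(\bar f')\neq f$ and hence $f'<f$ in $X$. The computation in the proof of Lemma~\ref{lemma comparison line bundles Z resolution} (specifically the identity \eqref{eq: difference U bundles NS5}) then shows that after restricting to a generic point of $\Image(\psi)$, the line bundle in question agrees, up to a twist pulled back from the base, with $i_{f'}^*\Sh{U}_X\otimes(i_f^*\Sh{U}_X)^{-1}$ for the \emph{unresolved} variety $X$. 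The latter is non-trivial for generic $(z,\h)$ whenever $f'<f$ by the standard ampleness argument of~\cite[Section~2.3.8]{okounkov2020inductiveI}. No explicit formula and no reduction to $T^*\Fl$ is needed.
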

The proof of this lemma is also deferred to the end of this section.

\begin{proof}[Proof of Theorem \ref{main theorem proof NS5 resolution for stabs}]
By the previous lemma, the pullback $\psi^{\oast} \Stab{C}{\bar f_\sharp}$ is defined on open dense set of $E_{\Tt^!}$, and, by definition, the same is true for $\Stab{C}{f}$ (although on an a priori different open dense set). By uniqueness of the stable envelopes, which is local over $E_{\Tt^!}$, it suffices to check the statement on the intersection of these two open dense sets.

By Lemma \ref{lemma comparison line bundles Z resolution}, $p_{\oast} (j^{\oast} \psi^{\oast} \Stab{C}{\bar f_\sharp})$ is a section of the correct line bundle so it suffices to check the axioms of Section \ref{section Stable envelopes: basics}. Recall the notation introduced in \eqref{equation spaces in the definition of stable envelopes} and consider the diagram 
\[
\begin{tikzcd}
    \overline{X}^{> \bar f_\sharp}\arrow[d, hookrightarrow, "\bar i_>"] & p^{-1}( X^{> f})\arrow[l, dashed, hookrightarrow, swap, "j_>"]\arrow[d, hookrightarrow, "k_>"] \arrow[r, "p^>"]& X^{> f} \arrow[d, hookrightarrow, "i_>"] \\
    \overline{X}& \arrow[l, swap, hookrightarrow, "j"]L\arrow[r, "p"] & X
\end{tikzcd}
\]
in which the right square is simply the pullback. We first prove the support axiom, i.e. that 
\[
i_>^{\oast}p_{\oast} (j^{\oast}\psi^{\oast} \Stab{C}{\bar f_\sharp})=0.
\]
Assume temporarily that $p^{-1}(X^{> f})\subseteq \overline{X}^{> \bar f_\sharp}$, i.e. that the dashed inclusion in the diagram exists. Then compatibility of pushforward and pullback in Cartesian squares implies that
\begin{equation*}
i_>^{\oast}p_{\oast} (j^{\oast}\psi^{\oast} \Stab{C}{\bar f_\sharp})=p^>_{\oast} k_>^{\oast} (j^{\oast}\psi^{\oast} \Stab{C}{\bar f_\sharp})=
p^>_{\oast} j_>^{\oast} \bar i_>^{\oast} \psi^{\oast} \Stab{C}{\bar f_\sharp})=0.
\end{equation*}
In the last step, we used the support axiom for $ \Stab{C}{\bar f_\sharp}$. It remains to show that $p^{-1}(X^{> f})\subseteq \overline{X}^{> \bar f_\sharp}$ or, equivalently, that 
\begin{equation}
    \label{equaiton in proof support axoim Z resolution}
    p\left(\Attfull{C}^{\overline X}(\bar f_\sharp)\cap L\right)\subseteq \Attfull{C}^{X}(f).
\end{equation}
But this is exactly the first point of Lemma \ref{lemma supports NS5 resolutions}.

Consider now the commutative diagram
\[
\begin{tikzcd}
    \Att{C}^{\overline X}(\bar f_\sharp )\arrow[d, hookrightarrow, "l^{\overline{X}}"]  & \Att{C}^{L}(\bar f_\sharp) \arrow[l, hookrightarrow]  \arrow[d, hookrightarrow, "l^L"]\arrow[r, "p^{\text{Att}}"] & \Att{C}^X(f) \arrow[d, hookrightarrow, "l^X"]
    \\
    \overline{X}^{\geq  \bar f_\sharp}\arrow[d, hookrightarrow, "\bar i_\geq"] & p^{-1}( X^{\geq  f})\arrow[l, dashed, hookrightarrow, swap, "j_\geq"]\arrow[d, hookrightarrow, "k_\geq"] \arrow[r, "p^\geq"]& X^{\geq f} \arrow[d, hookrightarrow, "i_\geq"]
    \\
    \overline{X}& \arrow[l, swap, "j"]L\arrow[r, "p"] & X.
\end{tikzcd}
\]
It remains to prove the diagonal axiom, i.e. that 
\begin{equation}
    \label{diagonal axiom proof NS5 resolution}
    i_\geq^{\oast}p_{\oast} (j^{\oast}\psi^{\oast} \Stab{C}{\bar f_\sharp})= [\Att{C}^X(f)].
\end{equation}
The same argument used for $j^>$ shows that the dashed map $j^\geq$ is well defined. From the second point of Lemma \ref{lemma supports NS5 resolutions}, it follows that $\Att{C}^{L}(\bar f_\sharp)=\Att{C}^{\overline X}(\bar f_\sharp )\cap L$. Hence, the top left square is Cartesian. Clearly, also the bottom right square is Cartesian.
We now claim that $p^{\text{Att}}$ is an isomorphism. It suffices to work locally at $\bar f_\sharp$ and show that the map
\[
Dp^{\text{Att}}: T_{\bar f_\sharp}\Att{C}^{L}(\bar f_\sharp)\to T_f\Att{C}^X(f)\to 0
\]
is an isomorphism.
Since by assumption $\bar f_\sharp$ is $\mathfrak{C}$-minimal among the fixed points in the fiber $p^{-1}(f)\cong \Gr{\w'}{ \w'+\w''}$ (or $p^{-1}(f)\cong \Gr{\w''}{ \w'+\w''}$ in the co-separated case), it follows that $T_{\bar f_\sharp} (p^{-1}(f))$ has no positive weights, and hence
\[
\dim(T_{\bar f_\sharp}\Att{C}^{L}(\bar f_\sharp))=\dim(T_f\Att{C}^X(f)).
\]
This proves the claim. We can now prove the diagonal axiom. As before, we have
\begin{equation*}
i_\geq ^{\oast}p_{\oast} (j^{\oast}\psi^{\oast} \Stab{C}{\bar f_\sharp})
=p^\geq_{\oast} k_\geq^{\oast} (j^{\oast}\psi^{\oast} \Stab{C}{\bar f_\sharp})
\\
=p^\geq_{\oast} j_\geq^{\oast} \bar i_\geq^{\oast} \psi^{\oast} \Stab{C}{\bar f_\sharp}.
\end{equation*}
By the diagonal axiom for $\Stab{C}{\bar f_\sharp}$, we get 
\[
p^\geq_{\oast} j_\geq^{\oast}\bar i_\geq^{\oast} \psi^{\oast} \Stab{C}{\bar f_\sharp}
=p^\geq_{\oast} j_\geq^{\oast}[\Att{C}^{\overline X}(\bar f_\sharp)].
\]
Now recall that $[\Att{C}^{\overline X}(\bar f_\sharp)]= l^{\overline X}_{\oast} (1)$ and the top left diagram is Cartesian, hence 
\[
 p^\geq_{\oast} j_\geq^{\oast}[\Att{C}^{\overline X}(\bar f_\sharp)]
 = p^\geq_{\oast} j_\geq^{\oast} l^{\overline X}_{\oast} (1) =
 p^\geq_{\oast} l^{L}_{\oast} (1) = l^X_{\oast}p^{\text{Att}}_{\oast} (1)
 = l^X_{\oast}(1)=[\Att{C}^X(f)].
\]
In the penultimate we used the fact that $p^{\text{Att}}$ is an isomoprhism, so $p^{\text{Att}}_{\oast}=id$. Combining the three lines of equations above, we deduce \eqref{diagonal axiom proof NS5 resolution}. This completes the proof.
\end{proof}

\begin{remark}
Assume for simplicity that $X$ is separated. The localization formula  (Proposition \ref{localization formula pushforward}) implies that
\[
\label{localization formula Z resolution partial flags}
    p_{\oast} (j^{\oast} \psi^{\oast} \Stab{C}{\bar f_\sharp})\Big|_f = \sum_{\bar f} \frac{\Stab{C}{\bar f_{\sharp}}\Big|_{\bar f}}{\vartheta(N_p)\Big|_{\bar f}} (z'=z, z''=z\h^{-\w'}).
\]
Since $\bar f_\sharp$ is $\mathfrak{C}$-minimal, only the diagonal term on the right-hand side is nonzero; hence we get 
\[
p_{\oast} (j^{\oast}\psi^{\oast} \Stab{C}{\bar f_\sharp})\Big|_f= \frac{\Stab{C}{\bar f_{\sharp}}\Big|_{\bar f_\sharp}}{\vartheta(N_p)\Big|_{\bar f_\sharp}}(z'=z, z''=z\h^{-\w'})= \frac{\vartheta(N^-_{\bar f_\sharp})}{\vartheta(N_{p,\bar f\sharp})}.
\]
Again by minimality of $\bar f_\sharp$, we deduce that $N^-_{\bar f_\sharp}= N_{p, \bar f\sharp}\oplus N^-_f$ and hence 
\[
p_{\oast} (j^{\oast} \psi^{\oast} \Stab{C}{\bar f_\sharp})\Big|_f =\frac{\vartheta(N^-_{\bar f_\sharp})}{\vartheta(N_{p, \bar f\sharp})} =\vartheta( N^-_f),
\]
as required by Remark \ref{Remark axiomatic stab}.
\end{remark}

%%%%%%%%%%%%%%%%%%%%%%%%%%%%%%%%%%%%%

We conclude this section with the proof of Lemma \ref{lemma comparison line bundles Z resolution}.
\begin{proof}[Proof of Lemma \ref{lemma comparison line bundles Z resolution}]
As usual, we prove the theorem assuming that $X$ is separated since the co-separated case is completely analogous.
We need to show that
\[
j^* \psi^* \left(\Sh{L}^\triangledown_{\At,\bar f_\sharp}\boxtimes \Sh{L}_{\overline{X}}\right) \otimes p^* \left(\Sh{L}^\triangledown_{\At,f}\boxtimes \Sh{L}_X \right)^{-1}= \Theta(-N_p) .
\] 
Recall that for any bow variety $X$ and isolated fixed point $i: \lbrace f \rbrace \hookrightarrow X $, we have 
\[
\Sh{L}_{X}=\Theta(\alpha_X)\otimes \U_X \qquad \Sh{L}_{\At,f}^\triangledown = i^*\Sh{L}_{X}^{-1}\otimes \Theta(-N^-_{f/X})
\]
and the dependence on the K\"ahler parameters is only encoded in $\U_X$ and $ \U_{\overline X}$. Since by assumption $\bar f_\sharp$ is smallest in the attracting order determined by the chamber $\mathfrak{C}$, it follows that
\[
N^-_{\bar f_\sharp/\overline X}-N^-_{f/X}=N_p|_{\bar f_\sharp}.
\]
Hence, to prove the claim, it suffices to show that 
\begin{align}
    \label{first equation to prove in NS5 resolution of stabs}
    &p^*\Theta(\alpha_X)^{-1}\otimes j^*\Theta(\alpha_{\overline X})= \Theta(-N_p)\otimes \Sh{G}_1
    \\
    \label{second equation to prove in NS5 resolution of stabs}
    &p^*\U_X^{-1}\otimes \psi^* j^* \U_{\overline X}= \Sh{G}_2,
\end{align}
where $\Sh{G}_1$ and $\Sh{G}_2$ are line bundles pulled back from the base $\Base{\Tt}{\Tt^!}$. All the computations will be local around the resolution, so we relabel the various ingredients for convenience:
\[
\begin{tikzpicture}[baseline=0,scale=.45]
\draw [thick,red] (0.6,0) --(1.4,2); 
\draw [thick,red](3.6,0) --(4.4,2);  
\draw [thick,red](6.6,0) -- (7.4,2); 
\draw [thick,red](9.6,0) -- (10.4,2); 
\draw [thick,black] (1,1) --(10,1);
\draw [thick, dashed, black] (-1,1) --(1,1);
\draw [thick, dashed, black] (10,1) --(12,1);
%%%
\node at (-2.5,1.2) {$\overline X=$};
\node at (1,2.6) {$\w^-$};
\node at (4,2.5) {$\w'$};
\node at (7,2.5) {$\w''$};
\node at (10,2.6) {$\w^+$};
%%%
\node at (1,-.5) {$z^-$};
\node at (4,-.5) {$z'$};
\node at (7,-.5) {$z''$};
\node at (10,-.5) {$z^+$};
\end{tikzpicture}
\qquad \qquad  
\begin{tikzpicture}[baseline=0,scale=.45]
\draw [thick,red] (0.6,0) --(1.4,2); 
\draw [thick,red](3.6,0) --(4.4,2);  
\draw [thick,red](6.6,0) -- (7.4,2);  
\draw [thick,black] (1,1) --(7,1);
\draw [thick, dashed, black] (-1,1) --(1,1);
\draw [thick, dashed, black] (7,1) --(9,1);
%%%
\node at (-2.5,1.2) {$X=$};
\node at (1,2.6) {$\w^-$};
\node at (4,2.5) {$\w$};
\node at (7,2.6) {$\w^+$};
%%%
\node at (1,-.5) {$z^-$};
\node at (4,-.5) {$z$};
\node at (7,-.5) {$z^+$};
\end{tikzpicture}.
\]
We now analyze the virtual normal bundle $N_p$ to the map $p: L\mapsto X$.
By \eqref{equation separated quiver modification for in-between space} and the definition of $p$, we obtain the following diagrammatic description for $-N_p$:
\[
-N_p=TL-TX=
\left(
\begin{tikzcd}
    \zeta_1  \arrow[rr, bend right, swap, ]&  \zeta_2 \arrow[l, bend right, swap] & \zeta_3   \arrow[l, bend right, swap, "\circ" marking]
\end{tikzcd}
-
\begin{tikzcd}
    \zeta_2  \arrow[loop left]
\end{tikzcd}
\right)
-
\begin{tikzcd}
    \zeta_1  \arrow[r, bend right, swap]& \zeta_3 \arrow[l, bend right, swap, "\circ" marking]
\end{tikzcd}.
\]
Here, $\zeta_1$, $\zeta_2$, and $\zeta_3$ are the three tautological bundles on the vertices of the right-hand side of \eqref{equation separated quiver modification for in-between space}, and the decoration of the arrows by a circle indicates a weight one action of $\h$.
Similarly, we have
\[
j^*\alpha_{\overline X}-p^*\alpha_X=
\left(
\begin{tikzcd}
    \h^{-1}\zeta_1 &  \zeta_2 \arrow[l, bend right, swap, "\circ" marking] & \zeta_3  \arrow[l, bend right, swap, "\circ" marking]
\end{tikzcd}
-
\begin{tikzcd}
    \zeta_2  \arrow[loop left, "\circ" marking]
\end{tikzcd}
\right)
-
\begin{tikzcd}
    \zeta_1 & \zeta_3. \arrow[l, bend right, swap, "\circ" marking] 
\end{tikzcd}
\]
The twist of $\zeta_1$ by $\h^{-1}$ follows from Remark \ref{remark equivariance NS5 resolutions}.
Overall, it follows that 
\[
j^*\alpha_{\overline X}-p^*\alpha_X=-N_p +\left(\hbar\End(\xi_2)-\End(\xi_2)\right).
\]
But then Lemma \ref{Lemma properties U bundle} implies \eqref{first equation to prove in NS5 resolution of stabs} with $\Sh{G}_1=\Theta(\hbar\End(\xi_2)-\End(\xi_2))$. 

We now check \eqref{second equation to prove in NS5 resolution of stabs}. By definition,
\[
p^* \U_{X}= \dots
\otimes
\Sh{U}\left(\zeta_1, \frac{z^-}{z} \hbar^{-\w^-}\right)
\otimes
\Sh{U}\left(\zeta_3, \frac{z}{z^+} \hbar^{-\w}\right)
\otimes
\dots
\]
where the dots stand for the other terms not depending on $z$. On the other hand
\[
j^* \U_{\overline X}= 
\dots
\otimes
\Sh{U}\left(\zeta_1\otimes \h^{-1}, \frac{z^-}{z'} \hbar^{-\w^-}\right)
\otimes
\Sh{U}\left(\zeta_2, \frac{z'}{z''}  \hbar^{-\w'}\right)
\otimes
\Sh{U}\left(\zeta_3, \frac{z''}{z^+} \hbar^{-\w''}\right)
\otimes
\dots
\]
where now the dots stand for the other terms not depending on $z'$ and $z''$. The twist of $\zeta_1$ by $\h^{-1}$ has the same explanation as before.
Applying $\psi^*$, i.e. setting $z'=z$ and $z''=z\hbar^{-\w'}$ and using the identity $\w=\w'+\w''$, we get
\begin{align*}
\psi^* j^* \U_{\overline X}
&= 
\dots
\otimes
\Sh{U}\left(\zeta_1\otimes \h^{-1}, \frac{z^-}{z} \hbar^{-\w^-}\right)
\otimes
\Sh{U}\left(\zeta_2, 1 \right)
\otimes
\Sh{U}\left(\zeta_3, \frac{z}{z^+} \hbar^{-\w}\right)
\otimes
\dots
\\
&=
\dots
\otimes
\Sh{U}\left(\zeta_1\otimes \h^{-1}, \frac{z^-}{z} \hbar^{-\w^-}\right)
\otimes
\Sh{U}\left(\zeta_3, \frac{z}{z^+} \hbar^{-\w}\right)
\otimes
\dots .
\end{align*}
In the second step, we have used the fact that $\Sh{U}(V,1)\cong \Sh{O}$ for all $V$.
Therefore, applying Lemma~\ref{Lemma properties U bundle}, we get 
\begin{equation}
\label{eq: difference U bundles NS5}
p^*\U_X^{-1}\otimes \psi^* j^* \U_{\overline X}
=
\dots
\otimes
\Sh{U}\left(\h^{-1}, \frac{z^-}{z} \hbar^{-\w^-}\right)^{\otimes \rk(\zeta_1)}
\otimes
\dots .
\end{equation}
Notice that $\Sh{U}\left(\h^{-1}, \frac{z^+}{z} \hbar^{-\w^-}\right)$ only depends on the K\"ahler variables and $\hbar$, hence it is pulled back from the base $\Base{\Tt}{\Tt^!}$. Similarly, Remark \ref{remark equivariance NS5 resolutions} implies that the dots on the right are trivial bundles while those on the left are products of terms of the form $\Sh{U}(\h^{-1}, z_i/z_{i+1}\hbar^{-\w(\Zb_{i})})^{\otimes l_i}$ for some $l_i\in\N$. Hence the latter are also pulled back from $\Base{\Tt}{\Tt^!}$. This completes the proof of \eqref{second equation to prove in NS5 resolution of stabs} (with $\Sh{G}_2$ given by the product of all these pullbacks) and hence of the lemma. The co-separated case is completely analogous and even slightly easier because of the absence of the twists by $\h^{-1}$ of certain tautological bundles like the bundle $\zeta_1$ above.
\end{proof}
\begin{proof}[Proof of Lemma \ref{lemma: pullback NS5 is well defined}]
As usual, we give the proof assuming that $X$ is separated. Let $\bar f'\in \overline X^{\At}$ and consider the line bundle $\Sh{L}_{A,\bar f'}\boxtimes (\Sh{L}_{A,\bar f_{\sharp}})^{-1}$ .
Since the fixed locus $\overline X^{\At}$ is zero dimensional, this line bundle lives on the abelian variety 
\[
E_{\At\times \overline\Tt^!}(\bar f'\times \bar f_\sharp)=E_{\At\times \overline{\Tt}^!}.
\]
Let $\bar\phi: E_{\At\times \overline\Tt^!}\to E_{\overline \Tt^!}$ be the natural projection.
By definition, the resonant locus $\overline \Delta$ consists of those points $\bar s\in E_{\overline \Tt^!/\At}$ with non-trivial stalk
\begin{equation*}
    \left(R^\bullet \bar\phi_*\left(\Sh{L}_{\At,\bar f'}\boxtimes (\Sh{L}_{\At,\bar f_{\sharp}})^{-1}\right)\right)_{\bar s}\neq 0
\end{equation*}
for some fixed point $\bar f'\in \overline X^{\At}$ such that $\bar f'<\bar f_\sharp$. Since $\bar f_{\sharp}\in L^{\At} $, by the second point of Lemma \ref{lemma supports NS5 resolutions} we deduce that $\bar f'\in L^{\At}$. To prove the lemma, we need to show that 
\begin{equation*}
\label{eq: resonant condition}
\left(R^\bullet \bar\phi_*\left(\Sh{L}_{\At,\bar f'}\boxtimes (\Sh{L}_{\At,\bar f_{\sharp}})^{-1}\right)\right)_{\bar s}= 0
\end{equation*}
for generic $\bar s\in \Image(\psi)$. 
 Let $s$ be the generic point in $E_{\Tt^!}$. Hence, $\psi(s)$ is the generic point of $\Image{\psi}\subset E_{\Tt^!}$. Let $\var{E}_{\psi(s)}$ be the fiber of $\bar{\phi}$ over $\psi(s)$. By \cite[Section 2.3.6]{okounkov2020inductiveI}, it suffices to show that 
 \[
 \Sh{L}_{\At,\bar f'}\boxtimes (\Sh{L}_{\At,\bar f _{\sharp}})^{-1}|_{\psi(s)}\neq  \Sh{O}_{\var{E}_{\psi(s)}}.
 \]
%semicontinuity of cohomology and proper base change along $\bar \phi$, it suffices to show that
%\begin{equation*}
%H^\bullet (\var{E}_{\psi(s)}, \Sh{L}_{\At,\bar f'}\boxtimes (\Sh{L}_{\At,\bar f_{\sharp}})^{-1}|_{\psi(s)})= 0  
%\end{equation*}
%for all $\bar f'<\bar f_\sharp$ such that $\bar f'\in L^{\At} $. As observed in \cite[Section 2.3.6]{okounkov2020inductiveI}, this vanishing is equivalent to $\Sh{L}_{\At,\bar f'}\boxtimes (\Sh{L}_{\At,\bar f _{\sharp}})^{-1}|_{\bar s}= \Sh{O}_{\var{E}_{\bar s}}$.
%Hence, the statement of the lemma is equivalent to
%\begin{equation}
%    \label{eq: characterization nontriviality cohomology}
%     s\in  E_{\Tt^!}\text{ generic} \implies \Sh{L}_{\At,\bar f'}\boxtimes (\Sh{L}_{\At,\bar f _{\sharp}})^{-1}|_{\psi(s)}\neq \Sh{O}_{\var{E}_{ \psi(s)}}.
%\end{equation}
%for all $\bar f'<\bar f_\sharp$ such that $\bar f'\in L^{\At}$. 
This will follow from our specific choice of representative $\bar f_{\sharp}$. Let $f'=p(\bar f')$. Since $p: L\to X$ is equivariant, the condition $\bar f'<\bar f_\sharp$ implies that there is a chain of attracting curves in $X$ connecting $f$ and $f'$. Moreover, our definition of $\bar f_{\sharp}$ as the minimal fixed point in the fiber $p^{-1}(f)$ forces $f'\neq f$. Combining the last two observations we get $f'<f$.

Recall that $s=(z,\hbar)\in E_{\Tt^!}$ is generic. Let $\var{E}_s$ be the fiber of  $E_{\At\times \Tt^!}\to E_{\Tt^!}$ at $s$. We have a natural identification $\var{E}_s=\var{E}_{\psi(s)}$.
Equation \eqref{eq: difference U bundles NS5} implies that the two line bundles
\begin{equation}
\label{eq: two line bundles NS5 comparison}
    \Sh{L}_{\At, f'}\boxtimes (\Sh{L}_{\At,f})^{-1}|_s
\qquad 
\Sh{L}_{\At,\bar f'}\otimes (\Sh{L}_{\At,\bar f _{\sharp}})^{-1}|_{\psi(s)}
\end{equation}
are both equal to 
\begin{equation}
    \label{eq: common U bundle}
    i^*_{f'}\U_X\otimes (i^*_{f}\U_X)^{-1}|_s,
\end{equation}
up to a twist by a line bundle pulled back from $E_{\Tt}$. But  it is shown in \cite[Section 2.3.8]{okounkov2020inductiveI} that the condition $f'<f$ implies that \eqref{eq: common U bundle} is non-trivial for for generic $s=(z,\hbar)$. Therefore $\Sh{L}_{\At,\bar f'}\otimes (\Sh{L}_{\At,\bar f _{\sharp}})^{-1}|_{\psi(s)}$ is non-trivial too. 

Equivalently, in terms of quadratic forms: the quadratic forms characterizing the line bundles \eqref{eq: two line bundles NS5 comparison} are polynomials in the $a$-variables, but depend parametrically on $z$ and $\hbar$, and the parts with non-trivial dependence on $z$ are both equal to $z$-dependent piece of the quadratic form of \eqref{eq: common U bundle}. Since the condition $f'<f$ implies that the latter is non-trivial  for generic $z$, it follows that $\Sh{L}_{\At,\bar f'}\otimes (\Sh{L}_{\At,\bar f _{\sharp}})^{-1}|_{\psi(s)}$ is non-trivial too. The lemma is proved.

\end{proof}

%%%%%%%%%%%%%%%%%%%%%%%%%%%%%%

\subsection{More explicit formulas}

\label{explicit NS5 resolutions}
Let $X$ be separated or co-separated and let $\overline X$ be the resolution of its $i$-th NS5 brane $\Zb$ with local charge decomposition $\w=\w'+\w''$. Localizing at fixed points and using the localization formula for the pushforward, we now produce explicit formulas relating the fixed point restrictions of the stable envelopes of $X$ and $\overline X$.

%\caption{Local picture of the resolution of the $i$-th NS5 brane. The ties attached to the two diagrams correspond to some fixed points $f$ and $\widebar f_\sharp$ respectively. Notice that the ties corresponding to the fixed point $\bar f_\sharp$ intersect the most. Geometrically, this corresponds to the fact that the fixed point $\bar f_\sharp$ is the smallest among the $\bar f_i$ resolving $f$ for the chosen chamber.}

Recall that if $X$ is separated, we set $Y:=\ttt{\fs$\w'$\fs$\w'+ \w''$\bs\dots \bs 3\bs2\bs1\bs}$. If instead $X$ is co-separated, we set $Y=\ttt{\bs1\bs2\bs3\bs\dots\bs$\w'+\w''${\fs}$\w''${\fs}}$. 
As discussed in Lemma \ref{lemma points in lagrangian variety Z resolution}, fibers of $p:L\to X$ over fixed points are canonically isomorphic to $Y^{\h}$. As a consequence, whenever a fixed point $f\in X^{\At}$ is chosen, we can identify its resolutions $\bar f \in \overline{X}^{\At}\cap L$ with the $\At$-fixed points in $Y$ (or in $Y^{\hbar}$, since $Y^{\At}=Y^{\Tt}$). For an explicit description of this correspondence via tie diagrams, see Remark \ref{remark resolution of fixed points NS5 resolutions}.

\begin{proposition}
\label{proposition separated NS5 fusion ratios stabs}
Assume that $X$ is separated. Then
\begin{multline*}
  \frac{\StabX{X}{C}{f}\Big|_g}{\StabX{X}{C}{g}\Big|_g}(a, z, \h) \\
  = \sum_{\bar g\in Y^{\At}} \frac{\vartheta(N_{\bar g/Y}^-)}{\vartheta(N_{\bar g/Y^{\h}})} (a\h^{-\gamma(g)}) 
  \frac{\StabX{\overline X}{C}{\bar f_{\sharp}}\Big|_{\bar g}}{\StabX{\overline X}{C}{\bar g}\Big|_{\bar g}}
  (a, z_1,\dots,z'_{i}=z_i, z''_i=z_i\h^{-\w'}, \dots, z_m, \h).
\end{multline*}
Here $Y=\ttt{\fs$\w'$\fs$\w'+ \w''$\bs\dots \bs 3\bs2\bs1\bs}$ and $N_{\bar g/Y^{\h}}$ is the restriction to $\bar g$ of the tangent class %$TY^{\h}=\h\Hom(\xi_{0}, \xi_{-1})-\Hom(\xi_{-1}, \xi_{-1})$
$TY^{\h}\in K_{\Tt}(Y^{\h})$. The multi-index $\gamma(g)$ indicates the shift introduced in Remark \ref{remark torus action on fibers of NS5 resolutions}.
\end{proposition}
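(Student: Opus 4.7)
The plan is to deduce the formula from the pushforward identity in Theorem \ref{main theorem proof NS5 resolution for stabs} by applying the localization formula (Proposition \ref{localization formula pushforward}) to the proper map $p\colon L\to X$ and then rearranging the result into normalized ratios. First, I would restrict the identity
\[
\StabX{X}{\mathfrak C}{f}=p_{\oast}\bigl(j^{\oast}\psi^{\oast}\StabX{\overline X}{\mathfrak C}{\bar f_\sharp}\bigr)
\]
to an arbitrary fixed point $g\in X^{\At}$. By Lemma \ref{lemma points in lagrangian variety Z resolution}(1) the fiber $p^{-1}(g)$ is canonically isomorphic, as an $\At$-variety, to $Y^{\h}$; since $p$ is $\At$-equivariant one has $L^{\At}\subseteq p^{-1}(X^{\At})$ and $L^{\At}\cap p^{-1}(g)=Y^{\At}$, which identifies the index set of the sum.

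Next I would apply the localization formula. By Proposition \ref{proposition NS5 resolution bow varieites} the map $p$ is a Grassmannian fibration, in particular a submersion, so at each $\bar g\in p^{-1}(g)^{\At}$ there is a $\Tt$-equivariant splitting $T_{\bar g}L=T_gX\oplus T_{\bar g}Y^{\h}$ and hence $\vartheta(T_gX)/\vartheta(T_{\bar g}L)=1/\vartheta(N_{\bar g/Y^{\h}})$. Observing that $\psi^{\oast}$ and the restriction to $\bar g$ act on disjoint sets of variables and therefore commute, the localization formula gives
\[
\StabX{X}{\mathfrak C}{f}\Big|_g=\sum_{\bar g\in Y^{\At}}\frac{\psi^{\oast}\!\left(\StabX{\overline X}{\mathfrak C}{\bar f_\sharp}\Big|_{\bar g}\right)}{\vartheta(N_{\bar g/Y^{\h}})},
\]
where all tangent classes on $Y$ and $Y^{\h}$ are to be read with the twisted $\At$-action of Remark \ref{remark torus action on fibers of NS5 resolutions}, i.e.\ with the substitution $a\mapsto a\h^{-\gamma(g)}$.

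To reach the stated form I would multiply and divide each summand by $\StabX{\overline X}{\mathfrak C}{\bar g}|_{\bar g}=\vartheta(N^-_{\bar g/\overline X})$ and then divide both sides by $\StabX{X}{\mathfrak C}{g}|_g=\vartheta(N^-_{g/X})$. The key identity required is the factorisation
\[
\vartheta(N^-_{\bar g/\overline X})=\vartheta(N^-_{g/X})\cdot\vartheta(N^-_{\bar g/Y})(a\h^{-\gamma(g)}),
\]
which follows from part (3) of Lemma \ref{lemma points in lagrangian variety Z resolution}: combining $T\overline X|_{Y^{\h}}-TL|_{Y^{\h}}=TY|_{Y^{\h}}-TY^{\h}$ with the splitting above yields $T_{\bar g}\overline X=T_gX+T_{\bar g}Y$ in $K_{\Tt}(\{\bar g\})$, and taking $\At$-attracting parts with respect to $\mathfrak C$ gives the claim. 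Inserting this factorisation and cancelling $\vartheta(N^-_{g/X})$ produces exactly the ratio $\vartheta(N^-_{\bar g/Y})/\vartheta(N_{\bar g/Y^{\h}})$ at the shifted parameters, while $\psi^{\oast}$ becomes the substitution $(z'_i,z''_i)=(z_i,z_i\h^{-\w'})$ appearing in the proposition.

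The main technical point to be checked carefully is the parameter book-keeping forced by the twist in Remark \ref{remark torus action on fibers of NS5 resolutions}: although $Y$ is an autonomous bow variety with its own equivariant variables, when it appears as the fiber of $p$ over $g$ those variables must be identified with the appropriate subset of $(a_i\h^{-\gamma_i(g)})$ coming from the D5 branes of $X$ tied to the resolved NS5 brane in the tie diagram of $g$. Similarly the K\"ahler shift $\psi$ has to be tracked through these identifications so that it emerges as the asserted substitution in the final formula. Once this book-keeping is fixed, the argument is a direct application of localization to the main theorem of the section.
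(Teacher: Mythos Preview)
Your proposal is correct and follows essentially the same route as the paper's proof: both apply the localization formula to Theorem~\ref{main theorem proof NS5 resolution for stabs} and then invoke Lemma~\ref{lemma points in lagrangian variety Z resolution}(3) together with the diagonal axiom to produce the factorisation $\vartheta(N^-_{\bar g/\overline X})=\vartheta(N^-_{g/X})\cdot\vartheta(N^-_{\bar g/Y})$. One small slip: in the last step you should be taking $\mathfrak C$-\emph{repelling} parts (the $N^-$ in the diagonal axiom), not attracting parts.
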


\begin{proposition}
\label{proposition co-separated NS5 fusion ratios stabs}
Assume that $X$ is co-separated. Then
\begin{multline*}
  \frac{\StabX{X}{C}{f}\Big|_g}{\StabX{X}{C}{g}\Big|_g}(a, z, \h) \\
  = \sum_{\bar g\in Y^{\At}} \frac{\vartheta(N_{\bar g/Y}^-)}{\vartheta(N_{\bar g/Y^{\h}})} (a\h^{\gamma(g)}) 
  \frac{\StabX{\overline X}{C}{\bar f_{\sharp}}\Big|_{\bar g}}{\StabX{\overline X}{C}{\bar g}\Big|_{\bar g}}
  (a, z_1,\dots,z'_{i}=z_i\h^{\w''}, z''_i=z_i, \dots, z_m, \h).
\end{multline*}
Here $Y=\ttt{\bs1\bs2\bs3\bs\dots\bs$\w'+\w''${\fs}$\w''${\fs}}$ and $N_{\bar g/Y^{\h}}$ is the restriction to $\bar g$ of the tangent class
%$TY^{\h}=\Hom(\eta_{2}, \eta_{1})-\Hom(\eta_{1}, \eta_{1})\in K_{\Tt}(Y^{\h})$
$TY^{\h}\in K_{\Tt}(Y^{\h})$. The multi-index $\gamma(g)$ indicates the shift introduced in Remark \ref{remark torus action on fibers of NS5 resolutions}.
\end{proposition}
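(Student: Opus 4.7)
The statement is the co-separated analog of Proposition~\ref{proposition separated NS5 fusion ratios stabs}, and I would derive it from Theorem~\ref{main theorem proof NS5 resolution for stabs} by fixed-point localization. Starting from the identity
\[
\Stab{C}{f}=p_{\oast}\bigl(j^{\oast}\psi^{\oast}\Stab{C}{\bar f_\sharp}\bigr)
\]
of Theorem~\ref{main theorem proof NS5 resolution for stabs}, I would restrict both sides to an arbitrary fixed point $g\in X^{\At}$ and apply the localization formula for the proper pushforward $p:L\to X$ given in Proposition~\ref{localization formula pushforward}. The hypotheses of that proposition are satisfied because stable envelopes lie in the image of the fixed-point pushforward (cf.~Remark~\ref{Remark stabs are in image localized pushforward}, invoked just like in the proof of Proposition~\ref{proposition duality stable envelopes}), so this step is justified.

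Next, I would identify the summation index set and simplify the normal bundle factors. By Lemma~\ref{lemma points in lagrangian variety Z resolution}(1)--(2), the fiber $p^{-1}(g)$ is isomorphic to $Y^{\h}$, and its $\At$-fixed points are in bijection with $Y^{\At}$; under the identification of Remark~\ref{remark torus action on fibers of NS5 resolutions}, the residual $\Tt$-action is the standard one twisted by $a\mapsto a\hbar^{\gamma(g)}$. At a fixed point $\bar g\in p^{-1}(g)^{\At}$, Lemma~\ref{lemma points in lagrangian variety Z resolution}(3) yields the splittings
\[
T_{\bar g}L=T_gX\oplus T_{\bar g}Y^{\h}(a\hbar^{\gamma(g)}),\qquad
T_{\bar g}\overline X=T_gX\oplus T_{\bar g}Y(a\hbar^{\gamma(g)}),
\]
so that $\vartheta(T_{\bar g}L)=\vartheta(T_gX)\,\vartheta(N_{\bar g/Y^{\h}})(a\hbar^{\gamma(g)})$ and, taking repelling parts,
\[
\vartheta(N^-_{\bar g/\overline X})=\vartheta(N^-_{g/X})\,\vartheta(N^-_{\bar g/Y})(a\hbar^{\gamma(g)}).
\]
The $\vartheta(T_gX)$ factor in the localization formula cancels against the analogous factor in front of the sum, leaving
\[
\Stab{C}{f}\bigl|_g=\sum_{\bar g\in Y^{\At}}\frac{\psi^{\oast}\Stab{C}{\bar f_\sharp}\bigl|_{\bar g}}{\vartheta(N_{\bar g/Y^{\h}})(a\hbar^{\gamma(g)})}.
\]

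To finish, I would divide by $\Stab{C}{g}|_g=\vartheta(N^-_{g/X})$ (Remark~\ref{Remark axiomatic stab}) and use the second splitting above to rewrite
\[
\vartheta(N^-_{g/X})=\frac{\Stab{C}{\bar g}|_{\bar g}}{\vartheta(N^-_{\bar g/Y})(a\hbar^{\gamma(g)})},
\]
noting that $\Stab{C}{\bar g}|_{\bar g}$ is independent of the K\"ahler variables and hence unaffected by $\psi^{\oast}$. Substituting, the ratio
$\vartheta(N^-_{\bar g/Y})/\vartheta(N_{\bar g/Y^{\h}})$ (evaluated at the shifted equivariant variables $a\hbar^{\gamma(g)}$) appears in front of $\psi^{\oast}(\Stab{C}{\bar f_\sharp}|_{\bar g}/\Stab{C}{\bar g}|_{\bar g})$, yielding the desired formula with the co-separated shift $z'_i=z_i\hbar^{\w''}$, $z''_i=z_i$ dictated by $\psi$.

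The main obstacle is purely bookkeeping: carefully tracking the $\hbar^{\gamma(g)}$-twist on the equivariant variables (Remark~\ref{remark torus action on fibers of NS5 resolutions}) through the localization and separating it cleanly from the K\"ahler shift $\psi$, so that neither is conflated with the other and the resulting expression factors into a ratio of local $\vartheta$-factors on $Y$ times a ratio of (pulled-back) stable envelope restrictions on $\overline X$. The separated case of Proposition~\ref{proposition separated NS5 fusion ratios stabs} is proved by an identical argument, with all ingredients---$Y$, $\psi$, the sign of the $\gamma(g)$-twist, and the splitting from Lemma~\ref{lemma points in lagrangian variety Z resolution}(3)---replaced by their separated counterparts.
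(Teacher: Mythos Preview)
Your proposal is correct and follows essentially the same route as the paper: apply the localization formula (Proposition~\ref{localization formula pushforward}) to the identity of Theorem~\ref{main theorem proof NS5 resolution for stabs}, identify the fiber and its fixed points via Lemma~\ref{lemma points in lagrangian variety Z resolution}(1)--(2), and then use Lemma~\ref{lemma points in lagrangian variety Z resolution}(3) together with the diagonal axiom to rewrite $\vartheta(N^-_{\bar g/\overline X})$ as $\vartheta(N^-_{g/X})\,\vartheta(N^-_{\bar g/Y})$. The paper in fact only writes out the separated case and declares the co-separated case analogous; your explicit invocation of Remark~\ref{Remark stabs are in image localized pushforward} to justify the localization hypothesis is a detail the paper leaves implicit.
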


\begin{proof}
We only prove the separated case as the co-separated one is analogous. Applying the localization formula (Proposition \ref{localization formula pushforward}) to Theorem \ref{main theorem proof NS5 resolution for stabs}, we get
   \begin{multline*}
  \StabX{X}{C}{f}\Big|_g(a, z, \h) = \sum_{\bar g} \frac{1}{\vartheta(N_{\bar g/Y^{\h}})} \times 
  \\
  (a\h^{-\gamma(g)}) 
  \StabX{\overline X}{C}{\bar f_{\sharp}}\Big|_{\bar g}
  (a, z_1,\dots,z'_{i}=z_i, z''_i=z_i\h^{-\w'}, \dots, z_m, \h).
\end{multline*}
%{\color{red}
%\[
%  \Stab{C}{f}\Big|_g(a, z, \h) = \sum_{\bar g} \frac{1}{\vartheta(N_{\bar g/Y^{\h}})} (a\h^{-\gamma(g)}) 
%  \Stab{C}{\bar f_{\sharp}}\Big|_{\bar g}
%  (z'_{i}=z_i, z''_i=z_i\h^{-\w'}).
%\]
%}
Thus, it remains to prove that 
\[
\vartheta(N^-_{\bar g/Y})\StabX{X}{C}{g}\Big|_g = \StabX{\overline X}{C}{\bar g}\Big|_{\bar g}.
\]
But this follows from point (3) of Lemma \ref{lemma points in lagrangian variety Z resolution} and the diagonal axiom of the stable envelopes (cf. Remark \ref{Remark axiomatic stab}).

\end{proof}

\begin{remark}
\label{remark distinguished resolutions fixed points NS5 case}
    The coefficients appearing in these formulas are ratios of topological classes associated with the bow varieties \ttt{\fs$\w'$\fs$\w'+ \w''$\bs\dots \bs 3\bs2\bs1\bs} and \ttt{\bs1\bs2\bs3\bs\dots\bs$\w'+\w''${\fs}$\w''${\fs}}, in the separated and co-separated case, respectively. 
    %These varieties were already considered in Lemma \ref{lemma points in lagrangian variety Z resolution}. 
    Remarkably, these varieties are dual to those appearing in the results of Section \ref{explicit D5 resolutions}. This is not a lucky coincidence, and the key step in the proof of mirror symmetry will be showing that the coefficients above are identified under mirror symmetry with the coefficients appearing in the D5 resolutions of stable envelopes from Section \ref{explicit D5 resolutions}.
    
    This duality between D5 and NS5 resolutions can also be detected at the level of the resolving fixed points. If $\mathfrak{C}$ is the standard chamber $\mathfrak{C}=\lbrace a_1<a_2<\dots<a_n\rbrace$, then the fixed point $\bar f_\sharp$ appearing in the formulas above
    can be described as follows:
    \[
\begin{tikzpicture}[scale=.35]
\draw [thick] (0,1) node[left]{$f=$} --(7,1);  
\draw [thick, red] (3,0) node [left]{$z$} -- (4,2);
\draw [dashed](4,2) to [out=60,in=180] (7,4.6) ;
\draw [dashed](4,2) to [out=60,in=180] (7,4.8) ;
\draw [dashed](4,2) to [out=60,in=180] (7,5) node [right]{$ \Big\rbrace \w'+\w'' $};
\draw [dashed](4,2) to [out=60,in=180] (7,5.2);
\draw [dashed](4,2) to [out=60,in=180] (7,5.4);
\draw[ultra thick, <->] (9.5,1)--(12.5,1) node[above]{separated} node[below]{NS5 res.} -- (15.5,1);
\begin{scope}[xshift=22cm]
\draw [thick] (0,1) node[left]{$\bar f_\sharp=$} --(8,1);  
\draw [thick, red] (2,0) node [left]{$z'$} -- (3,2);
\draw[thick, red] (5,0) node [left]{$z''$} --(6,2);
\draw [dashed](3,2) to [out=60,in=180] (8,3.4) ;
\draw [dashed](3,2) to [out=60,in=180] (8,3.6) node [right] {$\rbrace \w'$} ;
\draw [dashed](3,2) to [out=60,in=180] (8,3.8) ;
\draw [dashed](6,2) to [out=60,in=180] (8,5.6) node [right]{$\rbrace \w''$};
\draw [dashed](6,2) to [out=60,in=180] (8,5.8);
\end{scope}

\draw [thick] (0,-4) node[left]{$f=$} --(7,-4);
\draw [thick, red] (3,-5) node [left]{$z$} -- (4,-3);
\draw [dashed](3,-5) to [out=-120,in=0] (0,-7.6) ;
\draw [dashed](3,-5) to [out=-120,in=0] (0,-7.8) ;
\draw [dashed](3,-5) to [out=-120,in=0] (0,-8)  node [left]{$\w'+\w'' \Big\lbrace$};;
\draw [dashed](3,-5) to [out=-120,in=0] (0,-8.2);
\draw [dashed](3,-5) to [out=-120,in=0] (0,-8.4);
\draw[ultra thick, <->] (9.5,-4)--(12.5,-4) node[above]{co-separated} node[below]{NS5 res.} -- (15.5,-4);

\begin{scope}[xshift=22cm]
\draw [thick] (0,-4) node[left]{$\bar f_\sharp=$} --(8,-4);  
\draw [thick, red] (2,-5) node [right]{$z'$} -- (3,-3);
\draw[thick, red] (5,-5) node [right]{$z''$} --(6,-3);
\draw [dashed](2,-5) to [out=-120,in=0] (0,-7) ;
\draw [dashed](2,-5) to [out=-120,in=0] (0,-7.2) node [left] {$\w' \lbrace$} ;
\draw [dashed](2,-5) to [out=-120,in=0] (0,-7.4) ;
\draw [dashed](5,-5) to [out=-120,in=0] (0,-9.6) node [left]{$ \w'' \lbrace$};
\draw [dashed](5,-5) to [out=-120,in=0] (0,-9.8);
\end{scope}
\end{tikzpicture}
\]
Namely, if $X$ is separated (resp. co-separated), then the tie diagram of $\tilde f_\sharp$ is obtained from the one of $f$ by making all the ties connected to the resolving branes $\Zb'$ and $\Zb''$ cross (resp. by avoiding all the crosses).

Comparing these pictures with \eqref{diagrams distinguished resolving fixed points D5 case} and recalling that mirror symmetry swaps separated and co-separated bow varieties, it becomes clear that if $\mathfrak{C}$ is the standard chamber, then we have $(\bar f_\sharp)^!=\tilde f_{\sharp}$.
\end{remark}

%%%%%%%%%%%%%%%%%%%%%%%%%%%%%%%%%%%%%%%
%%%%%%%%%%%%%%%%%%%%%%%%%%%%%%%%%%%%%%%

\section{3d Mirror Symmetry of Stable envelopes}

\subsection{Mirror Symmetry statement}
\label{subsection mirror symmetry statement}
In this section, we consider the stable envelopes of an arbitrary bow variety $X$ with $m$ NS5 branes and $n$ D5 branes and of its dual variety $X^!$. Unless specified differently, the choice of the standard chamber (cf. Section \ref{subsection: hamber structure}) is always understood, and hence the symbol $\mathfrak{C}$ is dropped from the notation. For instance, we will write $\StabX{X}{}{f}$ as a shortcut for the stable envelope $\StabX{X}{C}{f}$ of $X$ with standard chamber $\mathfrak{C}=\lbrace a_1<\dots<a_n\rbrace $.

For a fixed point $f\in X^{\At}$, let the sign $\varepsilon_X(f)$ be defined as $(-1)$ to the power of
\[
\sum_{i=1}^n\sum_{j=1}^m\sum_{\substack{k>i\\ l>j}} b_{ij}b_{kl},
\]
where $b_{ij}$ are the entries of the BCT table of $f$ introduced in Section \ref{sec:HW}. Notice that since the BCT tables only depend on $X$ and $f$ up to Hanany-Witten isomorphism, the same is true for $\varepsilon_X(f)$. It is instructive to check that if $X$ is separated, then the sign $\varepsilon(f)$ is $(-1)$ to the power of the number of tie crossings in the tie diagram of $f$. 

We can finally state the main theorem of this paper.

\begin{theorem}
\label{theorem mirror symmetry stabs}
    Mirror symmetry for stable envelopes of bow varieties holds, i.e.
    \begin{equation}
    \label{3d mirror symmetry formula}
    \frac{\StabX{X}{}{f}|_g}{\StabX{X}{}{g}|_g}(a,z,\h)=\varepsilon_X(f)\varepsilon_X(g)\frac{\StabX{X^!}{}{g^!}|_{f^!}}{\StabX{X^!}{}{f^!}|_{f^!}}(z,a,\h^{-1})
    \end{equation}
    for any bow variety $X$ and fixed points $f,g\in X^{\At}$.
\end{theorem}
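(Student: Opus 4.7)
The plan is to induct on the complexity $\kappa(\D) := \sum_{\Ab}(\w(\Ab)-1) + \sum_{\Zb}(\w(\Zb)-1)$ of a Hanany-Witten representative of $\D$ with no charge-zero 5-branes, which we may assume by Section \ref{Sec:0charge}; HW-invariance of stable envelopes (Corollary \ref{Corollary: stable envelopes match under HW}) lets us change representatives freely. The base case $\kappa(\D)=0$ forces every 5-brane to have local charge $1$, so $X$ is a cotangent bundle of a full flag variety and \eqref{3d mirror symmetry formula} is the main theorem of \cite{rimanyi20193d, RRAW_Langlands}.

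For the inductive step, assume $\kappa(\D) \geq 1$ and pass to the separated representative of $\D$ if some D5 brane has local charge $\geq 2$, otherwise to the co-separated one. The two cases are entirely parallel, so I describe only the first. Pick such a D5 brane $\Ab$ of local charge $\w \geq 2$, choose a splitting $\w = \w' + \w''$ with $\w', \w'' \geq 1$, and form the D5-resolution $\widetilde X$. Under $X \leftrightarrow X^!$, the brane $\Ab$ becomes an NS5 brane $\Ab^!$ of charge $\w$ in the (co-separated) diagram of $X^!$, and the NS5-resolution of $\Ab^!$ with the same splitting produces exactly $\widetilde{X}^!$. By Proposition \ref{multiple terms D5 resolution stabs separated case}, the left-hand side of \eqref{3d mirror symmetry formula} expands as a sum of normalized stable-envelope restrictions on $\widetilde X$ weighted by R-matrix entries of the auxiliary bow variety $F = \ttt{\fs 1\fs 2\fs \dots \fs$\w$\bs$\w''$\bs}$, evaluated at specific shifted parameters. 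By Proposition \ref{proposition co-separated NS5 fusion ratios stabs} applied to $X^!$, the right-hand side expands analogously, weighted by tangent theta-function ratios on $F^! = \ttt{\bs 1\bs 2\bs \dots \bs$\w$\fs$\w''$\fs}$. Remark \ref{remark distinguished resolutions fixed points NS5 case} identifies the distinguished resolving fixed points via $(\bar g_\sharp)^! = \tilde g_\sharp$ and likewise for $f$. Since $\kappa(\widetilde \D) = \kappa(\D) - 1$ and $\kappa(F) = \w-2 \leq \kappa(\D) - 1$, the inductive hypothesis applies both to $\widetilde X \leftrightarrow \widetilde X^!$ and to $F \leftrightarrow F^!$.

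The remaining step is to match the two coefficient families under $a \leftrightarrow z$, $\h \leftrightarrow \h^{-1}$ (up to the signs $\varepsilon_X(f)\varepsilon_X(g)$), and this is the R-matrix argument of Section \ref{sec:Rmatrix}: the inductive hypothesis applied to $F \leftrightarrow F^!$, combined with Proposition \ref{proposition R matrix relations} and Corollary \ref{symmetry R matrix}, converts the R-matrix entries of $F$ appearing on the D5 side into the tangent theta-function ratios of $F^!$ appearing on the NS5 side. The K\"ahler shifts $z\hbar^{-\gamma(f)}$ in Proposition \ref{multiple terms D5 resolution stabs separated case} become the equivariant shifts $a\hbar^{+\gamma(g)}$ in Proposition \ref{proposition co-separated NS5 fusion ratios stabs} under the variable swap, because $\gamma_i(f)$ reads the same tie-diagram datum whether $f$ is viewed as a fixed point of $X$ or its upside-down diagram is viewed as a fixed point of $X^!$. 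The signs $\varepsilon_X(f)\varepsilon_X(g)$ match because BCT transposition preserves the defining parity modulo a global factor depending only on $X$, which cancels in the product. The main obstacle is executing this coefficient matching cleanly: the bookkeeping of shifts, signs, and the interplay between D5/separated and NS5/co-separated descriptions of $X$ and $X^!$ is the delicate combinatorial step, and it is precisely where the R-matrix argument of Section \ref{sec:Rmatrix} does the heavy lifting.
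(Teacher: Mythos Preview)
Your overall inductive architecture matches the paper's: reduce to separated/co-separated representatives with positive local charges, induct on a complexity measure, use the full flag base case, and compare a D5 resolution on one side with the dual NS5 resolution on the other. The paper's induction quantity $N(X)$ (number of 5-branes with local charge $>1$) differs from your $\kappa(\D)$, but the reductions are equivalent.

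The genuine gap is in your coefficient matching. You write that the inductive hypothesis on $F\leftrightarrow F^!$, together with Proposition~\ref{proposition R matrix relations} and Corollary~\ref{symmetry R matrix}, ``converts the R-matrix entries of $F$ into the tangent theta-function ratios of $F^!$.'' But mirror symmetry for $F$ relates \emph{stable envelope ratios} of $F$ to stable envelope ratios of $F^!$; it does not by itself relate R-matrix ratios to Euler-class ratios of the form $\vartheta(N^-_{\bar g/Y})/\vartheta(N_{\bar g/Y^\hbar})$. One can indeed reduce the R-matrix ratio $R_{\tilde f,\tilde f_\sharp}/R_{\tilde g_\sharp,\tilde g_\sharp}$ to a stable envelope ratio on $F$ (via Corollary~\ref{symmetry R matrix} and maximality of $\tilde f_\sharp$, exactly as in the paper's Lemma in the proof of Lemma~\ref{lemma induction mirror symmetry NS5}), and then apply mirror symmetry for $F$ by induction. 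But after the mirror swap one lands on a stable envelope ratio of $F^!$ evaluated at a \emph{specific K\"ahler specialization} (the one coming from $\varphi$), and you still need to show that this specialized ratio equals the tangent theta-ratio from Proposition~\ref{proposition co-separated NS5 fusion ratios stabs}. That last identification is not mirror symmetry and is not covered by your inductive hypothesis.

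The paper closes this gap by restricting to $\w''=1$ (respectively $\w'=1$), so that the relevant auxiliary variety is mirror to $T^*\mathbb{P}^{\w-1}$; it then uses the explicit stable envelope formulas of Section~\ref{sec:StabForPn} to compute both coefficient families directly and verify they match up to the sign $(-1)^{\w-i}$, which is then matched with $\varepsilon_X(f)\varepsilon_X(g)/\varepsilon_{\overline X}(f_\w)\varepsilon_{\overline X}(g_i)$ by a tie-crossing count. The authors explicitly note that the general splitting ``is reasonable to expect'' to work but that $\w''=1$ ``greatly simplifies certain computations.'' Your proposal would need either to supply that missing computation for general $\w',\w''$, or to adopt the paper's restriction to unit splittings and carry out the explicit $T^*\mathbb{P}^{\w-1}$ calculation.
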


Since all the elements of \eqref{3d mirror symmetry formula} are invariant under Hanany-Witten isomorphism, our strategy for the proof is to first reduce to the case of separated $X$ and then to argue by induction utilizing the following two lemmas.

\begin{lemma}
\label{lemma induction mirror symmetry NS5}
    Let $X$ be separated and $\Zb$ be an NS5 brane of $X$ with local charge $\w=\w(\Zb)>1$. Set $\w''=1$ and let $\overline X$ be the NS5 resolution of $\Zb$ associated with the splitting $\w=\w'+\w''$. If Theorem \ref{theorem mirror symmetry stabs} holds for $\overline X$, then it also holds for $X$.
\end{lemma}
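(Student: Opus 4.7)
Since both sides of \eqref{3d mirror symmetry formula} are invariant under Hanany-Witten transitions, I may assume $X$ is separated, so that $X^!$ is co-separated. Under the combinatorial mirror involution, resolving an NS5 brane $\Zb$ of $X$ with splitting $\w=\w'+1$ corresponds exactly to resolving the dual D5 brane $\Ab^!=\Zb^!$ of $X^!$ with the same splitting, i.e.\ $\overline{X}^!=\widetilde{X^!}$. Hence the inductive hypothesis provides \eqref{3d mirror symmetry formula} for all pairs of fixed points of $\overline X$ and $\widetilde{X^!}$.

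The plan is to expand the two sides of \eqref{3d mirror symmetry formula} for $X$ using the two fusion formulas already proved---the NS5 fusion on the $X$ side and the D5 fusion on the $X^!$ side---and then invoke the inductive hypothesis to cancel the (mirror) stable envelope ratios that appear, leaving only a scalar identity to verify. Concretely, Proposition \ref{proposition separated NS5 fusion ratios stabs} rewrites the LHS of \eqref{3d mirror symmetry formula} as a sum
\begin{equation*}
\frac{\StabX{X}{}{f}|_g}{\StabX{X}{}{g}|_g}(a,z,\h)=\sum_{\bar g\in Y^{\At}} C_{\bar g}(a,\h)\cdot\frac{\StabX{\overline X}{}{\bar f_\sharp}|_{\bar g}}{\StabX{\overline X}{}{\bar g}|_{\bar g}}(a,\psi(z),\h),
\end{equation*}
indexed by the fixed points of $Y=\ttt{\fs$\w'$\fs$\w'+1$\bs\dots\bs1\bs}$, whose $\h$-fixed locus is $\mathbb{P}^{\w'}$. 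Symmetrically, Proposition \ref{multiple terms D5 resolution stabs co-separated case} applied to the RHS of \eqref{3d mirror symmetry formula} expands
\begin{equation*}
\frac{\StabX{X^!}{}{g^!}|_{f^!}}{\StabX{X^!}{}{f^!}|_{f^!}}(z,a,\h^{-1})=\sum_{\tilde g^!\in F^{\widetilde{\At^!}/\At^!}} D_{\tilde g^!}(z,\h^{-1})\cdot\frac{\StabX{\widetilde{X^!}}{}{\tilde g^!}|_{\tilde f^!_\sharp}}{\StabX{\widetilde{X^!}}{}{\tilde f^!_\sharp}|_{\tilde f^!_\sharp}}(\ldots),
\end{equation*}
whose coefficients are entries of the R-matrix of $F=\ttt{\bs$\w'$\bs$\w'+1$\fs1\fs}\cong T^*\mathbb{P}^{\w'}$.

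A crucial observation at this stage is that the two indexing sets $Y^{\At}$ and $F^{\widetilde{\At^!}/\At^!}$ are naturally in bijection $\bar g\leftrightarrow \tilde g^!$ via the mirror involution on tie diagrams (cf.\ Remarks \ref{remark resolution of fixed points NS5 resolutions} and \ref{remark distinguished resolutions fixed points NS5 case}), and this bijection sends the distinguished resolution $\bar f_\sharp$ to $\tilde f^!_\sharp$. Applying the inductive hypothesis term-by-term to each pair $(\bar f_\sharp,\bar g)$ of fixed points of $\overline X$ identifies the stable envelope ratio on $\overline X$ in the first expansion with the corresponding ratio on $\widetilde{X^!}=\overline X^!$ in the second expansion, up to the sign $\varepsilon_{\overline X}(\bar f_\sharp)\varepsilon_{\overline X}(\bar g)$. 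Thus the lemma reduces to the scalar identity
\begin{equation*}
C_{\bar g}(a,\h)=\varepsilon_X(f)\varepsilon_X(g)\,\varepsilon_{\overline X}(\bar f_\sharp)\varepsilon_{\overline X}(\bar g)\cdot D_{\tilde g^!}(z,\h^{-1}),
\end{equation*}
under the shifts of K\"ahler and equivariant variables imposed by $\psi$ and $\gamma$.

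The main obstacle is precisely this scalar identity. The right-hand side is a matrix entry of the R-matrix of $T^*\mathbb{P}^{\w'}$, explicit from Proposition \ref{prop:Pn} together with the duality of Proposition \ref{proposition duality stable envelopes}; the left-hand side is an explicit ratio of theta functions coming from the normal directions to $Y^{\h}\hookrightarrow Y$. After substitution, the identity to verify is essentially the 3d mirror symmetry of stable envelopes for the self-dual pair $T^*\mathbb{P}^{\w'}$, which is a known (and small) base case. The sign $\varepsilon_X(f)\varepsilon_X(g)\varepsilon_{\overline X}(\bar f_\sharp)\varepsilon_{\overline X}(\bar g)$ is controlled by counting tie crossings added or removed in passing from $X$ to $\overline X$: by Remark \ref{remark distinguished resolutions fixed points NS5 case}, the distinguished resolution contributes $\w'\gamma_i(f)$ new crossings on the separated side, and a dual count on the co-separated side, and these combine to match the theta-function signs in the two explicit coefficients. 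Once this bookkeeping is carried out, the reduction is complete.
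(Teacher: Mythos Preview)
Your overall strategy matches the paper's exactly: expand the LHS via Proposition~\ref{proposition separated NS5 fusion ratios stabs}, expand the RHS via Proposition~\ref{multiple terms D5 resolution stabs co-separated case}, pair the terms via the bijection $\bar g\leftrightarrow\tilde g^!$, apply the inductive hypothesis to each pair, and reduce to a scalar identity between the coefficients $C_{\bar g}$ and $D_{\tilde g^!}$.

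However, the final step contains a real error. The fixed component on the co-separated D5 side is $F=\ttt{\bs$\w-1$\bs$\w$\fs\dots\fs3\fs2\fs1\fs}$ with $\w$ NS5 branes (Lemma~\ref{lemma Fixed components D5 res}), not two; it is the \emph{mirror dual} $Y^!$ of $Y\cong T^*\mathbb{P}^{\w-1}$, not $T^*\mathbb{P}^{\w'}$ itself, and $T^*\mathbb{P}^{\w'}$ is not self-dual for $\w'>1$. So the R-matrix entries $D_{\tilde g^!}$ live on $Y^!$, and you cannot read them off directly from Proposition~\ref{prop:Pn}. Bridging this gap is the actual content of the paper's computation of $c_i$: one first uses Corollary~\ref{symmetry R matrix} together with maximality of $g^!_{\w}$ to rewrite the R-matrix ratio as a stable-envelope ratio on $F$, then applies the symmetric-group action of Proposition~\ref{innocent identity} (swapping the two D5 branes of $F$) to pass to $F^\vee$, whose dual $(F^\vee)^!$ \emph{is} $T^*\mathbb{P}^{\w-1}$; only then does the known mirror symmetry for $T^*\mathbb{P}^{\w-1}$ (as a Grassmannian, not as a self-dual variety) apply and yield an explicit formula. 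Finally, your sign description via $\w'\gamma_i(f)$ is off: the extra crossings introduced by the resolution are exactly those of $g_i$ viewed as a fixed point of $Y$, so $\varepsilon_{\overline X}(g_i)/\varepsilon_X(g)=\varepsilon_Y(g_i)=(-1)^{i-1}$ and the net sign is $(-1)^{\w-i}$, independent of $\gamma$.
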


\begin{lemma}
\label{lemma induction mirror symmetry D5}
    Let $X$ be separated and let $\Ab$ be a D5 brane of $X$ with local charge  $\w=\w(\Ab)>1$. Set $\w'=1$ and let $\widetilde X$ be the D5 resolution of $\Ab$ associated with the splitting $\w=\w'+\w''$. If Theorem \ref{theorem mirror symmetry stabs} holds for $\widetilde X$, then it also holds for $X$.
\end{lemma}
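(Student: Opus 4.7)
The plan is to expand both sides of the 3d mirror symmetry identity for $X$, using the D5 fusion formula of Proposition \ref{multiple terms D5 resolution stabs separated case} on the left and the dual NS5 fusion formula of Proposition \ref{proposition co-separated NS5 fusion ratios stabs} on the right, match the resulting sums term by term via the inductive hypothesis on $\widetilde X$, and reduce the compatibility of the resulting coefficients to a single identity on the model bow variety $F$ and its 3d mirror $Y = F^!$. Since $X$ is separated, $X^!$ is co-separated, so the D5 resolution of $\Ab \in X$ with splitting $\w = 1 + (\w-1)$ mirrors an NS5 resolution of the dual brane $\Zb^! \in X^!$ with the same splitting; Corollary \ref{Corollary: stable envelopes match under HW} together with the tie-diagram descriptions of Remark \ref{remark interpretation fixed points and fixed components A resolution} and Remark \ref{remark distinguished resolutions fixed points NS5 case} provide a compatible bijection $\tilde f \leftrightarrow \overline{f^!}$ between D5-resolutions of $f \in X^{\At}$ and NS5-resolutions of $f^! \in (X^!)^{\At^!}$ that sends the distinguished D5-resolution $\tilde f_\sharp$ to the distinguished NS5-resolution $\overline{f^!}_\sharp$. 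Moreover, the D5 model $F$ and the NS5 model $Y$ are themselves 3d mirror dual: both encode, after Hanany-Witten transition and mirror, the variety $T^*\mathbb{P}^{\w-1}$ whose stable envelopes are tabulated in Proposition \ref{prop:Pn}.

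Applying Proposition \ref{multiple terms D5 resolution stabs separated case} to the LHS of \eqref{3d mirror symmetry formula} and Proposition \ref{proposition co-separated NS5 fusion ratios stabs} to the RHS writes each side as a linear combination of normalized stable-envelope restrictions on $\widetilde X$ and $\widetilde{X^!}$, indexed by resolutions of $f$ and $g^!$ respectively; the coefficients are built from R-matrix entries of $F$ on one side and from $\vartheta$-ratios on $Y$ on the other, with prescribed shifts of the $a$- and $z$-parameters by explicit powers of $\h$. Under the bijection above, the inductive hypothesis applied to $\widetilde X$ converts each summand of the LHS into a summand of the RHS, up to the sign product $\varepsilon_{\widetilde X}(\tilde f)\varepsilon_{\widetilde X}(\tilde g_\sharp)$. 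The identity of Lemma \ref{lemma induction mirror symmetry D5} thereby reduces, term by term, to a single identity between an R-matrix coefficient on $F$ and a $\vartheta$-ratio coefficient on $Y$, up to the combined sign $\varepsilon_X(f)\varepsilon_X(g)\varepsilon_{\widetilde X}(\tilde f)\varepsilon_{\widetilde X}(\tilde g_\sharp)$. This is precisely 3d mirror symmetry for $T^*\mathbb{P}^{\w-1}$, which can be verified directly from the explicit formulas of Proposition \ref{prop:Pn} or extracted as a special case of the cotangent-bundle-of-full-flag-variety result of \cite{rimanyi20193d, RRAW_Langlands}.

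The main obstacle will be the bookkeeping of parameter shifts and of signs. The D5 fusion introduces equivariant shifts $a_i \mapsto a_i\h^{-\w''_i}$ and K\"ahler shifts $z \mapsto z\h^{-\gamma(f)}$, while the dual NS5 fusion introduces $z_i \mapsto z_i\h^{\w''}$ and $a \mapsto a\h^{\gamma(g)}$; one must show that these shifts conjugate into each other under $(a,z,\h) \leftrightarrow (z,a,\h^{-1})$, using Remark \ref{remark torus action on fibers of NS5 resolutions} and the compatibility of the statistic $\gamma(f)$ with its mirror dual. Similarly, the factorization of $\varepsilon_X$ under resolution, of the form
\[
\varepsilon_X(f)\varepsilon_X(g) \;=\; \varepsilon_{\widetilde X}(\tilde f_\sharp)\varepsilon_{\widetilde X}(\tilde g_\sharp)\cdot(\text{model sign}),
\]
will be checked via a direct combinatorial computation on tie diagrams, exploiting the fact that by construction the tie diagrams of $\tilde f_\sharp$ and $\tilde g_\sharp$ introduce no crossings beyond those already present in $f$ and $g$.
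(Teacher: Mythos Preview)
Your proposal is correct and follows essentially the same route as the paper: expand the LHS of \eqref{3d mirror symmetry formula} via the separated D5 fusion formula (Proposition~\ref{multiple terms D5 resolution stabs separated case}) and the RHS via the co-separated NS5 fusion formula (Proposition~\ref{proposition co-separated NS5 fusion ratios stabs}) on $X^!$, invoke the inductive hypothesis on $\widetilde X$ term by term, and reduce the residual coefficient identity to an explicit $\vartheta$-computation on the model pair $(F,Y)$, ultimately handled via the stable envelopes of $T^*\mathbb P^{\w-1}$ from Proposition~\ref{prop:Pn}. One small slip to fix when you write out the details: in your second paragraph the RHS sum is indexed by NS5-resolutions of $f^!$ (the restriction point in $\Stab{}{g^!}|_{f^!}$), not of $g^!$; this is what your first paragraph's bijection $\tilde f\leftrightarrow\overline{f^!}$ already says, and it is what makes the term-by-term match go through.
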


As it will become clear from their proofs, it is reasonable to expect both lemmas above to hold without the assumptions $\w''=1$ or $\w'=1$. However, these assumptions greatly simplify certain computations, so we restrict ourselves to these special cases. We will prove these lemmas in the next section. Before that, let us explain why they imply Theorem \ref{theorem mirror symmetry stabs}.

\begin{proof}[Proof of Theorem \ref{theorem mirror symmetry stabs}] Since both the stable envelopes and the sign $\varepsilon_X(f)$ are invariant under Hanany-Witten isomorphism, we can assume that $X$ is separated. Moreover, we claim that it suffices to assume that all the local charges (both D5 and NS5) of $X$ are strictly positive. Indeed, as shown in Section \ref{section: Getting rid of the ``trivial'' branes}, the ratios of stable envelopes are unaffected by the insertions of charge zero branes. Likewise, inserting a D5 (resp. NS5) brane in the brane diagram of $X$ corresponds to adding a column (resp. a row) of zeros in the BCT tables of its fixed points. Consequently, the signs $\varepsilon_X(f)$ and $\varepsilon_X(g)$ are also unaffected by charge zero 5-branes. This proves the claim.

Overall, we can assume that $X$ is separated and all its local charges are strictly positive. We argue by induction on the number $N(X)$ of 5-branes $\mathcal{B}$ with local charge $\w(\mathcal{B})>1$. The case $N(X)=0$ is the base of our induction. In this case, we have $X\cong T^*\Fl_d\cong X^!$, where $d$ is the dimension of the unique D3 brane of $X$ in between a D5 and an NS5 brane, and Theorem \ref{theorem mirror symmetry stabs} is already proved in \cite{Rimanyi_2019full}. 

Assume now that the theorem holds for all $X$ such that $N(X)\leq M$ for some non-negative $M$ and fix any $X$ such that $N(X) =M+1$. This means that there exists at least one 5-brane $\mathcal{B}$ in $X$ with local charge $\w=\w(\mathcal{B})>1$. If $\mathcal{B}=\Zb$ (resp. $\mathcal{B}=\Ab$), then applying Lemma \ref{lemma induction mirror symmetry NS5} (resp. Lemma \ref{lemma induction mirror symmetry D5}) $\w-1$ times, it follows that the theorem holds for $X$ if it holds for the variety $X^{(\w-1)}$ obtained from $X$ by replacing $\mathcal{B}$ with $\w$ consecutive 5-branes of the same type with local charges all equal to one. But $N(X^{(\w-1)})=N(X)-1=M$, so by the inductive hypothesis the theorem holds for $X^{(\w-1)}$ and hence also for $X$.

\end{proof}

\subsection{Proofs of Lemma \ref{lemma induction mirror symmetry NS5} and Lemma \ref{lemma induction mirror symmetry D5}}

\begin{proof}[Proof of Lemma \ref{lemma induction mirror symmetry NS5}]  We first describe the strategy of the proof.  Let $m$ and $n$ be, respectively, the number of NS5 and D5 branes of the separated bow variety $X$. The strategy for the proof is the following. Consider the NS5 resolution $\overline X$ of the brane $\Zb$ in $X$ and the D5 resolution $\widetilde{(X^!)}$ of the brane $\Ab=\Zb^!$ in $X^!$. Clearly, we have $(\overline X)^!=\widetilde{(X^!)}$. By Proposition \ref{proposition separated NS5 fusion ratios stabs}, we express the left-hand side of \eqref{3d mirror symmetry formula} as a linear combination of stable envelopes of  $\overline X$. Similarly, by Proposition \ref{multiple terms D5 resolution stabs co-separated case}, we express the right-hand side as a linear combination of the stable envelopes of $\widetilde{(X^!)}=(\overline X)^!$. Since we assume that mirror symmetry holds for $\overline X$, to conclude that it also holds for $X$ it suffices to show that the coefficients in these two linear combinations match after the changes of variables prescribed by mirror symmetry. This comparison of the coefficients is done by means of an explicit computation.
	
After this overview of our argument's structure, we now spell it out in detail. For the sake of notation, say that $\Zb$ is the $k$-th NS5 brane in $X$. Then $X$ and $\overline X$ are
\begin{equation*}
	\begin{tikzpicture}[baseline=0,scale=0.4]
		\draw [thick,red] (0.6,0) --(1.4,2); 
		\draw [thick,red](3.6,0) --(4.4,2);  
		\draw [thick,red](8.6,0) -- (9.4,2);  
		\draw [thick,red](13.6,0) -- (14.4,2); 
		\draw [thick,red] (16.6,0) -- (17.4,2);
		\draw [thick,blue](20.4,0) -- (19.6,2); 
		\draw [thick,blue](23.4,0) -- (22.6,2);
		\draw [thick,blue] (26.4,0) -- (25.6,2); 
		\draw [thick,blue](31.4,0) -- (30.6,2);  
		\draw [thick,blue](34.4,0) -- (33.6,2); 
		
		\draw [thick] (1,1) node[left]{$X=$}--(4,1);
		\draw [thick, dashed] (4,1)--(14,1);
		\draw [thick] (14,1)--(26,1);
		\draw [thick, dashed] (26,1)--(31,1);
		\draw [thick] (31,1)--(34,1);

		\node at (1,-.8) {$z_{1}$};
		\node at (4,-.8) {$z_{2}$};
		\node at (9,-.8) {$z_{k}$};
		\node at (14,-.8) {$z_{m-1}$};
		\node at (17,-.8) {$z_{m}$};
		
		\node at (20.7,-.8) {$a_{1}$};
		\node at (23.7,-.8) {$a_{2}$};
		\node at (26.7,-.8) {$a_{3}$};
		\node at (31.7,-.8) {$a_{n-1}$};
		\node at (34.7,-.8) {$a_{n}$};
		
		\node at (9.5,2.8) {$\w$};
		
	\end{tikzpicture}
\end{equation*}
\begin{equation*}
	\begin{tikzpicture}[baseline=0,scale=.4]
		\draw [thick,red] (0.6,0) --(1.4,2); 
		\draw [thick,red](3.6,0) --(4.4,2);  
		\draw [thick,red](8.6,0) -- (9.4,2);  
		\draw [thick,red](11.6,0) -- (12.4,2); 
		\draw [thick,red] (16.6,0) -- (17.4,2);
		\draw [thick,red](19.6,0) -- (20.4,2); 
		\draw [thick,blue](23.4,0) -- (22.6,2);
		\draw [thick,blue] (26.4,0) -- (25.6,2); 
		\draw [thick,blue] (29.4,0) -- (28.6,2); 
		\draw [thick,blue](34.4,0) -- (33.6,2);  
		\draw [thick,blue](37.4,0) -- (36.6,2); 
		
		\draw [thick] (1,1) node[left]{$\overline X=$}--(4,1);
		\draw [thick, dashed] (4,1)--(9,1);
		\draw [thick] (9,1)--(12,1);
		\draw [thick, dashed] (12,1)--(17,1);
		\draw [thick] (17,1)--(29,1);
		\draw [thick, dashed] (29,1)--(34,1);
		\draw [thick] (34,1)--(37,1);

		\node at (1,-.8) {$z_{1}$};
		\node at (4,-.8) {$z_{2}$};
		\node at (9,-.8) {$z'_{k}$};
		\node at (12.3,-.8) {$z''_{k}$};
		\node at (17,-.8) {$z_{m-1}$};
		\node at (20,-.8) {$z_{m}$};
		
		\node at (23.7,-.8) {$a_{1}$};
		\node at (26.7,-.8) {$a_{2}$};
		\node at (29.7,-.8) {$a_{3}$};
		\node at (34.7,-.8) {$a_{n-1}$};
		\node at (37.7,-.8) {$a_{n}$};
		
		\node at (9.5,2.8) {$\w'$};
		\node at (12.5,2.8) {$\w''$};
		
	\end{tikzpicture}.
\end{equation*}
Fix $f\in X^{\At}$ and recall that by assumption $\w''=1$ and hence $\w'=\w-1$. As stated in Lemma \ref{lemma points in lagrangian variety Z resolution} (2), associated with $f$ are $\binom{\w'+\w''}{\w'}=\w$ resolutions $\bar f$. The latter can be both identified with the fixed points in $Y=\ttt{{\fs}$\w-1${\fs}$\w${\bs}\dots{\bs}3{\bs}2{\bs}1{\bs}}$ or with certain fixed points in 
$\overline X$ by means of the embedding $Y^{\hbar }\hookrightarrow \overline X$. For any $i=1,\dots, \w$, let us denote by $f_i$ the resolution of $f$ that, under the identification $Y\cong T^*\Gr{\w-1}{\w}$, corresponds to the hyperplane dual to the $i$-th coordinate plane in $\mathbb{C}^{\w}$\footnote{Writing $\bar f_i$ instead of $f_i$ would be more consistent with our previous notation, but would also make the present proof notationally too heavy.}. In the language of tie diagrams, $f_i$ is the unique tie diagram of  $\ttt{{\fs}$\w-1${\fs}$\w${\bs}\dots{\bs}3{\bs}2{\bs}1{\bs}}$ connecting the rightmost NS5 brane to the $i$-th D5 brane (counting from left to right). Notice that this ordering is consistent with our choice of chamber because we have 
\begin{equation}
	\label{fixed point order proof 3d mirror symmetry}
f_1<f_2<\dots <f_{\w-1}<f_{\w},
\end{equation}
both in $Y$ and in $\overline X$. In particular, the minimal resolution $\bar{f}_\sharp$ introduced in Theorem \ref{main theorem proof NS5 resolution for stabs} coincides with $f_{\w}$ (see also Remark \ref{remark distinguished resolutions fixed points NS5 case}). By Proposition \ref{proposition separated NS5 fusion ratios stabs}, the stable envelopes of $X$ and $\overline X$ are related as follows 
\begin{multline}
	\label{mirror symmetry proof equation 1}
	\frac{\StabX{X}{}{f}\Big|_g}{\StabX{X}{}{g}\Big|_g}(a,z,\h)\\
    =
	\sum_{ i=1}^{\w}  d_{i}(a\h^{-\gamma(g)},\h) 
	\frac{\StabX{\overline{X}}{}{f_{\w}}\Big|_{ g_i}}{\StabX{\overline{X}}{}{g_i}\Big|_{ g_i}}
	(a, z_1,\dots,z'_{i}=z_k, z''_{k}=z_k\h^{1-\w}, \dots, z_m, \h),
\end{multline}
where 
\begin{equation*}
	d_{i}(a,\h)=\frac{\vartheta(N_{ g_i/Y}^-)}{\vartheta(N_{ g_i/Y^{\h}})}(a,\h)
\end{equation*}
is an elliptic class of $Y\cong T^*\Gr{\w-1}{\w}$.

Let us now look at the dual side. As noticed above, we have $(\overline X)^!\cong \widetilde{X^!}$, where $\widetilde{X^!}$ is the D5 resolution of the $i$-th D5 brane in $X^!$: 
\begin{equation*}
	\begin{tikzpicture}[baseline=0,scale=.4]
		\draw [thick,blue] (1.4,0) --(0.6,2); 
		\draw [thick,blue](4.4,0) --(3.6,2);  
		\draw [thick,blue](9.4,0) -- (8.6,2);  
		\draw [thick,blue](14.4,0) -- (13.6,2); 
		\draw [thick,blue] (17.4,0) -- (16.6,2);
		\draw [thick,red](19.6,0) -- (20.4,2); 
		\draw [thick,red](22.6,0) -- (23.4,2);
		\draw [thick,red](27.6,0) -- (28.4,2);  
		\draw [thick,red](30.6,0) -- (31.4,2); 
		
		\draw [thick] (1,1) node[left]{$X^!=$}--(4,1);
		\draw [thick, dashed] (4,1)--(14,1);
		\draw [thick] (14,1)--(23,1);
		\draw [thick, dashed] (23,1)--(28,1);
		\draw [thick] (28,1)--(31,1);

		\node at (1.7,-.8) {$z_{1}$};
		\node at (4.7,-.8) {$z_{2}$};
		\node at (9.7,-.8) {$z_{i}$};
		\node at (14.7,-.8) {$z_{m-1}$};
		\node at (17.7,-.8) {$z_{m}$};
		
		\node at (20,-.8) {$a_{1}$};
		\node at (23,-.8) {$a_{2}$};
		\node at (28,-.8) {$a_{n-1}$};
		\node at (31,-.8) {$a_{n}$};
		
		\node at (8.5,2.8) {$\w$};
		
	\end{tikzpicture}
\end{equation*}
\begin{equation*}
	\begin{tikzpicture}[baseline=0,scale=.4]
		\draw [thick,blue] (1.4,0) --(0.6,2); 
		\draw [thick,blue](4.4,0) --(3.6,2);  
		\draw [thick,blue](9.4,0) -- (8.6,2);  
		\draw [thick,blue](12.4,0) -- (11.6,2); 
		\draw [thick,blue] (17.4,0) -- (16.6,2);
		\draw [thick,blue](20.4,0) -- (19.6,2); 
		\draw [thick,red](22.6,0) -- (23.4,2);
		\draw [thick,red] (25.6,0) -- (26.4,2); 
		\draw [thick,red](30.6,0) -- (31.4,2);  
		\draw [thick,red](33.6,0) -- (34.4,2); 
		
		\draw [thick] (1,1) node[left]{$\widetilde{X^!}=(\overline X)^!=$}--(4,1);
		\draw [thick, dashed] (4,1)--(9,1);
		\draw [thick] (9,1)--(12,1);
		\draw [thick, dashed] (12,1)--(17,1);
		\draw [thick] (17,1)--(26,1);
		\draw [thick, dashed] (26,1)--(31,1);
		\draw [thick] (31,1)--(34,1);

		\node at (1.7,-.8) {$z_{1}$};
		\node at (4.7,-.8) {$z_{2}$};
		\node at (9.7,-.8) {${z_{i}}'$};
		\node at (12.7,-.8) {${z_{i}}''$};
		\node at (17.7,-.8) {$z_{m-1}$};
		\node at (20.7,-.8) {$z_{m}$};
		
		\node at (23,-.8) {$a_{1}$};
		\node at (26,-.8) {$a_{2}$};
		\node at (31,-.8) {$a_{n-1}$};
		\node at (34,-.8) {$a_{n}$};
		
		\node at (8.5,2.8) {$\w'$};
		\node at (11.5,2.8) {$\w''$};
		
	\end{tikzpicture}.
\end{equation*}
Notice that here the variables $z$ play the role of the equivariant parameters, while $a$ the one of the K\"ahler parameters. Recall that the torus $\At^!$ acting on $X^!$ (the space of the variables $z$) is a proper subtorus of the torus $\widetilde{\At^!}$ acting on $\widetilde{X^!}$. Fix some $f^!\in (X^!)^{\At^!}$. Like in the NS5 case discussed above, by Lemma \ref{lemma Fixed components D5 res} we associate with $f^!$ its resolutions $\widetilde{(f^!)}$. The latter can be identified both with the $\widetilde{\At^!}$-fixed points in $F=\ttt{{\bs}$\w-1${\bs}$\w${\fs}\dots{\fs}3{\fs}2{\fs}1{\fs}}$ or with certain $\widetilde{\At^!}$-fixed points in $\widetilde{X^!}$ by means of the embedding $F\hookrightarrow  \widetilde{X^!}$.

 The first key observation for the proof is that $F=Y^!$, where $Y$ is the bow variety introduced above.  Mirror symmetry of fixed points for the pair $(Y, Y^!=F)$ implies that we have a one-to-one correspondence between NS5 resolutions of $f\in X^{\At}$ and D5 resolutions of $f^!\in (X^!)^{\At^!}$. We label the resolutions of $f$ according to mirror symmetry, i.e. we set $f^!_i:=(f_i)^!$ for $i=1\dots, \w$. The distinguished resolution $\widetilde{(f^!)}_{\sharp}$ from Lemma \ref{restrction Chern roots in A resolution} is identified with $f^!_{\w}$. With this notation, Proposition \ref{multiple terms D5 resolution stabs co-separated case} states that
\begin{multline}
	\label{mirror symmetry proof equation 2}
	\frac{\StabX{X^!}{}{g^!}\Big|_{f^!}}{\StabX{X^!}{}{f^!}\Big|_{f^!}}(z, a,\h^{-1}) \\
	=
	\sum_{i=1}^{\w} c_i(  a\h^{-\gamma(g)}, \h^{-1})\frac{\StabX{\widetilde{X^!}}{}{g^!_i}\Big|_{f^!_{\w}}}{\StabX{\widetilde{X^!}}{}{f^!_{\w}}\Big|_{f^!_{\w}}}(z_1,\dots,z_k'=z_k, z_k''=z_k\h^{1-\w},\dots, z_m, a,\h^{-1}),
\end{multline}
where 
\begin{equation}
    \label{definition c_i coeff in proof mirror symmetry}
    c_i(a,\h^{-1})= \frac{ \left( R_{\widetilde{\mathfrak{C}}/\mathfrak{C},\widetilde{\mathfrak{C}}^\vee / \mathfrak{C}}\right)_{g^!_i, g^!_{\w}}}{\left( R_{\widetilde{\mathfrak{C}}/\mathfrak{C},\widetilde{\mathfrak{C}}^\vee / \mathfrak{C}}\right)_{g^!_{\w},  g^!_{\w}}}(z_k,z_k\h^{1-\w}, a, \h^{-1})
\end{equation}
is a ratio of R-matrix coefficients of the bow variety $F=\ttt{{\bs}$\w-1${\bs}$\w${\fs}\dots{\fs}3{\fs}2{\fs}1{\fs}}$. Notice that since the R-matrix only depends on the ratio $z_k\h^{1-\w}/z_k=\h^{1-\w}$ of the equivariant parameters, the dependence on $z_k$ is only apparent. 

%As already noticed, we have $(\overline X)^!=\widetilde{X^!}$. Moreover, the fixed points indexing 
%
%\begin{align*}
%    \frac{\StabX{X^!}{C^!}{g^!}\Big|_{f^!}}{\StabX{X^!}{C^!}{f^!}\Big|_{f^!}}(a^!,z^!,\h) 
%    =
%    \sum_{\widetilde{(g^!)}\in F^{\widetilde \At} }c_{\widetilde{(g^!)} }(  z\hbar^{\gamma(g)}, \h)
%    \frac{\StabX{\overline X}{\overline C}{\bar f_\sharp}\Big|_{\bar g^!}}{\StabX{\overline X}{\overline C}{\bar g^!}\Big|_{\bar g^!}}
%    (a^!_1,\dots,{a_i^!}'=a_i^!, {a_i^!}''=a^!_i\h^{\w'},\dots, a^!_n, z^!,\h)
%\end{align*}
%
%At the level of fixed points, $\widetilde{(f^!)}_{\sharp}\in \widetilde{X^!}^{\widetilde \At}$ gets identified with $f$

All the ingredients have finally been set. Applying the assumption of the lemma, i.e. mirror symmetry for stable envelopes of the pair $(\overline X, (\overline X)^!=\widetilde{X^!})$, to each of the summands on the right-hand side of \eqref{mirror symmetry proof equation 2}, we deduce that the right hand side of \eqref{3d mirror symmetry formula}, i.e. the class
\[
\varepsilon_X(f)\varepsilon_X(g)\frac{\StabX{X^!}{}{g^!}\Big|_{f^!}}{\StabX{X^!}{}{f^!}\Big|_{f^!}}(z, a,\h^{-1}),
\]
is equal to 
\begin{equation*}
	\sum_{ i=1}^{\w}  \frac{\varepsilon_X(f)\varepsilon_X(g)}{\varepsilon_{\overline X}(f_{\w})\varepsilon_{\overline X}(g_i)} c_i(a\h^{-\gamma(g)},\h^{-1})
	\frac{\StabX{\overline{X}}{}{f_{\w}}\Big|_{ g_i}}{\StabX{\overline{X}}{}{g_i}\Big|_{ g_i}}
	(a, z_1,\dots,z'_{i}=z_k, z''_{k}=z_k\h^{1-\w}, \dots, z_m, \h).
\end{equation*}
Comparing this last formula with \eqref{mirror symmetry proof equation 1}, we deduce that to complete the proof of the lemma it suffices to show that the coefficients of these two resolutions are mirror dual, i.e. that
\begin{equation}
	\label{equation equality coefficients resolutions}
	\frac{\varepsilon_X(f)\varepsilon_X(g)}{\varepsilon_{\overline X}(f_{\w})\varepsilon_{\overline X}(g_i)} c_i(a,\h^{-1})= d_{i}(a,\h) 
\end{equation}
for all $i=1,\dots,\w$. We check equation \eqref{equation equality coefficients resolutions} via an explicit computation of both sides, performed in the next three lemmas. Namely, in Lemma \ref{lemma 1 proof mirror symmetry stab} we compute $d_{i}(a,\h)$, in Lemma \ref{lemma 2 proof mirror symmetry stab} we compute $c_i(a,\h^{-1})$, and in the last lemma we show that these two classes exactly differ by the sign $\frac{\varepsilon_X(f)\varepsilon_X(g)}{\varepsilon_{\overline X}(f_{\w})\varepsilon_{\overline X}(g_i)}$.
\end{proof}

\begin{lemma}
\label{lemma 1 proof mirror symmetry stab}
	Consider the bow variety $Y=\ttt{{\fs}$\w-1${\fs}$\w${\bs}\dots{\bs}3{\bs}2{\bs}1{\bs}}$. We have
	\[
	d_{i}(a,\h)=\prod_{j=i+1}^{\w}\frac{\vartheta\left(\frac{a_i\h}{a_j}\right) }{\vartheta\left(\frac{a_j}{a_i}\right)}.
	\]
\end{lemma}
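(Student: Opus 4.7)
My plan is to identify $Y$ with $T^*\mathbb{P}^{\w-1}$, and then read off the two $\vartheta$-Euler classes in $d_i(a,\h)$ directly from the torus weights at the fixed point $g_i$.

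Since $Y$ has D5 charge vector $(1,\ldots,1)$ and NS5 charges $(\w-1,1)$, Section~\ref{sec:quivers} identifies it with $T^*\Gr{\w-1}{\w}\cong T^*\mathbb{P}^{\w-1}$, equipped with the standard $\At$-action on $\mathbb{C}^{\w}$ and with $\Cs_{\h}$ scaling the cotangent fibers. By Lemma~\ref{lemma points in lagrangian variety Z resolution}(1), $Y^{\h}$ is the zero section $\mathbb{P}^{\w-1}$, so $N_{g_i/Y^{\h}}=T_{g_i}\mathbb{P}^{\w-1}$. The butterfly-diagram rules of \cite[Sec.~4.4]{rimanyi2020bow} (cf.~Section~\ref{sec:fixedpoints}) identify the tie-diagram fixed point $g_i$ as the one at which $T_{g_i}\mathbb{P}^{\w-1}$ carries $\At$-weights $\{a_j/a_i : j\ne i\}$, so the cotangent fibers (with their $\h$-twist) carry weights $\{\h\,a_i/a_j : j\ne i\}$.

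Under the standard chamber $\mathfrak{C}=\{a_1<\cdots<a_{\w}\}$, a weight $a_j/a_i$ is $\mathfrak{C}$-negative iff $j<i$, and $\h\,a_i/a_j$ has negative $\At$-component iff $j>i$. Hence
\[
N^-_{g_i/Y}\;=\;\bigoplus_{j<i}\mathbb{C}_{a_j/a_i}\;\oplus\;\bigoplus_{j>i}\mathbb{C}_{\h\,a_i/a_j},\qquad N_{g_i/Y^{\h}}\;=\;\bigoplus_{j\ne i}\mathbb{C}_{a_j/a_i}.
\]
Taking $\vartheta$-Euler classes and cancelling the common factor $\prod_{j<i}\vartheta(a_j/a_i)$ between numerator and denominator yields precisely
\[
d_i(a,\h)\;=\;\prod_{j=i+1}^{\w}\frac{\vartheta(a_i\h/a_j)}{\vartheta(a_j/a_i)}.
\]

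The only bookkeeping step is fixing the identification in the second paragraph: one has to check, by tracing the weight assignments through the butterfly diagrams, that the tie-diagram $g_i$ really corresponds to the fixed point with tangent characters $\{a_j/a_i : j\ne i\}$, and that the standard chamber induces the attracting ordering $g_1<g_2<\cdots<g_{\w}$ used in~\eqref{fixed point order proof 3d mirror symmetry}. Once this convention check is made, the weight count above is immediate and no elliptic-function identities are needed.
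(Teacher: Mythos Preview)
Your proof is correct and follows essentially the same approach as the paper's: the paper simply says the result ``directly follows from the tautological description of the tangent space $TY$'' (equivalently, from the standard description of $T_{g_i}(T^*\Gr{\w-1}{\w})$), and you have spelled out that weight computation explicitly. Your decomposition $N^-_{g_i/Y}=\bigoplus_{j<i}\mathbb{C}_{a_j/a_i}\oplus\bigoplus_{j>i}\mathbb{C}_{\h a_i/a_j}$ and $N_{g_i/Y^{\h}}=\bigoplus_{j\ne i}\mathbb{C}_{a_j/a_i}$ is exactly the content behind the paper's one-line justification, and the cancellation you perform is immediate. The final bookkeeping remark about the ordering in~\eqref{fixed point order proof 3d mirror symmetry} is not actually needed for the computation of $d_i$ itself, so even if that convention check were subtle it would not affect the argument.
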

\begin{proof}
	By definition 
	\[
	d_{i}(a,\h)=\frac{\vartheta(N_{ g_i/Y}^-)}{\vartheta(N_{ g_i/Y^{\h}})}(a,\h),
	\]
	hence the proof directly follows from the tautological description of the tangent space $TY$ of the bow variety $Y$ \cite[Section 3.2]{rimanyi2020bow}. Alternatively, one can use one of the standard descriptions of the tangent space of $T^*\Gr{\w-1}{\w}\cong Y$.
\end{proof}

\begin{lemma}
\label{lemma 2 proof mirror symmetry stab}
	Consider the bow variety $Y^!=F=\ttt{{\bs}$\w-1${\bs}$\w${\fs}\dots{\fs}3{\fs}2{\fs}1{\fs}}$. As before, denote by $z'$ and $z''$ its equivariant parameters and by $a$ its K\"ahler parameters. Let $R:= R_{\lbrace z'<z''\rbrace ,\lbrace z'>z''\rbrace}$ be its R-matrix. We have
	\[
	\frac{ R_{g^!_i, g^!_{\w}}}{R_{g^!_{\w}, g^!_{\w}}}(z',z'', a,\hbar)=\frac{\text{Stab}^F_{\lbrace z'>z''\rbrace }(g^!_i)\Big|_{g^!_{\w}}}{\text{Stab}^F_{\lbrace z'>z''\rbrace }(g^!_{\w})\Big|_{g^!_{\w}}}(z',z'',a^{-1},\h).
	\]
	Computing the right-hand side, we deduce that 
	\[
	c_{i}(a,\h^{-1})=(-1)^{\w-i} \prod_{j=i+1}^{\w}\frac{\vartheta\left(\frac{a_i\h}{a_j}\right) }{\vartheta\left(\frac{a_j}{a_i}\right)}.
	\]
\end{lemma}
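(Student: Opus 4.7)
The proof naturally splits in two parts. The first part establishes the claimed identity between the ratio of R-matrix coefficients and a ratio of stable envelope restrictions. Starting from the defining R-matrix relation of Proposition \ref{proposition R matrix relations} applied to the chambers $\mathfrak{C}=\{z'<z''\}$ and $\mathfrak{C}^{opp}=\{z'>z''\}$ of $F$, namely
\[
\text{Stab}^F_{\{z'>z''\}}(g^!_i)=\sum_{j=1}^{\w}\bigl(\Rmat{\{z'<z''\}}{\{z'>z''\}}\bigr)_{g^!_j,g^!_i}\,\text{Stab}^F_{\{z'<z''\}}(g^!_j),
\]
I restrict both sides to the fixed point $g^!_\w$. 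By the chamber order \eqref{fixed point order proof 3d mirror symmetry}, $g^!_\w$ is maximal in $\{z'<z''\}$, so the support axiom (Remark \ref{Remark axiomatic stab}) kills every summand except $j=\w$, and a similar computation for $i=\w$ yields the diagonal counterpart. Dividing one identity by the other, I obtain the ratio of stable envelopes on one side and the ratio $R_{g^!_\w,g^!_i}/R_{g^!_\w,g^!_\w}$ on the other. The symmetry of Corollary \ref{symmetry R matrix} applied to $F$, whose NS5 charge vector equals $(1,\dots,1)$, now exchanges the two subscripts at the price of substituting the Kähler parameters $a_j \mapsto a_j^{-1}$ (with the $\hbar$-shifts forced by the charges absorbed against the ratio structure). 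This produces the first claim.

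For the second part, I use the fact that $F$ is 3d mirror to $Y\cong T^*\mathbb{P}^{\w-1}$ and is itself a co-separated brane-diagram incarnation of $T^*\mathbb{P}^{\w-1}$. The stable envelopes are therefore computed by Proposition \ref{prop:Pn}, up to the exchange of roles between the (D5) equivariant variables $z',z''$ and the (NS5) Kähler variables $a_1,\dots,a_\w$, and using the chamber $\{z'>z''\}$ opposite to the one tabulated there. Under the identification of Remark \ref{remark resolution of fixed points NS5 resolutions}, the fixed point $g^!_j$ corresponds to the tie connecting the rightmost NS5 brane to the $j$-th D5 brane, and the Chern root of the relevant tautological line bundle specializes at $g^!_\w$ to a monomial in $z',z''$ and $\hbar$. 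Plugging the specializations of Section \ref{sec:StabForPn} into the $f'_k$-formula of Proposition \ref{prop:Pn} and forming the ratio, the factors involving only $z',z''$ in the diagonal denominator cancel; what survives is precisely the product $\prod_{j=i+1}^{\w}\vartheta(a_i\hbar/a_j)/\vartheta(a_j/a_i)$, up to the sign $(-1)^{\w-i}$ produced by applying the oddness $\vartheta(x^{-1})=-\vartheta(x)$ exactly $\w-i$ times, once per factor generated by the substitution $a_j\mapsto a_j^{-1}$ coming from the first part.

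The main obstacle is the careful bookkeeping of the auxiliary shifts: on the R-matrix side I must check that the $\hbar$-weights dictated by Corollary \ref{symmetry R matrix} combine correctly with the NS5 charges of $F$ so that the Kähler substitution is exactly $a\mapsto a^{-1}$; on the stable envelope side I must extract the opposite chamber formula from Proposition \ref{prop:Pn} (which is stated for the standard chamber only), verify it matches the one needed for the co-separated realization of $F$, and track signs consistently through the theta-function manipulations. Once these bookkeeping issues are settled, the comparison between the algebraic output of the two computations is a direct cancellation.
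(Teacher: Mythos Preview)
Your first part is essentially the paper's argument, just with the two steps in reverse order: the paper applies Corollary~\ref{symmetry R matrix} first and then expands the R-matrix relation at the maximal fixed point~$g^!_{\w}$, but the content is the same.

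The second part, however, contains a genuine error. You assert that $F=\ttt{{\bs}\w-1{\bs}\w{\fs}\dots{\fs}2{\fs}1{\fs}}$ ``is itself a co-separated brane-diagram incarnation of $T^*\mathbb{P}^{\w-1}$'' and then try to feed it to Proposition~\ref{prop:Pn}. This identification is false for $\w>2$: the variety $F$ has exactly two D5 branes (equivariant torus of rank $2$, parameters $z',z''$) and $\w$ NS5 branes (K\"ahler torus of rank $\w$), whereas the co-separated diagram $\DD'$ of Proposition~\ref{prop:Pn} has $n$ D5 branes and two NS5 branes. In fact $F=Y^!$ is the 3d mirror \emph{dual} of $T^*\mathbb{P}^{\w-1}$, which is a genuinely different variety unless $\w=2$. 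The ``exchange of roles between the equivariant variables $z',z''$ and the K\"ahler variables $a_1,\dots,a_\w$'' you invoke is precisely a mirror-symmetry statement for the pair $(Y,F)$, not a change of presentation; it cannot be read off from Proposition~\ref{prop:Pn}.

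The paper closes this gap with an extra maneuver you are missing. It first applies the symmetric-group action of Proposition~\ref{innocent identity} to pass from $F$ to $F^\vee$, the bow variety obtained by swapping the two D5 branes. The point of this swap is that $(F^\vee)^!=\ttt{{\fs}1{\fs}\w{\bs}\dots{\bs}2{\bs}1{\bs}}$ is now exactly the \emph{separated} brane diagram $\DD$ of Proposition~\ref{prop:Pn}. Then the paper invokes the already-known mirror symmetry for $T^*\mathbb{P}^{\w-1}$ (which is one of the base cases cited in the introduction) to convert the stable envelopes of $F^\vee$ into those of $(F^\vee)^!$, and only then applies the explicit $f_k$-formula of Proposition~\ref{prop:Pn}. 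The sign bookkeeping you anticipate is handled at this last stage via the signs $\varepsilon_{(F^\vee)^!}(h^!_i)$ coming from the mirror-symmetry statement.
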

\begin{proof}
Firstly, notice that all the NS5 charges $r_i$ of $F$ are equal to one. Thus, applying Corollary \ref{symmetry R matrix} and keeping in mind that now $a$ is the K\"ahler parameter, we get 
\[
\frac{ R_{g^!_i, g^!_{\w}}}{R_{g^!_{\w}, g^!_{\w}}}(z',z'', a,\hbar)=\frac{ R_{g^!_{\w}, g^!_i}}{R_{g^!_{\w}, g^!_{\w}}}(z',z'', a^{-1},\hbar).
\]
By definition of R-matrix, we have 
\[
\text{Stab}^F_{\lbrace z'>z''\rbrace }(g^!_i)\Big|_{g^!_{\w}}=\sum_{j=1}^{\w} \text{Stab}^F_{\lbrace z'<z''\rbrace }(g^!_j)\Big|_{g^!_{\w}} R_{g^!_j, g^!_i}
\]
for all $ i,j=1,\dots, \w$. Since $g^!_{\w}$ is maximal\footnote{Notice that the mirror dual of  $g^!_{\w}$, namely the fixed point  $g_{\w}\in Y^{\At}$, is instead minimal in the order introduced in \eqref{fixed point order proof 3d mirror symmetry}. This is in agreement with the fact that mirror symmetry inverts the poset structure of the dual fixed loci.} with respect to the partial order determined by $\lbrace z'<z'' \rbrace$, only the diagonal term on the right-hand side survives; hence, we get 
\[
\text{Stab}^F_{\lbrace z'>z''\rbrace }(g^!_i)\Big|_{g^!_{\w}}= \text{Stab}^F_{\lbrace z'<z''\rbrace }(g^!_{\w})\Big|_{g^!_{\w}} R_{g^!_{\w}, g^!_i},
\]
from which the first statement follows.

In order to deduce the second statement from the first one, we first apply Proposition \ref{innocent identity} to get
\[
\frac{ R_{g^!_i, g^!_{\w}}}{R_{g^!_1, g^!_{\w}}}(z',z'', a,\hbar)
=
\frac{\text{Stab}^{F^\vee}_{\lbrace z'<z''\rbrace }(h_i)\Big|_{h_{\w}}}{\text{Stab}^{F^\vee}_{\lbrace z'<z''\rbrace }(h_{\w})\Big|_{h_{\w}}}(z'',z',a^{-1},\h).
\]
Here, $F^\vee$ is the bow variety $\ttt{{\bs}$1${\bs}$\w-1${\fs}\dots{\fs}3{\fs}2{\fs}1{\fs}}$ and $h_i:= (g^!_i)^\vee$.  In terms of tie diagrams, the point $h_i$ corresponds to the unique tie diagram of $\ttt{{\bs}$1${\bs}$\w-1${\fs}\dots{\fs}3{\fs}2{\fs}1{\fs}}$ connecting the leftmost D5 brane to the $i$-th NS5 brane on the right. 

At this point, one could directly compute the right-hand side using a tautological presentation of the stable envelopes \cite{RRTBshuffle}, but we choose a different strategy that simplifies the computation. Notice that $F^\vee$ is dual to the bow variety $(F^\vee)^!= \ttt{{\fs}$1${\fs}$\w${\bs}\dots{\bs}3{\bs}2{\bs}1{\bs}} \cong T^*\mathbb{P}^{\w-1}$ (which should not be confused with $Y\cong \ttt{{\fs}$\w-1${\fs}$\w${\bs}\dots{\bs}3{\bs}2{\bs}1{\bs}}\cong T^*\Gr{\w-1}{\w}$).
Since 3d mirror symmetry for stable envelopes of $T^*\mathbb{P}^{\w-1}$ is known\footnote{Mirror symmetry of $X=T^*\Gr{k}{n}$ and its dual $X^!$ was, before this article, the only known case beyond the self duality of the cotangent bundle of the full flag variety.} \cite{rimanyi20193d}, we can invoke it to deduce that
\[
\frac{ R_{g^!_i, g^!_{\w}}}{R_{g^!_{\w}, g^!_{\w}}}(z',z'', a,\h)
=
\varepsilon_{(F^\vee)^!}(h^!_i)\varepsilon_{(F^\vee)^!}(h^!_{\w}) \frac{\StabX{(F^\vee)^!}{}{h^!_{\w}}\Big|_{h^!_i}}{\StabX{(F^\vee)}{ }{h^!_i}\Big|_{h^!_i}}(a^{-1}, z'',z', \h^{-1}).
\]
As usual, the standard chamber is understood, and $h^!_i:=(h_i)^!$. In particular, the fixed point $h^!_i$  is the $i$-th coordinate plane in $\mathbb{P}^{\w-1}\subset T^*\mathbb{P}^{\w-1}$. Recalling the definition of $c_i(a,\hbar^{-1})$ from equation \eqref{definition c_i coeff in proof mirror symmetry}, we finally have
\begin{equation}
	\label{key formula for lemma comparison coefficients}
c_i(a,\h^{-1})=\frac{ R_{g^!_i, g^!_{\w}}}{R_{g^!_{\w}, g^!_{\w}}}(z,z\h^{1-\w}, a,\h^{-1})=\varepsilon_{(F^\vee)^!}(h^!_i)\varepsilon_{(F^\vee)^!}(h^!_{\w}) \frac{\StabX{(F^\vee)^!}{}{h^!_{\w}}\Big|_{h^!_i}}{\StabX{(F^\vee)}{ }{h^!_i}\Big|_{h^!_i}}(a^{-1}, z\h^{1-\w},z, \h).
\end{equation}
This formula is particularly advantageous because it expresses the left-hand side in terms of the stable envelopes of $T^*\mathbb{P}^{\w-1}$, the easiest stable envelopes to compute. They are explicitly described in Section \ref{sec:StabForPn}. Let us first compute the right-hand side without the change of variables. For this, notice that the tie diagram of $h^!_{i}$ connects the leftmost NS5 brane in $\ttt{{\fs}$1${\fs}$\w${\bs}\dots{\bs}3{\bs}2{\bs}1{\bs}} \cong T^*\mathbb{P}^{\w-1}$ with the $i$-th D5 brane from the left. By Proposition \ref{prop:Pn} and the combinatorics of the fixed point restrictions, also described in Section \ref{sec:StabForPn}, we get
\begingroup
\allowdisplaybreaks
\begin{align*}
	\frac{\StabX{(F^\vee)^!}{}{h^!_{\w}}\Big|_{h^!_i}}{\StabX{(F^\vee)}{ }{h^!_i}\Big|_{h^!_i}}(a_1,\dots, a_{\w}, z_1,z_2, \h)
	&=
	\frac{\prod_{i=1}^{\w-1}\vartheta\left(\frac{a_i\h}{a_k}\right)}{\prod_{i=1}^{k-1} \vartheta\left(\frac{a_i\h}{a_k}\right) \prod_{i=k+1}^{\w} \vartheta\left( \frac{a_k}{a_i} \right) } \frac{\vartheta\left(\frac{a_k}{a_{\w}}\frac{z_1}{z_2}\h^{\w-2}\right)}{\vartheta\left(\frac{z_2}{z_1}\h^{\w-2}\right)}
	\\
	&=
	\frac{
		\prod_{i=1}^{k-1}\vartheta\left(\frac{a_i\h}{a_k}\right)
		\vartheta(h)
		\prod_{i=k+1}^{\w-1}\vartheta\left(\frac{a_i\h}{a_k}\right)
	}
	{\prod_{i=1}^{k-1} \vartheta\left(\frac{a_i\h}{a_k}\right) \prod_{i=k+1}^{\w} \vartheta\left( \frac{a_k}{a_i} \right)
	}
	\frac{\vartheta\left(\frac{a_k}{a_{\w}}\frac{z_2}{z_1}\h^{\w-2}\right)}{\vartheta\left(\frac{z_1}{z_2}\h^{\w-2}\right)}
	\\
	&=\prod_{i=k+1}^{\w}\left(\frac{\vartheta\left(\frac{a_i\h}{a_k}\right)}{\vartheta\left( \frac{a_k}{a_i} \right)}\right)
	\frac{\vartheta(h)}{\vartheta\left(\frac{a_{\w}\h}{a_k}\right)}
	\frac{\vartheta\left(\frac{a_k}{a_{\w}}\frac{z_1}{z_2}\h^{\w-2}\right)}{\vartheta\left(\frac{z_2}{z_1}\h^{\w-2}\right)}.
\end{align*}
Forcing now the substitutions above and recalling that $\vartheta(x^{-1})=-\vartheta(x)$, we get
\begin{align*}
	 \frac{\StabX{(F^\vee)^!}{}{h^!_{\w}}\Big|_{h^!_i}}{\StabX{(F^\vee)}{ }{h^!_i}\Big|_{h^!_i}}(a^{-1}, z\h^{1-\w},z, \h)
	&=
	\prod_{i=k+1}^{\w}\left(\frac{\vartheta\left(\frac{a_k\h}{a_i}\right)}{\vartheta\left( \frac{a_i}{a_k} \right)}\right)
	\frac{\vartheta(h)}{\vartheta\left(\frac{a_k\h}{a_{\w}}\right)}
	\frac{\vartheta\left(\frac{a_{\w}}{a_k\h}\right)}{\vartheta\left(\h^{-1}\right)}
	\\
	&=
	\prod_{i=k+1}^{\w}\frac{\vartheta\left(\frac{a_k\h}{a_i}\right)}{\vartheta\left( \frac{a_i}{a_k} \right)}.
\end{align*}
\endgroup
By this last formula and equation \eqref{key formula for lemma comparison coefficients}, to conclude the proof it suffices to show that 
\[
\varepsilon_{(F^\vee)^!}(h^!_i)\varepsilon_{(F^\vee)^!}(h^!_{\w})=(-1)^{\w-i}.
\]
But this immediately follows from the characterization of $\varepsilon_X(-)$ for a separated brane diagram, given at the beginning of Section \ref{subsection mirror symmetry statement}, and the fact that the tie diagram of $h^!_i$ has exactly $\w-i$ tie crossings.
\end{proof}

\begin{lemma}
\label{lemma 3 proof mirror symmetry stab}
	We have 
	\[
	\frac{\varepsilon_X(f)\varepsilon_X(g)}{\varepsilon_{\overline X}(f_{\w})\varepsilon_{\overline X}(g_i)} = (-1)^{\w-i}.
	\]
    As a consequence, equation \eqref{equation equality coefficients resolutions} holds.
\end{lemma}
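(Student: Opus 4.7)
The strategy is to exploit the combinatorial description of $\varepsilon_X(-)$ recalled at the beginning of Section \ref{subsection mirror symmetry statement}: since both $X$ and $\overline X$ are separated, $\varepsilon_X(f) = (-1)^{c(f)}$ where $c(f)$ is the number of tie crossings in the tie diagram of $f$, and analogously for $\varepsilon_{\overline X}$. Thus the whole lemma reduces to computing how the number of crossings changes under the NS5 resolution $f \rightsquigarrow f_i$ and $g \rightsquigarrow g_i$. I would first set up the bookkeeping: let $\Ab_{j_1} < \cdots < \Ab_{j_\w}$ denote the D5 branes tied to $\Zb$ in $f$; then in $f_i$ the $\Zb''$-tie lands on $\Ab_{j_i}$ while the remaining $\w - 1$ ties from $\Zb'$ land on $\Ab_{j_s}$ for $s \neq i$. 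This sets up a natural bijection between the ties of $f$ and the ties of $f_i$, and the same picture applies to $g$ and $g_i$.

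Next I would compare crossings under this bijection, splitting them into three classes: (a) crossings between two ties, neither of which is attached to $\Zb$ or to $\Zb'/\Zb''$; (b) crossings between a non-$\Zb$ tie and a $\Zb$-tie (versus the corresponding crossings in $\overline X$); and (c) crossings among $\Zb$-ties (which are zero) versus crossings among $\Zb'/\Zb''$-ties in $f_i$. Crossings of type (a) are trivially preserved. For type (b), one checks that inserting $\Zb''$ immediately to the right of $\Zb'=\Zb_k$ does not affect the sign of $(l - k_{\mathcal B})(p - j_s)$ governing crossings, by handling separately the subcases $l < k$ (positions are unchanged) and $l > k$ (position shifts from $l$ to $l+1$, but stays strictly greater than both $k$ and $k+1$). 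This shows the total count of (b)-crossings is the same in $f$ and $f_i$.

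Finally, type (c) is where the new crossings appear. Because $\Zb'$ is to the left of $\Zb''$ in $\overline X$, the $\Zb'$-tie to $\Ab_{j_s}$ crosses the $\Zb''$-tie to $\Ab_{j_i}$ iff $j_s > j_i$, i.e.\ iff $s > i$; the number of such $s$ is exactly $\w - i$. Combining the three cases,
\[
c(f_i) = c(f) + (\w - i), \qquad c(g_i) = c(g) + (\w - i),
\]
so that $\varepsilon_{\overline X}(f_i) = (-1)^{\w-i}\varepsilon_X(f)$ and likewise for $g$. In particular $\varepsilon_{\overline X}(f_\w) = \varepsilon_X(f)$, and a direct substitution yields
\[
\frac{\varepsilon_X(f)\varepsilon_X(g)}{\varepsilon_{\overline X}(f_{\w})\varepsilon_{\overline X}(g_i)} = \frac{\varepsilon_X(f)\varepsilon_X(g)}{\varepsilon_X(f)\cdot (-1)^{\w-i}\varepsilon_X(g)} = (-1)^{\w - i},
\]
as claimed. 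The statement about equation \eqref{equation equality coefficients resolutions} then follows immediately by combining Lemmas \ref{lemma 1 proof mirror symmetry stab} and \ref{lemma 2 proof mirror symmetry stab}: the formulas computed there give $c_i(a,\hbar^{-1}) = (-1)^{\w-i} d_i(a,\hbar)$, so multiplying by the just-computed sign $(-1)^{\w-i}$ produces exactly $d_i(a,\hbar)$.

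The only real obstacle is the tidy bookkeeping in step (b): one has to check that the position shift of the NS5 branes to the right of $\Zb$ (from $l$ to $l+1$) and the splitting of a $\Zb$-tie into a $\Zb'$- and a $\Zb''$-tie balance out exactly, so that no ``phantom'' crossings appear between non-$\Zb$ ties and $\Zb'/\Zb''$-ties. This is a clean, if somewhat tedious, case analysis, but it is entirely combinatorial and does not require any new geometric input beyond the combinatorial description of $\varepsilon_X(f)$ for separated diagrams.
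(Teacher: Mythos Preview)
Your approach is the same as the paper's: reduce to counting tie crossings in the separated picture and track how the count changes under the NS5 resolution. The bookkeeping for types (a) and (b) is fine. However, your crossing condition in step~(c) is inverted. With arcs drawn above the line in a separated diagram, the tie $\Zb'\!\to\!\Ab_{j_s}$ and the tie $\Zb''\!\to\!\Ab_{j_i}$ (with $\Zb'$ to the left of $\Zb''$) \emph{interlock} precisely when $j_s<j_i$, i.e.\ $s<i$; when $j_s>j_i$ the second arc is nested inside the first and they do not cross. So the number of new crossings is $i-1$, not $\w-i$, and in particular $c(f_\w)=c(f)+(\w-1)$, not $c(f)+0$. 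This is consistent with Remark~\ref{remark distinguished resolutions fixed points NS5 case}, which states that for separated $X$ the distinguished resolution $\bar f_\sharp=f_\w$ is the one with \emph{all} crossings, and with the paper's own computation $\varepsilon_Y(g_i)=(-1)^{i-1}$.

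Your final answer is nonetheless correct, but only by a parity accident: with the correct counts one gets
\[
\frac{\varepsilon_X(f)\varepsilon_X(g)}{\varepsilon_{\overline X}(f_{\w})\varepsilon_{\overline X}(g_i)}
=\frac{1}{(-1)^{\w-1}(-1)^{i-1}}=(-1)^{\w+i-2}=(-1)^{\w-i},
\]
which agrees with your formula because $(-1)^{\w+i}=(-1)^{\w-i}$. So the conclusion stands, but you should fix the crossing direction in step~(c) and the derived claim $\varepsilon_{\overline X}(f_\w)=\varepsilon_X(f)$.
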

\begin{proof}
	Both $X$ and $\overline X$ are separated, so we can compute the left-hand side in terms of tie crossings. We claim that $\varepsilon_{\overline X}(g_i)/\varepsilon_{X}(g) =\varepsilon_{Y}(g_i)$, where $Y=\ttt{{\fs}$\w-1${\fs}$\w${\bs}\dots{\bs}3{\bs}2{\bs}1{\bs}}$. Indeed, the  tie crossings at the numerator differ from the ones at the denominator by those additional crossings that are generated by the resolution of the NS5 brane. But these are effectively the crossings of $g_i$ seen as a fixed point in $Y$.
	
	 Now recall that $g_i$ is the unique fixed point whose tie diagram in 
  \[
\ttt{{\fs}$\w-1${\fs}$\w${\bs}\dots{\bs}3{\bs}2{\bs}1{\bs}}\]
connects the rightmost NS5 brane with the $i$-th D5 brane. This implies that $\varepsilon_{Y}(g_i)=(-1)^{i-1}$. Hence, we get 
	\[
	\frac{\varepsilon_X(f)\varepsilon_X(g)}{\varepsilon_{\overline X}(f_{\w})\varepsilon_{\overline X}(g_i)} =(-1)^{1-i}(-1)^{1-\w}=(-1)^{2-\w-i}=(-1)^{\w-i}.
	\]
	The second claim simply follows by comparing the explicit formulas of Lemmas \ref{lemma 1 proof mirror symmetry stab} and \ref{lemma 2 proof mirror symmetry stab}.
\end{proof}

These computations conclude the proof of Lemma \ref{lemma induction mirror symmetry NS5}. The proof of its sibling statement, Lemma \ref{lemma induction mirror symmetry D5}, is completely analogous, so  we only sketch it.

\begin{proof}[Proof of Lemma \ref{lemma induction mirror symmetry D5}]
Lemma \ref{lemma induction mirror symmetry NS5} was about the comparison of the NS5 resolution for the stable envelopes of X with the D5 resolution for the stable envelopes of $X^!$. Here we do the opposite, i.e. we consider a D5 resolution of type $\w'+\w''=\w$ with  $\w'=1$ for some D5 brane $\Ab$ in X and the homologous NS5 resolution of the dual brane $\Zb=\Ab^!$ in $X^!$. Like before, we consider the associated resolutions of the stable envelopes on the two dual sides. The latter are given by Lemma \ref{multiple terms D5 resolution stabs separated case} on the D5 side and Lemma \ref{proposition co-separated NS5 fusion ratios stabs} on the NS5 side (i.e. the dual side). We then apply the assumption of the lemma, i.e. 3d mirror symmetry of stable envelopes for the pair $(\widetilde X, \overline{X^!}=(\widetilde X)^!)$ on one of the two resolutions (for instance on the D5 side, like before) and compare the resulting formulas. To show that these two formulas are the same, and hence deduce the statement of the lemma, it suffices to compare the coefficients. Like before, the latter can be computed explicitly. As for Lemma \ref{lemma 1 proof mirror symmetry stab}, the computation of the NS5 coefficients is straightforward. On the D5 side, one proceeds like in Lemma \ref{lemma 2 proof mirror symmetry stab} to reduce the coefficients to a ratio of stable envelopes of $T^*\mathbb{P}^{\w-1}$, now described as a co-separated bow variety, and uses the explicit formula in Proposition \ref{prop:Pn} to complete the computation.
\end{proof}

%%%%%%%%%%%%%%%%%%%%%%%%%%
%%%%%%%%%%%%%%%%%%%%%%%%%%

%\newpage

\appendix

\section{Equivariant localization for bow varieties}

\label{Appendix: Equivariant localization for bow varieties}

Torus equivariant localization is an extremely powerful tool for computations in cohomology. The most favorable situation to effectively apply localization techniques is when the natural pullback \footnote{In this article, we use the symbols $f^{\oast}$ and $f_{\oast}$ to denote pullback and pushforward in elliptic cohomology. However, since this appendix is about singular cohomology, we switch to the more standard notation $f^*$ and $f_*$.} $i^*$ associated with the inclusion $i:X^T\hookrightarrow X$ is an isomorphism, i.e. when the cohomology ring of $X$ is determined by the information at the fixed points. This property holds (after permitting denominators) for many important varieties in geometric representation theory, including partial flag varieties and Nakajima quiver varieties \cite{Nakajimaquiver, ginzburg2009lectures, maulik2012quantum}. However, $i^*$ is not injective in general for bow varieties. On some level, this lack of injectivity hints at the fact that the whole cohomology $H_T^*(X)$ ring (or its K-theory and elliptic analogs) of a bow variety is not the correct space to look at from a representation theoretic point of view. Instead, one should look at some appropriate subalgebra on which $i^*$ is an isomorphism.

In this appendix, we show that the subalgebra generated by the stable envelopes satisfies this property. For simplicity, we work in singular cohomology. However, %the reader can check that 
all the statements and arguments adapt straightforwardly to K-theory and  elliptic cohomology.

Let $X$ be a $T$-space with finitely many $T$-fixed points. The cohomology $H_T^*(X^T)$ has a natural basis associated with the fixed points $f\in X^T$. With respect to this basis, the composition
\[
\begin{tikzcd}
    H^*_T(X^T)\arrow[r, "i_{*}"] &H^*_T(X)\arrow[r, "i^*"] & H^*_T(X^T)
\end{tikzcd}
\]
is equal to the diagonal matrix multiplying the basis elements by the Euler classes $e(T_f X)\in H_T^*(\pt)$. Since each tangent class $T_f X$ is simply a direct sum of nontrivial $T$-characters, the map $i^* i_{*}$ is injective and hence becomes an isomorphism after localization, i.e. after tensoring with $\text{Frac}(H_T^*(\pt))$. Notice in particular that although $i^*$ might not be injective, it is always injective on the image of $i_{*}$. From now on, we will always work in localized equivariant cohomology, i.e. with the functor 
\[
H_T^*(-)_{loc}:=H_T^*(-)\otimes_{H_T^*(\pt)}\text{Frac}(H_T^*(\pt)),
\]
and all the maps will be considered as morphisms of $\text{Frac}(H_T^*(\pt))$-modules. 

Let now $X$ be a bow variety with $m$ NS5 branes equipped with its standard action of the torus $\Tt=\At\times \Cs_{\hbar}$. The cohomological stable envelope is a morphism of $H_{\Tt}(\pt)$-modules 
\[
\text{Stab}_{\mC}: H_{\Tt}^*(X^{\Tt})=H_{\Tt}^*(X^{\At})\to H_{\Tt}^*(X),
\]
and hence its image generates a $H^*_{\Tt}(\pt)_{loc}$-subalgebra
\[
H^*_{\Tt}(X)^{Stab}_{loc}\subseteq H^*_{\Tt}(X)_{loc}.
\]
Since both the stable envelopes and the equivariant parameters are even, this subalgebra is concentrated in the even part of the cohomology ring and hence is commutative.

\begin{proposition}
    Let $X$ be an arbitrary bow variety. The localized pushforward
    \[
    i_{*}:  H^*_{\Tt}(X^{\Tt})_{loc}\to  H^*_{\Tt}(X)_{loc}
    \]
    is onto $H^*_{\Tt}(X)^{Stab}_{loc}$. As a consequence, the fixed point localization 
    \[
    i^*: H^*_{\Tt}(X)^{Stab}_{loc}\to  H^*_{\Tt}(X^{\T})_{loc}
    \]
    is an isomorphism.
\end{proposition}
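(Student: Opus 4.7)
The plan is to reduce the proposition to showing that each stable envelope $\Stab{C}{f}$ lies in $\text{Im}(i_*)$, and then obtain this by induction on the total excess D5-charge $\sum_{\Ab}(\w(\Ab)-1)$ of $X$. The reduction is immediate: the projection formula $\alpha\cdot i_*(\beta)=i_*(i^*\alpha\cdot\beta)$ makes $\text{Im}(i_*)$ closed under multiplication by arbitrary $H^*_{\Tt}(X)_{loc}$-classes, so once every stable envelope belongs to it, so does the entire subalgebra $H^*_{\Tt}(X)^{Stab}_{loc}$ that they generate. Combined with the injectivity of $i^*$ on $\text{Im}(i_*)$ noted just before the proposition and the invertibility of the upper-triangular restriction matrix of the $\Stab{C}{f}$, this yields the consequence that $i^*$ restricts to an isomorphism on $H^*_{\Tt}(X)^{Stab}_{loc}$.

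The base of the induction is the case when every D5 brane of $X$ has local charge one, so that Hanany-Witten equivalence brings $X$ to the cotangent bundle of a partial flag variety (Section~\ref{sec:quivers}); such varieties are equivariantly formal, whence $i^*$ is already an isomorphism in localized equivariant cohomology and every stable envelope lies in $\text{Im}(i_*)$ trivially. For the inductive step, suppose a D5 brane $\Ab$ has local charge $\w\geq 2$; pick a splitting $\w=\w'+\w''$ with $\w',\w''\geq 1$ (which strictly lowers the total excess) and let $j\colon X\hookrightarrow\widetilde X$ be the associated D5-resolution embedding from Proposition~\ref{proposition embedding resolution D5 branes}, equivariant along $\varphi\colon\Tt\to\widetilde\Tt$. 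The cohomological translation of Corollary~\ref{corollary nicest one-term D5 resolution} reads
\[
\kappa_f\cdot\Stab{C}{f} \;=\; j^*\varphi^*\,\StabX{\widetilde X}{\widetilde C}{\tilde f_\sharp},
\]
where $\kappa_f\in H^*_{\Tt}(\pt)$ is a product of non-zero characters---namely $\prod_{s=1}^{\w''}(s\h)$ times $e(N_j^-)$---hence invertible in the localized ring. By the inductive hypothesis, $\StabX{\widetilde X}{\widetilde C}{\tilde f_\sharp}=\tilde i_*(\tilde\gamma)$ for some $\tilde\gamma$; since pullback along the group homomorphism $\varphi$ commutes with proper pushforward, it suffices to show that $j^*\tilde i_*(\varphi^*\tilde\gamma)$ lies in $\text{Im}(i_*)$.

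I would obtain this via the commutative square
\[
\begin{tikzcd}
X^{\Tt}\arrow[r,"j_{\At}"]\arrow[d,hookrightarrow,"i"] & \widetilde X^{\varphi(\Tt)}\arrow[d,hookrightarrow,"\tilde i_0"]\\
X\arrow[r,hookrightarrow,"j"] & \widetilde X,
\end{tikzcd}
\]
in which $\tilde i_0$ is the inclusion of the $\varphi(\Tt)$-fixed locus (through which $\tilde i$ factors). By Lemma~\ref{lemma Fixed components D5 res}, each component $F_f$ of $\widetilde X^{\varphi(\Tt)}$ that meets $X$ does so in the single point $f\in X^{\Tt}$, so the square is set-theoretically Cartesian. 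I expect the main obstacle to be the verification that it is furthermore \emph{transverse}, i.e.\ that $T_f\widetilde X=T_fX\oplus T_fF_f$ as $\Tt$-representations at every $f\in X^{\Tt}$: the intersection $T_fX\cap T_fF_f$ vanishes because isolation of the $\Tt$-fixed points forces $T_fX$ to have no $\At$-invariant weights while $T_fF_f$ is entirely $\varphi(\At)$-invariant, and the dimension match follows from the identification $F_f\cong T^*\Gr{\w''}{\w'+\w''}$ (or its mirror) in Lemma~\ref{lemma Fixed components D5 res} together with the rank computation for $N_j$ implicit in Proposition~\ref{prop:rightbundles}. Granting transversality, ordinary base change yields $j^*\tilde i_{0*}=i_*\,j_{\At}^*$, so $j^*\varphi^*\,\StabX{\widetilde X}{\widetilde C}{\tilde f_\sharp}\in\text{Im}(i_*)$, and the induction closes.
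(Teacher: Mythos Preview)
Your overall strategy---reduce to the cotangent of a flag variety via D5 resolution and then transport the statement back by a base-change argument---is exactly the paper's, and the projection-formula reduction at the start is fine.  The genuine gap is the transversality assertion $T_f\widetilde X=T_fX\oplus T_fF_f$.  This is false in general.  Your argument correctly shows $T_fX\cap T_fF_f=0$, but the dimension count fails: for instance, take $X=\ttt{\fs 1\fs 2\fs 3\fs 5\bs 2\bs}$ (so $\dim X=2$) and resolve the first D5 brane with $\w'=2,\ \w''=1$.  Then $\dim\widetilde X=8$, while by Lemma~\ref{lemma Fixed components D5 res} the relevant fixed component $F_f\cong\ttt{\fs 1\fs 2\fs 3\bs 1\bs}$ has dimension $2$, so $\dim X+\dim F_f=4<8$.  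The discrepancy is precisely $\operatorname{rk}(N_j^+\oplus N_j^-)$: the normal bundle $N_j$ carries non-trivial $\At$-weights whenever $X$ has D5 branes other than the one being resolved, because the new tautological bundle sitting between $\Ab'$ and $\Ab''$ mixes their equivariant weights with those of the branes further right.  (Your identification ``$F_f\cong T^*\Gr{\w''}{\w'+\w''}$'' is also off---Lemma~\ref{lemma Fixed components D5 res} gives the \emph{mirror} of that Grassmannian, whose dimension is different---and the square should use $\widetilde X^{\varphi(\At)}$ rather than $\widetilde X^{\varphi(\Tt)}$.)

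The fix is to replace transversal base change by the excess-intersection formula, which is what the paper does: working through an equivariant tubular neighbourhood of $X$ in $\widetilde X$ one finds
\[
j^*\varphi^*\tilde i_*(\alpha)\;=\;e(N_j^+)\,e(N_j^-)\cdot i_*\bigl(j_{\At}^*\varphi^*\alpha\bigr),
\]
which still lands in $\operatorname{Im}(i_*)$ because $e(N_j^\pm)\in H^*_{\Tt}(\pt)_{loc}^\times$.  With this correction your induction closes; note though that the paper avoids the induction altogether by passing in a single step to the \emph{maximal} D5 resolution $\widetilde X=T^*\Fl$, where equivariant formality is available outright.
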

    \begin{proof} Applying the Hanany-Witten isomorphism, we may assume that $X$ is separated. Since adding an NS5 brane with maximal charge induces an isomorphism (which is equivalent to adding an NS5 brane with zero local charge in the co-separated setting), we may also assume that $X$ has no D5 branes with charge equal to $0$. By Corollary \ref{corollary any bow can be embedded in a flag variety} we have a closed embedding $j:X\hookrightarrow \widetilde X= T^*\Fl$, where $\Fl$ is a partial flag variety. This embedding is equivariant along the map $\varphi: \Tt\to \widetilde \Tt$ identifying certain equivariant parameters of $\widetilde \At$ up to some shifts in $\h\in \Cs_{\h}$, cf. Proposition \ref{proposition embedding resolution D5 branes}. In particular, $\varphi$ identifies $\At\subset \Tt$ with a subtorus of the maximal torus $\widetilde \At$ acting on $T^*\Fl$. Equivariant formality of partial flag varieties implies that the map
    \[
    \tilde{i}_{*}: H^*_{\widetilde \Tt}(\widetilde X^{\At})_{loc}\to H^*_{\widetilde \Tt}(\widetilde X)_{loc}
    \]
    associated with the inclusion $\tilde i: \widetilde X^{\At} \hookrightarrow \widetilde X$ is an isomorphism. In particular, the subalgebra $ H^*_{\widetilde \Tt}(\widetilde X)^{Stab}_{loc}$ is in the image of $\tilde{i}_{*}$ . In fact, in this case we have $H^*_{\widetilde \Tt}(\widetilde X)_{loc}= H^*_{\widetilde \Tt}(\widetilde X)^{Stab}_{loc}$ since the composition 
    \[
    H^*_{\widetilde \Tt}(\widetilde X^{\widetilde \At})\xrightarrow[]{\text{Stab}_{\mC}} H^*_{\widetilde \Tt}(\widetilde X)\xrightarrow[]{i^*} H^*_{\widetilde \Tt}(\widetilde X^{\widetilde \At})
    \]
    is an isomorphism after localization \cite[Section 3.3.4]{maulik2012quantum} and $H^*_{\widetilde \Tt}(\widetilde X^{\widetilde \At})$ and $H^*_{\widetilde \Tt}(\widetilde X)$ are free $H_{\widetilde \Tt}(\pt)$-modules of the same rank.
    Taking the cohomological limit of Theorem \ref{Fusion of D5 branes for separated brane diagrams}, we get
    \begin{equation}
    \label{cohomological version of D5 resolution os stable envelopes}
        c(\h)e(N^-)  \StabX{X}{C}{f}= j^* \varphi^*\StabX{\widetilde X}{\widetilde C}{\tilde f_\sharp},
    \end{equation}
    where $c(\h)$ is a nonzero monomial in $\h$ and $e(N^-)\in H^*_{\Tt}(\pt)$ is the Euler class of the negative part of the normal bundle $N$ of $j:X\hookrightarrow  \widetilde X$, which is topologically trivial. Since both $j^*$ and $\varphi^*$ are ring homomorphisms, it follows that the composition of the leftmost and bottom arrows in the solid diagram 
    \[
    \begin{tikzcd}
        H^*_{\widetilde \Tt}(\widetilde X^{\At})_{loc}\arrow[d, swap,  "\tilde{i}_{*}"]\arrow[r, dashed] & H^*_{\Tt}( X^{\At})_{loc} \arrow[d,  "{i}_{*}"]
        \\
        H^*_{\widetilde \Tt}(\widetilde X)^{Stab}_{loc} \arrow[r, "j^*\varphi^*"] & H^*_{\Tt}(X)^{Stab}_{loc}
    \end{tikzcd}
    \]
    is onto. Thus, to prove the proposition, it suffices to show that there exists a dashed map that makes the diagram above well-defined (a priori, the image of $i_{*}$ might not be contained in $H^*_{\Tt}(X)^{Stab}_{loc}$) and commutative. We claim that this map is given by $e(N^+)e(N^-)j_{\At}^*\varphi^*$, where $j_{\At} :X^{\At}\hookrightarrow \widetilde X^{\At} $ is the restriction of $j$ on the fixed locus and $N^\pm$ are the positive and negative parts of the normal bundle $N$.
    
    Fix some $\alpha\in  H^*_{\widetilde \Tt}(\widetilde X^{\At})_{loc}$.  It suffices to assume that $\alpha$ is supported on some fixed component $F\subset \widetilde X^{\At}$. 
    Two cases need to be distinguished. If $F\cap X=\emptyset$, then $j_{\At}^*\varphi^*(\alpha)=0$. On the other hand, by the exact sequence of the pair $(\widetilde X, \widetilde X\setminus F)$, it follows that $\tilde{i}_{*}(\alpha)$ restricts to zero on $ \widetilde X\setminus F\supset X$, and hence $j^*\varphi^*\tilde{i}_{*}(\alpha)=0$, so, in this case, the diagram commutes. If instead $F\cap X\neq \emptyset$, then by Lemma \ref{lemma Fixed components D5 res} $F\cap X=f$, where $f$ is some $\At$-fixed point in $X$.
    Taking into account tubular neighborhoods\footnote{To embed $N$ in $\widetilde X $ as an equivariant tubular neighborhood of $X$, we might need to pass to the maximal compact subgroup inside $\At$, which does not affect the cohomology theory.}, we get the commutative diagram
    \[
    \begin{tikzcd}
        f\arrow[rr, bend left, "j_{\At}"]\arrow[d, "i"] \arrow[r, hookrightarrow]&  N^0_f \arrow[r, hookrightarrow] \arrow[d, hookrightarrow] & F\arrow[d, "\tilde i"]\\
        X \arrow[rr, bend right, "j"]\arrow[r, hookrightarrow]&  N \arrow[r, hookrightarrow] & \widetilde X
    \end{tikzcd}.
    \]
    Here, $N_f^0$ is the normal bundle of $f$ in $F$.
    Since the right square is Cartesian and satisfies $\dim(\widetilde X)-\dim(F)=\dim(N)-\dim(N^0_f)$, the descending compatibility of pushforward and pullback implies that
    \[
    j^*\varphi^*\tilde{i}_{*}(\alpha)=e(N^+)e(N^-)j_{\At}^*\varphi^*(\alpha),
    \]
    as claimed. 
    \end{proof}
    \begin{remark}
        \label{Remark stabs are in image localized pushforward}
        The proof of the previous proposition is only based on formal properties of the cohomology theory and on Theorem \ref{Fusion of D5 branes for separated brane diagrams}. As a consequence, this statement can be directly generalized to K-theory and elliptic cohomology. In particular, it implies that the elliptic stable envelopes, their products, and their linear combinations (whenever the latter are defined since a priori different stable envelopes live in different line bundles) are in the image of the localized elliptic pushforward $i_{\oast}$.
    \end{remark}

    \begin{remark}
        Beyond $H^*_{\Tt}(X)^{Stab}_{loc}$, another interesting commutative subalgebra of $H^*_{\Tt}(X)_{loc}$ is $H^*_{\Tt}(X)^{taut}_{loc}$, the subalgebra generated by the Chern classes of the tautological bundles on $X$. Since the cohomology of any partial flag variety is generated by tautological classes, which are compatible with pullback, equation \eqref{cohomological version of D5 resolution os stable envelopes} implies that the stable envelopes of any bow variety are contained in $H^*_{\Tt}(X)^{taut}_{loc}$. As a result, we have the following chain of inclusions:
        \[
            H^*_{\Tt}(X)^{Stab}_{loc} \subseteq H^*_{\Tt}(X)^{taut}_{loc} \subseteq H^*_{\Tt}(X)_{loc}.
        \]
        In the case of quiver varieties, it is known that these inclusions are actually  equalities\footnote{The second inclusion is actually an isomorphism in the integral theory, i.e. without localizing.}, see \cite{maulik2012quantum, McGerty_2017} and \cite[App. A.4]{schiffmann2022cohomological}. For bow varieties in general this property is not expected to hold. 
    \end{remark}

\printbibliography

\end{document}